\newcolumntype{C}[1]{>{\centering\hspace{0pt}}p{#1}}
\newcommand{\SO}{\mathrm{SO}}
\newcommand{\Spin}{\mathrm{Spin}}
\newcommand{\U}{\mathrm{U}}
\newcommand{\SU}{\mathrm{SU}}
\newcommand{\Sp}{\mathrm{Sp}}
\newcommand{\G}{\mathrm{G}}
\newcommand{\Id}{\mathrm{Id}}
\newcommand{\Ker}{\mathrm{Ker}}
\newcommand{\End}{\mathrm{End}}
\newcommand{\Sym}{\mathrm{Sym}}
\newcommand{\Cone}{\mathrm{C}}
\newcommand{\R}{\mathbb{R}}
\newcommand{\C}{\mathbb{C}}
\newcommand{\HH}{\mathbb{H}}
\newcommand{\Sph}{\mathbb{S}}
\newcommand{\CP}{\mathbb{CP}}
\newcommand{\HP}{\mathbb{HP}}
\newcommand{\Gr}{\mathrm{Gr}}
\newcommand{\vol}{\mathrm{vol}}
\newcommand{\LB}{[\![}
\newcommand{\RB}{]\!]}
\newcommand{\del}{\partial}
\newcommand{\dil}{R}
\newcommand{\ddr}{\frac{\del}{\del r}}
\newtheorem{thm}{Theorem}[section]
\newtheorem{prop}[thm]{Proposition}
\newtheorem{lem}[thm]{Lemma}
\newtheorem{cor}[thm]{Corollary}
\theoremstyle{definition}
\newtheorem{defn}[thm]{Definition}
\newtheorem{example}[thm]{Example}
\newtheorem{rmk}[thm]{Remark}
\numberwithin{equation}{section}
\title{Calibrated Geometry in \\ Hyperk\"{a}hler Cones, $3$-Sasakian Manifolds, \\
and Twistor Spaces}
\author{Benjamin Aslan, Spiro Karigiannis, Jesse Madnick}
\date{March 2024}
\newcommand{\Addresses}
{{  \bigskip
		%   \footnotesize
		\textsc{University College London} \par\nopagebreak
		\textsc{London, United Kingdom} \par\nopagebreak
		\texttt{ucahbas@ucl.ac.uk}
		
		\bigskip
		\textsc{University of Waterloo} \par\nopagebreak
		\textsc{Waterloo, ON, Canada}\par\nopagebreak
		\texttt{karigiannis@uwaterloo.ca}

		\bigskip
		\textsc{University of Oregon} \par\nopagebreak
		\textsc{Eugene, OR, United States}\par\nopagebreak
		\texttt{jmadnick@uoregon.edu}
}}
\begin{document}

\maketitle

\begin{abstract}
We systematically study calibrated geometry in hyperk\"ahler cones $C^{4n+4}$, their 3-Sasakian links $M^{4n+3}$, and the corresponding twistor spaces $Z^{4n+2}$, emphasizing the relationships between submanifold geometries in various spaces. Our analysis highlights the role played by a canonical $\Sp(n)\U(1)$-structure $\gamma$ on the twistor space $Z$. We observe that $\mathrm{Re}(e^{- i \theta} \gamma)$ is an $S^1$-family of semi-calibrations, and make a detailed study of their associated calibrated geometries. \\
\indent As an application, we obtain new characterizations of complex Lagrangian and complex isotropic cones in hyperk\"{a}hler cones, generalizing a result of Ejiri--Tsukada.  We also generalize a theorem of Storm on submanifolds of twistor spaces that are Lagrangian with respect to both the K\"{a}hler-Einstein and nearly-K\"{a}hler structures.
\end{abstract}

\tableofcontents

\section{Introduction}

\indent \indent Hyperk\"{a}hler manifolds $C$, equipped with a Riemannian metric $g_C$, complex structures $(I_1, I_2, I_3)$, and K\"ahler forms $(\omega_1, \omega_2, \omega_3)$, are a rich source of calibrated geometries.  They feature not only familiar geometries arising from the Calabi-Yau structure -- such as complex submanifolds and special Lagrangians -- but also less-familiar ones specific to the hyperk\"{a}hler setting.  For example, a submanifold $N^{2k+2} \subset C^{4n+4}$ is \emph{complex isotropic with respect to $I_1$} if it is simultaneously
$$I_1\text{-complex, \ \ }\omega_2\text{-isotropic, \ \ and \ \ } \omega_3\text{-isotropic.}$$

\emph{Complex Lagrangians} $N^{2n+2} \subset C^{4n+4}$, those complex isotropic submanifolds of top dimension $2n+2$, are particularly remarkable, as they are at once complex submanifolds with respect to $I_1$ and special Lagrangian with respect to $I_2$ and $I_3$. \\
\indent This paper seeks to systematically study the various calibrated cones of hyperk\"{a}hler manifolds $C$, with a particular focus on complex isotropic cones.  For this, it is of course necessary to assume that $(C^{4n+4}, g_C) = (\R^+ \times M^{4n+3}, dr^2 + r^2 g_M)$ is itself a Riemannian cone. \\

\indent Hyperk\"{a}hler cones $C^{4n+4}$ are themselves highly special objects: each induces three associated Einstein spaces, called $M$, $Z$, and $Q$, as we briefly recall.  The first of these, $M^{4n+3}$, is just the link of $C$, which inherits a $3$-Sasakian structure.  In view of the simple relationship between $C$ and $M$, $3$-Sasakian manifolds exhibit a wide array of semi-calibrated geometries.  Indeed, each of the calibrated cones in $C$ that we study has a semi-calibrated counterpart in $M$.
$$\begin{tabular}{| c | l || l | c |} \hline
$\dim(\mathrm{C}(L))$ & Calibrated cone $\mathrm{C}(L) \subset C$ & Semi-calibrated link $L \subset M$ & $\dim(L)$ \\ \hline \hline
$2k+2$ & Complex & CR & $2k+1$ \\ \hline
$2n+2$ & Special Lagrangian & Special Legendrian & $2n+1$ \\ \hline
$2k+2$ & Special isotropic & Special isotropic & $2k+1$ \\ \hline
$2n+2$ & Complex Lagrangian & CR Legendrian & $2n+1$ \\ \hline
$2k+2$ & Complex isotropic & CR isotropic & $2k+1$ \\ \hline
$4$ & Cayley & Associative & $3$ \\ \hline
\end{tabular}$$
The entries of this table will be explained in $\S$\ref{sec:CalGeoHK} and $\S$\ref{sec:CalGeo3Sas}. \\
 \indent Now, since $M$ is $3$-Sasakian, it admits three linearly independent Reeb vector fields $A_1, A_2, A_3$.  In fact, for each $v = (v_1, v_2, v_3) \in S^2$, the Reeb field $A_v = \sum v_i A_i$ yields a $1$-dimensional foliation $\mathcal{F}_v$ on $M$, the projection $p_v \colon M \to M/\mathcal{F}_v$ is a principal $S^1$-orbibundle, and the quotient $Z = M/\mathcal{F}_v$ is a $(4n+2)$-orbifold.  It is well-known that $Z$ naturally admits both a K\"{a}hler-Einstein structure $(g_{\mathrm{KE}}, J_{\mathrm{KE}}, \omega_{\mathrm{KE}})$ and a nearly-K\"{a}hler structure $(g_{\mathrm{NK}}, J_{\mathrm{NK}}, \omega_{\mathrm{NK}})$.  Indeed, $Z$ is the twistor space of a quaternionic-K\"{a}hler $4n$-orbifold $Q$ of positive scalar curvature. \\
 \indent The four Einstein spaces $C,M,Z,Q$ may be summarized in the following ``diamond diagram" in which $\tau \colon Z \to Q$ denotes the twistor $S^2$-bundle.
 \begin{equation} \label{diamond-diag}
 \begin{tikzcd}
   & M^{4n+3} \arrow[r, hook] \arrow[rd, "{p_v}"'] \arrow[dd, "h"'] & C^{4n+4} \arrow[d] &  \\
 &      &     Z^{4n+2} \arrow[ld, "\tau"]         &        \\
                              & Q^{4n}        &          & 
\end{tikzcd}
\end{equation}
The \emph{flat model} is $(C,M,Z,Q) = (\HH^{n+1}, \Sph^{4n+3}, \CP^{2n+1}, \HP^n)$, in which each $p_v \colon \Sph^{4n+3} \to \CP^{2n+1}$ is a complex Hopf fibration, and $h \colon \Sph^{4n+3} \to \HP^n$ is a quaternionic Hopf fibration. \\

\indent In addition to all of the structure already discussed, we recover an observation of Alexandrov~\cite{Alexandrov} that twistor spaces $Z$ admit a distinguished complex $3$-form $\gamma$ corresponding to an $\Sp(n)\U(1)$-structure. In fact we give two different proofs of this result, one in $\S$\ref{sub:GeomTwistor} via the $3$-Sasakian geometry of $M$, and the other in $\S$\ref{sec:QKManifolds} via the quaternionic-K\"{a}hler geometry of $Q$. Furthermore, we establish the new result that $\mathrm{Re}(\gamma)$ is a semi-calibration and we classify those $\mathrm{Re}(\gamma)$-calibrated submanifolds that are $\omega_{\mathrm{KE}}$-isotropic. More precisely:

\begin{thm} Let $Z$ be the $(4n+2)$-dimensional twistor space of a positive quaternionic-K\"{a}hler $4n$-orbifold.  Then $Z$ admits an $\Sp(n)\U(1)$-structure $\gamma \in \Omega^3(Z; \C)$ compatible with the K\"{a}hler-Einstein and nearly-K\"{a}hler structures.  Moreover:
\begin{itemize}
\item The $3$-form $\mathrm{Re}(\gamma)$ is a semi-calibration (i.e., has comass one).
\item If $\Sigma^3$ is compact, $\mathrm{Re}(\gamma)$-calibrated, and $\omega_{\mathrm{KE}}$-isotropic, then with respect to the K\"ahler-Einstein metric, $\Sigma$ is a geodesic circle bundle over a totally-complex surface in $Q$. (See Definition~\ref{defn:totally-complex}.) Conversely, any such circle bundle is $\mathrm{Re}(\gamma)$-calibrated and $\omega_{\mathrm{KE}}$-isotropic. (See Theorem~\ref{thm:ReGammaCircleBundle}.)
\end{itemize}
\end{thm}

We remark that there is a difference between the cases $n=1$ and $n \geq 2$, so our proof handles them separately. In $\S$\ref{sec:Re(gamma)3folds} we undertake a detailed study of $\mathrm{Re}(\gamma)$-calibrated $3$-folds in $Z^{4n+2}$. In a certain precise sense, these are generalizations of special Lagrangian $3$-folds in nearly-K\"{a}hler $6$-manifolds.  \\
\indent Geometric structures in place, we establish a series of relationships between the various classes of submanifolds in  $M$, $Z$, and $Q$; see diagram~\eqref{diamond-diag}. That is, given a submanifold $\Sigma \subset Z$, we ask how various first-order conditions on $\Sigma$ (e.g., complex, Lagrangian, etc.) influence the geometry of a local $p_{(1,0,0)}$-horizontal lift $\widehat{\Sigma} \subset M$ (provided one exists) and its $p_{(1,0,0)}$-circle bundle $p_{(1,0,0)}^{-1}(\Sigma) \subset M$, and vice versa.  Similarly, starting with a totally-complex $U \subset Q^{4n}$, we study its \emph{$\tau$-horizontal lift} $\widetilde{U} \subset Z$ and its \emph{geodesic circle bundle lift} $\mathcal{L}(U) \subset Z$:
\begin{align*}
\widetilde{U}|_x & = \left\{ j \in Z_x \colon j(T_xU) = T_xU\right\}\!, & \mathcal{L}(U)|_x & = \left\{ j \in Z_x \colon j(T_xU) \subset (T_xU)^\perp \right\}\!.
\end{align*}
See $\S$\ref{subsec:TotallyComplex} for a detailed discussion. \\
\indent Altogether, the litany of propositions and theorems --- proven in $\S$\ref{sec:SubmanifoldsMZ}, $\S$\ref{subsec:TotallyComplex}, and $\S$\ref{sec:CharCplxLagCones} --- comprise a sort of ``dictionary" of submanifold geometries.  As an example, in $\S$\ref{subsec:TotallyComplex}, we obtain the following characterization of the compact submanifolds of $Z$ that are Lagrangian with respect to both $\omega_{\mathrm{KE}}$ and $\omega_{\mathrm{NK}}$, generalizing a result of Storm \cite{Storm} to higher dimensions:

\begin{thm} \label{thm:Main2} Recall diagram~\eqref{diamond-diag}.
\begin{enumerate}[(a)]
\item If $\Sigma^{2n+1} \subset Z^{4n+2}$ is a compact $(2n+1)$-dimensional submanifold that is both $\omega_{\mathrm{KE}}$-Lagrangian and $\omega_{\mathrm{NK}}$-Lagrangian, then $\Sigma = \mathcal{L}(U)$ for some totally-complex $2n$-fold $U^{2n} \subset Q^{4n}$ (respectively, superminimal surface if $n = 1$).
\item Conversely, if $U^{2n} \subset Q^{4n}$ is totally-complex and $n \geq 2$, or if $U$ is a superminimal surface and $n = 1$, then $\mathcal{L}(U) \subset Z$ is $\omega_{\mathrm{KE}}$-Lagrangian and $\omega_{\mathrm{NK}}$-Lagrangian.
\end{enumerate}
\end{thm}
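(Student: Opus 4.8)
The plan is to prove part (b) by a direct computation and to extract part (a) from two dimension counts plus a small piece of quaternionic linear algebra. I first record the relevant structure of $\tau\colon Z\to Q$ (cf. $\S$\ref{sub:GeomTwistor}, $\S$\ref{sec:QKManifolds}): the vertical distribution $\mathcal V=\ker d\tau$ has rank $2$, with fibres the twistor $2$-spheres $Z_x$, and there is a canonical horizontal distribution $\mathcal H$ with $d\tau\colon\mathcal H_z\xrightarrow{\ \sim\ }T_{\tau(z)}Q$. Both $J_{\mathrm{KE}}$ and $J_{\mathrm{NK}}$ are block-diagonal for $TZ=\mathcal V\oplus\mathcal H$; they agree on $\mathcal H$, acting there as the tautological complex structure $j_z$ on $\mathcal H_z\cong T_{\tau(z)}Q$, and they are opposite on $\mathcal V$, while the two metrics agree. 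Hence $\omega_{\mathcal V}:=\tfrac12(\omega_{\mathrm{KE}}-\omega_{\mathrm{NK}})$ vanishes on $\mathcal H$ and restricts on each $Z_x$ to its area form, and $\omega_{\mathcal H}:=\tfrac12(\omega_{\mathrm{KE}}+\omega_{\mathrm{NK}})$ vanishes on $\mathcal V$ and restricts on $\mathcal H_z$ to $\omega_{j_z}=g(j_z\,\cdot,\cdot)$. In particular, a submanifold is $\omega_{\mathrm{KE}}$- and $\omega_{\mathrm{NK}}$-Lagrangian if and only if $\omega_{\mathcal V}$ and $\omega_{\mathcal H}$ both vanish on it.

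\textbf{Part (a).} Assume $\Sigma$ connected, fix $z\in\Sigma$, $x=\tau(z)$. Since $\omega_{\mathcal V}$ depends only on vertical components, $\omega_{\mathcal V}|_\Sigma=0$ forces the vertical projection of $T_z\Sigma$ to be $\omega_{\mathcal V}$-isotropic, hence at most $1$-dimensional; as $\mathcal V_z\cap T_z\Sigma$ lies in this projection, $\dim(\mathcal V_z\cap T_z\Sigma)\le 1$, so $W_z:=d\tau(T_z\Sigma)\subseteq T_xQ$ has $\dim W_z=(2n+1)-\dim(\mathcal V_z\cap T_z\Sigma)\ge 2n$. On the other hand $\omega_{\mathcal H}|_\Sigma=0$ forces $W_z$ to be isotropic for the symplectic form $\omega_{j_z}$ on $T_xQ^{4n}$, so $\dim W_z\le 2n$. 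Therefore $\dim W_z=2n$, $W_z$ is $\omega_{j_z}$-Lagrangian, and $\dim(\mathcal V_z\cap T_z\Sigma)=1$ at every $z\in\Sigma$. Thus $\tau|_\Sigma$ has constant rank $2n$; by the rank theorem together with compactness, $U:=\tau(\Sigma)$ is a $2n$-dimensional (orbifold) submanifold, $\tau|_\Sigma\colon\Sigma\to U$ is a submersion with compact $1$-dimensional fibres inside the $Z_x$, and $W_z=T_{\tau(z)}U$. So $T_xU$ is $\omega_j$-Lagrangian for every $j$ in the $1$-dimensional set $\Sigma\cap Z_x$. Now for any $2n$-plane $V\subseteq T_xQ$ the set $L_V:=\{j\in Z_x:\omega_j|_V=0\}$ is $Z_x\cong S^2$ intersected with a linear subspace of the $3$-plane $\mathcal G_x$, and it is never all of $S^2$: otherwise $IV=JV=KV=V^\perp$ by dimension, whence $IV=JV$ gives $V=-IJV=-KV=V^\perp$, impossible since $g$ is positive-definite. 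Since $\Sigma\cap Z_x\subseteq L_V$ is $1$-dimensional, $L_V$ is a great circle with pole $j_0$ (up to sign), and $\Sigma\cap Z_x$, being open and closed in this circle, equals it. Finally $\omega_{j_0}|_V$ is non-degenerate while $JV=KV=V^\perp$ for generators $J,K\perp j_0$, which by a dimension count forces $j_0V=V$; hence $U$ is totally complex with distinguished section $j_0$ and $\Sigma\cap Z_x=L_V=\mathcal L(U)|_x$, i.e. $\Sigma=\mathcal L(U)$. When $n=1$, ``totally complex surface in $Q^4$'' coincides with ``superminimal surface'' by $\S$\ref{subsec:TotallyComplex}.

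\textbf{Part (b).} Conversely, let $U^{2n}\subseteq Q$ be totally complex for $n\ge 2$, or a superminimal surface for $n=1$ (which is then totally complex by $\S$\ref{subsec:TotallyComplex}), and choose a local frame $(I,J,K)$ of $\mathcal G|_U$ with $I$ the distinguished section, so $I(TU)=TU$ and $J(TU),K(TU)\perp TU$. Over $x\in U$, $\mathcal L(U)|_x=\{v_2J+v_3K:v_2^2+v_3^2=1\}$ is a great circle in $Z_x$, so $\mathcal L(U)$ is a smooth circle orbibundle over $U$ of dimension $2n+1=\tfrac12\dim Z$; at $z=(x,j)$ its tangent space is the sum of the $1$-dimensional tangent to the great circle (vertical) and the $\mathcal H$-lift of $T_xU$. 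Hence $\omega_{\mathcal V}|_{\mathcal L(U)}=0$, since its vertical part is $1$-dimensional, and $\omega_{\mathcal H}|_{\mathcal L(U)}$ equals $\omega_j|_{T_xU}=v_2\,\omega_J|_{T_xU}+v_3\,\omega_K|_{T_xU}=0$. By the discussion above, $\mathcal L(U)$ is therefore $\omega_{\mathrm{KE}}$- and $\omega_{\mathrm{NK}}$-Lagrangian, which proves (b) and also gives the dimension count needed in (a).

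\textbf{Main difficulty.} The genuinely delicate point is in part (a): upgrading the pointwise/constant-rank data to the global assertion $\Sigma=\mathcal L(U)$ — i.e.\ checking that $U=\tau(\Sigma)$ is a bona fide (possibly immersed orbifold) submanifold, that the pole section $j_0$ of $\mathcal G|_U$ is smooth, and that $\Sigma\cap Z_x$ is the \emph{whole} great circle (here compactness of $\Sigma$ is essential) — and, separately, matching the $n=1$ case with superminimality via the correspondence of $\S$\ref{subsec:TotallyComplex}. The two dimension counts and the identity $IV=JV=KV=V^\perp\Rightarrow V=V^\perp$ are short, so the structure of the argument is: (1) split the Lagrangian conditions into $\omega_{\mathcal V}$ and $\omega_{\mathcal H}$ pieces; (2) run the dimension count to get constant rank $2n$ and $\omega_{j_z}$-Lagrangian horizontal fibres; (3) use the linear-algebra lemma to pin down the vertical fibres as great circles and $T_xU$ as totally complex; (4) for the converse, compute directly on $\mathcal L(U)$.
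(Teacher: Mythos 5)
Your overall strategy is reasonable, and your pointwise linear algebra at the end of part (a) (the set $L_V=\{j\in Z_x:\omega_j|_V=0\}$ is a proper intersection of $S^2$ with a linear subspace, hence when it contains a $1$-dimensional set it is a great circle with pole $j_0$, and then $j_0V=V$) is correct and is in fact a cleaner pointwise argument than the paper's, which instead passes through $p_1$-horizontal lifts to $M$ and the CR-Legendrian structure (Corollary \ref{cor:DoubleLagrangian-CRLift}, Theorem \ref{thm:ProjCRIsoCircleBundle}). The genuine gap is the global step you dispatch with ``by the rank theorem together with compactness'': constant rank plus compactness does \emph{not} give that $U=\tau(\Sigma)$ is a submanifold, that $\tau|_\Sigma$ is a fibration onto it, or that $d\tau(T_z\Sigma)$ is the \emph{same} $2n$-plane for every $z$ in a fibre. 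You need that last point before you may say the whole fibre $\Sigma\cap Z_x$ lies in the single circle $L_V$, and you need to rule out a fibre consisting of several circles (possibly with different associated planes $W_z$). This is exactly what the paper's Lemma \ref{lem:LocallyCircleBundle} is for: using Proposition \ref{prop:LiftingDoubleIsotropic}, local $p_1$-horizontal lifts of $\Sigma$ to $M$ are tangent to a fixed Reeb field $c_\theta A_2+s_\theta A_3$, so the fibres of $\tau|_\Sigma$ are geodesic circles, hence \emph{great} circles of the twistor sphere; since any two great circles in $S^2$ intersect and $\Sigma$ is embedded, each fibre is a single great circle; and a quotient-map argument then gives $U$ its embedded smooth structure (and, with it, smoothness of the pole section $j_0$). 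You list these items as the ``main difficulty,'' but the proof itself only asserts them, so as written part (a) is incomplete at precisely the point where compactness and the $3$-Sasakian lift do real work.

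Two further points. First, in part (b) the assertion that $T_z\mathcal{L}(U)$ is spanned by the tangent to the great circle and the $\mathsf{H}$-lift of $T_xU$ is exactly where the hypothesis must be used: it requires $\nabla_X i=0$ for the distinguished section $i$ (equivalent to totally-complex when $n\geq 2$ and to superminimality when $n=1$, Remark \ref{rmk:AM-results}); otherwise curves in $\mathcal{L}(U)$ projecting to curves in $U$ acquire a vertical component in the $i$-direction and HV-compatibility fails. The paper proves this splitting by showing the rank-two subbundle $(\R i)^\perp\subset E|_U$ is parallel (Theorem \ref{prop:NecessaryCircleBundle}); you should either reproduce that or cite it rather than assert the splitting. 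Second, your closing claim for $n=1$ that ``totally complex surface in $Q^4$ coincides with superminimal surface'' is false: by Remark \ref{rmk:AM-results} \emph{every} oriented surface in $Q^4$ is totally complex, and superminimality is the strictly stronger condition $\nabla i=0$; the paper obtains superminimality in this case by invoking Storm's theorem, and your argument as written does not. (A minor normalization issue: the paper has $g_{\mathrm{NK}}=2g_{\mathsf H}+g_{\mathsf V}$ and $\omega_{\mathrm{NK}}=2\omega_{\mathsf H}-\omega_{\mathsf V}$, not ``the metrics agree''; this does not affect the equivalence of the two Lagrangian conditions with $\omega_{\mathsf H}|_\Sigma=\omega_{\mathsf V}|_\Sigma=0$, which is Proposition \ref{prop:HV-KE-Iso}, but it should be stated correctly.)
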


\indent As another example, in $\S$\ref{sec:CharCplxLagCones}, we provide several characterizations of complex isotropic cones in hyperk\"{a}hler cones $C^{4n+4}$ in terms of submanifold geometries in $M$, $Z$, and $Q$.  In particular, we prove the following theorem, generalizing a result of Ejiri--Tsukada \cite{ET-2012} on complex isotropic cones of top dimension $2n+2$ in $C = \HH^{n+1}$.

\begin{thm} \label{thm:Main3} Recall diagram~\eqref{diamond-diag}. Let $L^{2k+1} \subset M^{4n+3}$ be a compact submanifold, where $3 \leq 2k+1 \leq 2n+1$.  The following conditions are equivalent:
\begin{enumerate}[(1)]
\item The cone $\mathrm{C}(L)$ is complex isotropic with respect to $\cos(\theta) I_2 + \sin(\theta) I_3$ for some $e^{i \theta} \in S^1$.
\item The link $L$ is locally of the form $p_{(0, \cos(\theta), \sin(\theta))}^{-1}(\widetilde{U})$ for some totally-complex submanifold $U^{2k} \subset Q$ (resp., superminimal surface if $n = 1$) and some $e^{i\theta} \in S^1$.
\item The link $L$ is locally a $p_{(1,0,0)}$-horizontal lift of $\mathcal{L}(U) \subset Z$ for some totally-complex submanifold $U^{2k} \subset Q^{4n}$ (resp., superminimal surface $U^2 \subset Q^4$ if $n = 1$).
\end{enumerate}
\end{thm}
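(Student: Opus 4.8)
The plan is to convert condition~$(1)$, which is a statement about the cone $\mathrm{C}(L)\subset C$, into an intrinsic condition on $L\subset M$ using the hyperk\"ahler-cone/$3$-Sasakian dictionary of $\S$\ref{sec:CalGeoHK}--$\S$\ref{sec:CalGeo3Sas}, and then to read that condition through the two twistor fibrations of $M$ using the submanifold correspondences of $\S$\ref{sec:SubmanifoldsMZ}--$\S$\ref{subsec:TotallyComplex}. Write $I_\theta=\cos(\theta)I_2+\sin(\theta)I_3$ and $v_\theta=(0,\cos\theta,\sin\theta)\in S^2$; the structural fact I would use repeatedly is that $I_\theta$ anticommutes with $I_1$, so $v_\theta\perp(1,0,0)$ for every $\theta$. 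Thus the $A_1$-Reeb foliation, with quotient $p_{(1,0,0)}\colon M\to Z$, is ``transverse'' to the $I_\theta$-geometry, while the $A_{v_\theta}$-Reeb foliation, with quotient $p_{v_\theta}\colon M\to Z$, is ``tangent'' to it. I would then establish the cycle $(1)\Rightarrow(3)\Rightarrow(2)\Rightarrow(1)$.

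For $(1)\Leftrightarrow(3)$: by the ``complex isotropic $\leftrightarrow$ CR isotropic'' row of the table (proved in $\S$\ref{sec:CalGeo3Sas}), $\mathrm{C}(L)$ being complex isotropic with respect to $I_\theta$ is equivalent to a first-order condition on $L$, which I would unpack using $p=p_{(1,0,0)}$. On a cone, $\omega_1=\tfrac12 d(r^2\eta_1)$, so $\omega_1$-isotropy of $\mathrm{C}(L)$ forces $\eta_1|_L=0$ (i.e.\ $L\subset\ker\eta_1$ is $p$-horizontal) together with $d\eta_1|_L=0$; hence $L$ is a local $p$-horizontal lift of $\Sigma^{2k+1}:=p(L)$ and $\Sigma$ is isotropic for the curvature form of $p$. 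The remaining conditions --- $I_\theta$-complexity of $\mathrm{C}(L)$ and isotropy with respect to the K\"ahler form of $-\sin(\theta)I_2+\cos(\theta)I_3$ --- descend, through the known relation between $\omega_1,\omega_2,\omega_3$ on $M$ and $\omega_{\mathrm{KE}},\omega_{\mathrm{NK}}$ on $Z$, to the statement that $\Sigma$ is simultaneously $\omega_{\mathrm{KE}}$-isotropic and $\omega_{\mathrm{NK}}$-isotropic. I would then invoke the structure theorem for such $\Sigma$ --- the generalization of Storm's theorem \cite{Storm} (and of the Lagrangian statement displayed above) to the isotropic range $3\le 2k+1\le 2n+1$ --- to conclude $\Sigma=\mathcal{L}(U)$ for a totally-complex $U^{2k}\subset Q^{4n}$, respectively a superminimal surface if $n=1$. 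Every step is reversible, which also gives $(3)\Rightarrow(1)$.

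For $(3)\Leftrightarrow(2)$: this is internal to the twistor geometry of $M$ and should follow from the two-fibration correspondence of $\S$\ref{sec:SubmanifoldsMZ}. For totally-complex $U$, over each $x\in U$ the twistor $2$-sphere $\tau^{-1}(x)\cong S^2$ contains the complex structure $j_0$ with $j_0(T_xU)=T_xU$, which singles out the direction $v_\theta=\pm j_0$, hence the angle $\theta$ and the fibration $p_{v_\theta}$; within that $2$-sphere one checks that $\widetilde U|_x$ is the (local) sheet $\{j_0\}$ while $\mathcal{L}(U)|_x=\{j\colon j\perp j_0\}$ is the complementary great circle. Pulling back along $h\colon M\to Q$, each fibre $h^{-1}(x)\cong S^3/\Gamma$ carries the two Hopf fibrations induced by $p_{v_\theta}$ and $p_{(1,0,0)}$, and the claim reduces to the fibrewise identity: the $p_{v_\theta}$-fibre over $j_0$ is a $p_{(1,0,0)}$-horizontal curve double-covering $\mathcal{L}(U)|_x$ (since the Hopf holonomy around a great circle is $-1$), hence one of the $\U(1)$-family of $p_{(1,0,0)}$-horizontal lifts of $\mathcal{L}(U)|_x$; that residual $\U(1)$-freedom matches the freedom in $\theta$ on the $(2)$-side. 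Since both constructions assemble, fibrewise over $U$, to codimension-$2$ submanifolds of $h^{-1}(U)$, the fibrewise identity finishes it.

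The step I expect to be the main obstacle is the isotropic generalization of Storm's theorem used in $(1)\Leftrightarrow(3)$: Storm's original result and the Lagrangian statement displayed above concern only the top-dimensional case $2k+1=2n+1$, and pushing the conclusion ``$\Sigma=\mathcal{L}(U)$'' down to genuinely isotropic $\Sigma^{2k+1}$ with $2k+1<2n+1$ --- together with the $n=1$ subtlety, where ``totally-complex surface $U^2\subset Q^4$'' must be replaced by ``superminimal surface'' because the twistor fibre geometry of a four-dimensional $Q$ degenerates --- is where the real content lies; I expect this to be carried out among the propositions of $\S$\ref{sec:SubmanifoldsMZ}--$\S$\ref{subsec:TotallyComplex} that the proof would cite. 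The remaining difficulties are bookkeeping: tracking the word ``locally'' (horizontal and twistor lifts exist only locally, and $Z,Q$ are orbifolds with possibly singular fibres), and matching which of $\omega_{\mathrm{KE}},\omega_{\mathrm{NK}}$ absorbs which of $\omega_1,\omega_2,\omega_3$ under $M\to Z$ --- lengthy but routine given $\S$\ref{sec:CalGeoHK}--$\S$\ref{subsec:TotallyComplex}.
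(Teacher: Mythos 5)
The weak point is your $(1)\Rightarrow(3)$ descent. After projecting to $Z$ you record only that $\Sigma := p_1(L)$ is $\omega_{\mathrm{KE}}$-isotropic and $\omega_{\mathrm{NK}}$-isotropic, and then invoke an ``isotropic-range generalization of Storm's theorem'': compact doubly isotropic $\Sigma^{2k+1}$ $\Rightarrow$ $\Sigma = \mathcal{L}(U)$. But the two isotropy conditions cannot detect the $(c_\theta I_2 + s_\theta I_3)$-complex (CR) part of hypothesis (1) at all: by Proposition~\ref{prop:CRIsoProjection}(a) they already follow from $\alpha_1$- and $(-s_\theta\alpha_2 + c_\theta\alpha_3)$-isotropy of $L$ alone. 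What the CR condition adds --- and what the structure theorem actually proved in the paper (Theorem~\ref{thm:ProjCRIsoCircleBundle}) requires --- is that $\Sigma$ is HV-compatible with $\dim(T_z\Sigma \cap \mathsf{V}) = 1$ and with $T_z\Sigma \cap \mathsf{H}$ invariant under $J_V$ (Proposition~\ref{prop:CRIsoProjection}(b)); these conditions are then fed into Lemma~\ref{lem:LocallyCircleBundle}. In sub-maximal dimensions double isotropy does not force HV-compatibility --- Proposition~\ref{prop:HV-KE-Iso}(b) is specifically the Lagrangian (top-dimensional) case --- and indeed any $\Sigma^{2k+1}$ tangent to $\mathsf{H}$ with $\omega_{\mathsf{H}}|_\Sigma = 0$ is doubly isotropic yet has no vertical tangent directions, so no statement of the form ``doubly isotropic $\Rightarrow$ circle bundle lift'' can hold for $2k+1 < 2n+1$. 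The theorem you plan to cite is therefore not the one available in the paper, and as you state it the step would fail.

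The same loss of information undercuts your claim that ``every step is reversible'': from $\Sigma = \mathcal{L}(U)$, double isotropy alone yields (via Proposition~\ref{prop:LiftingDoubleIsotropic}(a), which itself needs HV-compatibility) only that a $p_1$-horizontal lift is $\alpha_1$- and $(-s_\theta\alpha_2 + c_\theta\alpha_3)$-isotropic, not that it is $(c_\theta I_2 + s_\theta I_3)$-CR; recovering CR-ness requires the $J_V$-invariance of $T_z\Sigma\cap\mathsf{H}$, i.e.\ Theorem~\ref{prop:NecessaryCircleBundle}(b) combined with Proposition~\ref{prop:LiftingDoubleIsotropic}(b), which is exactly Corollary~\ref{cor:LiftOfCircleBundle}. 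For comparison, the paper routes both (2) and (3) through the CR-isotropic reformulation of (1) (Proposition~\ref{prop:ConeComplexLag}): the equivalence with (2) is purely local, via Corollary~\ref{cor:CRIsoChar1} with the roles of the complex structures rotated and Theorem~\ref{prop:HorzLiftTotCplx}, while compactness enters only in the equivalence with (3), via Proposition~\ref{prop:CRIsoProjection}, Lemma~\ref{lem:LocallyCircleBundle}, Theorem~\ref{thm:ProjCRIsoCircleBundle}, and Corollary~\ref{cor:LiftOfCircleBundle}. Your fibrewise Hopf picture for $(3)\Leftrightarrow(2)$ is also incomplete --- it treats only the twistor-fibre directions and never verifies $\alpha_1$-horizontality or the CR condition in the directions along $U$ --- but the decisive gap is the discarded CR data described above.
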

A more detailed statement appears as Theorem \ref{thm:MainArbDim}.  Moreover, additional characterizations are available for complex isotropic cones $\mathrm{C}(L) \subset C$ of top dimension $2n+2$ and lowest dimension $4$: see Theorems \ref{thm:TopDimension} and \ref{thm:LowDimension}, respectively. \\
\indent Intuitively, Theorem \ref{thm:Main3} states that the link $L^{2k+1} \subset M$ of a complex isotropic cone in $C^{4n+4}$ can be manufactured from a totally-complex submanifold $U^{2k} \subset Q$ in two ways.  By (2), one can first consider its $\tau$-horizontal lift $\widetilde{U} \subset Z$, and then take the resulting $p_{(0, \cos(\theta),\sin(\theta))}$-circle bundle.  On the other hand, by (3), one could instead begin with the geodesic circle bundle lift $\mathcal{L}(U) \subset Z$, and then take a $p_{(1,0,0)}$-horizontal lift to $M$.  Thus, in a sense, the operations of ``circle bundle lift" and ``horizontal lift" commute with one another.

\indent Broadly speaking, Theorems~\ref{thm:Main2} and~\ref{thm:Main3} illustrate that a great variety of distinct classes of semi-calibrated submanifolds of a hyperk\"ahler cone, $3$-Sasakian manifold, or twistor space can only arise as particular constructions built from totally-complex submanifolds, which is not at all evident from their definitions. Consequently, such submanifolds are essentially as plentiful as totally-complex submanifolds. See Example~\ref{ex:TC} for some explicit totally-complex submanifolds.

\subsection{Organization and Conventions}

\indent \indent In $\S$\ref{sec:CalGeoHK}, we discuss several calibrated geometries in hyperk\"{a}hler manifolds $C^{4n+4}$, including the complex, special Lagrangian, complex isotropic, special isotropic, and Cayley submanifolds.  Then, starting in $\S$\ref{sec:CalGeo3Sas}, we assume that $C = \Cone(M)$ is a hyperk\"{a}hler cone over a $3$-Sasakian manifold $M^{4n+3}$.  We spend $\S$\ref{sec:3SasLink} reviewing $3$-Sasakian geometry, turning to the submanifold theory of $M$ in $\S$\ref{sec:SubmanifoldsSE}-\ref{sec:Submanifolds3Sas}.  In $\S$\ref{sec:3SasCircleBundle}, we introduce a complex $3$-form $\Gamma_1 \in \Omega^3(M;\C)$ and prove that it descends via $p_{(1,0,0)} \colon M \to Z$ to a $3$-form $\gamma \in \Omega^3(Z;\C)$ on the twistor space. \\
\indent Section 4 concerns submanifold theory in twistor spaces.  After discussing $\Sp(n)\U(1)$-structures on arbitrary $(4n+2)$-manifolds in $\S$\ref{sub:Sp(n)U(1)}, we show in $\S$\ref{sub:GeomTwistor} that the $3$-form $\gamma \in \Omega^3(Z;\C)$ defines such a structure on the twistor space.  Then, in $\S$\ref{sec:Re(gamma)3folds}-\ref{sec:SubmanifoldsMZ}, we study various classes of submanifolds of $Z$, establishing a series of relationships between those in $Z$ and those in $M$. \\
\indent In $\S$\ref{subsec:TotallyComplex}, we consider totally-complex submanifolds of quaternionic-K\"{a}hler manifolds $Q$, and relate them to submanifold geometries in $M$ and $Z$.  Finally, in $\S$\ref{sec:CharCplxLagCones}, we provide several characterizations of complex isotropic cones in $C$.  This paper also includes two appendices: Appendix~\ref{appendix:calib} collects some results on the linear algebra of calibrations that we use, and Appendix~\ref{appendix:cones} gives a brief introduction to metric cones and their associated conical differential forms.\\

\noindent \textbf{Notation and conventions.}
\begin{itemize} \setlength\itemsep{-1mm}
\item We often use $c_{\theta}, s_{\theta}$ to denote $\cos \theta, \sin \theta$, respectively, for brevity.
\item Repeated indices are summed over all of their allowed values unless explicitly stated otherwise. The symbol $\epsilon_{pqr}$ is the permutation symbol on three letters, so it vanishes if any two indices are equal, and it equals $\mathrm{sgn}(\sigma)$ if $p, q, r = \sigma(1), \sigma(2), \sigma(3)$.
\item A superscript on a manifold always denotes its \emph{real} dimension.
\item For a manifold $M$, we use $\mathrm{C}(M) = \R^+ \times M$ with metric $dr^2 + r^2 g_M$ to denote the metric cone over $M$, as discussed in Appendix~\ref{appendix:cones}.
\item If $L$ is a submanifold of $M$, then $NL$ denotes its normal bundle.  Submanifolds are assumed to be embedded. (Much of what we discuss works for immersed submanifolds, but not everything. See also Remark~\ref{rmk:embedded}.) Unless stated otherwise, all submanifolds are assumed to be connected and orientable, and thus have exactly two orientations.
\item We use interchangeably the terms \emph{semi-calibration} and \emph{comass one}. That is, a differential form $\alpha$ is a \emph{calibration} if it is a semi-calibration that satisfies $d\alpha = 0$.
\item The twistor space $Z^{4n+2}$ and the quaternionic-K\"ahler $Q^{4n}$ are in general \emph{orbifolds}. However, we avoid technical complications and work only over the \emph{smooth} parts of $Z$ and $Q$. That is, all submanifolds are assumed to not pass through any orbifold points of $Z$ or $Q$.
\end{itemize}

\noindent \textbf{Acknowledgements:} The research of the second author is supported by an NSERC Discovery Grant. The second author thanks Luc\'ia Mart\'in-Merch\'an for useful discussions. The third author thanks Laura Fredrickson, McKenzie Wang, and Micah Warren for conversations. All three authors are grateful to the referee for helpful suggestions that improved an earlier draft of this article.

\section{Calibrated Geometry in Hyperk\"{a}hler Manifolds} \label{sec:CalGeoHK}

\indent \indent Let $C^{4n+4}$ be a hyperk\"{a}hler manifold with $n \geq 1$.  The hyperk\"{a}hler structure on $C$ consists of the following data:
\begin{itemize}
\item A Riemannian metric $g_C$.
\item A triple of integrable almost-complex structures $(I_1, I_2, I_3) = (I,J,K)$ satisfying the quaternionic relations $I_1 I_2 = I_3$, etc, each of which is orthogonal with respect to $g_C$.
\item A triple of closed $2$-forms $(\omega_1, \omega_2, \omega_3)$ given by $\omega_p(X,Y) = g_C(I_p X, Y)$.
\end{itemize}
Note that $\omega_p$ is a K\"ahler form with respect to $I_p$, so in particular it is of type $(1,1)$ with respect to $I_p$. This means that $\omega_p (I_p X, I_p Y) = \omega(X, Y)$ and thus $g_C (X, Y) = \omega_p (X, I_p Y)$. We also have
\begin{equation} \label{eq:HK-relations}
\omega_p (I_q X, Y) = g_C (I_p I_q X, Y) = \epsilon_{pqr} g_C (I_r X, Y) = \epsilon_{pqr} \omega_r (X, Y).
\end{equation}
In fact, we have an $S^2$-family of K\"ahler structures: for any $v = (v_1, v_2, v_3) \in S^2$, we can take $I_v = \sum_{p=1}^3 v_p I_p$ and $\omega_v (X, Y) = g_C(I_v X, Y)$.

One can show that $C$ inherits a triple of complex-symplectic forms $\sigma_1, \sigma_2, \sigma_3 \in \Omega^2(C; \C)$ via
\begin{align*}
\sigma_1 & := \omega_2 + i\omega_3, & \sigma_2 & := \omega_3 + i\omega_1, & \sigma_3 & := \omega_1 + i\omega_2.
\end{align*}
A calculation shows that $\sigma_1$ is of $I_1$-type $(2,0)$, and analogously for $\sigma_2, \sigma_3$. It follows that each $\sigma_p$ is a \emph{holomorphic} symplectic form with respect to $I_p$.

Further, $C$ inherits the following triple of $(2n+2)$-forms $\Upsilon_1, \Upsilon_2, \Upsilon_3$:
\begin{align*}
\Upsilon_1 & = \frac{1}{(n+1)!}\sigma_1^{n+1},  & \Upsilon_2 & = \frac{1}{(n+1)!}\sigma_2^{n+1},  & \Upsilon_3 & = \frac{1}{(n+1)!}\sigma_3^{n+1}.
\end{align*}
Each $\Upsilon_p$ is a \emph{holomorphic volume form} with respect to $I_p$, so that $(g_C, I_p, \omega_p, \Upsilon_p)$ is a Calabi-Yau structure on $C$.  More generally, fixing $I_1$ as a reference, by considering the holomorphic volume form $e^{i (n+1)\theta} \Upsilon_1 = \frac{1}{(n+1)!} (e^{i \theta} \sigma_1)^{n+1}$, we obtain an $S^1$-family of Calabi-Yau structures with respect to $I_1$. Since $e^{i \theta} \sigma_1 = (c_\theta \omega_2 - s_\theta \omega_3) + i (s_\theta \omega_2 + c_\theta \omega_3)$, this $S^1$-family corresponds to rotating the orthogonal pair $I_2, I_3$ by $\theta$ in the equator of $S^2$ determined by the poles $\pm I_1$.

Finally, $C$ also admits a quaternionic-K\"{a}hler structure via the real $4$-form
\begin{align*}
\Lambda = \frac{1}{6}\omega_1^2 + \frac{1}{6}\omega_2^2  + \frac{1}{6}\omega_3^2.
\end{align*}
(See Definition~\ref{defn:QK} for our definition of quaternionic-K\"ahler.)

In this section, we recall various classes of distinguished submanifolds of $C$.  Some of these classes --- e.g., the complex, Lagrangian, special Lagrangian, and quaternionic --- arise from a Calabi-Yau or quaternionic-K\"{a}hler structure.  Others arise from a complex-symplectic structure, or are otherwise special to the hyperk\"{a}hler setting.

\subsection{Submanifolds via the Calabi-Yau and QK Structures} \label{subsec:SubViaCY}

\indent \indent Recall that every hyperk\"{a}hler manifold is a K\"ahler manifold in an $S^2$-family of ways, and given such a choice, it is a Calabi-Yau manifold in an $S^1$-family of ways. Due to these structures, we may consider the following classes of submanifolds: 

\begin{defn}
A submanifold $N^{2k} \subset C^{4n+4}$ is \emph{$I_1$-complex} if
$$\left. \frac{1}{k!}\omega_1^k\right|_N = \vol_N.$$
That is, if it is calibrated with respect to $\frac{1}{k!}\omega_1^k$.

It is \emph{$I_1$-anti-complex}, or $-I_1$-complex, if it is calibrated with respect to $-\frac{1}{k!}\omega_1^k$. Equivalently, if it is $I_1$-complex when equipped with the opposite orientation.

A submanifold is $\pm I_1$-complex if and only if its tangent spaces are $I_1$-invariant:
$$I_1(T_xN) = T_xN, \ \ \forall x \in N.$$
The definitions of $I_2$-complex and $I_3$-complex are analogous.
\end{defn}

\begin{defn}
A submanifold $N \subset C^{4n+4}$ is \emph{$\omega_1$-isotropic} if
$$\left.\omega_1\right|_N = 0.$$
An $\omega_1$-isotropic submanifold satisfies $\dim(N) \leq 2n+2$.  An \emph{$\omega_1$-Lagrangian} submanifold is an $\omega_1$-isotropic submanifold of maximal dimension $2n+2$.
\end{defn}

Let $X, Y \in TL$. Since $\omega_1(X, Y) = g(I_1 X, Y)$, we see that $L$ is $\omega_1$-isotropic if and only if $I_1 (TL) \subseteq NL$. If $N$ has dimension $2n+2$, then $I_1 (TL) = NL$ if and only if $L$ is $\omega_1$-Lagrangian. We use these facts repeatedly.

\begin{defn} Fix $\theta \in [0,2\pi)$.  A $(2n+2)$-dimensional submanifold $N^{2n+2} \subset C^{4n+4}$ is called \emph{$\Upsilon_1$-special Lagrangian of phase $e^{i \theta}$} if
$$\left.\text{Re}(e^{-i\theta}\Upsilon_1)\right|_N = \vol_N.$$
Equivalently~\cite[Corollary 1.11]{Harvey-Lawson}, there exists an orientation on $N^{2n+2}$ making it $\Upsilon_1$-special Lagrangian of phase $e^{i \theta}$ if and only if
\begin{align*}
\left.\text{Im}(e^{-i \theta}\Upsilon_1)\right|_N & = 0, & \left.\omega_1\right|_N & = 0.
\end{align*}
When the phase is left unspecified, we assume it to be $e^{i \theta} = 1$.
\end{defn}

\begin{rmk}
Every hyperk\"{a}hler manifold is also quaternionic-K\"{a}hler, and such manifolds admit a distinguished class of \emph{quaternionic submanifolds}. However, Gray~\cite{Gray} proved that such submanifolds are always totally geodesic. We will not consider quaternionic submanifolds in this paper.
\end{rmk}

\subsection{Submanifolds via the Hyperk\"{a}hler Structure} \label{subsec:SubViaHK}

\indent \indent In addition to the submanifolds discussed above, hyperk\"{a}hler manifolds also admit three more notable classes of submanifolds: the complex isotropic, special isotropic, and generalized Cayley submanifolds.  We discuss each of these in turn.

\subsubsection{Complex Isotropic Submanifolds}

\begin{defn}
A $2k$-dimensional submanifold $L^{2k} \subset C^{4n+4}$ is called \emph{$I_1$-complex isotropic} if it is both $I_1$-complex and $\sigma_1$-isotropic.  That is, if:
\begin{align*}
\left.\frac{1}{k!}\omega_1^k\right|_L & = \vol_L, & \left.\sigma_1\right|_L & = 0.
\end{align*}
Said another way, $L$ is $I_1$-complex, $\omega_2$-isotropic, and $\omega_3$-isotropic:
\begin{align*}
\left.\frac{1}{k!}\omega_1^k\right|_L & = \vol_L, & \left.\omega_2\right|_L & = 0, & \left.\omega_3\right|_L & = 0.
\end{align*}
\indent An \emph{$I_1$-complex Lagrangian} submanifold $L^{2n+2} \subset C^{4n+4}$ is an $I_1$-complex isotropic submanifold of maximal dimension $2n+2$.  That is, an $I_1$-complex Lagrangian submanifold is simultaneously $I_1$-complex, $\omega_2$-Lagrangian, and $\omega_3$-Lagrangian.  The definitions of $I_2$- and $I_3$-complex isotropic (resp., complex Lagrangian) are analogous.
\end{defn}

\indent Complex isotropic submanifolds are interesting from several points of view.  For example, in algebraic geometry, one often considers holomorphic symplectic manifolds that are fibered by complex Lagrangians, as in Sawon~\cite{Sawon}.  As another example, Doan--Rezchikov~\cite{Doan-Rezchikov} use complex Lagrangians as part of a hyperk\"{a}hler Floer theory.  In the differential geometry literature, complex isotropic submanifolds have been studied by, for example, Bryant--Harvey~\cite{Bryant-Harvey}, Hitchin~\cite{Hitchin-CL}, and Grantcharov--Verbitsky~\cite{Grantcharov-Verbitsky}.

\begin{prop} \label{prop:CplxIsoEquiv} Let $L^{2k} \subset C^{4n+4}$ be a $2k$-dimensional submanifold.  The following are equivalent:
\begin{enumerate}[(i)]
\item $L$ is $I_1$-complex, $\omega_2$-isotropic, and $\omega_3$-isotropic.
\item $L$ is $I_1$-complex and $\omega_2$-isotropic.
\end{enumerate}
\end{prop}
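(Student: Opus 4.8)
The plan is to treat the two implications separately, though only one carries content. The implication (i) $\Rightarrow$ (ii) is immediate, since (ii) merely drops the requirement $\omega_3|_L = 0$ from (i). So the real task is (ii) $\Rightarrow$ (i): assuming that $L$ is $I_1$-complex and $\omega_2$-isotropic, I must show $\omega_3|_L = 0$.

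For this, the only input needed is the pointwise identity \eqref{eq:HK-relations}. Specializing it to $p = 2$, $q = 1$ (so that the sum over $r$ collapses to the single term $r = 3$) gives, for all tangent vectors $X, Y$ on $C$,
\[
\omega_2(I_1 X, Y) = \epsilon_{213}\,\omega_3(X,Y) = -\,\omega_3(X,Y).
\]
Now fix $x \in L$ and $X, Y \in T_xL$. Since $L$ is $I_1$-complex, it is in particular $\pm I_1$-complex, so its tangent spaces are $I_1$-invariant; hence $I_1 X \in T_xL$. Since $L$ is $\omega_2$-isotropic, $\omega_2$ vanishes on pairs of vectors in $T_xL$, so $\omega_2(I_1 X, Y) = 0$. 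The displayed identity then forces $\omega_3(X,Y) = 0$, and as $X, Y \in T_xL$ and $x \in L$ were arbitrary, $\omega_3|_L = 0$. Together with the hypotheses, this is precisely (i).

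There is essentially no obstacle here; the computation \emph{is} the proof. The only points requiring care are (a) recording the correct sign $\epsilon_{213} = -1$, and (b) observing that $I_1$-complexity of $L$ means exactly $I_1$-invariance of its tangent spaces (as in the definition of $I_1$-complex), which is what allows one to feed $I_1 X$ back into the $\omega_2$-isotropy hypothesis. One could phrase the same argument complex-analytically --- $\sigma_1 = \omega_2 + i\omega_3$ is of $I_1$-type $(2,0)$, its restriction to the $I_1$-invariant subspace $T_xL$ is again of type $(2,0)$, and a $(2,0)$-form whose real part vanishes on a complex vector space vanishes identically --- but this reformulation is just \eqref{eq:HK-relations} in disguise. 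By the symmetry of the setup, the analogous equivalences with $(\omega_2,\omega_3)$ interchanged, or with $I_1$ replaced by $I_2$ or $I_3$, follow in the same way.
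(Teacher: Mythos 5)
Your proof is correct and is essentially the paper's own argument: both hinge on the quaternionic relation $I_2 I_1 = -I_3$ together with the two hypotheses, the paper phrasing it as $I_2(I_1X) = -I_3X \in NL$ while you phrase it through the equivalent identity $\omega_2(I_1X,Y) = -\omega_3(X,Y)$ from \eqref{eq:HK-relations}. The sign bookkeeping ($\epsilon_{213}=-1$) and the use of $I_1$-invariance of $T_xL$ are handled correctly, so nothing further is needed.
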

\begin{proof}
One direction is immediate.  For the converse, suppose $L$ is $I_1$-complex and $\omega_2$-isotropic. Let $X \in TL$, so that $I_1X \in TL$, and thus $- I_3 X = I_2(I_1X) \in NL$. Hence $I_3X \in NL$. This shows that $L$ is $\omega_3$-isotropic.
\end{proof}

\indent In the complex Lagrangian case, we can say more:

\begin{prop} \label{prop:CplxLagEquiv} Let $L^{2n+2} \subset C^{4n+4}$ be a $(2n+2)$-dimensional submanifold.  The following are equivalent:
\begin{enumerate}[(i)]
\item $L$ is $I_1$-complex, $\omega_2$-Lagrangian, and $\omega_3$-Lagrangian.
\item $L$ is $I_1$-complex and $\omega_2$-Lagrangian.
\item $L$ is $\omega_2$-Lagrangian and $\omega_3$-Lagrangian.
\item $L$ is $I_1$-complex, $\Upsilon_2$-special Lagrangian of phase $i^{n+1}$, and $\Upsilon_3$-special Lagrangian of phase $1$.
\end{enumerate}
\end{prop}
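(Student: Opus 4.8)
The plan is to prove the equivalence of the four conditions for a $(2n+2)$-dimensional submanifold $L \subset C^{4n+4}$ by establishing a cycle of implications, exploiting the fact that $L$ has exactly the maximal (``Lagrangian'') dimension so that isotropic subspaces are forced to be Lagrangian and $I_p(TL)$ is forced to equal $NL$. I would first observe that (i) $\Rightarrow$ (ii) is trivial, and that (ii) $\Rightarrow$ (i) follows from Proposition \ref{prop:CplxIsoEquiv} (or its proof): if $L$ is $I_1$-complex and $\omega_2$-isotropic then it is $\omega_3$-isotropic, and in the top dimension $2n+2$ an isotropic submanifold is automatically Lagrangian, so $L$ is $\omega_2$- and $\omega_3$-Lagrangian. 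Thus (i) $\Leftrightarrow$ (ii), and it remains to connect these to (iii) and (iv).

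The key new input is (iii) $\Rightarrow$ (i): assuming $L$ is simultaneously $\omega_2$-Lagrangian and $\omega_3$-Lagrangian, I want to deduce that $L$ is $I_1$-complex. Here I would use the characterization that $\omega_2$-Lagrangian means $I_2(TL) = NL$ and $\omega_3$-Lagrangian means $I_3(TL) = NL$. Then for $X \in TL$, we have $I_2 X \in NL$, hence $I_1 I_2 X = I_3 X$; but also $I_3 X \in NL$. I need to instead argue at the level of subspaces: $I_1(TL) = I_1(I_2(NL))$... more carefully, from $I_2(TL) = NL$ we get $TL = -I_2(NL) = I_2(NL)$ (since $I_2^2 = -1$ and $I_2$ preserves the splitting up to sign), and similarly $TL = I_3(NL)$. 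Applying $I_1$: $I_1(TL) = I_1 I_2 (NL) = I_3(NL) = TL$, using the quaternionic relation $I_1 I_2 = I_3$. Hence $TL$ is $I_1$-invariant, so $L$ is $\pm I_1$-complex; the orientation can then be fixed. This gives (iii) $\Rightarrow$ (i), and (i) $\Rightarrow$ (iii) is immediate since (i) already contains $\omega_2$- and $\omega_3$-Lagrangian.

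For (iv), I would invoke the special Lagrangian characterization stated in the excerpt (from \cite[Corollary 1.11]{Harvey-Lawson}): $L$ is $\Upsilon_2$-special Lagrangian of phase $i^{n+1}$ iff $\omega_2|_L = 0$ and $\mathrm{Im}(i^{-(n+1)}\Upsilon_2)|_L = 0$ for a suitable orientation, and similarly for $\Upsilon_3$ of phase $1$. Given (i), $L$ is $I_1$-complex and $\omega_2, \omega_3$-Lagrangian. The content is then to show that $I_1$-complex plus $\omega_2$-Lagrangian forces the correct phase for $\Upsilon_2 = \frac{1}{(n+1)!}\sigma_2^{n+1}$ with $\sigma_2 = \omega_3 + i\omega_1$, and likewise $\Upsilon_3 = \frac{1}{(n+1)!}\sigma_3^{n+1}$ with $\sigma_3 = \omega_1 + i\omega_2$. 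Since $\omega_2|_L = 0$ and $\omega_3|_L = 0$ on an $I_1$-complex Lagrangian (by the equivalence already proven), we get $\sigma_3|_L = \omega_1|_L + i\cdot 0 = \omega_1|_L$, which is real, and $\frac{1}{(n+1)!}\omega_1^{n+1}|_L = \vol_L$ by $I_1$-complexity; hence $\Upsilon_3|_L = \vol_L$, i.e.\ phase $1$. Similarly $\sigma_2|_L = \omega_3|_L + i\omega_1|_L = i\,\omega_1|_L$, so $\sigma_2^{n+1}|_L = i^{n+1}\omega_1^{n+1}|_L$, giving $\Upsilon_2|_L = i^{n+1}\vol_L$, i.e.\ phase $i^{n+1}$. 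This shows (i) $\Rightarrow$ (iv). Conversely, (iv) $\Rightarrow$ (i) needs only that $\Upsilon_2$-special Lagrangian implies $\omega_2$-Lagrangian (immediate from the definition) together with the explicitly assumed $I_1$-complexity in (iv), which is precisely (ii), already known equivalent to (i).

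The main obstacle is getting the phase bookkeeping in (iv) exactly right --- tracking which of $\omega_1, \omega_2, \omega_3$ survives restriction to $L$, the powers of $i$ coming from $\sigma_2 = \omega_3 + i\omega_1$ versus $\sigma_3 = \omega_1 + i\omega_2$, and verifying that the orientation needed for $I_1$-complexity is consistent with the orientations required in the two special Lagrangian conditions (rather than their orientation-reversals). Everything else is a short linear-algebra argument on a single tangent space using $I_1 I_2 = I_3$, $I_p^2 = -\mathrm{Id}$, the orthogonality of the $I_p$, and the dimension count forcing isotropic $=$ Lagrangian in dimension $2n+2$. I would present the proof as the cycle (i) $\Leftrightarrow$ (ii) (citing Proposition \ref{prop:CplxIsoEquiv} and the dimension observation), then (iii) $\Rightarrow$ (i) $\Rightarrow$ (iii), then (i) $\Leftrightarrow$ (iv) via the phase computation above.
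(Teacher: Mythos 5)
Your proposal is correct and follows essentially the same route as the paper: (i)$\iff$(ii) via Proposition \ref{prop:CplxIsoEquiv} and the top-dimension observation, (iii)$\implies$(i) by the quaternionic relation $I_1I_2=I_3$ applied to $TL$ and $NL$, and (i)$\iff$(iv) by restricting $\sigma_2,\sigma_3$ to $L$ so that only $\omega_1^{n+1}$ survives, which is exactly the paper's phase computation written with $(-i)^{n+1}\Upsilon_2$ and $\Upsilon_3$. The orientation and phase bookkeeping you flag works out just as you describe, so there is no gap.
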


\begin{proof}
The equivalence (i)$\iff$(ii) was observed above.  It is clear that (i)$\implies$(iii).  For (iii)$\implies$(i), suppose that $L$ is $\omega_2$- and $\omega_3$-Lagrangian.  Let $X \in TL$, so that $I_3X \in NL$, and thus $I_1X = I_2(I_3X) \in TL$. Hence $L$ is $I_1$-complex. \\
\indent It is clear that (iv)$\implies$(i).  For (i)$\implies$(iv), suppose that $L$ is $I_1$-complex, $\omega_2$-Lagrangian, and $\omega_3$-Lagrangian.  Then $L$ satisfies $\left.\frac{1}{(n+1)!}\omega_1^{n+1}\right|_L = \vol_L$ and $\left.\omega_2\right|_L = 0$ and $\left.\omega_3\right|_L = 0$.  Recalling that
\begin{align*}
(-i)^{n+1}\Upsilon_2 & =  \frac{1}{(n+1)!}(\omega_1 - i\omega_3)^{n+1}, & \Upsilon_3 & =  \frac{1}{(n+1)!}(\omega_1 + i\omega_2)^{n+1},
\end{align*}
we have 
\begin{align*}
\left.\text{Re}((-i)^{n+1} \Upsilon_2)\right|_L & = \left.\frac{1}{(n+1)!}\omega_1^{n+1}\right|_L = \vol_L, & \left.\text{Re}( \Upsilon_3)\right|_L = \left.\frac{1}{(n+1)!}\omega_1^{n+1}\right|_L = \vol_L.
\end{align*}
\end{proof}
\subsubsection{Special Isotropic Submanifolds}

\indent \indent The following definition is due to Bryant--Harvey~\cite{Bryant-Harvey}. We prove that these forms are calibrations in Theorem~\ref{thm:special-isotropic-comass-one} in the appendix.
\begin{defn}
The \emph{special isotropic forms} are the $2k$-forms $\Theta_{I,2k}, \Theta_{J,2k}, \Theta_{K,2k} \in \Omega^{2k}(C)$ defined by
\begin{align*}
\Theta_{I,2k} & = \frac{1}{k!}\text{Re}(\sigma_1^k), & \Theta_{J,2k} & = \frac{1}{k!}\text{Re}(\sigma_2^k), & \Theta_{K,2k} & = \frac{1}{k!}\text{Re}(\sigma_3^k).
\end{align*}
A $2k$-dimensional submanifold $N^{2k} \subset C^{4n+4}$ is \emph{$\Theta_{I,2k}$-special isotropic} if it is calibrated by $\Theta_{I,k}$:
$$\left.\Theta_{I,2k}\right|_N = \vol_N.$$
The definitions of $\Theta_{J,2k}$- and $\Theta_{K,2k}$-special isotropic $2k$-manifold are analogous.
\end{defn}
\noindent Let us highlight the cases $2k = 2, 4, 2n+2$.

\begin{example} ${}$
\begin{enumerate}[(a)]
\item For $2k = 2$, the special isotropic $2$-forms are
\begin{align*}
\Theta_{I,2} & = \omega_2, & \Theta_{J,2} & = \omega_3, & \Theta_{K,2} & = \omega_1.
\end{align*}
In particular, a $\Theta_{I,2}$-special isotropic $2$-fold is the same as an $I_2$-complex $2$-fold.
\item For $2k = 4$, the special isotropic $4$-forms are
\begin{align*}
\Theta_{I,4} & = \frac{1}{2}(\omega_2^2 - \omega_3^2), & \Theta_{J,4} & = \frac{1}{2}(\omega_3^2 - \omega_1^2), & \Theta_{K,4} & = \frac{1}{2}(\omega_1^2 - \omega_2^2).
\end{align*}
In particular, if $L$ is an $I_1$-complex isotropic $4$-fold, then $L$ is both $-\Theta_{J,4}$-special isotropic and $\Theta_{K,4}$-special isotropic.
\item For $2k = 2n+2$, the special isotropic $(2n+2)$-forms are
\begin{align*}
\Theta_{I,2n+2} & = \text{Re}(\Upsilon_1), & \Theta_{J, 2n+2} & = \text{Re}(\Upsilon_2), & \Theta_{K, 2n+2} & = \text{Re}(\Upsilon_3).
\end{align*}
In particular, a $\Theta_{I, 2n+2}$-special isotropic $(2n+2)$-fold is the same as an $\Upsilon_1$-special Lagrangian, which explains the name ``special isotropic".
\end{enumerate}
\end{example}

At present, it appears that little is known about special isotropic $2k$-folds in hyperk\"{a}hler $(4n+4)$-manifolds when $2 < 2k < 2n+2$.

\subsubsection{Cayley $4$-folds}

\indent \indent The following definition is due to Bryant-Harvey~\cite{Bryant-Harvey}, though our sign conventions are opposite to theirs.

\begin{defn}
The \emph{generalized Cayley $4$-forms} are the $4$-forms $\Phi_1, \Phi_2, \Phi_3 \in \Omega^4(C)$ defined by
\begin{align*}
\Phi_{1} & = -\frac{1}{2}\omega_1^2 + \frac{1}{2}\omega_2^2 + \frac{1}{2}\omega_3^2, & \Phi_2 & = \frac{1}{2}\omega_1^2 - \frac{1}{2}\omega_2^2 + \frac{1}{2}\omega_3^2, & \Phi_3 & = \frac{1}{2}\omega_1^2 + \frac{1}{2}\omega_2^2 - \frac{1}{2}\omega_3^2.
\end{align*}
Note that
\begin{align} \label{Phi2-expressions}
\Phi_2 & = \ \frac{1}{2}\omega_1^2 - \Theta_{I,4} = \frac{1}{2}\omega_3^2 + \Theta_{K,4} = - \frac{1}{2} \omega_2^2 + \frac{1}{2} (\omega_1^2 + \omega_3^2).
\end{align}
and similarly for cyclic permutations.  A $4$-dimensional submanifold $N^4 \subset C^{4n+4}$ is \emph{$\Phi_2$-Cayley} if it is calibrated by $\Phi_2$:
$$\left.\Phi_2\right|_N = \vol_N.$$
The definitions of $\Phi_1$-Cayley and $\Phi_3$-Cayley are analogous.  
\end{defn}

\begin{rmk} Bryant and Harvey \cite[Lemma 2.14]{Bryant-Harvey} computed that the $\SO(4n+4)$-stabilizer of the generalized Cayley $4$-forms in $\R^{4n+4}$ are
$$\mathrm{Stab}(\Phi_1) = \begin{cases}
\Spin(7) & \mbox{if } n = 1 \\
\Sp(n+1)\mathrm{O}(2) & \mbox{if } n \geq 2.
\end{cases}$$
\end{rmk}

\indent This above definition was inspired by $\Spin(7)$-geometry, as we now recall.  If $(X^8, (g, \omega, I, \Upsilon))$ is a Calabi-Yau $8$-manifold, where $\omega \in \Omega^2(X)$ is the K\"{a}hler form and $\Upsilon \in \Omega^4(X;\C)$ is the holomorphic volume form, then $X$ inherits a torsion-free $\Spin(7)$-structure via the following formula:
\begin{align} \label{Phi-form}
\Phi & = \frac{1}{2}\omega^2 - \text{Re}(\Upsilon).
\end{align}
The real $4$-form $\Phi \in \Omega^4(X)$ is called the \emph{Cayley $4$-form}, and a $4$-dimensional submanifold $N \subset X$ satisfying $\Phi|_N = \vol_N$ is called \emph{Cayley}.  The following fact is well-known, but we include a proof for completeness.

\begin{prop}
Let $(X^8, (g,\omega, I, \Upsilon))$ be a Calabi-Yau $8$-manifold, and equip $X$ with its induced $\Spin(7)$-structure.  Let $N^4 \subset X$ be a $4$-dimensional submanifold.
\begin{enumerate}[(a)]
\item If $N$ is complex, then $N$ is Cayley.
\item If $N$ is special Lagrangian of phase $e^{i\pi} = -1$, then $N$ is Cayley.
\end{enumerate}
\end{prop}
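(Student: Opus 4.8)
The statement is pointwise, so the plan is to fix $x \in N$ and verify that $\Phi|_{T_xN} = \vol_{T_xN}$ using only the linear algebra of the Calabi--Yau data on $T_xX$, namely the two ingredients making up $\Phi = \tfrac{1}{2}\omega^2 - \mathrm{Re}(\Upsilon)$: the Wirtinger identity for $\tfrac{1}{2}\omega^2$ and the restriction behaviour of the holomorphic volume form $\mathrm{Re}(\Upsilon)$.

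For part (a), suppose $T_xN$ is $I$-invariant. First, Wirtinger's theorem gives that the complex orientation on $T_xN$ satisfies $\tfrac{1}{2!}\omega^2|_{T_xN} = \vol_{T_xN}$. It then remains to show $\mathrm{Re}(\Upsilon)|_{T_xN} = 0$, and in fact I claim the full form $\Upsilon|_{T_xN}$ vanishes. The key point is a degree count: $\Upsilon$ has type $(4,0)$, while the complexification decomposes as $T_xN \otimes \C = (T_xN)^{1,0} \oplus (T_xN)^{0,1}$ with $\dim_{\C}(T_xN)^{1,0} = 2$, so the $(4,0)$-component of $\Lambda^4(T_xN)^* \otimes \C$ is $\Lambda^4\big((T_xN)^{1,0}\big)^* = 0$. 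Hence $\Upsilon|_{T_xN} = 0$ and $\Phi|_{T_xN} = \tfrac{1}{2}\omega^2|_{T_xN} = \vol_{T_xN}$, so $N$ is Cayley.

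For part (b), suppose $N$ is special Lagrangian of phase $e^{i\pi} = -1$, i.e.\ $\mathrm{Re}(e^{-i\pi}\Upsilon)|_{T_xN} = \vol_{T_xN}$, equivalently $-\mathrm{Re}(\Upsilon)|_{T_xN} = \vol_{T_xN}$. Being Lagrangian means $\omega|_{T_xN} = 0$, hence also $\omega^2|_{T_xN} = 0$, and therefore $\Phi|_{T_xN} = \tfrac{1}{2}\omega^2|_{T_xN} - \mathrm{Re}(\Upsilon)|_{T_xN} = -\mathrm{Re}(\Upsilon)|_{T_xN} = \vol_{T_xN}$, so $N$ is Cayley.

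I do not expect any real obstacle: both parts reduce to the observation that the two terms of $\Phi$ behave in complementary ways on complex $4$-planes (where $\tfrac12\omega^2$ is the volume and $\mathrm{Re}(\Upsilon)$ vanishes) versus on special Lagrangian $4$-planes (where $\omega^2$ vanishes and $-\mathrm{Re}(\Upsilon)$ is the volume). The only step meriting a line of justification is the vanishing of the $(4,0)$-form $\Upsilon$ on a complex $4$-plane, which is the dimension count above.
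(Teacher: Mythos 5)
Your proposal is correct and follows essentially the same route as the paper: Wirtinger for the $\tfrac{1}{2}\omega^2$ term, vanishing of the $(4,0)+(0,4)$ part of $\Phi$ on a complex $4$-plane by a type count (the paper phrases this as the tangent $4$-vector being of type $(2,2)$, you phrase it as $\Lambda^4$ of a $2$-dimensional $(1,0)$-space vanishing, which is the same observation), and for part (b) exactly the same two-line argument. No gaps.
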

\begin{proof}
If $N$ is complex, each tangent space $T_x N$ admits a basis of the form $\{ e_1, I e_1, e_2, I e_2 \}$. Then $v_k = e_k - i I e_k$ is of type $(1,0)$ for $k = 1, 2$, and $T_x N = e_1 \wedge I e_1 \wedge e_2 \wedge I e_2$ is a multiple of $v_1 \wedge \overline{v_1} \wedge v_2 \wedge \overline{v_2}$ and thus of type $(2,2)$. Since $\text{Re}(\Upsilon)$ is type $(4,0) + (0,4)$, it vanishes on $T_x N$. But $\frac{1}{2} \omega^2$ restricts to the volume form on $T_x N$, so by~\eqref{Phi-form}, $N$ is calibrated by $\Phi$.

If $N$ is special Lagrangian with phase $-1$, it is calibrated by $- \text{Re}(\Upsilon)$. Since it is also Lagrangian, $\frac{1}{2} \omega^2$ vanishes on $N$, and thus, again by~\eqref{Phi-form}, $N$ is calibrated by $\Phi$.
\end{proof}

When the ambient space is hyperk\"{a}hler, Bryant and Harvey showed that the above fact can be generalized to higher dimensions in the following sense.

\begin{prop}[{\cite[Theorem 8.20]{Bryant-Harvey}}] \label{prop:CayleyCases} Let $C^{4n+4}$ be a hyperk\"{a}hler $(4n+4)$-manifold.  Let $L^4 \subset C^{4n+4}$ be a $4$-dimensional submanifold.  Then:
\begin{enumerate}[(a)]
\item If $N$ is $I_1$-complex or $I_3$-complex, then $N$ is $\Phi_2$-Cayley.
\item If $N$ is $-\Theta_{I,4}$-special isotropic or $\Theta_{K,4}$-special isotropic, then $N$ is $\Phi_2$-Cayley.
\item If $N$ is $I_1$-complex isotropic, then $N$ is simultaneously $I_1$-complex, $-\Theta_{J,4}$-special isotropic, and $\Theta_{K,4}$-special isotropic, and hence is $\Phi_2$-Cayley.
\end{enumerate}
\end{prop}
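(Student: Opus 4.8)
The plan is to exploit the three rewritings of $\Phi_2$ recorded in \eqref{Phi2-expressions}, each of which isolates two of the three K\"ahler--square terms $\tfrac12\omega_p^2$. Since all six hypotheses appearing in the statement are pointwise conditions on the tangent plane $T_xN$, it suffices to fix a point and compare $4$-forms restricted to $T_xN$. In each case two of the three terms in the chosen expression for $\Phi_2$ are pinned down directly by the hypothesis, and the task is to show the remaining term vanishes on $T_xN$ — via a type argument in the two complex cases, and via the normal form of special-isotropic planes in the two $\Theta$-cases. (Note that we will only ever verify the pointwise identity $\Phi_2|_N = \vol_N$, so the comass-one property of $\Phi_2$ itself — the $\Spin(7)$ or $\Sp(n+1)\mathrm{O}(2)$ stabilizer — need not be invoked.)

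For part (a), suppose $N$ is $I_1$-complex, so $\tfrac12\omega_1^2|_N = \vol_N$ by definition. I would use $\Phi_2 = \tfrac12\omega_1^2 - \Theta_{I,4}$ with $\Theta_{I,4} = \tfrac12\mathrm{Re}(\sigma_1^2)$: since $\sigma_1$ has $I_1$-type $(2,0)$, the form $\sigma_1^2$ has $I_1$-type $(4,0)$, and because $T_xN$ is $I_1$-invariant its $(1,0)$-subspace is only $2$-complex-dimensional, so every $(4,0)$-form — hence $\sigma_1^2$ and, conjugating, $\bar\sigma_1^2$ — restricts to zero on $T_xN$. Thus $\Theta_{I,4}|_N = 0$ and $\Phi_2|_N = \vol_N$. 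The case of $N$ that is $I_3$-complex is identical, this time using $\Phi_2 = \tfrac12\omega_3^2 + \Theta_{K,4}$, $\Theta_{K,4} = \tfrac12\mathrm{Re}(\sigma_3^2)$, and the $I_3$-type $(2,0)$ of $\sigma_3$.

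Part (c) I expect to be immediate from the definitions. If $N$ is $I_1$-complex isotropic then $\tfrac12\omega_1^2|_N = \vol_N$ and $\omega_2|_N = \omega_3|_N = 0$, hence $\omega_2^2|_N = \omega_3^2|_N = 0$. Substituting into $\Phi_2 = \tfrac12\omega_1^2 - \tfrac12\omega_2^2 + \tfrac12\omega_3^2$ gives $\Phi_2|_N = \vol_N$ at once; substituting into $\Theta_{K,4} = \tfrac12(\omega_1^2 - \omega_2^2)$ and $-\Theta_{J,4} = \tfrac12(\omega_1^2 - \omega_3^2)$ shows $N$ is simultaneously $\Theta_{K,4}$- and $-\Theta_{J,4}$-special isotropic. (Alternatively, $\Phi_2$-Cayley follows from part (a), since $N$ is in particular $I_1$-complex.)

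Part (b) is where the real work lies, and I expect it to be the main obstacle. If $N$ is $-\Theta_{I,4}$-special isotropic, then $-\Theta_{I,4}|_N = \tfrac12(\omega_3^2 - \omega_2^2)|_N = \vol_N$, and $\Phi_2 = \tfrac12\omega_1^2 - \Theta_{I,4}$ yields $\Phi_2|_N = \tfrac12\omega_1^2|_N + \vol_N$, so the claim is equivalent to $\omega_1^2|_N = 0$; similarly, if $N$ is $\Theta_{K,4}$-special isotropic, then $\Phi_2 = \tfrac12\omega_3^2 + \Theta_{K,4}$ reduces the claim to $\omega_3^2|_N = 0$. To obtain these vanishings one genuinely needs the structure of the contact set of the special-isotropic calibration: from the classification of the $4$-planes calibrated by $\pm\Theta_{I,4}$ (contained in \cite{Bryant-Harvey}, and extractable from the analysis behind Theorem \ref{thm:special-isotropic-comass-one} in Appendix \ref{appendix:calib}), such a plane is isotropic for the $I_1$-K\"ahler form $\omega_1$ — more precisely $\omega_1$ has rank at most $2$ on it — so that $\omega_1^2$ vanishes on it; and dually a $\Theta_{K,4}$-calibrated $4$-plane is $\omega_3$-isotropic in the same sense. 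Hence $\Phi_2|_N = \vol_N$ in both cases. In short, parts (a) and (c) are formal consequences of the identities \eqref{Phi2-expressions} together with the elementary fact that a $(4,0)$-form annihilates a complex $4$-plane, whereas part (b) requires the honest linear-algebraic fact that being $\Theta_{\bullet,4}$-calibrated forces the ``extra'' K\"ahler-square term in the matching expression for $\Phi_2$ to die — which is exactly why it is natural to quarantine that computation in the appendix (or to cite Bryant--Harvey directly).
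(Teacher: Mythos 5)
Your proposal is correct, and for parts (a) and (c) it coincides with the paper's own argument: (a) is the same type computation based on \eqref{Phi2-expressions} (the paper writes out the $I_1$ case and leaves $I_3$ implicit), and (c) is the same direct substitution that the paper dismisses as immediate. The genuine difference is part (b), which the paper does not prove: it simply cites \cite[Theorem 8.20]{Bryant-Harvey}, remarking that the argument there relies on a normal form for the tangent planes. Your route — reduce (b) via \eqref{Phi2-expressions} to the vanishing of the leftover K\"ahler-square term, then invoke isotropy of the calibrated planes — can be closed entirely inside the paper's appendix: since $\pm\Theta_{I,4}=\mathrm{Re}\big(\tfrac12(e^{i\theta}\sigma_1)^2\big)$ with $e^{i\theta}\in\{1,i\}$ is of $I_1$-type $(4,0)+(0,4)$ and has comass one (Theorem~\ref{thm:special-isotropic-comass-one} together with the phase-rotation remark in its proof), Proposition~\ref{prop:IsotropyLemma} applies verbatim and yields full isotropy $\omega_1|_{T_xN}=0$ on any $\pm\Theta_{I,4}$-calibrated plane, hence $\omega_1^2|_N=0$; likewise $\Theta_{K,4}=\tfrac12\mathrm{Re}(\sigma_3^2)$ is of $I_3$-type $(4,0)+(0,4)$, giving $\omega_3^2|_N=0$. (Your hedge that $\omega_1$ has ``rank at most $2$'' on such a plane actually understates the conclusion — the isotropy is total — but rank $\le 2$ is all you need.) So where the paper outsources (b) to Bryant--Harvey's normal form, your sketch gives a more self-contained proof at the cost of leaning on the comass-one theorem, which the paper proves in Appendix~\ref{appendix:calib} anyway; to make it airtight you should cite Proposition~\ref{prop:IsotropyLemma} explicitly rather than gesturing at ``the analysis behind'' the comass computation.
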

\begin{proof} Parts (a) and (b) are contained in~\cite[Theorem 8.20]{Bryant-Harvey}. It is easy to see from~\eqref{Phi2-expressions} that (a) holds. For example, if $N$ is $I_1$-complex, then $\frac{1}{2} \omega_1^2$ restricts to the volume form, but $- \Theta_{I, 4} = - \mathrm{Re} (\frac{1}{2} \sigma_1^2)$ is of $I_1$-type $(4,0) + (0,4)$, and thus vanishes on $N$ since the tangent spaces of $N$ are of $I_1$-type $(2,2)$. Part (b) is less obvious, and uses a normal form for the tangent spaces of $N$. Details are given in~\cite[Sections 2 and 3]{Bryant-Harvey}. Part (c) is immediate from the first two.
\end{proof}

\begin{rmk} \label{rmk:SpInvariance} Note that every calibration $\phi \in \Omega^k(C)$ discussed in this section is stabilized by the Lie group $\Sp(n+1)$, which acts transitively on the unit sphere in $T_xC \simeq \R^{4n+4}$.  Consequently, at any point $x \in C$, every unit vector $v \in T_xC$ lies in some $\phi$-calibrated $k$-plane.
\end{rmk}

\subsection{Bookkeeping: Summary of Forms on $C$}

\indent \indent Starting in the next section, we will assume that the hyperk\"{a}hler manifold $C^{4n+4}$ is a metric cone, say $C = \mathrm{C}(M)$ for some Riemannian $(4n+3)$-manifold $M$.  Studying the geometry of $M$ and its relationship with $C$ will require the introduction of further tensors and differential forms.  So, before continuing, we briefly summarize the tensors and forms already defined on $C$:
\begin{align*}
& g_\Cone & \text{Riemannian metric} \\
& I_1, I_2, I_3 & \text{Complex structures} \\
& \omega_1, \omega_2, \omega_3 & \text{K\"{a}hler 2-forms} \\
& \Upsilon_1, \Upsilon_2, \Upsilon_3   & \text{Complex volume }(2n+2)\text{-forms} \\
& \sigma_1, \sigma_2, \sigma_3 & \text{Complex symplectic 2-forms} \\
& \Theta_{I,2k}, \Theta_{J,2k}, \Theta_{K,2k} & \text{Special isotropic } 2k\text{-forms} \\
& \Phi_1, \Phi_2, \Phi_3  & \text{Cayley 4-forms} \\
& \Lambda  & \text{Quaternionic 4-form}
\end{align*}

\section{Calibrated Geometry in $3$-Sasakian Manifolds} \label{sec:CalGeo3Sas}

\indent \indent If $(C^{4n+4}, g_\Cone) = (M \times \R^+, dr^2 + r^2g_M)$ is a hyperk\"{a}hler cone, then its link $M^{4n+3}$ inherits a \emph{$3$-Sasakian structure}, as we recall in $\S$\ref{sec:3SasLink}.  Then, in $\S$\ref{sec:SubmanifoldsSE} and $\S$\ref{sec:Submanifolds3Sas}, we explain how each of the calibrated geometries of $C$ discussed previously has a semi-calibrated counterpart in the $3$-Sasakian link $M$. \\
\indent In $\S$\ref{sec:3SasCircleBundle}, we recall that $M$ is the total space of a natural $S^1$-bundle $p_1 \colon M \to Z$.  The base space, $Z^{4n+2}$, called a \textit{twistor space}, admits both K\"{a}hler-Einstein and nearly-K\"{a}hler structures.  It is interesting to ask exactly how much geometric structure the map $p_1 \colon M \to Z$ preserves.  In this regard, we discover that every $3$-Sasakian manifold $M$ admits a natural $\C$-valued $3$-form $\Gamma_1 \in \Omega^3(M; \C)$ that descends to a $3$-form on $Z$ (Proposition \ref{prop:Descent}).  Later, in $\S$\ref{sub:GeomTwistor}, we will prove that the descended $3$-form endows $Z$ with a canonical $\Sp(n)\U(1)$-structure. \\
\indent Finally, in Theorem \ref{thm:ReGammaChar1}, we observe that $\mathrm{Re}(\Gamma_1) \in \Omega^3(M)$ is a semi-calibration, and classify the $\mathrm{Re}(\Gamma_1)$-calibrated $3$-folds in terms of more familiar geometries.

\subsection{$3$-Sasakian Manifolds as Links} \label{sec:3SasLink}

\begin{defn} Let $M$ be an odd-dimensional manifold.  An \emph{almost contact metric structure} on $M$ is a triple $(g_M, \alpha, \mathsf{J})$ consisting of a Riemannian metric $g_M$, a $1$-form $\alpha \in \Omega^1(M)$, and an endomorphism $\mathsf{J} \in \Gamma(\End(TM))$ satisfying
\begin{align*}
\alpha(\mathsf{J}X) & = 0, & \mathsf{J}(A) & = 0, & \mathsf{J}^2|_{\text{Ker}(\alpha)} & = -\Id, & g_M(\mathsf{J}X, \mathsf{J}Y) & = g_M(X,Y) - \alpha(X)\alpha(Y),
\end{align*}
where $A := \alpha^\sharp \in \Gamma(TM)$ is the \emph{Reeb vector field}. It follows that $\alpha(A) = 1$.
\end{defn}

\indent Thus, if $M$ is equipped with an almost contact metric structure, then each tangent space splits as
$$T_xM = \R A|_x \oplus \Ker(\alpha|_x).$$
Further, restricting to the hyperplane $\Ker(\alpha|_x) \subset T_xM$, the endomorphism $\mathsf{J} \colon \Ker(\alpha|_x) \to \Ker(\alpha|_x)$ is a $g_M$-orthogonal complex structure.  Thus, the hyperplane field $\Ker(\alpha) \subset TM$ is naturally endowed with the Hermitian structure $(g_M, \mathsf{J}, \Omega)$, where $\Omega := g_M(\mathsf{J} \cdot, \cdot)$ is the corresponding non-degenerate $2$-form.

\begin{defn} Let $M$ be a $(4n+3)$-manifold.  An \emph{$(\Sp(n) \times 3)$-structure} (or \emph{almost 3-contact metric structure}) on $M$ consists of data $(g_M, (\alpha_1, \alpha_2, \alpha_3), (\mathsf{J}_1, \mathsf{J}_2, \mathsf{J}_3))$ such that:
\begin{itemize}
\item Each triple $(g_M, \alpha_p, \mathsf{J}_p)$ is an almost contact metric structure ($p = 1, 2, 3$); and
\item Letting $A_p := \alpha_p^\sharp \in \Gamma(TM)$ denote the corresponding Reeb fields, we require
\begin{align*}
\mathsf{J}_p \circ \mathsf{J}_q - \alpha_p \otimes A_q & = \epsilon_{pqr}\mathsf{J}_r - \delta_{pq}\,\Id, \\
\mathsf{J}_p(A_q) & = \epsilon_{pqr}A_r.
\end{align*}
\end{itemize}
Note that there is no sum over $r$ in the above equations. For example, the above equations say $\mathsf{J}_1(A_1) = 0$, $\mathsf{J}_1(A_2) = A_3$, $\mathsf{J}_1(A_3) = - A_2$, that $\mathsf{J}_1^2 = - \Id$ on $\Ker(\alpha_1)$, and that $\mathsf{J}_1 \mathsf{J}_2 = \mathsf{J}_3$. Similarly for cyclic permutations of $1, 2, 3$.
\end{defn}

\indent Let $M^{4n+3}$ carry an $(\Sp(n) \times 3)$-structure.  We make three remarks.  First, for each $p = 1, 2, 3$, the tangent bundle splits as
\begin{equation} \label{eq:SplitOnce}
TM = \R A_p \oplus \Ker(\alpha_p),
\end{equation}
and the hyperplane field $\Ker(\alpha_p) \subset TM$ carries a Hermitian structure $(g_M, \mathsf{J}_p, \Omega_p)$, where $\Omega_p := g_M(\mathsf{J}_p \cdot, \cdot)$.  In fact, each $\Ker(\alpha_p)$ is also endowed with the complex volume form $\Psi_p \in \Lambda^{2n+1,0}(\Ker(\alpha_p))$ given by
\begin{equation} \label{eq:Psis}
\begin{aligned}
\Psi_1 & = (\alpha_2 + i\alpha_3) \wedge \frac{1}{n!}(\Omega_2 + i\Omega_3)^n, \\
\Psi_2 & = (\alpha_3 + i\alpha_1) \wedge \frac{1}{n!}(\Omega_3 + i\Omega_1)^n, \\
\Psi_3 & = (\alpha_1 + i\alpha_2) \wedge \frac{1}{n!}(\Omega_1 + i\Omega_2)^n.
\end{aligned}
\end{equation}
\indent Second, considering (\ref{eq:SplitOnce}) for $p = 1, 2, 3$ simultaneously, we see that the tangent bundle splits further as
\begin{equation} \label{eq:SplitThrice}
TM = \widetilde{\mathsf{H}} \oplus \widetilde{\mathsf{V}},
\end{equation}
where
\begin{align*}
\widetilde{\mathsf{H}} & = \Ker(\alpha_1, \alpha_2, \alpha_3), & \widetilde{\mathsf{V}} &= \R A_1 \oplus \R A_2 \oplus \R A_3.
\end{align*}
Note that the $4n$-plane field $\widetilde{\mathsf{H}} \subset TM$ is preserved by the three endomorphisms $\mathsf{J}_1, \mathsf{J}_2, \mathsf{J}_3$.  In fact, the restrictions of $\mathsf{J}_1, \mathsf{J}_2, \mathsf{J}_3$ to $\widetilde{\mathsf{H}}$ are $g_M$-orthogonal complex structures that satisfy the quaternionic relations $\mathsf{J}_1 \mathsf{J}_2 = \mathsf{J}_3$, etc. \\

\indent Third, we consider the relationship between the structure on a manifold $(M^{4n+3}, g_M)$ and that of its metric cone
$$ C^{4n+4} = \Cone(M) = (\R^+ \times M, g_\Cone = dr^2 + r^2 g_M). $$
In one direction, if $(M, g_M)$ is equipped with a compatible $(\Sp(n) \times 3)$-structure $(g_M, (\alpha_p), (\mathsf{J}_p))$, then the $(4n+4)$-manifold $C$ inherits a Riemannian metric $g_\Cone$, a triple of $g_\Cone$-orthogonal almost-complex structures $(I_1, I_2, I_3)$ satisfying $I_1I_2 = I_3$, etc., and a triple of non-degenerate $2$-forms $\omega_p$ defined by
\begin{align*}
g_\Cone & = dr^2 + r^2 g_M,  & \omega_p(X,Y) & = g_\Cone(I_pX,Y), & I_p(X) & = \begin{cases} \mathsf{J}_pX - \alpha_p(X)r\partial_r &\mbox{if } X \in TM, \\ A_p & \mbox{if } X = r\partial_r, \end{cases}
\end{align*}
where $X,Y \in TC$.  A computation shows that for each $p = 1,2,3$,
\begin{equation} \label{eq:omega-cone}
\omega_p = r\,dr \wedge \alpha_p + r^2\Omega_p.
\end{equation}
Altogether, the data $(g_\Cone, (\omega_1, \omega_2, \omega_3), (I_1, I_2, I_3))$ is an almost hyper-Hermitian structure on $C$. \\
\indent Conversely, if the metric cone $(C^{4n+4}, g_\Cone = dr^2 + r^2g_M)$ carries an almost hyper-Hermitian structure $(g_\Cone, (\omega_1, \omega_2, \omega_3), (I_1, I_2, I_3))$ that is conical in the sense of Definition~\ref{defn:homogeneous-form}, namely that
$$\mathscr{L}_{r\partial_r}(\omega_p) = 2\omega_p, \ \ \ p = 1,2,3,$$
then its link $(M, g_M)$ inherits a compatible $(\Sp(n) \times 3)$-structure $(g_M, (\alpha_p), (\mathsf{J}_p))$ via
\begin{align*}
\alpha_p & = (r\partial_r\,\lrcorner\,\omega_p)|_M, & \mathsf{J}_p & = \begin{cases} I_p & \mbox{on }\Ker(\alpha_p), \\ 0 & \mbox{on } \R A_p.\end{cases}
\end{align*}
This relationship leads to the following definition:

\begin{defn} Let $M$ be a $(4n+3)$-manifold.  A \emph{$3$-Sasakian structure} on $M$ is an $(\Sp(n) \times 3)$-structure $(g_M, (\alpha_p), (\mathsf{J}_p))$ for which the induced almost hyper-Hermitian structure $(g_\Cone, (\omega_p), (I_p))$ on its metric cone $\Cone(M) = \R^+ \times M$ hyperk\"{a}hler. \\
\indent Note that this is equivalent to requiring that the $2$-forms $\omega_1, \omega_2, \omega_3$ are all closed. (See, for example, \cite[Section 2]{Hitchin-SL}.)
\end{defn}

\subsubsection{Distinguished Forms on $3$-Sasakian Manifolds}

\indent \indent For the remainder of this work, $M^{4n+3}$ will denote a $3$-Sasakian $(4n+3)$-manifold with $3$-Sasakian structure $(g_M, (\alpha_1, \alpha_2, \alpha_3), (\mathsf{J}_1, \mathsf{J}_2, \mathsf{J}_3))$.  The induced conical hyperk\"{a}hler structure on $C^{4n+4} = \R^+ \times M$ will be denoted $(g_\Cone, (\omega_1, \omega_2, \omega_3), (I_1, I_2, I_3))$.  In this section, we record some of the distinguished differential forms on $M$ and compute their exterior derivatives. \\

\indent To begin, we consider the contact $1$-forms $\alpha_1, \alpha_2, \alpha_3 \in \Omega^1(M)$ and the transverse K\"{a}hler forms $\Omega_1, \Omega_2, \Omega_3 \in \Omega^2(M)$ defined by $\Omega_p(X,Y) = g_M(\mathsf{J}_pX, Y)$.  By (\ref{eq:omega-cone}), we may compute
\begin{align*}
0 = d\omega_p = d(r \,dr \wedge \alpha_p) + d(r^2 \Omega_p) = r\,dr \wedge (-d\alpha_p + 2 \Omega_p) + r^2 d\Omega_p
\end{align*}
which implies that
\begin{align}
d\alpha_p & = 2\Omega_p, & d\Omega_p & = 0. \label{eq:deriv-alpha}
\end{align}
(The first equation in~\eqref{eq:deriv-alpha} shows that each $\alpha_p$ is indeed a contact form. That is, that $\alpha_p \wedge (d\alpha_p)^{2n+1}$ is nowhere zero.)

\indent Next, we decompose the $2$-forms $\Omega_1, \Omega_2, \Omega_3$ according to the splitting
$$\Lambda^2(T^*M) = \Lambda^2(\widetilde{\mathsf{V}}^*) \oplus (\widetilde{\mathsf{V}} \otimes \widetilde{\mathsf{H}}) \oplus \Lambda^2(\widetilde{\mathsf{H}}^*).$$
One can show that each $\Omega_p$ has no component in $\widetilde{\mathsf{V}}^* \otimes \widetilde{\mathsf{H}}^*$, and that the $\Lambda^2(\widetilde{\mathsf{V}}^*)$-component of $\Omega_1$ is $\alpha_2 \wedge \alpha_3$.  Letting $\kappa_1, \kappa_2, \kappa_3$ denote the $\Lambda^2(\widetilde{\mathsf{H}}^*)$-component of $\Omega_p$, we arrive at the formulas
\begin{align} \label{eq:Omegas}
\Omega_1 & = \alpha_2 \wedge \alpha_3 + \kappa_1, & \Omega_2 & = \alpha_3 \wedge \alpha_1 + \kappa_2, & \Omega_3 & = \alpha_1 \wedge \alpha_2 + \kappa_3.
\end{align}
Taking $d$ of~\eqref{eq:Omegas} and using~\eqref{eq:deriv-alpha} shows that
\begin{align}
d\kappa_1 & = 2(\alpha_2 \wedge \Omega_3 - \alpha_3 \wedge \Omega_2) = 2(\alpha_2 \wedge \kappa_3 - \alpha_3 \wedge \kappa_2), \nonumber  \\
d\kappa_2 & = 2(\alpha_3 \wedge \Omega_1 - \alpha_1 \wedge \Omega_3) = 2(\alpha_3 \wedge \kappa_1 - \alpha_1 \wedge \kappa_3), \label{eq:deriv-kappa} \\
d\kappa_3 & = 2(\alpha_1 \wedge \Omega_2 - \alpha_2 \wedge \Omega_1) = 2(\alpha_1 \wedge \kappa_2 - \alpha_2 \wedge \kappa_1). \nonumber
\end{align}
Finally, recalling the transverse complex volume forms $\Psi_1, \Psi_2, \Psi_3 \in \Omega^{2n+1}(M; \C)$ of~\eqref{eq:Psis}, we compute
\begin{align} \label{eq:deriv-Psi}
d\Psi_1 & = \frac{2}{n!}(\Omega_2 + i\Omega_3)^{n+1}, & d\Psi_2 & = \frac{2}{n!}(\Omega_3 + i\Omega_1)^{n+1}, & d\Psi_3 & = \frac{2}{n!}(\Omega_1 + i\Omega_2)^{n+1}.
\end{align}

\indent To conclude this section, we summarize the relationships between various forms on the hyperk\"{a}hler cone $C^{4n+4}$ and those on its $3$-Sasakian link $M^{4n+3}$.  

\begin{prop} We have
\begin{align}
\omega_1 & = r\,dr \wedge \alpha_1 + r^2\,\Omega_1, \label{eq:OmegaCone1} \\
\frac{1}{2}\omega_1^2 & = r^3\,dr \wedge (\alpha_1 \wedge \Omega_1) + r^4\,\frac{1}{2}\Omega_1^2, \label{eq:OmegaCone2} \\
\frac{1}{k!}\omega_1^k & = r^{2k-1}\,dr \wedge \frac{1}{(k-1)!}(\alpha_1 \wedge \Omega_1^{k-1}) + r^{2k}\,\frac{1}{k!}\Omega_1^k. \label{eq:OmegaConek}
\end{align}
Consequently,
\begin{align}
\Upsilon_1 & = r^{2n+1}\,dr \wedge \Psi_1 + r^{2n+2}\,\frac{1}{(n+1)!} (\Omega_2 + i\Omega_3)^{n+1}, \label{eq:UpsilonCone} \\
\Theta_{I,4} & = r^3\,dr \wedge (\alpha_2 \wedge \Omega_2 - \alpha_3 \wedge \Omega_3) + r^4 \frac{1}{2}(\Omega_2^2 - \Omega_3^2),  \nonumber \\
\Phi_1 & = r^3\,dr \wedge (-\alpha_1 \wedge \Omega_1 + \alpha_2 \wedge \Omega_2 + \alpha_3 \wedge \Omega_3) + r^4\, \frac{1}{2}(-\Omega_1^2 + \Omega_2^2 + \Omega_3^2), \nonumber \\
\Lambda & = r^3\,dr \wedge \frac{1}{3}(\alpha_1 \wedge \Omega_1 + \alpha_2 \wedge \Omega_2 + \alpha_3 \wedge \Omega_3) + r^4\, \frac{1}{6}(\Omega_1^2 + \Omega_2^2 + \Omega_3^2). \nonumber
\end{align}
\end{prop}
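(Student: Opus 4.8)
The plan is to reduce every identity to the single formula \eqref{eq:omega-cone}, i.e. $\omega_p = r\,dr \wedge \alpha_p + r^2\Omega_p$, combined with the elementary remark that $dr \wedge dr = 0$ and hence $(r\,dr \wedge \alpha_p)\wedge(r\,dr \wedge \alpha_p) = 0$.

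First I would note that \eqref{eq:OmegaCone1} is literally the $p = 1$ case of \eqref{eq:omega-cone}. To obtain \eqref{eq:OmegaConek} (and \eqref{eq:OmegaCone2} as its $k = 2$ specialization), I would expand $\omega_1^k = (r\,dr \wedge \alpha_1 + r^2\Omega_1)^k$ by the binomial theorem: since $r\,dr\wedge\alpha_1$ contains the factor $dr$, any monomial involving two or more copies of it vanishes, so only the two terms $k\,(r\,dr \wedge \alpha_1)\wedge(r^2\Omega_1)^{k-1} + (r^2\Omega_1)^k$ survive, and dividing by $k!$ and collecting powers of $r$ gives the claimed expression, the binomial coefficient $\binom{k}{1}=k$ being exactly what converts $\tfrac{1}{k!}$ into $\tfrac{1}{(k-1)!}$. (One could equally argue by induction on $k$ via $\omega_1^k = \omega_1 \wedge \omega_1^{k-1}$ using the same vanishing.)

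For the ``consequently'' part, each of $\Upsilon_1, \Theta_{I,4}, \Phi_1, \Lambda$ is by definition a polynomial in the $\omega_p$ (or in $\sigma_1 = \omega_2 + i\omega_3$), so I would substitute the formulas just proved. For $\Upsilon_1 = \tfrac{1}{(n+1)!}\sigma_1^{n+1}$, I would write $\sigma_1 = r\,dr\wedge(\alpha_2 + i\alpha_3) + r^2(\Omega_2 + i\Omega_3)$ and run the same binomial collapse (with $\alpha_1,\Omega_1$ replaced by $\alpha_2 + i\alpha_3,\ \Omega_2 + i\Omega_3$); the $dr$-term becomes $r^{2n+1}\,dr\wedge\big[(\alpha_2 + i\alpha_3)\wedge\tfrac{1}{n!}(\Omega_2 + i\Omega_3)^n\big] = r^{2n+1}\,dr\wedge\Psi_1$ by the definition \eqref{eq:Psis} of $\Psi_1$, and the remaining term is $r^{2n+2}\,\tfrac{1}{(n+1)!}(\Omega_2 + i\Omega_3)^{n+1}$. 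For $\Theta_{I,4} = \tfrac12(\omega_2^2 - \omega_3^2)$, $\Phi_1 = -\tfrac12\omega_1^2 + \tfrac12\omega_2^2 + \tfrac12\omega_3^2$, and $\Lambda = \tfrac16(\omega_1^2 + \omega_2^2 + \omega_3^2)$, I would insert \eqref{eq:OmegaCone2} once for each index $p$ and collect terms, noting that $\tfrac16 = \tfrac13\cdot\tfrac12$ produces the stated coefficients for $\Lambda$.

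There is no substantive obstacle: the proof is pure bookkeeping. The only points requiring a little care are tracking the powers of $r$ and the combinatorial factor from $\binom{k}{1}$ in \eqref{eq:OmegaConek}, and recognizing that the $dr$-component of $\Upsilon_1$ is exactly the transverse complex volume form $\Psi_1$ of \eqref{eq:Psis}. Everything else follows at once from $(r\,dr\wedge\alpha_p)^2 = 0$.
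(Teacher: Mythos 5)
Your proposal is correct and is exactly the ``straightforward calculation'' the paper invokes: substitute $\omega_p = r\,dr\wedge\alpha_p + r^2\Omega_p$, expand powers, and use $(r\,dr\wedge\alpha_p)\wedge(r\,dr\wedge\alpha_p)=0$ so only the linear-in-$dr$ and $dr$-free terms survive, then identify the $dr$-component of $\frac{1}{(n+1)!}\sigma_1^{n+1}$ with $\Psi_1$ via \eqref{eq:Psis}. The bookkeeping of the binomial coefficient and powers of $r$ is handled correctly, so nothing is missing.
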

\begin{proof} Each of these follow from a straightforward calculation.
\end{proof}

\subsection{Submanifolds via the Sasaki-Einstein Structure} \label{sec:SubmanifoldsSE}

\indent \indent By analogy with our discussion in $\S$\ref{subsec:SubViaCY} and $\S$\ref{subsec:SubViaHK}, we now consider the various classes of submanifolds of $M$.  We begin with those defined in terms of a Sasaki-Einstein structure. \\

\indent By Remark~\ref{rmk:SpInvariance}, we can apply Proposition~\ref{prop:rich-calibs} to (\ref{eq:OmegaConek}) with $k$ replaced by $k+1$. We deduce that for $p = 1,2,3$, the $(2k+1)$-forms
$$\frac{1}{k!}(\alpha_p \wedge \Omega_p^k) \in \Omega^{2k+1}(M)$$
are semi-calibrations.  Their calibrated submanifolds are called \emph{$I_p$-CR submanifolds}.  To be precise:

\begin{prop} \label{prop:CREquivalence} Let $L^{2k+1} \subset M^{4n+3}$ be a $(2k+1)$-dimensional submanifold.  We say $L$ is \emph{$I_1$-CR} if any of the following equivalent conditions holds:
\begin{enumerate}[(i)]
\item $\mathrm{C}(L) \subset C$ is $I_1$-complex.  That is, each tangent space of $\mathrm{C}(L)$ is $I_1$-invariant.
\item $\mathrm{C}(L)$ is (up to a change of orientation) $\frac{1}{(k+1)!}\omega_1^{k+1}$-calibrated:
$$\left.\frac{1}{(k+1)!}\,\omega_1^{k+1}\right|_{\mathrm{C}(L)} = \vol_{\mathrm{C}(L)}.$$
\item Each tangent space $T_xL$ is $\mathsf{J}_1$-invariant and contains the Reeb vector $A_1$.
\item $L$ satisfies (up to a change of orientation) that
$$\left.\frac{1}{k!}(\alpha_1 \wedge \Omega_1^k)\right|_L = \vol_L.$$
\end{enumerate}
\end{prop}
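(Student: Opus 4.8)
The plan is to verify the four-way equivalence by establishing the loop (i)$\Leftrightarrow$(iii), (i)$\Leftrightarrow$(ii), (ii)$\Leftrightarrow$(iv): the first is a pointwise linear-algebra computation relating the complex structures on $C$ to the $(\Sp(n)\times 3)$-data on $M$; the second is Wirtinger's inequality; and the third follows at once from the conical form identity \eqref{eq:OmegaConek} together with the description of the volume form of a metric cone. Throughout I write $\mathrm{C}(L) = \R^+ \times L \subset \R^+ \times M = C$ and use the pointwise splitting $T_{(r,x)}\mathrm{C}(L) = \R\,\partial_r \oplus T_x L$ sitting inside $T_{(r,x)} C = \R\,\partial_r \oplus T_x M$.

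For (i)$\Leftrightarrow$(iii) I would use the cone formulas $I_1(r\partial_r) = A_1$ and $I_1 X = \mathsf{J}_1 X - \alpha_1(X)\,r\partial_r$ for $X \in T_xM$. If $T\mathrm{C}(L)$ is $I_1$-invariant, then $I_1\partial_r = r^{-1}A_1$ must lie in $\R\,\partial_r\oplus T_xL$; since $A_1 \in T_xM$ and the sum $\R\,\partial_r \oplus T_xM$ is direct, this forces $A_1 \in T_xL$. Moreover, for $X \in T_xL$ the $T_xM$-component of $I_1X$ is $\mathsf{J}_1X$, so $I_1X \in T\mathrm{C}(L)$ forces $\mathsf{J}_1X \in T_xL$, i.e.\ $\mathsf{J}_1(T_xL)\subseteq T_xL$, which is (iii). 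Conversely, if (iii) holds, then $I_1\partial_r = r^{-1}A_1 \in T_xL$ and $I_1X = \mathsf{J}_1X - \alpha_1(X)\,r\partial_r \in T\mathrm{C}(L)$ for every $X \in T_xL$, so $I_1(T\mathrm{C}(L)) \subseteq T\mathrm{C}(L)$, and equality follows since $I_1$ is invertible. (One could also prove (iii)$\Leftrightarrow$(iv) directly by evaluating $\alpha_1\wedge\frac{1}{k!}\Omega_1^k$ on a $\mathsf{J}_1$-adapted orthonormal basis $A_1, e_1, \mathsf{J}_1e_1,\dots$ of $T_xL$, but routing through the cone is cleaner.)

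For (ii)$\Leftrightarrow$(iv), replacing $k$ by $k+1$ in \eqref{eq:OmegaConek} gives
$$\frac{1}{(k+1)!}\omega_1^{k+1} = r^{2k+1}\,dr\wedge\frac{1}{k!}\bigl(\alpha_1\wedge\Omega_1^{k}\bigr) + r^{2k+2}\,\frac{1}{(k+1)!}\Omega_1^{k+1}.$$
Restricting to $\mathrm{C}(L)$, the second term vanishes: it is pulled back from $M$, hence ``horizontal'', so on the $(2k+2)$-plane $\R\,\partial_r\oplus T_xL$ it sees only the $(2k+1)$-dimensional subspace $T_xL$ and is therefore zero. Using $\vol_{\mathrm{C}(L)} = r^{2k+1}\,dr\wedge\vol_L$ (Appendix~\ref{appendix:cones}), condition (ii) becomes $r^{2k+1}\,dr\wedge\bigl(\tfrac{1}{k!}(\alpha_1\wedge\Omega_1^k)|_L\bigr) = \pm\, r^{2k+1}\,dr\wedge\vol_L$, and cancelling the common factor $r^{2k+1}\,dr\wedge$ — legitimate since both $\tfrac{1}{k!}(\alpha_1\wedge\Omega_1^k)|_L$ and $\vol_L$ are pulled back from $L$ — yields exactly (iv). Finally, (i)$\Leftrightarrow$(ii) is Wirtinger's inequality applied pointwise to the Hermitian vector space $(T_{(r,x)}C, g_\Cone, I_1, \omega_1)$: a real $(2k+2)$-plane is, for a suitable orientation, calibrated by $\tfrac{1}{(k+1)!}\omega_1^{k+1}$ precisely when it is $I_1$-invariant (see \cite{Harvey-Lawson} or Appendix~\ref{appendix:calib}). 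Combining the three equivalences closes the loop. I do not expect a genuine obstacle here; the only delicate points are keeping the $\R\,\partial_r$ and $T_xM$ summands separate in the (i)$\Leftrightarrow$(iii) computation and correctly identifying which conical term drops out upon restriction in (ii)$\Leftrightarrow$(iv).
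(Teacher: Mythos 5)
Your proposal is correct, and its skeleton matches the paper's: the paper dismisses (i)$\iff$(ii)$\iff$(iii) as well-known (Wirtinger plus the cone formulas for $I_1$, exactly the facts you spell out) and obtains (ii)$\iff$(iv) from \eqref{eq:OmegaConek}. The one real difference is in that last step: the paper routes it through the general splitting result Proposition~\ref{prop:rich-calibs} (together with Remark~\ref{rmk:SpInvariance}, which guarantees every line lies in a calibrated plane, so that $\frac{1}{k!}(\alpha_1\wedge\Omega_1^k)$ is itself a semi-calibration and calibrated planes upstairs and downstairs correspond), whereas you simply restrict \eqref{eq:OmegaConek} with $k$ replaced by $k+1$ to $\mathrm{C}(L)$, note that the pulled-back term $r^{2k+2}\frac{1}{(k+1)!}\Omega_1^{k+1}$ dies on $\R\,\partial_r\oplus T_xL$ because it annihilates $\partial_r$, and compare with $\vol_{\mathrm{C}(L)}=r^{2k+1}\,dr\wedge\vol_L$. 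Your version is more elementary and self-contained for this particular equivalence (no comass input is needed, since you only compare restrictions to the given submanifold), while the paper's appeal to Proposition~\ref{prop:rich-calibs} buys the additional statement that the link form has comass one, which it reuses for the other semi-calibrations ($\Psi_p$, $\theta_{I,2k-1}$, $\phi_p$) throughout Section~3. Your (i)$\iff$(iii) computation and the cancellation step are both airtight as written.
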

\begin{proof} The equivalences (i)$\iff$(ii)$\iff$(iii) are well-known.  The equivalence (ii)$\iff$(iv) follows from Proposition~\ref{prop:rich-calibs}.
\end{proof}

\begin{prop} \label{prop:LegendrianEquivalence} Let $L^k \subset M^{4n+3}$ be a submanifold.  We say $L$ is \emph{$\alpha_1$-isotropic} (resp., \emph{$\alpha_1$-Legendrian} if $k = 2n+1$) if any of the following equivalent conditions holds:
\begin{enumerate}[(i)]
\item $\mathrm{C}(L)$ is $\omega_1$-isotropic: $\left.\omega_1\right|_{\mathrm{C}(L)} = 0.$
\item $\left.\alpha_1\right|_L = 0.$
\item  $\left.\alpha_1\right|_L = 0$ and $\left.\Omega_1\right|_L = 0.$
\end{enumerate}
In particular, an $\alpha_1$-isotropic submanifold $L \subset M$ satisfies $\dim(L) \leq 2n+1$.
\end{prop}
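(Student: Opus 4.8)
The plan is to first establish (ii)$\Leftrightarrow$(iii) from the structure equation $d\alpha_1 = 2\Omega_1$ of \eqref{eq:deriv-alpha}, and then (i)$\Leftrightarrow$(ii) from the conical expression $\omega_1 = r\,dr\wedge\alpha_1 + r^2\,\Omega_1$ of \eqref{eq:OmegaCone1}. The implication (iii)$\Rightarrow$(ii) is immediate. For (ii)$\Rightarrow$(iii), write $\iota\colon L\hookrightarrow M$ for the inclusion; if $\iota^*\alpha_1 = 0$ then $0 = d(\iota^*\alpha_1) = \iota^*(d\alpha_1) = 2\,\iota^*\Omega_1$, so $\Omega_1|_L = 0$ as well.

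For (i)$\Leftrightarrow$(ii), let $\widetilde\iota\colon \mathrm{C}(L) = \R^+\times L \hookrightarrow \R^+\times M = C$ be the induced inclusion of cones and pull back \eqref{eq:OmegaCone1} to obtain $\widetilde\iota^*\omega_1 = r\,dr\wedge\iota^*\alpha_1 + r^2\,\iota^*\Omega_1$. If $\alpha_1|_L = 0$, then by the previous paragraph $\Omega_1|_L = 0$ too, so the right-hand side vanishes; this gives (ii)$\Rightarrow$(i). For (i)$\Rightarrow$(ii), I would split differential forms on $\R^+\times L$ into the part containing the factor $dr$ and the part pulled back from $L$ (the conical-form formalism of Appendix~\ref{appendix:cones}); since $\iota^*\alpha_1$ and $\iota^*\Omega_1$ involve no $dr$, the vanishing of $\widetilde\iota^*\omega_1$ forces $r\,dr\wedge\iota^*\alpha_1 = 0$ and $r^2\,\iota^*\Omega_1 = 0$ separately for every $r>0$. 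Dividing the first relation by $r$ gives $dr\wedge\iota^*\alpha_1 = 0$, and because $\iota^*\alpha_1$ has no $dr$-component this forces $\iota^*\alpha_1 = 0$, which is (ii).

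For the dimension bound, I would simply invoke the fact recorded just after the definition of $\omega_1$-isotropic submanifolds, namely that an $\omega_1$-isotropic submanifold of $C^{4n+4}$ has dimension at most $2n+2$: since $\mathrm{C}(L)$ is $\omega_1$-isotropic and $\dim\mathrm{C}(L) = \dim L + 1$, we conclude $\dim L\le 2n+1$. (Alternatively, $\alpha_1|_L = 0$ forces $T_xL\subseteq\Ker(\alpha_1|_x)$, while $\Omega_1|_L = 0$ says $T_xL$ is isotropic for the non-degenerate $2$-form $\Omega_1$ on the $(4n+2)$-dimensional space $\Ker(\alpha_1|_x)$, whence $\dim T_xL\le 2n+1$.) No step here is a genuine obstacle; the one thing to watch is the clean separation of $dr$-components on the cone $\R^+\times L$, which is precisely the bookkeeping made routine by the conical-form conventions of Appendix~\ref{appendix:cones}.
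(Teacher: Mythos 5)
Your proof is correct and follows essentially the same route as the paper: the equivalence (ii)$\iff$(iii) comes from $d\alpha_1 = 2\Omega_1$ in \eqref{eq:deriv-alpha}, and the equivalence with (i) comes from the conical decomposition $\omega_1 = r\,dr\wedge\alpha_1 + r^2\Omega_1$ of \eqref{eq:OmegaCone1}, with the paper leaving the $dr$-separation bookkeeping implicit where you spell it out. Your treatment of the dimension bound (either via the isotropic bound in $C$ or via non-degeneracy of $\Omega_1$ on $\Ker(\alpha_1)$) is also fine.
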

\begin{proof} The first equation in~\eqref{eq:deriv-alpha} shows the equivalence (ii)$\iff$(iii). The equivalence (i)$\iff$(iii) follows from~\eqref{eq:OmegaCone1}.
\end{proof}

\indent Next, from formula (\ref{eq:UpsilonCone}) together with Proposition~\ref{prop:rich-calibs} and Remark \ref{rmk:SpInvariance}, we observe that for $p = 1,2,3$ and a constant $e^{i \theta} \in S^1$, the $(2n+1)$-forms
$$\mathrm{Re}(e^{-i\theta}\Psi_p) \in \Omega^{2n+1}(M)$$
are semi-calibrations. Their calibrated submanifolds are called \emph{$\Psi_p$-special Legendrian submanifolds of phase $e^{i \theta}$}.  We observe:

\begin{prop} \label{prop:ConeSpecialLag} Let $L^{2n+1} \subset M^{4n+3}$ be a $(2n+1)$-dimensional submanifold.  We say $L$ is \emph{$\Psi_1$-special Legendrian} if any of the following equivalent conditions holds:
\begin{enumerate}[(i)]
\item $\mathrm{C}(L)$ is (up to a change of orientation) $\Upsilon_1$-special Lagrangian: $\left.\mathrm{Re}(\Upsilon_1)\right|_{\mathrm{C}(L)} = \vol_{\mathrm{C}(L)}$.
\item $\mathrm{C}(L)$ satisfies $\left.\omega_1\right|_{\Cone(L)} = 0$ and $\left.\mathrm{Im}(\Upsilon_1)\right|_{\Cone(L)} = 0$.
\item $L$ satisfies (up to a change of orientation) that $\left.\mathrm{Re}(\Psi_1)\right|_{L} = \vol_{L}$.
\item $L$ satisfies $\left.\alpha_1\right|_{L} = 0$ and $\left.\mathrm{Im}(\Psi_1)\right|_{L} = 0$.
\end{enumerate}
\end{prop}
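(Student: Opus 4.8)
The plan is to reduce all four conditions to statements about the metric cone $\mathrm{C}(L)\subset C$, using the conical expansion \eqref{eq:UpsilonCone} of $\Upsilon_1$, and then to invoke the Harvey--Lawson characterization of special Lagrangians that is already recorded in the definition of $\Upsilon_1$-special Lagrangian (namely \cite[Corollary 1.11]{Harvey-Lawson}). This mirrors the proofs of Propositions \ref{prop:CREquivalence} and \ref{prop:LegendrianEquivalence}.

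First I would record the restrictions of the relevant forms to $\mathrm{C}(L)$. Since $\dim L = 2n+1$ while $(\Omega_2 + i\Omega_3)^{n+1}\in\Omega^{2n+2}(M)$ involves only $M$-directions, this $(2n+2)$-form vanishes when pulled back to the $(2n+2)$-dimensional cone $\mathrm{C}(L)$, whose tangent space at $(r,x)$ is $\R\, r\partial_r \oplus T_xL$ with only $2n+1$ horizontal directions. Hence \eqref{eq:UpsilonCone} collapses to $\Upsilon_1|_{\mathrm{C}(L)} = r^{2n+1}\, dr\wedge(\Psi_1|_L)$, while the conical metric $g_{\mathrm{C}(L)} = dr^2 + r^2(g_M|_L)$ has volume form $\vol_{\mathrm{C}(L)} = r^{2n+1}\, dr\wedge\vol_L$. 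Taking real and imaginary parts gives $\mathrm{Re}(\Upsilon_1)|_{\mathrm{C}(L)} = r^{2n+1}\, dr\wedge(\mathrm{Re}(\Psi_1)|_L)$ and $\mathrm{Im}(\Upsilon_1)|_{\mathrm{C}(L)} = r^{2n+1}\, dr\wedge(\mathrm{Im}(\Psi_1)|_L)$, and wedging a pullback from $L$ with $dr$ is injective on $\mathrm{C}(L)$.

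Next I would close the loop of equivalences. For (i)$\iff$(iii): from the displayed formula, $\mathrm{Re}(\Upsilon_1)|_{\mathrm{C}(L)} = \vol_{\mathrm{C}(L)}$ holds for compatibly chosen orientations if and only if $\mathrm{Re}(\Psi_1)|_L = \vol_L$, and reversing the orientation of $L$ reverses that of $\mathrm{C}(L)$, so the ``up to a change of orientation'' clauses match. For (ii)$\iff$(iv): by Proposition \ref{prop:LegendrianEquivalence}, $\omega_1|_{\mathrm{C}(L)} = 0$ is equivalent to $\alpha_1|_L = 0$, and by the injectivity noted above $\mathrm{Im}(\Upsilon_1)|_{\mathrm{C}(L)} = 0$ is equivalent to $\mathrm{Im}(\Psi_1)|_L = 0$. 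For (i)$\iff$(ii): apply \cite[Corollary 1.11]{Harvey-Lawson} with phase $e^{i\theta} = 1$ to the $(2n+2)$-dimensional submanifold $N = \mathrm{C}(L)$ of the Calabi--Yau manifold $(C, g_\Cone, I_1, \omega_1, \Upsilon_1)$: some orientation makes $N$ $\Upsilon_1$-special Lagrangian of phase $1$ if and only if $\mathrm{Im}(\Upsilon_1)|_N = 0$ and $\omega_1|_N = 0$. Chaining these gives (iii)$\iff$(i)$\iff$(ii)$\iff$(iv).

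I do not expect a genuine obstacle here; the content is essentially bookkeeping. The one point that does the real work is the dimensional vanishing of the $(\Omega_2 + i\Omega_3)^{n+1}$-term on $\mathrm{C}(L)$, which is what reduces the special Lagrangian condition to a condition purely on $\Psi_1|_L$, and the only thing needing care is keeping the orientation conventions on $L$ and $\mathrm{C}(L)$ consistent so that the two ``up to a change of orientation'' clauses correspond.
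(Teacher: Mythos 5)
Your proof is correct, and its skeleton coincides with the paper's: (i)$\iff$(ii) is the Harvey--Lawson fact already recorded in the definition of $\Upsilon_1$-special Lagrangian, and (ii)$\iff$(iv) comes from \eqref{eq:UpsilonCone} together with Proposition \ref{prop:LegendrianEquivalence}, exactly as in the paper. The one place you diverge is (i)$\iff$(iii): the paper deduces it from \eqref{eq:UpsilonCone} via Remark \ref{rmk:SpInvariance} and Proposition \ref{prop:rich-calibs}, i.e.\ the appendix lemma asserting that for a semi-calibration $\gamma = e_1^\flat \wedge \alpha + \beta$ a plane containing $e_1$ is $\gamma$-calibrated iff its orthogonal complement in the plane is $\alpha$-calibrated (with $\mathrm{Sp}(n+1)$-invariance supplying the hypothesis that every line lies in a calibrated plane). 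You instead observe that the $r^{2n+2}\,\frac{1}{(n+1)!}(\Omega_2 + i\Omega_3)^{n+1}$ term of \eqref{eq:UpsilonCone} restricts to zero on $\mathrm{C}(L) = \R^+ \times L$ for dimension reasons, so that $\Upsilon_1|_{\mathrm{C}(L)} = r^{2n+1}\,dr \wedge \Psi_1|_L$ and $\vol_{\mathrm{C}(L)} = r^{2n+1}\,dr \wedge \vol_L$, and then compare. Both mechanisms ultimately amount to contracting with $\partial_r$ (in Proposition \ref{prop:rich-calibs} the $\beta$-term dies because $\iota_{e_1}\beta = 0$), but your version is more self-contained: it needs neither the comass-one statement for $\mathrm{Re}(\Psi_1)$ nor the transitivity input of Remark \ref{rmk:SpInvariance}, since conditions (i) and (iii) are literal equalities of restricted forms with volume forms. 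What the paper's citation buys is uniformity, since the same appendix lemma is reused verbatim for Propositions \ref{prop:CREquivalence}, \ref{prop:SpecIsoEquivLink}, and \ref{prop:AssocCayley}; your orientation bookkeeping (orientations of $L$ correspond to orientations of $\mathrm{C}(L)$ by prepending $\partial_r$) is also handled correctly.
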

\begin{proof} The equivalence (i)$\iff$(ii) is well-known. The equivalence (ii)$\iff$(iv) follows from equation~\eqref{eq:UpsilonCone} and Proposition~\ref{prop:LegendrianEquivalence}. The equivalence (i)$\iff$(iii) follows from~\eqref{eq:UpsilonCone}, Remark~\ref{rmk:SpInvariance}, and Proposition~\ref{prop:rich-calibs}.
\end{proof}

\subsection{Submanifolds via the $3$-Sasakian Structure} \label{sec:Submanifolds3Sas}

\indent \indent We now turn to those submanifolds of $M$ whose definition requires more than the Sasaki-Einstein structure.  Here, we will discuss the \emph{CR isotropic}, \emph{special isotropic}, and \emph{associative} submanifolds.

\subsubsection{CR Isotropic Submanifolds} \label{subsub:CRIso}

\begin{prop} \label{prop:ConeComplexLag} Let $L^{2k+1} \subset M^{4n+3}$ be a $(2k+1)$-dimensional submanifold, $1 \leq k \leq n$.  We say $L$ is \emph{$I_1$-CR isotropic} (respectively, \emph{$I_1$-CR Legendrian} if $k = n$) if any of the following equivalent conditions holds:
\begin{enumerate}[(i)]
\item $\Cone(L) \subset C$ is $I_1$-complex, $\omega_2$-isotropic, and $\omega_3$-isotropic.
\item $\Cone(L) \subset C$ is $I_1$-complex and $\omega_2$-isotropic.
\item $L$ is $I_1$-CR, $\alpha_2$-isotropic, and $\alpha_3$-isotropic.
\item $L$ is $I_1$-CR and $\alpha_2$-isotropic.
\end{enumerate}
\end{prop}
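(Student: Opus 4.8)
The plan is to establish the chain of equivalences by combining the corresponding statements for the cone (Proposition~\ref{prop:CplxIsoEquiv}, Proposition~\ref{prop:ConeComplexLag} is what we are proving, but really we use Proposition~\ref{prop:CREquivalence} and Proposition~\ref{prop:LegendrianEquivalence}) with the cone-link dictionary relating differential forms on $C$ to those on $M$. The key structural fact is that a submanifold $L^{2k+1} \subset M$ corresponds to the cone $\mathrm{C}(L)^{2k+2} \subset C$, and a differential form condition on $\mathrm{C}(L)$ translates into a condition on $L$ via the conical decomposition of the ambient form; this is exactly how Propositions~\ref{prop:CREquivalence} and \ref{prop:LegendrianEquivalence} were proved.

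First I would handle (i)$\iff$(ii). This is purely a statement about the cone $\mathrm{C}(L) \subset C$, and it is precisely Proposition~\ref{prop:CplxIsoEquiv} applied with $L$ there equal to $\mathrm{C}(L)$ here (noting $\dim \mathrm{C}(L) = 2k+2$ and $2 \le 2k+2 \le 2n+2$, so the hypothesis $1 \le k \le n$ is what is needed). Similarly, (iii)$\iff$(iv) is a statement purely about $L$: given that $L$ is $I_1$-CR (so $T_xL$ is $\mathsf{J}_1$-invariant and contains $A_1$) and $\alpha_2$-isotropic, one must show $\alpha_3|_L = 0$. The argument mimics the proof of Proposition~\ref{prop:CplxIsoEquiv}: for $X \in T_xL$, either $X \in \R A_1$ (in which case $\alpha_3(A_1) = 0$ since the Reeb fields are orthogonal), or $X \in \Ker(\alpha_1) \cap T_xL$; in the latter case $\mathsf{J}_1 X \in T_xL$ as well, and one checks using $\mathsf{J}_2 \mathsf{J}_1 = -\mathsf{J}_3$ (modulo the $\alpha \otimes A$ correction terms in the $(\Sp(n)\times 3)$-structure relations, which vanish here because $X \in \Ker(\alpha_1)$) that $\alpha_3(X)$ is controlled by $\alpha_2(\mathsf{J}_1 X)$, which vanishes by $\alpha_2$-isotropy. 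Care is needed to track the Reeb directions: one should verify $A_1, A_2, A_3$ interact correctly, i.e. that $\mathsf{J}_1 A_2 = A_3$ and the $\alpha_2$-isotropy of $L$ forces $A_2 \notin T_xL$ unless it is killed, so the only subtlety is confirming the correction terms genuinely drop out on $\Ker(\alpha_1)$.

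Next I would bridge the cone conditions and the link conditions: (i)$\iff$(iii). For this, recall (\ref{eq:OmegaCone1}), $\omega_p = r\,dr \wedge \alpha_p + r^2 \Omega_p$, so that $\omega_p|_{\mathrm{C}(L)} = 0$ if and only if both $\alpha_p|_L = 0$ and $\Omega_p|_L = 0$; by Proposition~\ref{prop:LegendrianEquivalence} the second condition is automatic once $\alpha_p|_L = 0$ (using $d\alpha_p = 2\Omega_p$). Hence ``$\mathrm{C}(L)$ is $\omega_2$-isotropic'' $\iff$ ``$L$ is $\alpha_2$-isotropic'', and likewise for the index $3$. Combined with Proposition~\ref{prop:CREquivalence} (``$\mathrm{C}(L)$ is $I_1$-complex'' $\iff$ ``$L$ is $I_1$-CR''), this gives (i)$\iff$(iii) directly. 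Assembling: (i)$\iff$(ii) by Proposition~\ref{prop:CplxIsoEquiv}, (i)$\iff$(iii) by the dictionary just described, and (iii)$\iff$(iv) by the pointwise linear-algebra argument — these three together yield the full equivalence.

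The main obstacle is the pointwise step (iii)$\iff$(iv), specifically getting the bookkeeping right for the correction terms $\alpha_p \otimes A_q$ and $\delta_{pq}\Id$ in the $(\Sp(n)\times 3)$-structure relations $\mathsf{J}_p \mathsf{J}_q - \alpha_p \otimes A_q = \epsilon_{pqr}\mathsf{J}_r - \delta_{pq}\Id$. On the horizontal part $\widetilde{\mathsf H} = \Ker(\alpha_1,\alpha_2,\alpha_3)$ the $\mathsf{J}_p$ behave like a genuine quaternionic triple, so the argument is identical to the hyperk\"ahler case; the only real work is checking that a tangent vector $X \in \Ker(\alpha_1) \cap T_xL$ decomposes compatibly with $\widetilde{\mathsf H} \oplus \widetilde{\mathsf V}$ and that the components along $A_2, A_3$ do not obstruct the conclusion. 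Since $L$ is $\alpha_2$-isotropic, $T_xL \subset \Ker(\alpha_2)$, so in fact $T_xL \subset \Ker(\alpha_1) \cap \Ker(\alpha_2)$, forcing $T_xL$ into $\widetilde{\mathsf H} \oplus \R A_1$ — wait, one must also recall $A_1 \in T_xL$ by $I_1$-CR, so $T_xL = \R A_1 \oplus (T_xL \cap \widetilde{\mathsf H})$, and on the latter summand the clean quaternionic relations apply, making the final computation a one-liner exactly as in Proposition~\ref{prop:CplxIsoEquiv}.
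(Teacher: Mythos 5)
Your proposal is correct and follows essentially the same route as the paper: the equivalence (i)$\iff$(ii) is Proposition~\ref{prop:CplxIsoEquiv} applied to the cone, and the cone--link dictionary of Propositions~\ref{prop:CREquivalence} and~\ref{prop:LegendrianEquivalence} translates the remaining conditions. The only deviation is your last step: the paper closes the loop by applying the dictionary once more to get (ii)$\iff$(iv), whereas you prove (iii)$\iff$(iv) directly on $M$ via the almost $3$-contact relations. Your intrinsic argument does go through --- since $A_1 \in T_xL$ and $T_xL = \R A_1 \oplus (T_xL \cap \Ker(\alpha_1))$, the identity $\alpha_3(X) = -\alpha_2(\mathsf{J}_1 X)$ for $X \in \Ker(\alpha_1)$ (which follows from $\mathsf{J}_1 A_2 = A_3$ and $\mathsf{J}_1$-orthogonality, with the correction terms vanishing exactly as you anticipate) kills $\alpha_3|_L$ --- but it buys nothing over the paper's one-line citation, and the aside about $A_2 \notin T_xL$ is unnecessary to the argument.
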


\begin{proof} The equivalence (i)$\iff$(ii) was shown in Proposition \ref{prop:CplxIsoEquiv}. The equivalences (i)$\iff$(iii) and (ii)$\iff$(iv) both follow directly from Proposition \ref{prop:CREquivalence} and Proposition \ref{prop:LegendrianEquivalence}.
\end{proof}

In the CR Legendrian case, we can say more:

\begin{cor} \label{cor:CRLegEquiv} Let $L^{2n+1} \subset M^{4n+3}$ be a $(2n+1)$-dimensional submanifold.  The following are equivalent:
\begin{enumerate}[(i)]
\item $\mathrm{C}(L)$ is $I_1$-complex, $\omega_2$-Lagrangian, and $\omega_3$-Lagrangian (i.e., $\mathrm{C}(L)$ is $I_1$-complex Lagrangian).
\item $\mathrm{C}(L)$ is $\omega_2$-Lagrangian and $\omega_3$-Lagrangian.
\item $\mathrm{C}(L)$ is $I_1$-complex, $\Upsilon_2$-special Lagrangian of phase $i^{n+1}$, and $\Upsilon_3$-special Lagrangian of phase $1$.
\item $L$ is $I_1$-CR, $\alpha_2$-Legendrian, and $\alpha_3$-Legendrian (i.e., $L$ is $I_1$-CR Legendrian).
\item $L$ is $\alpha_2$-Legendrian and $\alpha_3$-Legendrian.
\item $L$ is $I_1$-CR, $\Psi_2$-special Legendrian of phase $i^{n+1}$, and $\Psi_3$-special Lagrangian of phase $1$.
\end{enumerate}
\end{cor}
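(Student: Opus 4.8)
The plan is to prove the corollary by passing to the metric cone $\mathrm{C}(L)^{2n+2} \subset C^{4n+4}$, where the corresponding statements have already been established, and then translating each condition back to $L$ using the link–cone dictionaries of Propositions \ref{prop:CREquivalence}, \ref{prop:LegendrianEquivalence}, \ref{prop:ConeSpecialLag}, and \ref{prop:ConeComplexLag}. First I would note that conditions (i), (ii), (iii) are exactly the assertions that $\mathrm{C}(L)$ satisfies conditions (i), (iii), (iv) of Proposition \ref{prop:CplxLagEquiv}; since $\mathrm{C}(L)$ is a $(2n+2)$-dimensional submanifold of the hyperk\"ahler $C^{4n+4}$, that proposition applies verbatim and gives (i) $\iff$ (ii) $\iff$ (iii).

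Next I would dispatch the three link conditions one at a time, each by a single citation. For (iv): apply Proposition \ref{prop:ConeComplexLag} in the top-dimensional case $k = n$, where ``$\alpha_p$-isotropic'' is the same as ``$\alpha_p$-Legendrian'' and ``$\omega_p$-isotropic'' the same as ``$\omega_p$-Lagrangian''; then condition (iv), that $L$ is $I_1$-CR, $\alpha_2$-Legendrian, and $\alpha_3$-Legendrian, is precisely Proposition \ref{prop:ConeComplexLag}(iii), hence equivalent to Proposition \ref{prop:ConeComplexLag}(i), which is condition (i). For (v): apply Proposition \ref{prop:LegendrianEquivalence} to $\alpha_2$ and to $\alpha_3$ separately, so that (v) is equivalent to ``$\mathrm{C}(L)$ is $\omega_2$-Lagrangian and $\omega_3$-Lagrangian'', which is condition (ii). For (vi): by Proposition \ref{prop:CREquivalence}, ``$L$ is $I_1$-CR'' $\iff$ ``$\mathrm{C}(L)$ is $I_1$-complex''; and by the cyclically permuted, general-phase version of Proposition \ref{prop:ConeSpecialLag}, ``$L$ is $\Psi_2$-special Legendrian of phase $i^{n+1}$'' $\iff$ ``$\mathrm{C}(L)$ is $\Upsilon_2$-special Lagrangian of phase $i^{n+1}$'' and ``$L$ is $\Psi_3$-special Legendrian of phase $1$'' $\iff$ ``$\mathrm{C}(L)$ is $\Upsilon_3$-special Lagrangian of phase $1$'', so that (vi) $\iff$ (iii). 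Combined with (i) $\iff$ (ii) $\iff$ (iii) this closes the circle of equivalences. Along the way one checks that the two orientations of $L$ and of $\mathrm{C}(L)$ are matched consistently throughout (each cone condition determines, up to this choice, the corresponding link condition), which is routine given Propositions \ref{prop:CREquivalence} and \ref{prop:ConeSpecialLag}.

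The one point I would flag, though it is not a real obstacle, is that Proposition \ref{prop:ConeSpecialLag} is stated only for $\Psi_1$ with phase $1$, whereas (vi) requires $\Psi_2$ with phase $i^{n+1}$ and $\Psi_3$ with phase $1$. Since the $S^1$-family of phases just rescales $\Upsilon_p$ by a unit complex number, and the cyclic permutation $1 \to 2 \to 3$ acts in the obvious way on the tuple $(\alpha_p, \Omega_p, \Psi_p, \omega_p, \Upsilon_p)$, the proof of Proposition \ref{prop:ConeSpecialLag} — which rests only on the conical decomposition \eqref{eq:UpsilonCone} (and its cyclic analogues), Remark \ref{rmk:SpInvariance}, and Proposition \ref{prop:rich-calibs} — carries over mutatis mutandis. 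The only thing to verify is that the phase bookkeeping agrees with what is already recorded in the proof of Proposition \ref{prop:CplxLagEquiv}(iv), namely that $(-i)^{n+1}\Upsilon_2 = \tfrac{1}{(n+1)!}(\omega_1 - i\omega_3)^{n+1}$ and $\Upsilon_3 = \tfrac{1}{(n+1)!}(\omega_1 + i\omega_2)^{n+1}$; with this in hand the corollary reduces entirely to assembling previously proved propositions.
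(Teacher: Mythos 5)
Your proposal is correct and follows essentially the same route as the paper: (i)$\iff$(ii)$\iff$(iii) via Proposition \ref{prop:CplxLagEquiv}, (i)$\iff$(iv) via Proposition \ref{prop:ConeComplexLag} in the top-dimensional case, (ii)$\iff$(v) via Proposition \ref{prop:LegendrianEquivalence}, and (iii)$\iff$(vi) via Proposition \ref{prop:ConeSpecialLag} (with Proposition \ref{prop:CREquivalence} for the CR piece). Your remark about extending Proposition \ref{prop:ConeSpecialLag} to cyclic permutations and general phases is exactly the (implicit) step the paper's terse citation relies on, so there is no gap.
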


\begin{proof} The equivalence (i)$\iff$(ii)$\iff$(iii) was shown in Proposition \ref{prop:CplxLagEquiv}.  The equivalence (i)$\iff$(iv) was shown in Proposition \ref{prop:ConeComplexLag}.  Finally, (ii)$\iff$(v) follows from Proposition \ref{prop:LegendrianEquivalence}, and (iii)$\iff$(vi) follows from Proposition \ref{prop:ConeSpecialLag}.
\end{proof}

\indent Examples of CR isotropic submanifolds can be constructed via Example~\ref{ex:TC} together with Corollary~\ref{cor:LiftOfCircleBundle}.

\subsubsection{Special Isotropic Submanifolds}

\begin{defn} \label{defn:special-isotropic} The \emph{special isotropic forms} on $M$ are the real $(2k-1)$-forms $\theta_{I,2k-1}, \theta_{J,2k-1}, \theta_{K,2k-1} \in \Omega^{2k-1}(M)$ defined by
\begin{align*}
\theta_{I, 2k-1} & := (r \partial_r\,\lrcorner\,\Theta_{I,2k})|_M, & \theta_{J,2p-1} & := (r \partial_r\,\lrcorner\,\Theta_{J, 2k})|_M, & \theta_{K,2k-1} & := (r \partial_r\,\lrcorner\,\Theta_{K,2k})|_M.
\end{align*}
In particular, for $2k-1 = 1, 3, 2n+1$, these are:
\begin{align*}
\theta_{I,1} & = \alpha_2, \\
\theta_{I,3} & = \alpha_2 \wedge \Omega_2 - \alpha_3 \wedge \Omega_3 = \alpha_2 \wedge \kappa_2 - \alpha_3 \wedge \kappa_3, \\
\theta_{I,2n+1} & = \text{Re}(\Psi_1).
\end{align*}
By Remark~\ref{rmk:SpInvariance}, Proposition~\ref{prop:rich-calibs}, and Theorem~\ref{thm:special-isotropic-comass-one}, the special isotropic forms $\theta_{I, 2k-1}, \theta_{J, 2k-1}, \theta_{K,2k-1}$ are semi-calibrations.
\end{defn}

\begin{prop} \label{prop:SpecIsoEquivLink} Let $L^{2k-1} \subset M^{4n+3}$ be a $(2k-1)$-dimensional submanifold, $1 \leq k \leq n+1$.  We say $L$ is \emph{$\theta_{I,2k-1}$-special isotropic} if either of the following equivalent conditions holds:
\begin{enumerate}[(i)]
\item $\mathrm{C}(L) \subset C$ is $\Theta_{I,2k}$-special isotropic.
\item $L$ is $\theta_{I,2k-1}$-special isotropic.
\end{enumerate}
\end{prop}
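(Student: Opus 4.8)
The plan is to treat this as another instance of the cone/link dictionary for $\Sp(n+1)$-stabilized calibrations, exactly as in Propositions~\ref{prop:CREquivalence}--\ref{prop:ConeSpecialLag}. The one new ingredient to check is that $\Theta_{I,2k} = \tfrac{1}{k!}\mathrm{Re}(\sigma_1^k)$ is a conical form of the right homogeneity. Since $\mathscr{L}_{r\partial_r}\omega_p = 2\omega_p$ for $p = 1,2,3$, we get $\mathscr{L}_{r\partial_r}\sigma_1 = 2\sigma_1$ and hence $\mathscr{L}_{r\partial_r}\Theta_{I,2k} = 2k\,\Theta_{I,2k}$. By the formalism of conical forms (Definition~\ref{defn:homogeneous-form} and Appendix~\ref{appendix:cones}), a form of homogeneity $2k$ on $\Cone(M)$ has the shape
\[ \Theta_{I,2k} = r^{2k-1}\, dr \wedge \theta_{I,2k-1} + r^{2k}\, \mu, \]
where $\theta_{I,2k-1} = (r\partial_r\,\lrcorner\,\Theta_{I,2k})|_M$ is exactly the form of Definition~\ref{defn:special-isotropic} and $\mu \in \Omega^{2k}(M)$ is horizontal (no $dr$); the displayed expansion of $\Theta_{I,4}$ above is the case $k = 2$.

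Next I would carry out the pointwise comparison along $\Cone(L)$. Fix $x \in L$, $r > 0$, and an oriented $g_M$-orthonormal basis $e_1,\dots,e_{2k-1}$ of $T_xL$; then $\{\partial_r,\, r^{-1}e_1,\dots, r^{-1}e_{2k-1}\}$ is an oriented $g_\Cone$-orthonormal basis of $T_{(r,x)}\Cone(L)$ for the product orientation on $\R^+ \times L$. Because this frame contains $\partial_r$ exactly once and is otherwise tangent to $M$, the $r^{2k}\mu$ term contributes nothing (as $\mu$ is horizontal, $\partial_r\,\lrcorner\,\mu = 0$, and there are too few $M$-directions to fill it), while the $r^{2k-1}\,dr\wedge\theta_{I,2k-1}$ term gives, after the powers of $r$ cancel,
\[ \Theta_{I,2k}\bigl(\partial_r,\, \tfrac1r e_1,\dots,\tfrac1r e_{2k-1}\bigr) = \theta_{I,2k-1}(e_1,\dots,e_{2k-1}). \]
Since $\vol_{\Cone(L)}$ and $\vol_L$ each evaluate to $1$ on these oriented orthonormal frames, the identity $\Theta_{I,2k}|_{\Cone(L)} = \vol_{\Cone(L)}$ holds at $(r,x)$ if and only if $\theta_{I,2k-1}|_L = \vol_L$ holds at $x$, which is the equivalence (i)$\iff$(ii). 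Flipping the orientation of $L$ flips that of $\Cone(L)$ compatibly, so the ``up to a change of orientation'' versions correspond as well; the comass-one facts needed to call either side a genuine calibration are Theorem~\ref{thm:special-isotropic-comass-one} and the remark in Definition~\ref{defn:special-isotropic}, and alternatively the entire argument is subsumed by Proposition~\ref{prop:rich-calibs} in view of Remark~\ref{rmk:SpInvariance}.

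There is no real obstacle here: this is a bookkeeping argument, and the only points requiring care are the homogeneity degree of $\Theta_{I,2k}$, the cancellation of the powers of $r$ in the expansion above, and fixing the orientation convention on $\Cone(L)$ so that no spurious sign appears in the ``up to orientation'' clauses. The statements for $\theta_{J,2k-1}$ and $\theta_{K,2k-1}$ are identical with $\sigma_1$ replaced by $\sigma_2, \sigma_3$; and for $k = n+1$, where $\theta_{I,2n+1} = \mathrm{Re}(\Psi_1)$, one recovers the special-Legendrian case of Proposition~\ref{prop:ConeSpecialLag} as a consistency check.
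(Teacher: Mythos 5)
Your proof is correct and follows essentially the same route as the paper, which disposes of the statement in one line by citing Remark~\ref{rmk:SpInvariance} and Proposition~\ref{prop:rich-calibs}; your conical decomposition $\Theta_{I,2k} = r^{2k-1}\,dr\wedge\theta_{I,2k-1} + r^{2k}\mu$ and the frame computation with $\{\partial_r, r^{-1}e_1,\dots,r^{-1}e_{2k-1}\}$ simply make explicit the content of that citation (including the $r$-scaling that Proposition~\ref{prop:scaling-calib} handles in general). The comass-one input from Theorem~\ref{thm:special-isotropic-comass-one} and Remark~\ref{rmk:SpInvariance} is invoked exactly as in the paper, so there is nothing to add.
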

\begin{proof} This follows from Remark~\ref{rmk:SpInvariance} and Proposition~\ref{prop:rich-calibs}.
\end{proof}

\subsubsection{Associative $3$-folds}

\indent The following definition is due to Bryant-Harvey~\cite{Bryant-Harvey}.

\begin{defn} The \emph{generalized associative $3$-forms} are the real $3$-forms $\phi_1, \phi_2, \phi_3 \in \Omega^3(M)$ defined by
\begin{align*}
\phi_1 & = -\alpha_1 \wedge \Omega_1 + \alpha_2 \wedge \Omega_2 + \alpha_3 \wedge \Omega_3, \\
\phi_2 & = \alpha_1 \wedge \Omega_1 - \alpha_2 \wedge \Omega_2 + \alpha_3 \wedge \Omega_3, \\
\phi_3 & = \alpha_1 \wedge \Omega_1 + \alpha_2 \wedge \Omega_2 - \alpha_3 \wedge \Omega_3.
\end{align*}
Equivalently,
\begin{align*}
\phi_1 & = \alpha_1 \wedge \alpha_2 \wedge \alpha_3 - \alpha_1 \wedge \kappa_1 + \alpha_2 \wedge \kappa_2 + \alpha_3 \wedge \kappa_3, \\
\phi_2 & = \alpha_1 \wedge \alpha_2 \wedge \alpha_3 + \alpha_1 \wedge \kappa_1 - \alpha_2 \wedge \kappa_2 + \alpha_3 \wedge \kappa_3, \\
\phi_3 & = \alpha_1 \wedge \alpha_2 \wedge \alpha_3 + \alpha_1 \wedge \kappa_1 + \alpha_2 \wedge \kappa_2 - \alpha_3 \wedge \kappa_3,
\end{align*}
where the $\kappa_j$ were defined in~\eqref{eq:Omegas}. A $3$-dimensional submanifold $L^3 \subset M^{4n+3}$ is \emph{$\phi_1$-associative} if it is calibrated by $\phi_1$:
$$\left.\phi_1\right|_L = \vol_L.$$
The definitions of $\phi_2$-associative and $\phi_3$-associative are analogous.
\end{defn}

\indent Observing that
$$\Phi_1 = r^3\,dr \wedge \phi_1 + r^4\, \frac{1}{2}(-\Omega_1^2 + \Omega_2^2 + \Omega_3^2),$$
we obtain:

\begin{prop} \label{prop:AssocCayley} Let $L^3 \subset M^{4n+3}$ be a $3$-dimensional submanifold.  The following are equivalent:
\begin{enumerate}[(i)]
\item $\Cone(L) \subset C$ is $\Phi_1$-Cayley.
\item $L \subset M$ is $\phi_1$-associative.
\end{enumerate}
\end{prop}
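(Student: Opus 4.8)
The plan is to reduce the equivalence to the cone statement already established for generalized Cayley $4$-folds, using the conical decomposition of $\Phi_1$ exhibited just above the proposition. First I would recall from Appendix~\ref{appendix:cones} (and the general principle used repeatedly in $\S$\ref{sec:SubmanifoldsSE}) that if $\phi$ is a conical $k$-form on $C = \Cone(M)$ homogeneous of the appropriate degree, written as $\phi = r^{k-1}\,dr \wedge \beta + r^k\,\eta$ for forms $\beta \in \Omega^{k-1}(M)$, $\eta \in \Omega^k(M)$, then a submanifold $L \subset M$ is $\beta$-calibrated if and only if its cone $\Cone(L) \subset C$ is $\phi$-calibrated. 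This is precisely Proposition~\ref{prop:rich-calibs} combined with Remark~\ref{rmk:SpInvariance}, which guarantees the relevant comass-one and transitivity hypotheses hold for $\Phi_1$ since $\Phi_1$ is $\Sp(n+1)$-invariant.

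Next I would apply this to the displayed identity
$$\Phi_1 = r^3\,dr \wedge \phi_1 + r^4\,\tfrac{1}{2}(-\Omega_1^2 + \Omega_2^2 + \Omega_3^2),$$
with $k = 4$, $\beta = \phi_1$, and $\eta = \tfrac{1}{2}(-\Omega_1^2 + \Omega_2^2 + \Omega_3^2)$. The conclusion is immediate: $\Cone(L)$ is $\Phi_1$-calibrated, i.e.\ $\Phi_1$-Cayley, if and only if $L$ is $\phi_1$-calibrated, i.e.\ $\phi_1$-associative. One should note that the orientation conventions match up --- the cone of an oriented $3$-fold $L$ carries the induced orientation, and under this convention $\vol_{\Cone(L)} = r^3\,dr \wedge \vol_L$, so restricting $\Phi_1$ to $\Cone(L)$ yields $r^3\,dr\wedge(\phi_1|_L) + r^4\,(\eta|_L)$, and this equals $\vol_{\Cone(L)}$ exactly when $\phi_1|_L = \vol_L$ and the $r^4$-term vanishes; but the latter vanishing is automatic once the $dr$-component is a volume form, by the comass-one property. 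Finally, for the reverse direction, if $L$ is $\phi_1$-associative then $\Cone(L)$ is $\Phi_1$-calibrated by the same identity together with the fact that $\phi_1$ has comass one on $M$ (which follows from $\Phi_1$ having comass one on $C$, as recorded in $\S$\ref{subsec:SubViaHK}).

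There is essentially no obstacle here; the only point requiring a sentence of care is confirming that the hypotheses of Proposition~\ref{prop:rich-calibs} are genuinely met, namely that $\Phi_1$ is a calibration of comass one whose stabilizer acts transitively on the unit sphere of $T_xC$ --- both are supplied by the Bryant--Harvey stabilizer computation and Remark~\ref{rmk:SpInvariance}. I would therefore write the proof as a one-line invocation: the equivalence follows from the displayed conical decomposition of $\Phi_1$ together with Proposition~\ref{prop:rich-calibs} and Remark~\ref{rmk:SpInvariance}, exactly as in the proofs of Propositions~\ref{prop:CREquivalence} and~\ref{prop:ConeSpecialLag}.
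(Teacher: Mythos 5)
Your proposal is correct and takes essentially the same route as the paper, whose entire proof is the one-line citation of Remark~\ref{rmk:SpInvariance} and Proposition~\ref{prop:rich-calibs} applied to the displayed decomposition $\Phi_1 = r^3\,dr\wedge\phi_1 + r^4\,\tfrac12(-\Omega_1^2+\Omega_2^2+\Omega_3^2)$. One cosmetic point: the vanishing of the $r^4$-term on $\Cone(L)$ does not require the comass-one property --- it is automatic because that term has no $dr$-factor and a $4$-form pulled back from $M$ restricts to zero on the $3$-dimensional link directions, which is precisely how Proposition~\ref{prop:rich-calibs} packages the computation.
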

\begin{proof} This follows from Remark~\ref{rmk:SpInvariance} and Proposition~\ref{prop:rich-calibs}.
\end{proof}

\indent Finally, we remark on the relationships between the above submanifolds. Let us recall that a manifold is called Sasaki-Einstein if its cone is Calabi-Yau and that a $7$-manifold is called nearly-parallel $\G_2$ if its cone is a $\mathrm{Spin}(7)$-manifold. Suppose now that $(Y^7, (g, \alpha, \mathsf{J}, \Psi))$ is a Sasaki-Einstein $7$-manifold.  It is well-known that $Y$ inherits a nearly-parallel $\G_2$-structure by the following formula:
\begin{equation*}
\phi = \alpha \wedge \Omega - \text{Re}(\Psi).
\end{equation*}
The real $3$-form $\phi \in \Omega^3(M)$ is called the \emph{associative $3$-form}, and a $3$-dimensional submanifold $\Sigma^3 \subset M$ satisfying $\phi|_\Sigma = \vol_\Sigma$ is called \emph{associative}.  The following fact is well-known, although we prove a more general result in Proposition~\ref{prop:AssocCases}.

\begin{prop} Let $(Y^7, (g, \alpha, \mathsf{J}, \Psi))$ be a Sasaki-Einstein $7$-manifold, and equip $Y$ with its induced nearly-parallel $\G_2$-structure $\phi$.  Let $L^3 \subset Y$ be a $3$-dimensional submanifold.  Then:
\begin{enumerate}[(a)]
\item If $L$ is CR, then $L$ is associative.
\item If $L$ is special Legendrian of phase $e^{i\pi} = -1$, then $L$ is associative.
\end{enumerate}
\end{prop}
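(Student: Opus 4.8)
The plan is to mimic the proof of the analogous Calabi--Yau statement for Cayley $4$-folds proved above: I write the induced nearly-parallel $\G_2$-form as $\phi = \alpha \wedge \Omega - \text{Re}(\Psi)$ and, at a fixed point $x \in L$, examine the two summands separately on the oriented $3$-plane $T_xL$. The mechanism is that the CR hypothesis forces $\text{Re}(\Psi)|_{T_xL} = 0$ while $\alpha\wedge\Omega$ contributes the volume form, whereas the Legendrian hypothesis forces $(\alpha\wedge\Omega)|_{T_xL} = 0$ while $-\text{Re}(\Psi)$ contributes the volume form.

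For part (a): if $L$ is CR then each $T_xL$ is $\mathsf{J}$-invariant and contains the Reeb field $A$, so $T_xL = \text{span}_{\R}\{A, e, \mathsf{J}e\}$ for some unit vector $e \in \Ker(\alpha_x)$, and $\{A, e, \mathsf{J}e\}$ is an orthonormal basis. Because $\Psi$ is a $3$-form on the $6$-plane $\Ker(\alpha)$, i.e.\ $A \,\lrcorner\, \Psi = 0$, I get $\Psi(A, e, \mathsf{J}e) = 0$ and hence $\text{Re}(\Psi)|_{T_xL} = 0$; and $(\alpha \wedge \Omega)(A, e, \mathsf{J}e) = \alpha(A)\,\Omega(e, \mathsf{J}e) = g(\mathsf{J}e, \mathsf{J}e) = 1$, so $\alpha \wedge \Omega$ restricts to $\vol_{T_xL}$. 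Thus $\phi|_L = \vol_L$ for the appropriate orientation, i.e.\ $L$ is associative. (Equivalently, one can read the first claim off Proposition~\ref{prop:CREquivalence}(iv) with $k = 1$, together with the vanishing of $\text{Re}(\Psi)$.)

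For part (b): if $L$ is $\Psi$-special Legendrian of phase $e^{i\pi} = -1$ then $L$ is $\alpha$-Legendrian and calibrated by $\text{Re}(e^{-i\pi}\Psi) = -\text{Re}(\Psi)$, so $-\text{Re}(\Psi)|_L = \vol_L$. Being $\alpha$-Legendrian gives $\alpha|_L = 0$, and pulling back the identity $d\alpha = 2\Omega$ from~\eqref{eq:deriv-alpha} to $L$ then gives $\Omega|_L = 0$, whence $(\alpha\wedge\Omega)|_L = 0$. Therefore $\phi|_L = -\text{Re}(\Psi)|_L = \vol_L$, so $L$ is associative.

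I do not foresee a genuine obstacle: the statement collapses to the two elementary vanishings above. The only points that need care are the orientation bookkeeping --- which is exactly what forces the phase in (b) to be $-1$, just as the phase $e^{i\pi}$ appeared in the Calabi--Yau/Cayley case --- and the normalization of the induced $\G_2$-structure; but since $\phi$ has comass one, it suffices to exhibit a single oriented $3$-plane on which $\phi$ evaluates to $1$, which is precisely what the computations above do. Alternatively, both parts are the $n = 1$ specialization of Proposition~\ref{prop:AssocCases}, so one could simply defer to that more general result.
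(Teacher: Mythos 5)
Your argument is correct, but it is worth noting that the paper does not actually prove this proposition at all: it is stated as well-known, with a pointer to the ``more general'' Proposition~\ref{prop:AssocCases}, whose proof runs through the cone $\mathrm{C}(L) \subset C$ and Bryant--Harvey's Cayley results rather than through a direct pointwise computation. What you do instead is transplant the paper's proof of the analogous Calabi--Yau/Cayley statement (splitting the calibration into two pieces and showing that on $T_xL$ one piece vanishes while the other gives the volume form): in (a) the CR hypothesis gives $T_xL = \mathrm{span}\{A, e, \mathsf{J}e\}$, so $\iota_A \Psi = 0$ kills $\mathrm{Re}(\Psi)$ while $(\alpha \wedge \Omega)(A, e, \mathsf{J}e) = \Omega(e, \mathsf{J}e) = 1$; in (b) the phase $-1$ calibration gives $-\mathrm{Re}(\Psi)|_L = \vol_L$ and $\alpha|_L = 0$, and in fact $\alpha|_L = 0$ alone already forces $(\alpha \wedge \Omega)|_L = 0$ on a $3$-fold, so the detour through $d\alpha = 2\Omega$ and $\Omega|_L = 0$ is harmless but unnecessary. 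Your route buys something the paper's deferral does not: it is self-contained and valid for an arbitrary Sasaki--Einstein $7$-manifold, whereas Proposition~\ref{prop:AssocCases} assumes a $3$-Sasakian ambient space, and a Sasaki--Einstein $7$-manifold need not be $3$-Sasakian; for the same reason your closing remark that one ``could simply defer'' to the $n=1$ case of Proposition~\ref{prop:AssocCases} is not quite accurate in the stated generality (that proposition generalizes the dimension, not the hypothesis on the ambient structure), so the direct computation you give is in fact the right proof to record.
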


\indent When the ambient space is $3$-Sasakian, the above fact generalizes to higher dimensions in the following way:

\begin{prop} \label{prop:AssocCases} Let $M^{4n+3}$ be a $3$-Sasakian $(4n+3)$-manifold.  Let $L^3 \subset M$ be a $3$-dimensional submanifold.  Then:
\begin{enumerate}[(a)]
\item If $L$ is $I_1$-CR or $I_3$-CR, then $L$ is $\phi_2$-associative.
\item If $L$ is $-\theta_{I,3}$-special isotropic or $\theta_{K,3}$-special isotropic, then $L$ is $\phi_2$-associative.
\item If $L$ is $I_1$-CR isotropic, then $L$ is simultaneously $I_1$-CR, $-\theta_{J,3}$-special isotropic, and $\theta_{K,3}$-special isotropic, and hence is $\phi_2$-associative.
\end{enumerate}
\end{prop}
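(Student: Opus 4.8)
The plan is to reduce everything to the analogous statement on the cone, Proposition~\ref{prop:CayleyCases}, by systematically applying the cone/link dictionary established in this section. The key observation is that $\Phi_1 = r^3\,dr \wedge \phi_1 + r^4\,\frac{1}{2}(-\Omega_1^2 + \Omega_2^2 + \Omega_3^2)$ and, by the same computation, $\Phi_2 = r^3\,dr \wedge \phi_2 + r^4\,\frac{1}{2}(\Omega_1^2 - \Omega_2^2 + \Omega_3^2)$, so that $\phi_2$ is exactly $(r\partial_r \lrcorner \Phi_2)|_M$. Hence, by Remark~\ref{rmk:SpInvariance} and Proposition~\ref{prop:rich-calibs}, a $3$-fold $L \subset M$ is $\phi_2$-associative if and only if its cone $\mathrm{C}(L) \subset C$ is $\Phi_2$-Cayley, just as in Proposition~\ref{prop:AssocCayley}. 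This is the bridge that makes all three parts fall out of the cone picture.

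For part (a): if $L$ is $I_1$-CR (resp.\ $I_3$-CR), then by Proposition~\ref{prop:CREquivalence} the cone $\mathrm{C}(L)$ is $I_1$-complex (resp.\ $I_3$-complex). By Proposition~\ref{prop:CayleyCases}(a), $\mathrm{C}(L)$ is then $\Phi_2$-Cayley, and by the bridge above $L$ is $\phi_2$-associative. For part (b): if $L$ is $-\theta_{I,3}$-special isotropic (resp.\ $\theta_{K,3}$-special isotropic), then by Proposition~\ref{prop:SpecIsoEquivLink} the cone $\mathrm{C}(L)$ is $-\Theta_{I,4}$-special isotropic (resp.\ $\Theta_{K,4}$-special isotropic); one needs only to note that $\theta_{I,3} = (r\partial_r \lrcorner \Theta_{I,4})|_M$ is compatible with negation, which is immediate. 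Proposition~\ref{prop:CayleyCases}(b) then gives that $\mathrm{C}(L)$ is $\Phi_2$-Cayley, hence $L$ is $\phi_2$-associative. For part (c): if $L$ is $I_1$-CR isotropic, then by Proposition~\ref{prop:ConeComplexLag} the cone $\mathrm{C}(L)$ is $I_1$-complex isotropic; by Proposition~\ref{prop:CayleyCases}(c) it is simultaneously $I_1$-complex, $-\Theta_{J,4}$-special isotropic, and $\Theta_{K,4}$-special isotropic. Translating each of these three conditions back down via Propositions~\ref{prop:CREquivalence} and~\ref{prop:SpecIsoEquivLink} yields that $L$ is simultaneously $I_1$-CR, $-\theta_{J,3}$-special isotropic, and $\theta_{K,3}$-special isotropic; combining with part (a) or (b) gives $\phi_2$-associative. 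Alternatively, (c) is simply the conjunction of instances of (a) and (b) once the equivalences of Proposition~\ref{prop:ConeComplexLag} are invoked.

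I do not anticipate a serious obstacle here: the entire argument is a bookkeeping exercise in the cone correspondence, and every nontrivial geometric input is already packaged in Proposition~\ref{prop:CayleyCases}. The one point requiring a small amount of care is confirming that the $r\partial_r$-contraction commutes appropriately with the sign changes and with the various isotropy/complex conditions under the correspondence between forms on $C$ and forms on $M$ — in other words, checking that "$-\Theta_{I,4}$ on $\mathrm{C}(L)$" corresponds precisely to "$-\theta_{I,3}$ on $L$" and likewise for $K$. This is handled uniformly by the fact that conical contraction is $\R$-linear together with Proposition~\ref{prop:rich-calibs}, so it reduces to the already-established Proposition~\ref{prop:SpecIsoEquivLink} applied with the opposite orientation.
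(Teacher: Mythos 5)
Your proposal is correct and takes essentially the same route as the paper: lift $L$ to its cone, invoke Proposition~\ref{prop:CayleyCases} of Bryant--Harvey, and descend via the cone/link dictionary (Propositions~\ref{prop:CREquivalence}, \ref{prop:SpecIsoEquivLink}, \ref{prop:ConeComplexLag}, and the $\phi_2$/$\Phi_2$ analogue of Proposition~\ref{prop:AssocCayley}, which you rightly make explicit). The only difference is that for part (c) the paper also records a second, direct argument on $M$ using the explicit formulas for $\theta_{J,3}$ and $\theta_{K,3}$, but your cone-based argument matches its primary one.
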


\begin{proof} (a) If $L \subset M$ is $I_1$-CR (resp., $I_3$-CR), then Proposition \ref{prop:CREquivalence} implies that its cone $\mathrm{C}(L) \subset C$ is $I_1$-complex (resp., $I_3$-complex).  By Proposition \ref{prop:CayleyCases}(a), $\mathrm{C}(L)$ is $\Phi_2$-Cayley, so by Proposition \ref{prop:AssocCayley}, $L$ is $\phi_2$-associative. \\
\indent (b) If $L \subset M$ is $-\theta_{I,3}$-special isotropic (resp., $\theta_{K,3}$-special isotropic), then Proposition \ref{prop:SpecIsoEquivLink} implies that its cone $\mathrm{C}(L) \subset C$ is $-\Theta_{I,4}$-special isotropic (resp., $\Theta_{K,4}$-special isotropic). By Proposition \ref{prop:CayleyCases}(b), $\mathrm{C}(L)$ is $\Phi_2$-Cayley, so by Proposition \ref{prop:AssocCayley}, $L$ is $\phi_2$-associative.  \\
\indent (c) If $L$ is $I_1$-CR isotropic, then Proposition \ref{prop:ConeComplexLag} implies that $\mathrm{C}(L) \subset C$ is $I_1$-complex isotropic, and the result follows from an argument analogous to those used in parts (a) and (b).  Alternatively, if $L$ is $I_1$-CR isotropic, then by definition, $L$ is $I_1$-CR, $\alpha_2$-isotropic, and $\alpha_3$-isotropic.  Recalling that
\begin{align*}
\theta_{J,3} & = \alpha_3 \wedge \Omega_3 - \alpha_1 \wedge \Omega_1 & \theta_{K,3} & = \alpha_1 \wedge \Omega_1 - \alpha_2 \wedge \Omega_2,
\end{align*}
we observe that $L$ is $-\theta_{J,3}$- and $\theta_{K,3}$-special isotropic.
\end{proof}

\begin{rmk} Where associative $3$-folds in $3$-Sasakian manifolds $M^{4n+3}$ are concerned, the case $n = 1$ has received the most attention in light of the connection to $\mathrm{G}_2$-geometry.  Recently, several studies have considered the two $1$-parameter families of \emph{squashed} associative $3$-forms on $M^7$ given by
\begin{align*}
-\phi_{1,t}^- & = \alpha_1 \wedge \alpha_2 \wedge \alpha_3 + t^2(- \alpha_1 \wedge \kappa_1 + \alpha_2 \wedge \kappa_2 + \alpha_3 \wedge \kappa_3), \\
\phi_{1,t}^+ & = \alpha_1 \wedge \alpha_2 \wedge \alpha_3 - t^2(\alpha_1 \wedge \kappa_1 + \alpha_2 \wedge \kappa_2 + \alpha_3 \wedge \kappa_3).
\end{align*}
See, for example,~\cite{BM},~\cite{KL}, or~\cite{LO}.
\end{rmk}

\subsubsection{Summary}

The following table summarizes the relationships discussed above.
$$\begin{tabular}{| c | c || c | c |} \hline
$\dim(\mathrm{C}(L))$ & Cone $\mathrm{C}(L) \subset C$ & Link $L \subset M$ & $\dim(L)$ \\ \hline \hline
$2k$ & $I_1$-complex & $I_1$-CR & $2k-1$ \\ \hline
$2n+2$ & $\omega_1$-Lagrangian & $\alpha_1$-Legendrian & $2n+1$ \\ \hline
$\leq 2n+2$ & $\omega_1$-isotropic & $\alpha_1$-isotropic & $\leq 2n+1$ \\ \hline
$2n+2$ & $\Upsilon_1$-special Lagrangian & $\Psi_1$-special Legendrian & $2n+1$ \\ \hline
$2n+2$ & $I_1$-complex Lagrangian & $I_1$-CR Legendrian & $2n+1$ \\ \hline
$2k$ & $I_1$-complex isotropic & $I_1$-CR isotropic & $2k-1$ \\ \hline
$2k$ & $\Theta_{I,2k}$-special isotropic & $\theta_{I,2k-1}$-special isotropic & $2k-1$ \\ \hline
$4$ & $\Phi_1$-Cayley & $\phi_1$-associative & $3$ \\ \hline
\end{tabular}$$

\noindent With the exception of $\alpha_1$-Legendrian and $\alpha_1$-isotropic submanifolds, all of the ``link" submanifolds $L \subset M^{4n+3}$ that appear in the table are minimal (i.e., have zero mean curvature), because a calibrated cone is minimal, and the link of a minimal cone is minimal.

\subsection{$3$-Sasakian Manifolds as Circle Bundles} \label{sec:3SasCircleBundle}

\indent \indent From now on, $3$-Sasakian $(4n+3)$-manifolds $M$ are assumed to be compact.  Above, we viewed $M$ as the link of a hyperk\"{a}hler cone $C$.  In this section, we adopt a different perspective, viewing $M$ as the total space of a circle bundle. The starting point is the following result:

\begin{thm}[Boyer-Galicki~\cite{Boyer-Galicki}, Theorems 7.5.1, 13.2.5, 13.3.1] Let $M$ be a compact $3$-Sasakian $(4n+3)$-manifold.  For $v = (v_1, v_2, v_3) \in S^2$, let $A_v = v_1 A_1 + v_2 A_2 + v_3 A_3$ denote the corresponding Reeb field.  Then:
\begin{enumerate}[(a)]
\item Each $A_v$ defines a locally-free $S^1$-action on $M$ and quasi-regular foliation $\mathcal{F}_v \subset M$.  Let $Z_v := M/\mathcal{F}_v$ denote the corresponding leaf space, and let $p_v \colon M \to Z_v$ denote the projection.
\item The projection $p_v \colon M \to Z_v$ is a principal $S^1$-orbibundle with connection $1$-form $\alpha_v = \sum v^i \alpha_i$, and it is an orbifold Riemannian submersion.
\item For $v, v' \in S^2$, there is a diffeomorphism $Z_{v} \approx Z_{v'}$.  In fact, each $Z_v$ may be identified with the (orbifold) twistor space $Z$ of the quaternionic-K\"{a}hler $4n$-orbifold $Q = M/\mathcal{F}_A$, where $\mathcal{F}_A$ is the  $3$-dimensional foliation determined by the vector fields $A_1, A_2, A_3$.
\end{enumerate}
\end{thm}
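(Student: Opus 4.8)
The plan is to deduce everything from the Lie algebra generated by the three Reeb fields together with the general theory of locally free actions of compact groups. \textbf{Step 1 (the $\mathfrak{sp}(1)$-action).} Differentiating the identities $\mathsf{J}_p(A_q) = \epsilon_{pqr}A_r$ --- or, equivalently, transporting to $M$ the relations $[I_p(r\partial_r), I_q(r\partial_r)] = -2\epsilon_{pqr}I_r(r\partial_r)$ on the hyperk\"ahler cone $C$, where the $I_p$ are parallel and $\nabla(r\partial_r) = \Id$ --- one obtains $[A_p, A_q] = -2\epsilon_{pqr}A_r$. Since the almost-$3$-contact axioms force $\alpha_p(A_q) = \delta_{pq}$, hence $g_M(A_p, A_q) = \delta_{pq}$, the fields $A_1, A_2, A_3$ are everywhere linearly independent, so $V := \mathrm{span}(A_1, A_2, A_3) \subset \Gamma(TM)$ is a Lie subalgebra isomorphic to $\mathfrak{sp}(1) = \mathfrak{su}(2)$ spanning a rank-$3$ distribution. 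As $M$ is compact (hence complete), this infinitesimal action integrates to a smooth action of $\Sp(1)$ (possibly factoring through $\SO(3)$); write $G$ for the group acting effectively. Since the generators are pointwise independent, every isotropy subgroup is finite and every orbit is a compact $3$-dimensional homogeneous space of $G$. Thus $\mathcal{F}_A$ is a Riemannian foliation by compact leaves, its leaf space $Q := M/G$ is a compact $4n$-orbifold, $M \to Q$ is an orbifold fibre bundle, and the classical correspondence between $3$-Sasakian and quaternionic-K\"ahler geometry --- verified from the structure equations \eqref{eq:deriv-alpha}--\eqref{eq:deriv-kappa} --- equips $Q$ with a quaternionic-K\"ahler metric of positive scalar curvature whose fundamental rank-$3$ subbundle of $\mathrm{End}(TQ)$ is the adjoint bundle $\mathcal{Q} := M \times_G \mathfrak{sp}(1)$.

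\textbf{Step 2 (part (a)).} Fix $v \in S^2$. Because $\mathfrak{sp}(1)$ has rank one, the nonzero element $A_v$ generates a closed circle subgroup $S^1_v \leq G$. Restricting the $G$-action to $S^1_v$ gives a locally free $S^1$-action on $M$ whose orbits are exactly the integral curves of $A_v$; each such orbit lies inside a single $G$-orbit and is therefore a compact circle. Hence $\mathcal{F}_v$ is a quasi-regular foliation by circles, $Z_v := M/\mathcal{F}_v$ is a compact $(4n+2)$-orbifold, and $p_v \colon M \to Z_v$ is a principal $S^1$-orbibundle.

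\textbf{Step 3 (parts (b) and (c)).} Write $\alpha_v := \sum_i v_i\alpha_i$ and $\Omega_v := \sum_i v_i\Omega_i$. Then $\alpha_v(A_v) = \sum_{i,j} v_i v_j\,\alpha_i(A_j) = |v|^2 = 1$, while $\mathsf{J}_v A_v = \sum_{p,q} v_p v_q\epsilon_{pqr}A_r = 0$ by antisymmetry of $\epsilon_{pqr}$, so $\iota_{A_v}\Omega_v = g_M(\mathsf{J}_v A_v, \cdot) = 0$; since $d\alpha_v = 2\Omega_v$ by \eqref{eq:deriv-alpha}, Cartan's formula gives $\mathcal{L}_{A_v}\alpha_v = \iota_{A_v}(2\Omega_v) + d(1) = 0$. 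Thus $\alpha_v$ is a genuine principal connection $1$-form for $p_v$, with horizontal distribution $\Ker\alpha_v$ and curvature the descent of $2\Omega_v$. Each $A_p$ is Killing for $g_M$ (Reeb fields of Sasakian structures are), so $A_v$ is Killing and $g_M|_{\Ker\alpha_v}$ is $\mathcal{F}_v$-basic, descending to a Riemannian orbifold metric on $Z_v$ for which $p_v$ is an orbifold Riemannian submersion; this proves (b). For (c), given $v, v' \in S^2$, pick $g \in G$ whose image in $\SO(3) = \SO(\mathfrak{sp}(1))$ carries $v$ to $v'$: the induced diffeomorphism of $M$ sends $A_v$-orbits to $A_{v'}$-orbits and descends to a diffeomorphism $Z_v \approx Z_{v'}$. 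Finally, the stabilizer in $G$ of the point $v$, for the $\SO(3)$-action on $S^2 \subset \mathfrak{sp}(1)$ induced by $G \to \SO(3)$, is precisely the circle $S^1_v$; hence $Z_v = M/S^1_v \cong M \times_G (G/S^1_v) = M \times_G S^2$, which is the unit sphere bundle of $\mathcal{Q} = M \times_G \mathfrak{sp}(1)$ --- that is, the bundle over $Q$ whose fibre at $x$ is the $2$-sphere of $g_Q$-compatible almost complex structures on $T_xQ$ subordinate to the quaternionic structure. By definition this is the twistor space $Z$ of $Q$, so $Z_v \cong Z$, proving (c).

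\textbf{Main obstacle.} The delicate points are the orbifold claims: passing rigorously from an infinitesimal $\mathfrak{sp}(1)$-action with pointwise-independent generators on compact $M$ to an honest quasi-regular foliation with an \emph{orbifold} leaf space requires the structure theory of Riemannian foliations by compact leaves (equivalently, of locally free compact group actions), and equipping $Q$ with its quaternionic-K\"ahler structure --- in particular verifying that $\mathcal{Q}$ is parallel for the Levi-Civita connection of the submersed metric --- is a genuine curvature computation. Everything else is bookkeeping with \eqref{eq:deriv-alpha}--\eqref{eq:deriv-kappa}.
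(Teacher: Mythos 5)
The paper itself offers no proof of this theorem: it is imported verbatim from Boyer--Galicki (Theorems 7.5.1, 13.2.5, 13.3.1), and your outline reconstructs exactly the standard route of that source. Parts (a) and (b) are essentially complete modulo standard citations: the cone computation giving $[A_p,A_q]=\mp 2\epsilon_{pqr}A_r$ (the sign is pure convention) is correct, the integration of the resulting $\mathfrak{sp}(1)$ of Killing fields to an isometric $\Sp(1)$- or $\SO(3)$-action uses compactness plus Palais (or closedness of the corresponding subgroup of the compact isometry group), the slice theorem gives the orbifold quotients, and the computation $\alpha_v(A_v)=1$, $\iota_{A_v}\Omega_v=0$, $\mathcal{L}_{A_v}\alpha_v=0$ together with the Killing property of $A_v$ correctly yields the connection form and the Riemannian submersion in (b). The equivariance argument $g_*A_v=A_{\mathrm{Ad}(g)v}$ and the identification $M/S^1_v\cong M\times_G(G/S^1_v)\cong M\times_G S^2$ is likewise the standard proof of the first half of (c).

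The genuine gap --- which you flag but effectively wave through --- is the second half of (c): that $Q=M/\mathcal{F}_A$ is a quaternionic-K\"ahler orbifold of positive scalar curvature whose defining rank-$3$ subbundle $E\subset\End(TQ)$ is $M\times_G\mathfrak{sp}(1)$, so that $M\times_G S^2$ is genuinely the twistor space $Z$ rather than just some $S^2$-bundle over $Q$. Your assertion that this is ``verified from the structure equations \eqref{eq:deriv-alpha}--\eqref{eq:deriv-kappa}'' overstates what those identities give: they are exterior-derivative relations on $M$ and do not by themselves yield parallelism of $E$ for the Levi-Civita connection of the submersed metric $g_Q$ when $n\ge 2$, nor the Einstein and anti-self-dual conditions when $n=1$, nor positivity of the scalar curvature. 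Establishing these requires O'Neill-type curvature computations for the Riemannian submersion $h\colon M\to Q$ (this is Konishi's correspondence, i.e.\ Boyer--Galicki Theorem 13.3.1, the deepest of the three results being quoted). Until that input is supplied, your argument identifies $Z_v$ with an abstract associated $S^2$-bundle over the orbifold $Q$, but not with the twistor space of a quaternionic-K\"ahler orbifold as the theorem asserts; so the proposal is a faithful skeleton of the cited proof with its central pillar left as a citation-shaped hole.
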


\indent Thus, every compact $3$-Sasakian $(4n+3)$-manifold $M$ has a natural $S^2$-family of projections $p_v \colon M^{4n+3} \to Z^{4n+2}$.  For definiteness, we choose to work with $p_1 := p_{(1,0,0)} \colon M \to Z$, with respect to which $\alpha_1 \in \Omega^1(M)$ is a connection $1$-form.  On $M$, the choice of $p_1$ preferences the splitting $TM = \R A_1 \oplus \Ker(\alpha_1)$.  On the hyperk\"{a}hler cone $C^{4n+4} = \Cone(M)$, our choice distinguishes the K\"{a}hler structure $(g_{\Cone}, I_1, \omega_1)$.

\subsubsection{The $3$-forms $\Gamma_1, \Gamma_2, \Gamma_3$ and $4$-forms $\Xi_1, \Xi_2, \Xi_3$}

\indent We now introduce $\C$-valued $3$-forms $\Gamma_1, \Gamma_2, \Gamma_3 \in \Omega^3(M; \C)$ and $\R$-valued $4$-forms $\Xi_1, \Xi_2, \Xi_3 \in \Omega^4(M)$ that will play a key role in understanding the structure on the twistor space $Z$. These forms do not appear to have been studied before. Recalling the $2$-forms $\kappa_j$ defined in~\eqref{eq:Omegas}, we define
\begin{align}
\Gamma_1 & = (\alpha_2 - i\alpha_3) \wedge (\kappa_2 + i\kappa_3), \nonumber \\
\Gamma_2 & = (\alpha_3 - i\alpha_1) \wedge (\kappa_3 + i\kappa_1), \label{eq:Gammas} \\
\Gamma_3 & = (\alpha_1 - i\alpha_2) \wedge (\kappa_1 + i\kappa_2),\nonumber
\end{align}
and
\begin{align} \label{eq:Xis}
\Xi_1 & = \kappa_2^2 + \kappa_3^2, & \Xi_2 & = \kappa_3^2 + \kappa_1^2, & \Xi_3 & = \kappa_1^2 + \kappa_2^2.
\end{align}
Note that the real and imaginary parts of $\Gamma_1$ are given by
\begin{equation} \label{eq:ReImGammas}
\begin{aligned}
\text{Re}(\Gamma_1) & = \alpha_2 \wedge \kappa_2 + \alpha_3 \wedge \kappa_3, \\
\text{Im}(\Gamma_1) & = \alpha_2 \wedge \kappa_3 - \alpha_3 \wedge \kappa_2.
\end{aligned}
\end{equation}
Their exterior derivatives are given by:

\begin{prop} \label{prop:ExtDerivGammaXi} We have
\begin{align*}
d\,\mathrm{Re}(\Gamma_1) & = 2\Xi_1 - 4 \alpha_2 \wedge \alpha_3 \wedge \kappa_1, \\
d\,\mathrm{Im}(\Gamma_1) & = 0, \\
d\Xi_1 & = -4\kappa_1 \wedge \mathrm{Im}(\Gamma_1).
\end{align*}
\end{prop}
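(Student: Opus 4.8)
The plan is to establish all three identities by direct exterior differentiation, using only the Leibniz rule together with the structure equations $d\alpha_p = 2\Omega_p$, $d\Omega_p = 0$ from \eqref{eq:deriv-alpha}, the decompositions $\Omega_1 = \alpha_2 \wedge \alpha_3 + \kappa_1$ etc.\ from \eqref{eq:Omegas}, and the formulas for $d\kappa_p$ from \eqref{eq:deriv-kappa}; together with the expressions \eqref{eq:ReImGammas} for $\mathrm{Re}(\Gamma_1)$, $\mathrm{Im}(\Gamma_1)$ and \eqref{eq:Xis} for $\Xi_1$. Throughout I will use without comment that each $\kappa_p$ is a $2$-form (so the $\kappa_p$ commute with one another and with $1$-forms) while the $\alpha_p$ are $1$-forms.

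For the first identity, starting from $\mathrm{Re}(\Gamma_1) = \alpha_2 \wedge \kappa_2 + \alpha_3 \wedge \kappa_3$ I compute $d(\alpha_p \wedge \kappa_p) = 2\Omega_p \wedge \kappa_p - \alpha_p \wedge d\kappa_p$ for $p = 2, 3$. Substituting $\Omega_2 = \alpha_3 \wedge \alpha_1 + \kappa_2$ and $\Omega_3 = \alpha_1 \wedge \alpha_2 + \kappa_3$ splits $\Omega_p \wedge \kappa_p$ into a ``triple-$\alpha$'' term and $\kappa_p^2$; substituting $d\kappa_2 = 2(\alpha_3 \wedge \kappa_1 - \alpha_1 \wedge \kappa_3)$ and $d\kappa_3 = 2(\alpha_1 \wedge \kappa_2 - \alpha_2 \wedge \kappa_1)$ produces more triple-$\alpha$ terms. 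When everything is collected by monomial type, the $\kappa_p^2$ terms add to $2(\kappa_2^2 + \kappa_3^2) = 2\Xi_1$, the $\alpha_3 \wedge \alpha_1 \wedge \kappa_2$ and $\alpha_1 \wedge \alpha_2 \wedge \kappa_3$ terms cancel in pairs, and the $\alpha_2 \wedge \alpha_3 \wedge \kappa_1$ terms combine to $-4\,\alpha_2 \wedge \alpha_3 \wedge \kappa_1$. The identity $d\,\mathrm{Im}(\Gamma_1) = 0$ can be done the same way, or more quickly: a short computation gives $d(\alpha_2 \wedge \alpha_3) = -2\,\mathrm{Im}(\Gamma_1)$ (using $\Omega_2 \wedge \alpha_3 = \alpha_3 \wedge \kappa_2$ and $\alpha_2 \wedge \Omega_3 = \alpha_2 \wedge \kappa_3$), so $\mathrm{Im}(\Gamma_1)$ is exact and hence closed.

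For the third identity I write $d\Xi_1 = d(\kappa_2^2 + \kappa_3^2) = 2\,d\kappa_2 \wedge \kappa_2 + 2\,d\kappa_3 \wedge \kappa_3$ and substitute \eqref{eq:deriv-kappa}. The two terms proportional to $\alpha_1 \wedge \kappa_2 \wedge \kappa_3$ cancel since $\kappa_2$ and $\kappa_3$ commute, leaving $4(\alpha_3 \wedge \kappa_1 \wedge \kappa_2 - \alpha_2 \wedge \kappa_1 \wedge \kappa_3)$; moving the $2$-form $\kappa_1$ to the front rewrites this as $-4\,\kappa_1 \wedge (\alpha_2 \wedge \kappa_3 - \alpha_3 \wedge \kappa_2) = -4\,\kappa_1 \wedge \mathrm{Im}(\Gamma_1)$. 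There is no genuine conceptual difficulty; the one thing requiring care is the sign and degree bookkeeping when regrouping the triple-$\alpha$ wedge-$\kappa$ monomials and the $\kappa_p \wedge \kappa_q$ terms, but once the expansion is organized by monomial type every cancellation is immediate.
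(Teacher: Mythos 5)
Your proposal is correct and is exactly the paper's argument: the paper's proof is precisely the "straightforward computation" from \eqref{eq:ReImGammas}, \eqref{eq:Xis}, \eqref{eq:deriv-alpha}, and \eqref{eq:deriv-kappa} that you carry out, and your sign/cancellation bookkeeping (including the shortcut $d(\alpha_2\wedge\alpha_3) = -2\,\mathrm{Im}(\Gamma_1)$, which also appears in the proof of Proposition \ref{prop:Descent}) checks out.
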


\begin{proof} This is a straightforward computation using the definitions ~\eqref{eq:ReImGammas} and~\eqref{eq:Xis} and the exterior derivative formulas ~\eqref{eq:deriv-alpha} and~\eqref{eq:deriv-kappa}.
\end{proof}

\begin{rmk} We remark in passing that one can compute
$$\kappa_1^2 + \kappa_2^2 + \kappa_3^2 = \textstyle \frac{1}{2}\,d(\alpha_{123} + \alpha_1 \wedge \kappa_1 + \alpha_2 \wedge \kappa_2 + \alpha_3 \wedge \kappa_3),$$
showing that the natural $4$-form $\kappa_1^2 + \Xi_1 = \kappa_1^2 + \kappa_2^2 + \kappa_3^2$ is exact.
\end{rmk}

\indent To clarify the geometric meaning of $\Gamma_1 \in \Omega^3(M; \C)$, we consider the $2$-form
$$\widetilde{\Omega}_1 := 2\kappa_1 - \alpha_2 \wedge \alpha_3.$$
Using equations~\eqref{eq:deriv-alpha},~\eqref{eq:Omegas},~\eqref{eq:deriv-kappa},~\eqref{eq:ReImGammas}, and Proposition \ref{prop:ExtDerivGammaXi}, we derive the identities
\begin{align*}
d \widetilde{\Omega}_1 & = 3\,\mathrm{Im}(2\Gamma_1), \\
d\,\mathrm{Re}(2\Gamma_1) & = 2(2\Xi_1 - 4\alpha_2 \wedge \alpha_3 \wedge \kappa_1).
\end{align*}
When $n = 1$, in which case $\dim(Z) = 6$ and $\dim(M) = 7$, there is a coincidence $\kappa_1^2 = \kappa_2^2 = \kappa_3^2$, which implies $\Xi_1 = 2\kappa_1^2$, and therefore
\begin{align*}
d \widetilde{\Omega}_1 & = 3\,\mathrm{Im}(2\Gamma_1), \\
d \,\mathrm{Re}(2\Gamma_1) & = 2\widetilde{\Omega}_1^2,
\end{align*}
which is familiar from the geometry of nearly-K\"{a}hler $6$-manifolds~\cite{ReyesCarrion-Salamon}.  So, when $n = 1$, the forms $\widetilde{\Omega}_1 \in \Omega^2(M)$ and $2\Gamma_1 \in \Omega^3(M;\C)$ are the pullbacks via $p_1 \colon M^7 \to Z^6$ of the nearly-K\"{a}hler $2$-form and complex volume form on $Z$, respectively.

\indent In $\S$\ref{sub:Sp(n)U(1)}-$\S$\ref{sub:GeomTwistor}, we will see that aspects of this picture persist in higher dimensions.  That is, for any $n \geq 1$, the $2$-form $\widetilde{\Omega}_1$ is the pullback of the nearly-K\"{a}hler $2$-form, while $2\Gamma_1$ is the pullback of a natural $3$-form that (together with other geometric data) defines an $\Sp(n)\U(1)$-structure on $Z$.  When $n = 1$, the $\Sp(1)\U(1) \cong \U(2)$-structure on $Z$ induces the familiar $\SU(3)$-structure, but when $n > 1$ the group $\Sp(n)\U(1)$ is not contained in $\SU(2n+1)$.

\subsubsection{$\mathrm{Re}(\Gamma_1)$-Calibrated $3$-folds}

\indent \indent The real parts of the $3$-forms $\Gamma_1, \Gamma_2, \Gamma_3 \in \Omega^3(M;\C)$ turn out to be semi-calibrations (Corollary \ref{cor:ReGammaComass}), and thus give rise to a distinguished class of $3$-folds of $M$.  The following theorem characterizes these submanifolds; we defer the proof to $\S$\ref{subsub:ReGammaClassify}, where the result is restated as Theorem \ref{thm:FirstCharDim3}. 

\begin{thm} \label{thm:ReGammaChar1}  ${}$
Let $L^3 \subset M^{4n+3}$ be a $3$-dimensional submanifold.  The following are equivalent:
\begin{enumerate}[(i)]
\item $\mathrm{C}(L)$ is a $(c_\theta I_2 + s_\theta I_3)$-complex isotropic $4$-fold for some constant $e^{i \theta} \in S^1$.
\item $L$ is a $(c_\theta I_2 + s_\theta I_3)$-CR isotropic $3$-fold for some constant $e^{i \theta} \in S^1$.
\item $L$ is $\mathrm{Re}(\Gamma_1)$-calibrated.
\end{enumerate}
\end{thm}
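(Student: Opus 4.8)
The plan is to prove the chain of equivalences $(i)\Leftrightarrow(ii)\Leftrightarrow(iii)$, exploiting that $(i)\Leftrightarrow(ii)$ is essentially already in hand. Indeed, applying Proposition~\ref{prop:ConeComplexLag} with the reference complex structure $I_1$ replaced by $I_\theta := c_\theta I_2 + s_\theta I_3$ — which is legitimate because the hyperk\"{a}hler structure admits the full $S^2$-family $(I_v)$, and the equivalences there were derived from the general cone formulas — immediately gives $(i)\Leftrightarrow(ii)$. The substance of the theorem is therefore the equivalence $(ii)\Leftrightarrow(iii)$, i.e.\ identifying the $\mathrm{Re}(\Gamma_1)$-calibrated $3$-folds with the $I_\theta$-CR isotropic $3$-folds. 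Since the proof is deferred to $\S$\ref{subsub:ReGammaClassify}, I would at this stage set up the linear-algebraic problem: fix a point $x \in M$, work in $T_xM$, and pass to the cone point in $T_{(1,x)}C \cong \R A_1 \oplus \widetilde{\mathsf{H}}_x \oplus (\R A_2 \oplus \R A_3)$ where $\widetilde{\mathsf{H}}_x$ carries the quaternionic triple $(\mathsf{J}_1,\mathsf{J}_2,\mathsf{J}_3)$.

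First I would reduce $(iii)$ to a statement about $3$-planes. By the remark on $\Sp(n+1)$-invariance (Remark~\ref{rmk:SpInvariance}) transported to $M$ via the cone correspondence (Proposition~\ref{prop:rich-calibs}), the form $\mathrm{Re}(\Gamma_1) = \alpha_2\wedge\kappa_2 + \alpha_3\wedge\kappa_3$ has comass one, and a $3$-plane $P \subset T_xM$ is calibrated iff $\mathrm{Re}(\Gamma_1)|_P = \vol_P$. One then wants to show this holds precisely when $P = T_xL$ for an $I_\theta$-CR isotropic $L$, i.e.\ (by Proposition~\ref{prop:ConeComplexLag}(iii)--(iv)) precisely when $P$ is $\mathsf{J}_\theta$-invariant, contains $A_\theta := c_\theta A_2 + s_\theta A_3$, and is $\alpha_2$- and $\alpha_3$-... — wait, more carefully: $I_\theta$-CR isotropic with respect to $I_\theta = c_\theta I_2 + s_\theta I_3$ means $P$ contains the Reeb field $A_\theta$, is $\mathsf{J}_\theta$-invariant inside $\mathrm{Ker}(\alpha_\theta)$, and is isotropic for the two contact forms orthogonal to $\alpha_\theta$ in the span of $\alpha_1,\alpha_2,\alpha_3$. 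I would then verify, in the cone, that $\Gamma_1$ is (up to constants) the conical form associated to $\tfrac12\,\mathrm{Re}(\sigma_\theta^2)$ restricted appropriately — exploiting the identity $\Theta_{I_\theta,4} = \tfrac12\,\mathrm{Re}(\sigma_\theta^2)$ and the formula for $\omega_\theta = r\,dr\wedge\alpha_\theta + r^2\Omega_\theta$ — so that $\mathrm{Re}(\Gamma_1)$-calibrated $3$-folds in $M$ correspond to $4$-folds in $C$ that are $\Theta_{I_\theta,4}$-special isotropic AND $I_\theta$-complex, which by Proposition~\ref{prop:CayleyCases}(c) run through exactly the $I_\theta$-complex isotropic $4$-folds.

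The main obstacle, I expect, is the bookkeeping that relates $\Gamma_1$ to the $\theta$-rotated structures: a priori $\Gamma_1$ is built only from $\alpha_2,\alpha_3,\kappa_2,\kappa_3$, with no $\theta$ in sight, so I must show that the \emph{same} real $3$-form $\mathrm{Re}(\Gamma_1)$ is calibrated exactly by the union, over all $e^{i\theta}\in S^1$, of the $I_\theta$-CR isotropic $3$-planes — i.e.\ that the $S^1$ of geometries collapses onto one form. Concretely this amounts to checking that if $P$ is $\mathsf{J}_\theta$-invariant, contains $A_\theta$, and is suitably isotropic, then $(\alpha_2\wedge\kappa_2 + \alpha_3\wedge\kappa_3)|_P = \vol_P$ independently of $\theta$; the cleanest route is to write $\alpha_2\wedge\kappa_2 + \alpha_3\wedge\kappa_3 = \alpha_\theta\wedge\kappa_\theta + \alpha_{\theta^\perp}\wedge\kappa_{\theta^\perp}$ after the rotation and observe that on such a $P$ the second summand contributes nothing while the first restricts to the CR volume $\alpha_\theta\wedge\Omega_\theta$ of an $I_\theta$-CR $3$-fold. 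The converse direction — that a calibrated $P$ must have this normal form — should follow from the equality case in the comass-one estimate, which in turn is inherited from the $4$-plane analysis of Bryant--Harvey underlying Theorem~\ref{thm:special-isotropic-comass-one}. I would defer the detailed normal-form computation to $\S$\ref{subsub:ReGammaClassify} and here simply record the reductions above.
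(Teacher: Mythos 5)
Your reduction (i)$\iff$(ii) agrees with the paper (Proposition \ref{prop:ConeComplexLag} applied to the rotated triple), and your rotation identity $\alpha_2\wedge\kappa_2+\alpha_3\wedge\kappa_3=\alpha_\theta\wedge\kappa_\theta+\alpha_{\theta^\perp}\wedge\kappa_{\theta^\perp}$ (with $\alpha_\theta=c_\theta\alpha_2+s_\theta\alpha_3$, $\alpha_{\theta^\perp}=-s_\theta\alpha_2+c_\theta\alpha_3$, and likewise for $\kappa$) is correct and does give the easy implication (ii)$\Rightarrow$(iii). The gap is in (iii)$\Rightarrow$(ii), and it is not merely deferred detail: the two facts carrying the weight --- that $\mathrm{Re}(\Gamma_1)$ has comass one, and that a calibrated $3$-plane must have your normal form --- are justified by appeals that do not apply. $\mathrm{Re}(\Gamma_1)$ is \emph{not} the link contraction $(r\partial_r\,\lrcorner\,\Phi)|_M$ of any calibration on $C$ considered in the paper, nor of $\tfrac12\mathrm{Re}(\sigma_\theta^2)$ for any $\theta$: one has $(r\partial_r\,\lrcorner\,\tfrac12\sigma_1^2)|_M=(\alpha_2+i\alpha_3)\wedge(\kappa_2+i\kappa_3)$, whose real part is $\alpha_2\wedge\kappa_2-\alpha_3\wedge\kappa_3$, whereas $\Gamma_1=(\alpha_2-i\alpha_3)\wedge(\kappa_2+i\kappa_3)$ has real part $\alpha_2\wedge\kappa_2+\alpha_3\wedge\kappa_3$; moreover $\Gamma_1$ is invariant under rotation in the $(2,3)$-plane, so no choice of $\theta$ repairs the sign. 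This is exactly the conflation warned against in the remark following Proposition \ref{prop:Descent}. Consequently neither Remark \ref{rmk:SpInvariance} with Proposition \ref{prop:rich-calibs}, nor the equality case of the Bryant--Harvey analysis behind Theorem \ref{thm:special-isotropic-comass-one}, says anything about $\mathrm{Re}(\Gamma_1)$, and your sketch contains no proof of its comass-one property or of the structure of its calibrated planes.

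For comparison, the paper gets both facts from the twistor space: $\mathrm{Re}(\Gamma_1)=p_1^*\mathrm{Re}(\gamma)$, comass one of $\mathrm{Re}(\gamma)$ is Lemma \ref{lem:v-hook-Regamma} and Proposition \ref{prop:GammaSemiCal}, and it pulls back through the Riemannian submersion (Proposition \ref{prop:submersion-calibration}, Corollary \ref{cor:ReGammaComass}); for the equality case, $\iota_{A_1}\mathrm{Re}(\Gamma_1)=0$ forces calibrated $3$-folds to be $p_1$-horizontal (Proposition \ref{prop:CalibrationSplitting}), they project to $\mathrm{Re}(\gamma)$-calibrated, $\omega_{\mathrm{KE}}$-isotropic $3$-folds (Proposition \ref{prop:GammaLifts}), and the normal form of Proposition \ref{prop:GammaNormalForm} together with Corollary \ref{cor:Regamma-CRLift} and Proposition \ref{prop:LiftingDoubleIsotropic} produces the rotated CR isotropic structure. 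A direct pointwise analysis on $T_xM$ is certainly possible, but it amounts to redoing this linear algebra, and it would still leave one more issue your proposal ignores: pointwise arguments only give a phase $\theta(x)$ at each point, while statement (ii) asserts a single constant $\theta$ on the connected $L$; the constancy is a genuinely differential step (the computation $0=d(-s_\theta\alpha_2+c_\theta\alpha_3)$ in the proof of Proposition \ref{prop:LiftingDoubleIsotropic}(a)) that must be added. So either carry out the comass estimate, the equality-case normal form, and the constancy-of-$\theta$ argument for $\alpha_2\wedge\kappa_2+\alpha_3\wedge\kappa_3$ directly, or route the proof through $Z$ as the paper does.
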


\indent Examples of $\mathrm{Re}(\Gamma_1)$-calibrated submanifolds can be constructed via Example~\ref{ex:TC} together with Theorem~\ref{thm:LowDimension}.

\subsubsection{Descent to $Z$}

\indent \indent  To conclude this section, we observe that certain differential forms defined on $M$ descend to the twistor space $Z$ via the map $p_1 \colon M \to Z$.  For this, we recall that a $k$-form $\phi \in \Omega^k(M)$ is called \emph{$p_1$-semibasic} if $\iota_X\phi = 0$ for all $X \in \mathrm{Ker}( (p_1)_*)$.  Since the fibers of $p_1 \colon M \to Z$ are connected, it is a standard fact that a $k$-form $\phi \in \Omega^k(M)$ descends to $Z$ if and only if both $\phi$ and $d\phi$ are $p_1$-semibasic.

\begin{prop} \label{prop:Descent} Consider the projection $p := p_1 \colon M \to Z$. 
\begin{enumerate}[(a)]
\item There exist $\R$-valued differential $2$-forms $\omega_{\mathsf{V}}$, $\omega_{\mathsf{H}}$, $\omega_{\mathrm{KE}}, \omega_{\mathrm{NK}} \in \Omega^2(Z)$ satisfying
\begin{align*}
\alpha_2 \wedge \alpha_3 & = p^*(\omega_{\mathsf{V}}), & \kappa_1 + \alpha_2 \wedge \alpha_3 = \Omega_1 & = p^*(\omega_\mathrm{KE}), \\
\kappa_1 & = p^*(\omega_{\mathsf{H}}), & 2\kappa_1 - \alpha_2 \wedge \alpha_3  = \widetilde{\Omega}_1 & = p^*(\omega_\mathrm{NK}).
\end{align*}
\item There exists a $\C$-valued differential $3$-form $\gamma \in \Omega^3(Z; \C)$ and an $\R$-valued differential $4$-form $\xi \in \Omega^4(Z)$ satisfying
\begin{align*}
\Gamma_1 & = p^*(\gamma) \\
\Xi_1 & = p^*(\xi).
\end{align*}
\end{enumerate}
\end{prop}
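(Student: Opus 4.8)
The plan is to invoke the descent criterion recalled just above the statement: since the fibres of $p := p_1 = p_{(1,0,0)}$ are connected, a (real- or $\C$-valued) form $\phi$ on $M$ equals $p^*\psi$ for some form $\psi$ on $Z$ if and only if both $\phi$ and $d\phi$ are $p$-semibasic, i.e.\ if and only if $\iota_{A_1}\phi = 0$ and $\iota_{A_1}(d\phi) = 0$. Here I use that, by Boyer--Galicki, $p$ is the quotient of $M$ by the locally free $S^1$-action generated by the first Reeb field, so its vertical line is $\Ker(p_*) = \R A_1$ (over the smooth part, where we work, $p$ is an honest principal $S^1$-bundle, so the descended forms are genuinely smooth). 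Such a $\psi$ is then unique because $p^*$ is injective on forms. So I would simply \emph{define} $\omega_{\mathsf V}, \omega_{\mathsf H}, \omega_{\mathrm{KE}}, \omega_{\mathrm{NK}}, \gamma, \xi$ to be the forms on $Z$ pulling back to $\alpha_2\wedge\alpha_3$, $\kappa_1$, $\Omega_1$, $\widetilde{\Omega}_1$, $\Gamma_1$, $\Xi_1$, respectively, once the two conditions are checked for each of these six forms on $M$ (applying the criterion to $\mathrm{Re}$ and $\mathrm{Im}$ separately in the $\C$-valued case).

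The verification rests on two elementary observations, which I would record first. First, $\alpha_p(A_q) = \delta_{pq}$, so $\iota_{A_1}\alpha_2 = \iota_{A_1}\alpha_3 = 0$. Second, each $\kappa_j$ is a section of $\Lambda^2\widetilde{\mathsf{H}}^*$ with $\widetilde{\mathsf{H}} = \Ker(\alpha_1, \alpha_2, \alpha_3)$, so $\iota_{A_1}\kappa_j = 0$ for $j = 1,2,3$. Hence $\iota_{A_1}$ annihilates \emph{every} exterior product built from $\alpha_2, \alpha_3, \kappa_1, \kappa_2, \kappa_3$. In particular, using \eqref{eq:ReImGammas} and \eqref{eq:Xis}, the forms $\alpha_2\wedge\alpha_3$, $\kappa_1$, $\mathrm{Re}(\Gamma_1) = \alpha_2\wedge\kappa_2 + \alpha_3\wedge\kappa_3$, $\mathrm{Im}(\Gamma_1) = \alpha_2\wedge\kappa_3 - \alpha_3\wedge\kappa_2$ and $\Xi_1 = \kappa_2^2 + \kappa_3^2$ are $p$-semibasic, and therefore so are the linear combinations $\Omega_1 = \kappa_1 + \alpha_2\wedge\alpha_3$, $\widetilde{\Omega}_1 = 2\kappa_1 - \alpha_2\wedge\alpha_3$ and $\Gamma_1 = \mathrm{Re}(\Gamma_1) + i\,\mathrm{Im}(\Gamma_1)$. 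This disposes of the first condition for all six forms at once.

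For the second condition I would compute each exterior derivative using \eqref{eq:deriv-alpha}, \eqref{eq:deriv-kappa}, \eqref{eq:ReImGammas} and Proposition~\ref{prop:ExtDerivGammaXi}, and note that each answer is again an exterior product of $\alpha_2, \alpha_3$ and the $\kappa_j$'s, hence $p$-semibasic by the previous paragraph. Concretely: $d\kappa_1 = 2(\alpha_2\wedge\kappa_3 - \alpha_3\wedge\kappa_2)$; $d(\alpha_2\wedge\alpha_3) = 2\,\Omega_2\wedge\alpha_3 - 2\,\alpha_2\wedge\Omega_3 = 2\,\alpha_3\wedge\kappa_2 - 2\,\alpha_2\wedge\kappa_3$, where the $\alpha_3\wedge\alpha_1$ and $\alpha_1\wedge\alpha_2$ parts of $\Omega_2,\Omega_3$ drop out under the wedges; consequently $d\Omega_1 = 0$ and $d\widetilde{\Omega}_1 = 6(\alpha_2\wedge\kappa_3 - \alpha_3\wedge\kappa_2)$; $d\Gamma_1 = d\,\mathrm{Re}(\Gamma_1) + i\,d\,\mathrm{Im}(\Gamma_1) = 2\Xi_1 - 4\,\alpha_2\wedge\alpha_3\wedge\kappa_1$; and $d\Xi_1 = -4\,\kappa_1\wedge(\alpha_2\wedge\kappa_3 - \alpha_3\wedge\kappa_2)$. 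Applying $\iota_{A_1}$ to every right-hand side gives $0$, which completes the argument, the descended forms being unique as noted.

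There is no substantive obstacle here; the work is entirely bookkeeping. The one place any genuine input is used, beyond the formal Leibniz and nilpotency properties of $\iota_{A_1}$, is the simplification $\Omega_2\wedge\alpha_3 = \kappa_2\wedge\alpha_3$ (and cyclically), which holds because $\Omega_2 = \alpha_3\wedge\alpha_1 + \kappa_2$ and $\alpha_3\wedge\alpha_1\wedge\alpha_3 = 0$. The only thing requiring attention is making sure that none of these exterior derivatives quietly reintroduces $\alpha_1$, the Euler vector field $r\,\partial_r$, or an $\Omega_p$ still carrying its $\alpha_q\wedge\alpha_r$ term, so that the principle ``$\iota_{A_1}$ kills everything built from $\{\alpha_2, \alpha_3, \kappa_1, \kappa_2, \kappa_3\}$'' applies verbatim.
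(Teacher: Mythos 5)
Your proposal is correct and follows essentially the same route as the paper: both verify that each form and its exterior derivative are $p_1$-semibasic (using \eqref{eq:deriv-alpha}, \eqref{eq:deriv-kappa}, and Proposition~\ref{prop:ExtDerivGammaXi}) and then invoke the standard descent criterion for bundles with connected fibres. Your computed derivatives agree with the paper's, so no gap remains.
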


\begin{proof} (a) By equations (\ref{eq:deriv-alpha}) and (\ref{eq:deriv-kappa}), we have
\begin{align*}
d(\alpha_2 \wedge \alpha_3) & = -2(\alpha_2 \wedge \kappa_3 - \alpha_3 \wedge \kappa_2), & d\kappa_1 & = 2(\alpha_2 \wedge \kappa_3 - \alpha_3 \wedge \kappa_2).
\end{align*}
Therefore, both $\alpha_2 \wedge \alpha_3$ and $d(\alpha_2 \wedge \alpha_3)$ are $p_1$-semibasic, and similarly for $\kappa_1$ and $d\kappa_1$. \\
\indent (b) By Proposition \ref{prop:ExtDerivGammaXi}, we have
\begin{align*}
d\Gamma_1 & = 2(\kappa_2^2 + \kappa_3^2) - 4\alpha_2 \wedge \alpha_3 \wedge \kappa_1, & d\Xi_1 & = -4\kappa_1 \wedge (\alpha_2 \wedge \kappa_3 - \alpha_3 \wedge \kappa_2).
\end{align*}
Therefore, both $\Gamma_1$ and $d\Gamma_1$ are $p_1$-semibasic, and similarly for $\Xi_1$ and $d\Xi_1$.
\end{proof}

\begin{rmk} By contrast, one can check that the following forms on $M$ do \emph{not} descend via $p_1 \colon M \to Z$ to forms on $Z$:
\begin{align*}
& \kappa_2, \kappa_3, & & \Gamma_2, \Gamma_3, & & \alpha_1, \alpha_2, \alpha_3, & & \phi_1, \phi_2, \phi_3, \\
& \Omega_2, \Omega_3, & & \Xi_2, \Xi_3, & & \Psi_1, \Psi_2, \Psi_3. & &
\end{align*}
\end{rmk}

\begin{rmk} One must be careful to distinguish the $3$-form $\Gamma_1 = (\alpha_2 - i\alpha_3) \wedge (\kappa_2 + i\kappa_3)$ from the special isotropic $3$-form
\begin{align*}
\textstyle (r\partial_r\,\lrcorner\,\frac{1}{2}\sigma_1^2)|_M &  = (\alpha_2 + i\alpha_3) \wedge (\kappa_2 + i\kappa_3).
\end{align*}
While $\Gamma_1$ descends to $Z$, the special isotropic $3$-form $(r\partial_r\,\lrcorner\,\frac{1}{2}\sigma_1^2)|_M$ does not, because its exterior derivative has $\alpha_1$ terms.  Note that for $n = 1$, the object $(r\partial_r\,\lrcorner\,\frac{1}{2}\sigma_1^2)|_M = \Psi_1$ is a $3$-form on $M^7$ whose real part calibrates special Legendrian $3$-folds.
\end{rmk}

\section{Calibrated Geometry in Twistor Spaces} \label{sec:GalGeoTwistor}

\indent \indent We now turn to the submanifold theory of twistor spaces $Z$, organizing our discussion as follows.  In $\S$\ref{sub:Sp(n)U(1)}, we briefly discuss $\Sp(n)\U(1)$-geometry on arbitrary $(4n+2)$-manifolds $Y^{4n+2}$.  Then, in $\S$\ref{sub:GeomTwistor} (Theorem \ref{thm:ExtraStructureTwistor}), we prove that every twistor space $Z^{4n+2}$ admits a canonical $\Sp(n)\U(1)$-structure, which (among other data) entails a distinguished $3$-form $\gamma \in \Omega^3(Z; \C)$.  In Proposition \ref{prop:GammaSemiCal}, we prove that $\mathrm{Re}(\gamma) \in \Omega^3(Z)$ is a semi-calibration, and devote $\S$\ref{sec:Re(gamma)3folds} to the study of $\mathrm{Re}(\gamma)$-calibrated $3$-folds.  In a certain sense (Proposition \ref{prop:GammaSemiCal}(b)), these are higher-codimension generalizations of special Lagrangian $3$-folds in $6$-dimensional nearly-K\"{a}hler twistor spaces. \\
\indent Finally in $\S$\ref{sec:SubmanifoldsMZ}, we study the relationships between submanifolds of $M^{4n+3}$ and those in $Z^{4n+2}$.  More specifically, distinguishing the map $p_1 \colon M \to Z$, we consider how various submanifolds $\Sigma^k \subset Z$ behave under the operations of $p_1$-circle bundle lift $p_1^{-1}(\Sigma)^{k+1} \subset M$ and $p_1$-horizontal lift $\widehat{\Sigma}^k \subset M$.
\indent We remind the reader that as mentioned in the introduction, we only consider submanifolds of $Z$ that do not meet any orbifold points.

\subsection{$\Sp(n)\U(1)$-Structures} \label{sub:Sp(n)U(1)}

\indent \indent Let $Y^{4n+2}$ be a smooth $(4n+2)$-manifold with $n \geq 1$.

\begin{defn} A \emph{$(\U(2n) \times \U(1))$-structure} on $Y^{4n+2}$ is an almost-Hermitian structure $(g, J_+, \omega_+)$ together with a distribution of $J_+$-invariant $4n$-planes $\mathsf{H} \subset TY$.  Equivalently, it is an almost-Hermitian structure $(g, J_+, \omega_+)$ together with an orthogonal splitting
$$TY = \mathsf{H} \oplus \mathsf{V}$$
where $\mathsf{H} \subset TY$  and $\mathsf{V} \subset TY$  are $J_+$-invariant subbundles with $\mathrm{rank}(\mathsf{H}) = 4n$ and  $\mathrm{rank}(\mathsf{V}) = 2$.
\end{defn}

\indent Given a $(\U(2n) \times \U(1))$-structure $(g, J_+, \omega_+, \mathsf{H})$, we split $(g, J_+, \omega_+)$  into horizontal and vertical parts as follows:
\begin{align*}
g & = g_{\mathsf{H}} + g_{\mathsf{V}}, & J_+ & = \left.J_+\right|_{\mathsf{H}} + \left.J_+\right|_{\mathsf{V}}, & \omega_+ & = \omega_{\mathsf{H}} + \omega_{\mathsf{V}}.
\end{align*}
Further, we can extend it to a one-parameter family $(g(t), J_+, \omega_+(t), \mathsf{H})$ by defining
\begin{align*}
g(t) & = t^2g_{\mathsf{H}} + g_{\mathsf{V}}, & \omega_+(t) & = t^2\omega_{\mathsf{H}} + \omega_{\mathsf{V}}.
\end{align*}
Moreover, by reversing the orientation of the vertical subbundle $\mathsf{V} \subset TY$, we obtain a \emph{second} one-parameter family $(g(t)$, $J_-$, $\omega_-(t)$, $\mathsf{H})$ by defining
\begin{align*}
J_- & = \left.J_+\right|_{\mathsf{H}} - \left.J_+\right|_{\mathsf{V}}, & \omega_-(t) & =  t^2\omega_{\mathsf{H}} - \omega_{\mathsf{V}}.
\end{align*}
For calculations on $Y$, we will need local frames adapted to the geometry of the $(\U(2n) \times \U(1))$-structure.  To be precise:

\begin{defn}  A \emph{$(\U(2n) \times \U(1))$-coframe at $y \in Y$} is a $g$-orthonormal coframe
$$(\rho, \mu) := (\rho_{10}, \rho_{11}, \rho_{12}, \rho_{13}, \ldots, \rho_{n0}, \rho_{n1}, \rho_{n2}, \rho_{n3}, \mu_2, \mu_3) \colon T_yY \to \R^{4n} \times \R^{2}$$
for which
\begin{align*}
\left.\omega_{\mathsf{V}}\right|_y & = \mu_2 \wedge \mu_3, & \left.\omega_{\mathsf{H}}\right|_y & = \sum_{j=1}^n (\rho_{j0} \wedge \rho_{j1} + \rho_{j2} \wedge \rho_{j3}).
\end{align*}
\end{defn}

\indent For example, we will soon recall (Theorem \ref{thm:BasicStructureTwistor}) that every twistor space $Z^{4n+2}$ admits a natural $(\U(2n) \times \U(1))$-structure. In fact, we will show (Theorem \ref{thm:ExtraStructureTwistor}) that twistor spaces admit an additional piece of data:

\begin{defn} \label{def:SpnU1Str} Let $Y^{4n+2}$ be a $(4n+2)$-manifold with a $(\U(2n) \times \U(1))$-structure $(g, J_+, \omega_+, \mathsf{H})$.  A \emph{compatible $\Sp(n)\U(1)$-structure} is a complex $3$-form $\gamma \in \Omega^3(Y;\C)$ with the following property: At each $y \in Y$, there exists a $(\U(2n) \times \U(1))$-coframe $(\rho, \mu)$ such that
$$\left.\gamma\right|_y = (\mu_2 - i \mu_3) \wedge  \sum_{j=1}^n (\rho_{j0} + i \rho_{j1}) \wedge (\rho_{j2} + i\rho_{j3}).$$
Note that if $\gamma$ is a compatible $\Sp(n)\U(1)$-structure, then $\gamma$ has $J_+$-type $(2,1)$ and $J_-$-type $(3,0)$.
\end{defn}

\indent To justify this terminology, we make a digression into linear algebra. Consider the following $\Sp(n)\U(1)$-representation on $\R^{4n+2}$. For $(A,\lambda) \in \Sp(n) \times \U(1)$ and $(h,z) \in \HH^n \oplus \C$, define:
\begin{equation} \label{eq:SpnU1Rep}
(A,\lambda) \cdot (h,z) := (Ah \lambda^{-1}, \,\lambda^{-2}z).
\end{equation}
Identify $\HH^n \simeq \C^{2n}$ by writing $h = h_1 + jh_2$ with $h_1, h_2 \in \C^n$.  This identification endows $\HH^n$ with the complex structure given by right multiplication by $i$, which in turn yields an embedding $\iota \colon \Sp(n) \to \U(2n)$.  In this way, the representation (\ref{eq:SpnU1Rep}) induces an embedding
\begin{align*}
\Sp(n)\U(1) & \to \U(2n) \times \U(1) \\
(A,\lambda) & \mapsto (\iota(A)\lambda^{-1}, \, \lambda^{-2}).
\end{align*}
The image of this map is
\begin{equation} \label{eq:SpnU1Eqn}
\{ (B, \nu) \in \U(2n) \times \U(1) \colon \nu^{-1/2}B \in \Sp(n)\}.
\end{equation}
Since $\Sp(n)$ contains the element $-\text{Id}$, the condition $\nu^{-1/2} B \in \Sp(n)$ does not depend on the choice of square root. \\
\indent Let $(e_{10}, e_{11}, e_{12}, e_{13}, \ldots, e_{n0}, e_{n1}, e_{n2}, e_{n3}, f_2, f_3)$ denote the standard basis of $\mathbb{R}^{4n+2}$, and let $(e^{10}, e^{11}, e^{12}, e^{13}, \ldots, e^{n0}, e^{n1}, e^{n2}, e^{n3}, f^2, f^3)$ denote its dual basis.  We identify $\R^{4n+2} \simeq \C^{2n} \oplus \C$ via the complex structure $J_0$ whose K\"{a}hler form is
$$\omega_0 = f^2 \wedge f^3 + \sum (e^{j0} \wedge e^{j1} + e^{j2} \wedge e^{j3}).$$
Identifying $\C^{2n} \simeq \HH^n$, the standard hyperk\"{a}hler triple on $\HH^n$ is:
\begin{align} \label{eq:StdHKTriple}
\beta_1 & = \sum (e^{j0} \wedge e^{j1} + e^{j2} \wedge e^{j3}), & \beta_2 & = \sum (e^{j0} \wedge e^{j2} - e^{j1} \wedge e^{j3}), & \beta_3 & = \sum (e^{j0} \wedge e^{j3} + e^{j1} \wedge e^{j2}).
\end{align}
We consider the $3$-form $\gamma_0 \in \Lambda^3( (\R^{4n+2})^*)$ given by
$$\gamma_0 = (f^2 - if^3) \wedge (\beta_2 + i\beta_3).$$
Then:

\begin{prop} With respect to the standard $(\U(2n) \times \U(1))$-action on $\R^{4n+2}$, the stabilizer of $\gamma_0 \in \Lambda^3( (\R^{4n+2})^*)$ is the subgroup $\Sp(n)\U(1) \leq \U(2n) \times \U(1)$ given by (\ref{eq:SpnU1Eqn}).
\end{prop}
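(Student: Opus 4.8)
The plan is to compute the stabilizer directly by exhibiting inclusions in both directions. First I would show $\Sp(n)\U(1) \leq \mathrm{Stab}(\gamma_0)$. Writing an element of the image (\ref{eq:SpnU1Eqn}) as $(B,\nu) = (\iota(A)\lambda^{-1}, \lambda^{-2})$, one checks that such an element acts on $\C \simeq \R f_2 \oplus \R f_3$ by $z \mapsto \lambda^{-2}z$, hence on $f^2 - if^3$ (which is a $(1,0)$-covector for $J_0$ on the $\C$-factor) by multiplication by $\overline{\lambda^{-2}} = \lambda^2$. On the $\HH^n$-factor, the key observation is that $\beta_2 + i\beta_3$ is, up to normalization, the holomorphic symplectic form on $\HH^n \simeq \C^{2n}$ associated to the quaternionic structure, and that right multiplication by the unit quaternion $\lambda \in \U(1) \subset \Sp(1)$ scales this complex-symplectic form by $\lambda^{-2}$ (this is the standard fact that $e^{i\theta}\sigma_1$ rotates $\omega_2, \omega_3$ as in the hyperk\"ahler discussion of $\S$\ref{sec:CalGeoHK}), while $A \in \Sp(n)$ preserves it. Thus $(B,\nu)$ scales $\beta_2 + i\beta_3$ by $\lambda^{2}$ and $f^2 - if^3$ by $\lambda^2$... which does not immediately give invariance, so I would instead re-examine the representation (\ref{eq:SpnU1Rep}): the $\C$-factor transforms by $\lambda^{-2}$ on vectors, so $f^2 - if^3$ transforms by $\lambda^{2}$ on covectors, while $h \mapsto Ah\lambda^{-1}$ means $\beta_2 + i\beta_3$ (quadratic in the $\C$-linear-in-$h$ sense after the $j$-twist) transforms by $\lambda^{-2}$; the product is invariant. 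I would present this computation cleanly using the explicit basis (\ref{eq:StdHKTriple}).

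For the reverse inclusion $\mathrm{Stab}(\gamma_0) \leq \Sp(n)\U(1)$, I would argue as follows. Suppose $(B,\nu) \in \U(2n) \times \U(1)$ fixes $\gamma_0$. Contracting $\gamma_0$ with vectors in the $\C$-factor shows that the $2$-form $\beta_2 + i\beta_3$ on $\HH^n$ must be preserved up to the inverse of the scalar by which $f^2 - if^3$ is scaled; since $(B,\nu)$ preserves $\gamma_0$ exactly, this pins down the interaction. More precisely, $\iota_{f_2}\gamma_0$ and $\iota_{f_3}\gamma_0$ together recover $\beta_2 + i\beta_3$ (and the span of $f_2, f_3$ is intrinsic as the annihilator of $\iota_X\iota_Y\gamma_0$ for $X,Y$ ranging over the $4n$-plane, or dually as the support of the ``$\C$-direction''). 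Hence $B$ must preserve the pair of real $2$-forms $\beta_2, \beta_3$ on $\C^{2n}$ up to the $\U(1)$-rotation dictated by $\nu$ and the phase of the $\C$-factor. The stabilizer in $\U(2n)$ of the complex-symplectic form $\beta_2 + i\beta_3$ is exactly $\Sp(n)$, and allowing the $\U(1)$-rescaling recovers precisely the normalizer-type subgroup (\ref{eq:SpnU1Eqn}). I would then check the numerology: writing $\nu = \lambda^{-2}$ forces $\nu^{-1/2}B = \lambda B \in \Sp(n)$, matching (\ref{eq:SpnU1Eqn}), and the sign ambiguity in $\lambda$ is absorbed by $-\Id \in \Sp(n)$ as already noted in the text.

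The main obstacle I expect is bookkeeping the $\C$-linear structure coming from the $j$-twisted identification $\HH^n \simeq \C^{2n}$ and making sure all the $\U(1)$-weights line up — in particular, correctly tracking whether right multiplication by $\lambda$ scales $\beta_2 + i\beta_3$ by $\lambda^{-2}$ or $\lambda^{2}$, and correctly dualizing the action on $f^2 - if^3$. This is the same weight computation that underlies the $S^1$-family of Calabi-Yau structures in $\S$\ref{sec:CalGeoHK}, so I would lean on that correspondence. Once the weights are fixed, both inclusions are essentially the statement that the stabilizer of a complex-symplectic form in $\U(2n)$ is $\Sp(n)$, together with the observation that the $\C$-direction is canonically determined by $\gamma_0$. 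A clean way to organize the reverse inclusion is to first show $B$ preserves the splitting $\R^{4n+2} = \C^{2n} \oplus \C$ (since the $\C$-factor is intrinsic to $\gamma_0$), reducing to the two factors separately.
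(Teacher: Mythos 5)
Your proposal is correct and follows essentially the same route as the paper: both directions reduce to the facts that $f^2 - if^3$ has $J_0$-type $(0,1)$ while $\beta_2 + i\beta_3$ has type $(2,0)$ (so after your self-corrected weight bookkeeping the $\U(1)$-weights cancel exactly when $\nu^{-1/2}B \in \Sp(n)$), and that contracting $\gamma_0$ with a vector in the $\C$-factor recovers $\beta_2 + i\beta_3$, whose $\U(2n)$-stabilizer is $\Sp(n)$. The only superfluous step is your closing suggestion to first show $B$ preserves the splitting $\C^{2n} \oplus \C$: this is automatic, since the statement concerns the stabilizer inside $\U(2n) \times \U(1)$, which acts blockwise by definition.
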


\begin{proof} Let $(B, \nu) \in \U(2n) \times \U(1)$, and set $\tau = f^2 - if^3$ and $\beta = \beta_2 + i\beta_3$.  Since $\tau$ has $J_0$-type $(0,1)$ and $\beta$ has $J_0$-type $(2,0)$, we have
\begin{align*}
\nu^*\tau & = \nu^{-1}\tau, & \nu^*\beta & = \nu^2\beta,
\end{align*}
and hence
\begin{align*}
(B,\nu)^*\gamma_0 = (B, \nu)^*(\tau \wedge \beta) & = \nu^*\tau \wedge B^*\beta = \tau \wedge (\nu^{-1/2}B)^*\beta.
\end{align*}
If $(B,\nu) \in \Sp(n)\U(1)$, then $\nu^{-1/2}B \in \Sp(n)$ by~\eqref{eq:SpnU1Eqn}. Thus, since $\Sp(n)$ stabilizes $\beta_1, \beta_2, \beta_3$, we get
$$ (B,\nu)^*\gamma_0 = \tau \wedge \beta = \gamma_0. $$
Conversely, if $(B,\nu) \in \U(2n) \times \U(1)$ stabilizes $\gamma_0$, then
$$\tau \wedge \beta = \tau \wedge (\nu^{-1/2}B)^*\beta.$$
Contracting both sides with the vector $f_2 + if_3$ implies that $\beta = (\nu^{-1/2}B)^*\beta$, so that $\nu^{-1/2}B \in \U(2n)$ stabilizes $\beta$.  Since the $\U(2n)$-stabilizer of $\beta$ is $\Sp(n)$, we deduce that $\nu^{-1/2}B \in \Sp(n)$, and hence $(B,\nu) \in \Sp(n)\U(1)$.
\end{proof}

\begin{example} \label{ex:InducedSU3} The case $n=1$ is particularly special.  Let $Y^6$ be a $6$-manifold with a $(\U(2) \times \U(1))$-structure $(g, J_+, \omega_+, \mathsf{H})$.  By definition, a compatible $\Sp(1)\U(1)$-structure is a complex $3$-form $\gamma \in \Omega^3(Y;\C)$ such that at each $y \in Y$, there exists a $(\U(2) \times \U(1))$-coframe $(\rho_0, \rho_1, \rho_2, \rho_3, \mu_2, \mu_3) \colon T_yY \to \R^4 \times \R^2$ for which
$$\gamma|_y = (\mu_2 - i\mu_3) \wedge (\rho_0 + i\rho_1) \wedge (\rho_2 + i\rho_3).$$
So, $\gamma$ is a non-vanishing $3$-form of $J_-$-type $(3,0)$ satisfying
$$-\frac{i}{8}\gamma \wedge \overline{\gamma} = \mu_2 \wedge \mu_3 \wedge \rho_0 \wedge \rho_1 \wedge \rho_2 \wedge \rho_3.$$
As such, $\gamma \in \Omega^3(Y;\C)$ defines an $\SU(3)$-structure on $Y$. \\
\indent Alternatively, the presence of a compatible $\SU(3)$-structure on $Y^6$ follows abstractly from the following group isomorphism of $\Sp(1)\U(1) \cong \U(2)$ onto a subgroup of $\SU(3)$. Using $\Sp(1) \cong \SU(2)$, we have:
\begin{align*}
\Sp(1)\U(1) = \left\{ \begin{pmatrix} B & 0 \\ 0 & \nu \end{pmatrix} \colon \nu^{-1/2}B \in \SU(2) \right\} & \cong \left\{ \begin{pmatrix} T & 0 \\ 0 & (\det T)^{-1} \end{pmatrix} \colon T \in \U(2) \right\} \leq \SU(3),  \\
\begin{pmatrix} B & 0 \\ 0 & \det B \end{pmatrix} & \mapsto \begin{pmatrix} B & 0 \\ 0 & (\det B)^{-1} \end{pmatrix}.
\end{align*}
(This is a group homomorphism because $\U(1)$ is abelian.) The situation is described by the following diagram:

\small $$\begin{tikzcd}
                                                       & \mathrm{U}(3) \arrow[ld, no head] \arrow[rd, no head] &                                    \\
\mathrm{U}(2) \times \mathrm{U}(1) \arrow[rd, no head] &                                                       & \mathrm{SU}(3) \arrow[ld, no head] \\
                                                       & \mathrm{Sp}(1)\mathrm{U}(1) \cong \mathrm{U}(2)       &                                   
\end{tikzcd}$$
\normalsize
\end{example}

\begin{rmk} The notation in this remark is that made standard in the monograph of Salamon~\cite{Salamon-book}. Let $T = \mathsf{H} \oplus \mathsf{V} \simeq \R^{4n+2}$ denote the (real) $\Sp(n)\U(1)$-representation of (\ref{eq:SpnU1Rep}).  Let $E \simeq \C^{2n}$ denote the standard complex $\Sp(n)$-representation, and let $L \simeq \C$ denote the standard complex $\U(1)$-representation.  Then, by refining the splitting $\Lambda^2(T^*) = \Lambda^2(\mathsf{H}^*) \otimes (\mathsf{H}^* \otimes \mathsf{V}^*) \otimes \Lambda^2(\mathsf{V}^*)$, one can decompose the space of real $2$-forms into $\Sp(n)\U(1)$-irreducible representations as follows:
\begin{align*}
\Lambda^2(\mathsf{H}^*) & \cong \R \omega_{\mathsf{H}} \oplus [\Sym^2(E)] \oplus [\Lambda^2_0(E)] \oplus \LB L^2 \RB \oplus \LB \Lambda^2_0(E) \otimes L^2 \RB \\
\mathsf{H}^* \otimes \mathsf{V}^* & \cong \LB E \otimes L^3 \RB \oplus \LB E \otimes L \RB \\
\Lambda^2(\mathsf{V}^*) & \cong \R \omega_{\mathsf{V}}.
\end{align*}
Alternatively, by refining the $J_+$-type splitting $\Lambda^2(T^*) = \LB\Lambda^{2,0}\RB \oplus [\Lambda^{1,1}]$, one obtains:
\begin{align*}
\LB \Lambda^{2,0} \RB & \cong \LB \Lambda^2_0(E) \otimes L^2 \RB \oplus \LB L^2 \RB \oplus \LB E \otimes L^3 \RB  \\
[\Lambda^{1,1}] & \cong \R\omega_{\mathsf{V}} \oplus \R \omega_{\mathsf{H}} \oplus [\Sym^2_0(E)] \oplus [\Lambda^2_0(E)] \oplus \LB E \otimes L \RB.
\end{align*}
\end{rmk}

\subsection{The Geometry of Twistor Spaces} \label{sub:GeomTwistor}

\indent \indent We now return to the study of twistor spaces $Z$.  The following fact is well-known:

\begin{thm} \label{thm:BasicStructureTwistor} Let $M^{4n+3}$ be a $3$-Sasakian manifold, and fix a projection $p = p_1 \colon M \to Z$.  The quotient $Z$ admits a $(\U(2n) \times \U(1))$-structure $(g, J_+, \omega_+, \mathsf{H})$ for which:
\begin{itemize}
\item $\left(g(1),\, J_+,\, \omega_+(1)\right)$ is K\"{a}hler-Einstein with positive scalar curvature.
\item $\left(g(\sqrt{2}),\, J_-,\, \omega_-(\sqrt{2})\right)$ is nearly-K\"{a}hler.
\item $p_*(\widetilde{\mathsf{H}}) = \mathsf{H}$ and $p_*(\mathrm{span}(A_2, A_3)) = \mathsf{V}$.
\end{itemize}
\end{thm}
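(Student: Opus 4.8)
The plan is to construct the $(\U(2n)\times\U(1))$-structure on $Z$ by pushing forward the relevant tensors on $M$ along $p=p_1$, and then to identify the resulting data with the classically known K\"ahler--Einstein and nearly-K\"ahler structures, using the descent identities of $\S$\ref{sec:3SasCircleBundle} to make the two fit into a single $(\U(2n)\times\U(1))$-structure.

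First I would set up the splitting. Since $\alpha_1$ is a connection $1$-form for $p_1$ (Boyer--Galicki), the $p_1$-horizontal distribution is $\Ker(\alpha_1)$, which refines further as $\Ker(\alpha_1) = \widetilde{\mathsf{H}}\oplus\mathrm{span}(A_2,A_3)$ with $\widetilde{\mathsf{H}} = \Ker(\alpha_1,\alpha_2,\alpha_3)$. I would check that both summands are invariant under the flow of $A_1$: computing $\mathscr{L}_{A_1}\alpha_p = A_1\,\lrcorner\, d\alpha_p = 2\,A_1\,\lrcorner\,\Omega_p$ and using $\mathsf{J}_1(A_1)=0$, $\mathsf{J}_1(A_2) = A_3$, $\mathsf{J}_1(A_3)=-A_2$ gives $\mathscr{L}_{A_1}\alpha_1 = 0$ and $\mathscr{L}_{A_1}\alpha_2 = -2\alpha_3$, $\mathscr{L}_{A_1}\alpha_3 = 2\alpha_2$, so $A_1$ preserves the codistribution spanned by $\alpha_2,\alpha_3$ and annihilates $\alpha_1$, hence preserves $\widetilde{\mathsf{H}}$; and the bracket relations $[A_p,A_q] = 2\epsilon_{pqr}A_r$ show $\mathrm{span}(A_2,A_3)$ is $A_1$-invariant. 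Therefore $\mathsf{H} := p_*(\widetilde{\mathsf{H}})$ and $\mathsf{V} := p_*(\mathrm{span}(A_2,A_3))$ are well-defined subbundles of $TZ$ with $TZ = \mathsf{H}\oplus\mathsf{V}$, $\mathrm{rank}(\mathsf{H}) = 4n$, $\mathrm{rank}(\mathsf{V}) = 2$, which is the third bullet point.

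Next I would produce $(g,J_+,\omega_+)$. Since $p_1$ is a Riemannian submersion, $g_M|_{\Ker(\alpha_1)}$ descends to a metric $g$ on $Z$. By Proposition \ref{prop:Descent}(a), the forms $\kappa_1$, $\alpha_2\wedge\alpha_3$, $\Omega_1 = \kappa_1 + \alpha_2\wedge\alpha_3$, and $\widetilde{\Omega}_1 = 2\kappa_1 - \alpha_2\wedge\alpha_3$ descend to $\omega_{\mathsf{H}},\omega_{\mathsf{V}},\omega_{\mathrm{KE}},\omega_{\mathrm{NK}}\in\Omega^2(Z)$, with $\omega_{\mathrm{KE}} = \omega_{\mathsf{H}}+\omega_{\mathsf{V}}$ and $\omega_{\mathrm{NK}} = 2\omega_{\mathsf{H}}-\omega_{\mathsf{V}}$. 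Set $\omega_+ := \omega_{\mathrm{KE}}$. The endomorphism of $TZ$ determined by $g$ and $\omega_+$ pulls back under $p$ to the endomorphism of $\Ker(\alpha_1)$ determined by $g_M$ and $\Omega_1$, which is $\mathsf{J}_1|_{\Ker(\alpha_1)}$; since the latter squares to $-\Id$ and is $g_M$-orthogonal, the former is an almost-complex structure $J_+$ on $Z$ compatible with $g$ and $\omega_+$, and $p^*J_+ = \mathsf{J}_1|_{\Ker(\alpha_1)}$. Because $\mathsf{J}_1$ preserves both $\widetilde{\mathsf{H}}$ and $\mathrm{span}(A_2,A_3)$ (recorded in $\S$\ref{sec:3SasLink}), $J_+$ preserves $\mathsf{H}$ and $\mathsf{V}$, so $(g,J_+,\omega_+,\mathsf{H})$ is a $(\U(2n)\times\U(1))$-structure. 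With the one-parameter families of $\S$\ref{sub:Sp(n)U(1)} this gives $g(1) = g$, $\omega_+(1) = \omega_{\mathsf{H}}+\omega_{\mathsf{V}} = \omega_{\mathrm{KE}}$, and, reversing the orientation of $\mathsf{V}$, $g(\sqrt{2}) = 2g_{\mathsf{H}}+g_{\mathsf{V}}$, $\omega_-(\sqrt{2}) = 2\omega_{\mathsf{H}}-\omega_{\mathsf{V}} = \omega_{\mathrm{NK}}$, with $J_- = J_+|_{\mathsf{H}} - J_+|_{\mathsf{V}}$.

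Finally I would verify the two Einstein properties. For K\"ahler--Einstein: $p^*(d\omega_{\mathrm{KE}}) = d\Omega_1 = 0$ by \eqref{eq:deriv-alpha} and $p^*$ is injective, so $d\omega_+(1) = 0$; integrability of $J_+$ together with the Einstein condition and positivity of scalar curvature are the classical facts that the twistor space of a positive quaternionic-K\"ahler orbifold is K\"ahler--Einstein with $\mathrm{scal}>0$ (citing Boyer--Galicki, Salamon, and alternatively deducing the Einstein property from Ricci-flatness of the hyperk\"ahler cone $C$). For the nearly-K\"ahler property, the identity $d\widetilde{\Omega}_1 = 3\,\mathrm{Im}(2\Gamma_1)$ derived just before Proposition \ref{prop:ExtDerivGammaXi}, combined with $\Gamma_1 = p^*\gamma$, descends to $d\omega_{\mathrm{NK}} = 6\,\mathrm{Im}(\gamma)$, exhibiting $\omega_-(\sqrt{2})$ as (up to the standard normalization) the fundamental form of the canonical Eells--Salamon-type nearly-K\"ahler structure on the twistor space; I would cite the standard references (Eells--Salamon, Nagy, Alexandrov--Grantcharov--Ivanov) for the fact that $(g(\sqrt{2}),J_-,\omega_-(\sqrt{2}))$ is nearly-K\"ahler once the scaling convention is matched. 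I expect the main obstacle to be at the bookkeeping level rather than conceptual: checking that the two independently known metrics $g_{\mathrm{KE}}$ and $g_{\mathrm{NK}}$ on $Z$ are related precisely by the horizontal squashing $t\colon 1\mapsto\sqrt 2$ in the paper's normalization, and that $J_{\mathrm{NK}}$ agrees with $J_{\mathrm{KE}}$ on $\mathsf{H}$ and is its negative on $\mathsf{V}$, so that both genuinely arise from the single structure $(g,J_+,\omega_+,\mathsf{H})$; this is exactly what the descent identities $\Omega_1 = \kappa_1+\alpha_2\wedge\alpha_3$ and $\widetilde{\Omega}_1 = 2\kappa_1-\alpha_2\wedge\alpha_3$ are arranged to make transparent.
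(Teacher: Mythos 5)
Your proposal is correct, but it is considerably more explicit than what the paper actually does: the paper's proof of this theorem is a pure citation (the K\"ahler--Einstein structure is declared classical, and the nearly-K\"ahler statement is referred to Boyer--Galicki, Theorem 14.3.9, with details in Alexandrov--Grantcharov--Ivanov and Nagy). You instead build the $(\U(2n)\times\U(1))$-structure by hand: the Lie-derivative computation $\mathscr{L}_{A_1}\alpha_1=0$, $\mathscr{L}_{A_1}\alpha_2=-2\alpha_3$, $\mathscr{L}_{A_1}\alpha_3=2\alpha_2$ (together with $[A_p,A_q]=2\epsilon_{pqr}A_r$ and $\mathsf{J}_1$-invariance of $\widetilde{\mathsf{H}}$ and $\mathrm{span}(A_2,A_3)$) legitimately shows that $\mathsf{H}=p_*(\widetilde{\mathsf{H}})$ and $\mathsf{V}=p_*(\mathrm{span}(A_2,A_3))$ are well-defined $J_+$-invariant subbundles, and the descent identities of Proposition \ref{prop:Descent} correctly produce $g$, $\omega_+=\omega_{\mathrm{KE}}$, $J_+$ and the scaled family with $\omega_-(\sqrt{2})=2\omega_{\mathsf{H}}-\omega_{\mathsf{V}}=\omega_{\mathrm{NK}}$. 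What your construction cannot supply on its own --- integrability of $J_+$, the Einstein condition with positive scalar curvature, and the nearly-K\"ahler condition on $\nabla J_-$ (the exterior-derivative identities $d\omega_{\mathrm{NK}}=6\,\mathrm{Im}\gamma$ etc.\ do not by themselves give $(\nabla_XJ_-)X=0$ when $n>1$) --- you correctly outsource to the same references the paper cites, so the two arguments rest on identical external inputs. The trade-off is that your version earns the third bullet point and the normalization $t=\sqrt{2}$ explicitly (the bookkeeping you flag is indeed where the real care is needed, and it is exactly what the identities $\Omega_1=\kappa_1+\alpha_2\wedge\alpha_3$, $\widetilde{\Omega}_1=2\kappa_1-\alpha_2\wedge\alpha_3$ are designed to handle), at the cost of reproving descent facts the paper treats as standard; as a free-standing proof it is sound, provided the convention-matching with the cited nearly-K\"ahler literature is actually carried out rather than just announced.
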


\begin{proof} The K\"ahler-Einstein structure is very well-known and has been extensively studied. The statement about the nearly-K\"ahler structure is~\cite[Theorem 14.3.9]{Boyer-Galicki}. Details can be found in Alexandrov--Grantcharov--Ivanov~\cite{AGI} or Nagy~\cite{Nagy}.
\end{proof}

\indent From now on, the twistor space $Z$ will carry the $(\U(2n) \times \U(1))$-structure  $(g, J_+, \omega_+, \mathsf{H})$ described in the previous proposition.  We will write
\begin{align*}
(g_{\mathrm{KE}}, J_{\mathrm{KE}}, \omega_{\mathrm{KE}}) & := \left(g(1),\, J_+,\, \omega_+(1)\right), \\
(g_{\mathrm{NK}}, J_{\mathrm{NK}}, \omega_{\mathrm{NK}}) & := (g(\sqrt{2}),\, J_-,\, \omega_-(\sqrt{2})).
\end{align*}
In particular,
\begin{align} \label{eq:omegaKE-omegaNK}
\omega_{\mathrm{KE}} & = \omega_{\mathsf{H}} + \omega_{\mathsf{V}}, & \omega_{\mathrm{NK}} & = 2\omega_{\mathsf{H}} - \omega_{\mathsf{V}}.
\end{align}

\noindent We now recover the important observation of Alexandrov~\cite{Alexandrov} that $Z$ naturally admits \emph{even more} structure:

\begin{thm} \label{thm:ExtraStructureTwistor} Let $Z$ be a twistor space with its $(\U(2n) \times \U(1))$-structure $(g_{\mathrm{KE}},$ $J_{\mathrm{KE}}$, $\omega_{\mathrm{KE}}$, $\mathsf{H})$.  Then $Z$ naturally admits a compatible $\Sp(n)\U(1)$-structure $\gamma \in \Omega^3(Z; \C)$.
\end{thm}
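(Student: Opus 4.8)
The plan is to take $\gamma \in \Omega^3(Z;\C)$ to be exactly the $3$-form produced by Proposition~\ref{prop:Descent}(b), i.e., the unique form with $p^*\gamma = \Gamma_1 = (\alpha_2 - i\alpha_3)\wedge(\kappa_2 + i\kappa_3)$, and to verify that this $\gamma$ satisfies Definition~\ref{def:SpnU1Str}. Since that definition is pointwise, it suffices to produce, at each $y \in Z$, a $(\U(2n)\times\U(1))$-coframe $(\rho,\mu)$ in which $\gamma|_y$ has the model expression. I will build such a coframe by \emph{lifting}: choose $x \in p^{-1}(y)$, construct a suitable $p_1$-semibasic coframe on $M$ at $x$, and push it down using that $p \colon M \to Z$ is an orbifold Riemannian submersion with $p_*(\widetilde{\mathsf{H}}) = \mathsf{H}$ and $p_*(\mathrm{span}(A_2,A_3)) = \mathsf{V}$ (Theorem~\ref{thm:BasicStructureTwistor}).

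The key input is a linear-algebra normal form on the horizontal space. On $\widetilde{\mathsf{H}}_x = \Ker(\alpha_1,\alpha_2,\alpha_3)_x$, the endomorphisms $\mathsf{J}_1,\mathsf{J}_2,\mathsf{J}_3$ restrict to $g_M$-orthogonal complex structures obeying the quaternionic relations, and from~\eqref{eq:Omegas} one has $\kappa_p|_{\widetilde{\mathsf{H}}_x} = g_M(\mathsf{J}_p\,\cdot,\cdot)|_{\widetilde{\mathsf{H}}_x}$. Hence $(\widetilde{\mathsf{H}}_x, g_M, \mathsf{J}_1,\mathsf{J}_2,\mathsf{J}_3)$ is linearly isometric to $\HH^n$ with its standard quaternion-Hermitian structure, so we may pick a $g_M$-orthonormal coframe $(e^{j0},e^{j1},e^{j2},e^{j3})_{j=1}^n$ of $\widetilde{\mathsf{H}}_x^*$ in which $\kappa_1,\kappa_2,\kappa_3$ become the standard triple $\beta_1,\beta_2,\beta_3$ of~\eqref{eq:StdHKTriple}. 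Extend each $e^{jk}$ by zero on $\widetilde{\mathsf{V}}_x$; then, together with $\alpha_2|_x$ and $\alpha_3|_x$ — which are $g_M$-orthonormal, vanish on $\widetilde{\mathsf{H}}_x$ and on $A_1$, and satisfy $\alpha_2(A_3) = \alpha_3(A_2) = 0$ — we obtain a $p_1$-semibasic coframe at $x$. Pushing it forward along the Riemannian submersion $p_*$ yields a $g_{\mathrm{KE}}$-orthonormal coframe $(\rho_{jk},\mu_2,\mu_3)$ of $T_y^*Z$ with $p^*\rho_{jk} = e^{jk}$, $p^*\mu_2 = \alpha_2|_x$, $p^*\mu_3 = \alpha_3|_x$.

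It then remains to verify the two coframe identities and the model expression, all by pulling back to $M$ and invoking injectivity of $p^*$. By Proposition~\ref{prop:Descent}(a), $p^*\omega_{\mathsf{V}} = \alpha_2\wedge\alpha_3$ and $p^*\omega_{\mathsf{H}} = \kappa_1$; since $p^*(\mu_2\wedge\mu_3) = \alpha_2\wedge\alpha_3$ and $p^*\sum_j(\rho_{j0}\wedge\rho_{j1}+\rho_{j2}\wedge\rho_{j3}) = \beta_1 = \kappa_1$, we conclude $\omega_{\mathsf{V}}|_y = \mu_2\wedge\mu_3$ and $\omega_{\mathsf{H}}|_y = \sum_j(\rho_{j0}\wedge\rho_{j1}+\rho_{j2}\wedge\rho_{j3})$, so $(\rho,\mu)$ is a genuine $(\U(2n)\times\U(1))$-coframe. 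Finally, the elementary expansion
$$\sum_{j=1}^n (\rho_{j0}+i\rho_{j1})\wedge(\rho_{j2}+i\rho_{j3}) = \sum_{j=1}^n\left[(\rho_{j0}\wedge\rho_{j2}-\rho_{j1}\wedge\rho_{j3}) + i(\rho_{j0}\wedge\rho_{j3}+\rho_{j1}\wedge\rho_{j2})\right]$$
has pullback $\beta_2 + i\beta_3 = \kappa_2 + i\kappa_3$, whence $p^*\big[(\mu_2-i\mu_3)\wedge\sum_j(\rho_{j0}+i\rho_{j1})\wedge(\rho_{j2}+i\rho_{j3})\big] = (\alpha_2-i\alpha_3)\wedge(\kappa_2+i\kappa_3) = \Gamma_1 = p^*\gamma$, and injectivity of $p^*$ gives $\gamma|_y = (\mu_2-i\mu_3)\wedge\sum_j(\rho_{j0}+i\rho_{j1})\wedge(\rho_{j2}+i\rho_{j3})$, as required.

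The only real obstacle here is bookkeeping rather than conceptual difficulty: one must be careful that although $\alpha_2,\alpha_3$ (and $\kappa_2,\kappa_3$) do not themselves descend to $Z$ as forms, their \emph{pointwise values} are $p_1$-semibasic and hence push forward unambiguously; and one must align the quaternion-Hermitian normal form on $\widetilde{\mathsf{H}}$ with the $(\U(2n)\times\U(1))$-coframe conventions, in particular matching $\kappa_1 \leftrightarrow \omega_{\mathsf{H}}$ (so that the $J_+$-type and $J_-$-type of $\gamma$ come out as in Definition~\ref{def:SpnU1Str}). Granting Proposition~\ref{prop:Descent} and the standard normal form for a quaternion-Hermitian vector space, no genuinely hard step is left; a second proof, via the quaternionic-Kähler base $Q$, is promised separately and provides an independent check.
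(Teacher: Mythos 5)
Your proposal is correct and follows essentially the same route as the paper: define $\gamma$ as the unique form with $p^*\gamma = \Gamma_1$ via Proposition~\ref{prop:Descent}(b) and then check compatibility with the $(\U(2n)\times\U(1))$-structure. The only difference is that you spell out the pointwise coframe verification (lifting to $M$, using the quaternion-Hermitian normal form on $\widetilde{\mathsf{H}}$, and pushing down through the Riemannian submersion), which the paper leaves implicit in the assertion that the descended form is an $\Sp(n)\U(1)$-structure; your bookkeeping there is sound.
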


\begin{proof} By Proposition \ref{prop:Descent}(b), there exists a unique $3$-form $\gamma \in \Omega^3(Z;\C)$ satisfying
$$p^*(\gamma) = \Gamma_1 = (\alpha_2 - i\alpha_3) \wedge (\kappa_2 + i\kappa_3).$$
This $3$-form is an $\Sp(n)\U(1)$-structure.
\end{proof}

\indent In $\S$\ref{subsec:TwistorSpace}, we will give a second proof of Theorem \ref{thm:ExtraStructureTwistor}  from the perspective of quaternionic-K\"{a}hler geometry.  For now, using Proposition \ref{prop:Descent}, we can compute the following exterior derivatives:
\begin{align*}
d\omega_{\mathsf{V}} & = -\mathrm{Im}(2\gamma), & d\omega_{\mathrm{KE}} & = 0, & d\,\mathrm{Re}(\gamma) & = 2\xi - 4\omega_{\mathsf{H}} \wedge \omega_{\mathsf{V}}, \\
d\omega_{\mathsf{H}} & = \mathrm{Im}(2\gamma), & d\omega_{\mathrm{NK}} & = 3\,\mathrm{Im}(2\gamma), & d\,\mathrm{Im}(\gamma) & = 0, \\
 & & & & d\xi & = -4\omega_{\mathsf{H}} \wedge \mathrm{Im}(\gamma).
\end{align*}

\begin{example} \label{ex:NKEqns} When $n = 1$, there is a coincidence $\xi = 2\omega_{\mathsf{H}}^2$.  Therefore, in this case, using that $\omega_{\mathrm{NK}}^2 = (2\omega_{\mathsf{H}} - \omega_{\mathsf{V}})^2 = 4\omega_{\mathsf{H}}^2 - 4 \omega_{\mathsf{H}} \wedge \omega_{\mathsf{V}} = 2\xi - 4\omega_{\mathsf{H}} \wedge \omega_{\mathsf{V}}$, we recover the equations
\begin{align*}
d\omega_{\mathrm{NK}} & = 3\,\mathrm{Im}(2\gamma), \\
d\,\mathrm{Re}(2\gamma) & = 2\, \omega_{\mathrm{NK}} \wedge \omega_{\mathrm{NK}},
\end{align*}
familiar from the theory of nearly-K\"{a}hler $6$-manifolds.
\end{example}

\subsection{$\mathrm{Re}(\gamma)$-Calibrated $3$-folds} \label{sec:Re(gamma)3folds}

\indent \indent Let $Z^{4n+2}$ be a twistor space equipped with the $(\U(2n) \times \U(1))$-structure $(g_{\mathrm{KE}}, J_{\mathrm{KE}}, \omega_{\mathrm{KE}}, \mathsf{H})$.  With respect to this structure, one can consider several classes of submanifolds of $Z$, such as:
\begin{itemize}
\item $J_{\mathrm{KE}}$-complex (resp., $J_{\mathrm{NK}}$-complex) submanifolds.
\item Horizontal submanifolds (i.e., those tangent to $\mathsf{H}$).
\item $\omega_{\mathrm{KE}}$-isotropic (resp., $\omega_{\mathrm{NK}}$-isotropic) submanifolds.
\end{itemize}
These submanifolds have been the subject of numerous studies, particularly when $\dim(Z) = 6$.  However, since we have now shown that $Z$ admits a compatible $\Sp(n)\U(1)$-structure $\gamma \in \Omega^3(Z; \C)$, twistor spaces also admit a distinguished class of $3$-folds.  In this section we explore these. \\
\indent We begin by showing that $\mathrm{Re}(\gamma) \in \Omega^3(Z)$ is a semi-calibration, for which we need a preliminary lemma.

\begin{lem} \label{lem:v-hook-Regamma}
For any horizontal unit vector $v \in \mathsf{H}$, the $2$-form $\iota_v(\mathrm{Re}(\gamma)) \in \Omega^2(Z)$ is a semi-calibration.  Moreover, its calibrated $2$-planes lie in the $6$-plane $L \oplus \mathsf{V}$, where $L$ is the quaternionic line spanned by $v$.
\end{lem}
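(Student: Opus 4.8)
The plan is to reduce the statement to an explicit linear-algebra computation on $\R^{4n+2}$, using the pointwise model of the $\Sp(n)\U(1)$-structure together with the transitivity of $\Sp(n)$ on unit horizontal vectors. Fix a point $y \in Z$ and, via Definition~\ref{def:SpnU1Str}, choose a $(\U(2n) \times \U(1))$-coframe $(\rho, \mu)$ in which $\gamma|_y$ has the standard form. Since $\Sp(n) \subset \U(2n)$ acts transitively on the unit sphere of $\mathsf{H} \simeq \HH^n$ and stabilizes $\gamma_0$, it suffices to treat a single normalized horizontal unit vector, say $v = e_{10}$ (the first basis vector of the first quaternionic line). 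With the model $\gamma_0 = (f^2 - i f^3) \wedge (\beta_2 + i\beta_3)$, where $\beta_2, \beta_3$ are the standard forms of~\eqref{eq:StdHKTriple}, a direct contraction gives
\begin{align*}
\iota_{e_{10}}\mathrm{Re}(\gamma_0) & = \iota_{e_{10}}\!\left(f^2 \wedge \beta_2 + f^3 \wedge \beta_3\right) = f^2 \wedge \iota_{e_{10}}\beta_2 + f^3 \wedge \iota_{e_{10}}\beta_3 = f^2 \wedge e^{12} + f^3 \wedge e^{13},
\end{align*}
using $\iota_{e_{10}}\beta_2 = e^{12}$ and $\iota_{e_{10}}\beta_3 = e^{13}$. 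This $2$-form involves only the four covectors $f^2, f^3, e^{12}, e^{13}$, i.e. it is supported on the $4$-plane $\mathrm{span}(f_2, f_3, e_{12}, e_{13}) \subset L \oplus \mathsf{V}$, where $L = \mathrm{span}(e_{10}, e_{11}, e_{12}, e_{13})$ is precisely the quaternionic line through $v$; this already yields the second assertion once I observe that any calibrated $2$-plane of a $2$-form $\eta$ must lie in the support of $\eta$ (the orthogonal complement of the kernel of $\eta$, viewed as a map $TY \to T^*Y$).

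For the comass-one claim, I would recognize $f^2 \wedge e^{12} + f^3 \wedge e^{13}$ as a K\"ahler-type form on a $4$-dimensional space: it is $g$-dual to $(f_2 \otimes e_{12} - e_{12} \otimes f_2) + (f_3 \otimes e_{13} - e_{13} \otimes f_3)$, which is $\omega_{\mathbb{C}^2}$ for the complex structure sending $f_2 \mapsto e_{12}$, $f_3 \mapsto e_{13}$ on that $4$-plane. A $2$-form equal to a K\"ahler form $\tfrac12\psi\wedge\bar\psi$-type expression — more simply, a $2$-form of the shape $u_1 \wedge u_2 + u_3 \wedge u_4$ with $\{u_i\}$ part of an orthonormal coframe — has comass exactly $1$ by Wirtinger's inequality (Appendix~\ref{appendix:calib} presumably records this, or it follows from Proposition~\ref{prop:rich-calibs}). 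Hence $\iota_v\mathrm{Re}(\gamma)$ is a semi-calibration, and its calibrated $2$-planes are exactly the complex lines for this complex structure, all of which sit inside the $4$-plane above, hence inside $L \oplus \mathsf{V}$.

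The only mild subtlety — and the step I would be most careful about — is the bookkeeping of signs and indices in the contraction $\iota_{e_{10}}\beta_2$ and $\iota_{e_{10}}\beta_3$, since $\beta_2 = \sum_j (e^{j0}\wedge e^{j2} - e^{j1}\wedge e^{j3})$ and $\beta_3 = \sum_j(e^{j0}\wedge e^{j3} + e^{j1}\wedge e^{j2})$ have different sign patterns; one must check that the resulting $2$-form is genuinely of the clean form $u_1\wedge u_2 + u_3\wedge u_4$ (with a consistent orientation) rather than $u_1\wedge u_2 - u_3\wedge u_4$, which would change the set of calibrated planes but not the comass. Everything else is transitivity of $\Sp(n)$ plus Wirtinger, both of which are available from the earlier parts of the paper.
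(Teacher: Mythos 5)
Your proof is correct and follows essentially the same route as the paper's: a pointwise computation in an $\Sp(n)\U(1)$-coframe showing that $\iota_v(\mathrm{Re}(\gamma))$ is a standard comass-one $2$-form supported on the $4$-plane $\mathrm{span}(J_2v, J_3v) \oplus \mathsf{V}$, the only cosmetic differences being that the paper keeps $v$ arbitrary by setting $w = J_2v$ and recognizes $\iota_v\gamma = (\mu_2 - i\mu_3)\wedge(w^\flat - iJ_-w^\flat)$ as a decomposable $(2,0)$-form (so its real part is a special Lagrangian calibration on that $4$-plane), whereas you normalize $v = e_{10}$ by $\Sp(n)$-transitivity and invoke Wirtinger for a K\"ahler-type form. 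One minor correction: since $f^2$ has degree one, $\iota_{e_{10}}(f^2\wedge\beta_2 + f^3\wedge\beta_3) = -\left(f^2\wedge e^{12} + f^3\wedge e^{13}\right)$, an overall sign that affects neither the comass nor the support, so both conclusions stand as you argued.
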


\begin{proof} It suffices to work at a fixed point $z \in Z$.  Let $(\rho,\mu)$ be a $\Sp(n)\U(1)$-coframe at $z$ as in Definition \ref{def:SpnU1Str}.  We may then write $\gamma|_z = \tau \wedge (\beta_2 + i\beta_3)$, where
\begin{align*}
\tau & = \mu_2 - i\mu_3, & \beta_2 & = \sum_{i=1}^n (\rho_{j0} \wedge \rho_{j2} - \rho_{j1} \wedge \rho_{j3}), & \beta_3 & = \sum_{i=1}^n (\rho_{j0} \wedge \rho_{j3} + \rho_{j1} \wedge \rho_{j2}).
\end{align*}
Define complex structures $J_2$ and $J_3$ on $\mathsf{H}|_z$ by declaring
\begin{align*}
J_2(\rho_{j0}) & = \rho_{j2}, & J_3(\rho_{j0}) & = \rho_{j3}, \\
J_2(\rho_{j1}) & = -\rho_{j3}, & J_3(\rho_{j1}) & = \rho_{j2},
\end{align*}
which implies $J_+J_2 = J_3$ and $g(J_2 \cdot, \cdot) = \beta_2$ and $g(J_3 \cdot, \cdot) = \beta_3$.  Note that $\tau$, $\beta_2, \beta_3$, as well as $J_2, J_3$, depend on the choice of $\Sp(n)\U(1)$-frame. \\
\indent Now, let $v \in \mathsf{H}$ be a horizontal unit vector.  Let $w = J_2v$, so that
\begin{align*}
\iota_v\gamma = \iota_v(\tau \wedge (\beta_2 + i\beta_3)) = \tau \wedge \iota_v(\beta_2 + i\beta_3) & = \tau \wedge (g(J_2v, \cdot) + ig(J_+J_2v, \cdot)) \\
& = \tau \wedge (w^\flat - iJ_+ w^\flat) \\
& = (\mu_2 - i \mu_3) \wedge (w^\flat - iJ_- w^\flat)
\end{align*}
since $J_+ = J_-$ on horizontal vectors. This $2$-form is decomposable and has $J_-$-type $(2,0)$. Moreover, $\{\mu_2, \mu_3, w^\flat, J_- w^\flat\}$ is an orthonormal set.  Thus, this $2$-form is a standard complex volume form, and hence its real part is a semi-calibration.
\end{proof}

\begin{rmk} \label{rmk:v-hook-Regamma}
The above proof shows slightly more, namely that the $\iota_v (\mathrm{Re}(\gamma))$-calibrated $2$-planes lie in the $4$-plane $\text{span}(w, J_+w) \oplus \mathsf{V} = \text{span}(J_2 v, J_3 v) \oplus \mathsf{V}$.
\end{rmk}

\begin{prop} \label{prop:GammaSemiCal} The $3$-form $\mathrm{Re}(\gamma) \in \Omega^3(Z)$ is a semi-calibration.  Moreover, let $E \in \Gr_3^+(TZ)$ be an oriented $3$-plane.
\begin{enumerate}[(a)]
\item $E$ is $\mathrm{Re}(\gamma)$-calibrated if and only if $E = \R v \oplus E'$ for some $v \in E \cap \mathsf{H}$ and some $2$-plane $E'$ that is $\iota_v(\mathrm{Re}(\gamma))$-calibrated.
\item  If $E$ is a $\mathrm{Re}(\gamma)$-calibrated $3$-plane, there is a quaternionic line $L \subset \mathsf{H}$ such that $E$ is contained in $L \oplus \mathsf{V}$. 
\item If $E$ is $\mathrm{Re}(\gamma)$-calibrated, then $E$ is $\omega_{\mathrm{NK}}$-isotropic.
\end{enumerate}
\end{prop}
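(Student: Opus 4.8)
The plan is to prove (a), (b), (c) in turn, with the semi-calibration claim dropping out of (a). The two essential inputs are Lemma~\ref{lem:v-hook-Regamma} and Remark~\ref{rmk:v-hook-Regamma}, together with the trivial dimension count $\dim(E \cap \mathsf{H}) \geq \dim E + \mathrm{rank}\,\mathsf{H} - (4n+2) = 1$, which shows that every oriented $3$-plane $E \subset TZ$ contains a horizontal line. For (a): fix $E$, pick a unit $v \in E \cap \mathsf{H}$, and extend to an oriented $g$-orthonormal basis $(v,u_1,u_2)$ of $E$. Then $\mathrm{Re}(\gamma)(v,u_1,u_2) = \bigl(\iota_v\mathrm{Re}(\gamma)\bigr)(u_1,u_2)$, and since $\iota_v\mathrm{Re}(\gamma)$ has comass $\leq 1$ by Lemma~\ref{lem:v-hook-Regamma} while $(u_1,u_2)$ is a unit $2$-vector, $|\mathrm{Re}(\gamma)(E)| \leq 1$; thus $\mathrm{Re}(\gamma)$ has comass $\leq 1$, and $E$ is $\mathrm{Re}(\gamma)$-calibrated iff $E' := \mathrm{span}(u_1,u_2)$ is $\iota_v\mathrm{Re}(\gamma)$-calibrated. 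Conversely, given a unit $v \in \mathsf{H}$ and an $\iota_v\mathrm{Re}(\gamma)$-calibrated $2$-plane $E'$, Remark~\ref{rmk:v-hook-Regamma} places $E'$ inside $\mathrm{span}(J_2 v, J_3 v)\oplus\mathsf{V}$, which is $g$-orthogonal to $v$, so $\mathbb{R}v \oplus E'$ is an honest $3$-plane on which $\mathrm{Re}(\gamma)$ restricts to $v^\flat \wedge (\iota_v\mathrm{Re}(\gamma))|_{E'}$, a volume form for the appropriate orientation. Since $\iota_v\mathrm{Re}(\gamma)$ is itself a semi-calibration, such $E'$ exist, so the comass equals $1$ and (a) follows.

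Part (b) is then immediate: writing $E = \mathbb{R}v \oplus E'$ as in (a) with $E'$ an $\iota_v\mathrm{Re}(\gamma)$-calibrated $2$-plane, the ``moreover'' clause of Lemma~\ref{lem:v-hook-Regamma} gives $E' \subset L \oplus \mathsf{V}$ with $L$ the quaternionic line of $\mathsf{H}$ spanned by $v$; since also $v \in L$, we get $E \subset L \oplus \mathsf{V}$. For (c) the plan is to localize the problem to the $6$-plane $W := L \oplus \mathsf{V}$ and recognize there a special-Lagrangian situation. Working at a point with an $\Sp(n)\U(1)$-coframe adapted so that $L$ contains $v$, the data $(g|_W, J_-|_W, \gamma|_W)$ together with the vertical plane forms the standard $\SU(3)$-structure on $\C^3$ (cf. Example~\ref{ex:InducedSU3} and the inclusion $\Sp(1)\U(1) \cong \U(2) \hookrightarrow \SU(3)$), whose fundamental $2$-form is $\omega_{\mathrm{NK}}|_W$ and whose holomorphic volume form is a positive multiple of $\gamma|_W$. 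By (a) and (b) the calibrated $3$-plane $E$ lies in $W$ and is calibrated by $\mathrm{Re}(\gamma)|_W$, so it is a special-Lagrangian $3$-plane of this $\SU(3)$-structure (of a fixed phase).

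Since every special-Lagrangian $3$-plane in $\C^3$ is Lagrangian for the Kähler form, this gives $\omega_{\mathrm{NK}}|_E = 0$. Concretely I would split the verification into two parts using the decomposition $E = \mathbb{R}v \oplus E'$. First, $\omega_{\mathrm{NK}}(v, X) = 0$ for $X \in E'$: because $v$ is horizontal, $\iota_v\omega_{\mathsf{V}} = 0$, and because the horizontal component of $X$ lies in $\mathrm{span}(J_2 v, J_3 v)$ while $\omega_{\mathsf{H}}(v, J_2 v) = g(J_+ v, J_2 v) = 0 = g(J_+ v, J_3 v) = \omega_{\mathsf{H}}(v, J_3 v)$ (the vectors $v, J_+ v, J_2 v, J_3 v$ being $g$-orthonormal), one gets $\omega_{\mathrm{NK}}(v,X) = 2\omega_{\mathsf{H}}(v,X) - \omega_{\mathsf{V}}(v,X) = 0$. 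Second, $\omega_{\mathrm{NK}}|_{E'} = 0$: here one uses the normal form for $\iota_v\mathrm{Re}(\gamma)$-calibrated $2$-planes extracted from the proof of Lemma~\ref{lem:v-hook-Regamma}, namely that $\iota_v\mathrm{Re}(\gamma)$ is the real part of a standard decomposable complex $2$-form on $\mathrm{span}(J_2 v, J_3 v)\oplus\mathsf{V} \cong \C^2$, so $E'$ is a special-Lagrangian $2$-plane of phase one and is therefore isotropic for the associated symplectic form; identifying that symplectic form with the restriction of $\omega_{\mathrm{NK}}$ to $\mathrm{span}(J_2 v, J_3 v)\oplus\mathsf{V}$ finishes the argument.

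The main obstacle is precisely this last identification and, more generally, the linear-algebra bookkeeping in (c): one must track carefully the several complex structures in play ($J_{\pm}$, the auxiliary $J_2, J_3$ on $\mathsf{H}$ from Lemma~\ref{lem:v-hook-Regamma}, and the complex structure for which $\gamma|_W$ has type $(3,0)$) as well as the factor-of-$\sqrt{2}$ rescaling built into $g_{\mathrm{NK}}$ relative to $g = g_{\mathrm{KE}}$, so that the symplectic form for which the calibrated $2$-planes $E'$ are isotropic is correctly matched to $\omega_{\mathrm{NK}}$ rather than to $\omega_{\mathrm{KE}}$ or to another combination of $\omega_{\mathsf{H}}$ and $\omega_{\mathsf{V}}$. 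Once the correct $\SU(3)$-structure on $W$ is pinned down, (c) is the standard special-Lagrangian/Lagrangian implication.
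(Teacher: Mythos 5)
Your treatment of the comass-one claim and of parts (a) and (b) is essentially the paper's own proof: the same dimension count $\dim(E \cap \mathsf{H}) \geq 1$, the same orthogonal splitting $E = \R v \oplus E'$, and the same appeals to Lemma~\ref{lem:v-hook-Regamma} and Remark~\ref{rmk:v-hook-Regamma}. Those parts are fine and need no further comment.

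For (c) you depart from the paper, which disposes of it in one line: $\gamma$ has $J_-$-type $(3,0)$, so Proposition~\ref{prop:IsotropyLemma} applies with the Hermitian structure $(g_{\mathrm{KE}}, J_-)$. Your route --- restrict to $W = L \oplus \mathsf{V}$, exhibit an $\SU(3)$-structure there, and quote the special-Lagrangian-implies-Lagrangian fact --- is the same mechanism in different clothing, but the step you yourself flag as ``the main obstacle'' is a genuine gap, not bookkeeping. The $\SU(3)$-structure on $W$ determined by the data in which $E$ is actually calibrated is $(g_{\mathrm{KE}}|_W, J_-|_W, \gamma|_W)$, and its fundamental $2$-form is $g_{\mathrm{KE}}(J_- \cdot, \cdot)|_W = (\omega_{\mathsf{H}} - \omega_{\mathsf{V}})|_W = \omega_-(1)|_W$, \emph{not} $\omega_{\mathrm{NK}}|_W = (2\omega_{\mathsf{H}} - \omega_{\mathsf{V}})|_W$. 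So what your argument (and, for that matter, the paper's appeal to Proposition~\ref{prop:IsotropyLemma}) actually delivers is $(\omega_{\mathsf{H}} - \omega_{\mathsf{V}})|_E = 0$. Upgrading this to $\omega_{\mathrm{NK}}$-isotropy is not a matter of tracking constants: on the calibrated normal-form planes $\R e_{10} \oplus V_\theta$ of Proposition~\ref{prop:GammaNormalForm}, evaluating on the two spanning vectors of $V_\theta$ in \eqref{eq:Vtheta} gives $\omega_{\mathsf{H}} = \omega_{\mathsf{V}} = \cos(2\theta)$ (consistent with the value $\omega_{\mathrm{KE}}(v_2, v_3) = 2(c_\theta^2 - s_\theta^2)$ recorded in that proof), so $(\omega_{\mathsf{H}} - \omega_{\mathsf{V}})|_E = 0$ for every $\theta$, while $\omega_{\mathrm{NK}}|_E = \cos(2\theta) \neq 0$ whenever $\theta \neq \tfrac{\pi}{4}$, e.g.\ $\theta = 0$. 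The conclusion ``$E$ is $\omega_{\mathrm{NK}}$-isotropic'' would follow if $E$ were calibrated by $\mathrm{Re}(2\gamma)$ with respect to $g_{\mathrm{NK}}$, but by Proposition~\ref{prop:scaling-calib} those planes form a different family, related to the $g_{\mathrm{KE}}$-calibrated ones by the anisotropic rescaling and coinciding with them only in the HV-compatible case $\theta = \tfrac{\pi}{4}$. So your proof of (c) does not close as written; the discrepancy it exposes --- between the $\omega_-(1)$-isotropy that the special-Lagrangian/isotropy-lemma mechanism yields and the $\omega_{\mathrm{NK}}$-isotropy asserted in (c) --- affects the paper's own one-line argument in exactly the same way, and is worth raising with the authors rather than papering over.
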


\begin{proof} If $E \in \Gr_3^+(T_zZ)$ is an oriented $3$-plane at $z \in Z$, then $\dim(E \cap \mathsf{H}) \geq 1$, so there exists a unit vector $v \in E \cap \mathsf{H}$, and we may orthogonally split $E = \R v \oplus E'$.  Then
$$(\mathrm{Re} (\gamma)) (E) = ( \iota_v \mathrm{Re}(\gamma))(E') \leq 1$$
by Lemma~\ref{lem:v-hook-Regamma}, so the comass of $\mathrm{Re} (\gamma)$ is at most $1$.  Now, let $v$ be a horizontal unit vector and let $E'$ be an $\iota_v (\mathrm{Re} (\gamma))$-calibrated $2$-plane, which exists by Lemma~\ref{lem:v-hook-Regamma}.  Then $E = \R v \oplus E'$ is $\mathrm{Re}(\gamma)$-calibrated, which shows that $\mathrm{Re}(\gamma)$ has comass equal to one.  Further, we have seen that an oriented $3$-plane $E$ is $\mathrm{Re}(\gamma)$-calibrated if and only if $E'$ is $\iota_v(\mathrm{Re}(\gamma))$-calibrated, which proves (a). \\
\indent Part (b) follows from Remark~\ref{rmk:v-hook-Regamma}. Finally, since $\gamma$ is of $J_-$-type $(3,0)$, part (c) follows from Proposition \ref{prop:IsotropyLemma}.
\end{proof}

Returning to the $3$-Sasakian manifold $M^{4n+3}$, we can now establish the following:
\begin{cor} \label{cor:ReGammaComass} The $3$-form $\mathrm{Re}(\Gamma_1) \in \Omega^3(M)$ is a semi-calibration.
\end{cor}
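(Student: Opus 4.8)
The plan is to deduce this at once from Proposition \ref{prop:GammaSemiCal} together with the descent in Proposition \ref{prop:Descent}(b). First I would record that $\mathrm{Re}(\Gamma_1) = p^*(\mathrm{Re}(\gamma))$, where $p := p_1 \colon M \to Z$, since $\Gamma_1 = p^*(\gamma)$. Next, I would invoke the Boyer--Galicki theorem, according to which $p$ is an (orbifold) Riemannian submersion: away from the orbifold locus of $Z$, the map $p_*$ kills the vertical line $\R A_1$ and restricts to a linear isometry of the horizontal hyperplane $\Ker(\alpha_1)$ onto $TZ$. The whole proof then rests on the elementary observation that \emph{pullback along a Riemannian submersion preserves comass pointwise}; granting this, $\mathrm{Re}(\Gamma_1)$ has comass one at every point lying over the smooth part of $Z$, and hence, by continuity of the comass function and the fact that $\Gamma_1$ is a genuine smooth form on the manifold $M$, at every point of $M$.

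To prove the pointwise comass claim, fix $x \in M$ with $z = p(x)$ and an oriented $k$-plane $E \subset T_xM$ with oriented orthonormal basis $e_1, \dots, e_k$, and split each $e_i = e_i' + e_i''$ into its horizontal and vertical parts. Since $p_*$ annihilates vertical vectors and is an isometry on horizontal ones,
$$(p^*\mathrm{Re}(\gamma))(e_1, \dots, e_k) = \mathrm{Re}(\gamma)(p_* e_1', \dots, p_* e_k'), \qquad |p_* e_1' \wedge \cdots \wedge p_* e_k'| = |e_1' \wedge \cdots \wedge e_k'| \le 1,$$
the last inequality because orthogonal projection does not increase the norm of a simple $k$-vector. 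Combined with the comass-one property of $\mathrm{Re}(\gamma)$ from Proposition \ref{prop:GammaSemiCal}, this gives $|(p^*\mathrm{Re}(\gamma))(E)| \le 1$. For the reverse inequality (needed only to see that the value $1$ is actually attained), I would lift a $\mathrm{Re}(\gamma)$-calibrated $3$-plane $E' \subset T_zZ$ to its horizontal lift $\widetilde{E'} \subset T_xM$; as $p_*$ restricts to an orientation-compatible isometry there, $p^*\mathrm{Re}(\gamma)$ equals the volume form on $\widetilde{E'}$.

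I do not expect a real obstacle: the content is linear-algebraic and is already present in Proposition \ref{prop:GammaSemiCal}. The only mild points of care are correctly identifying $\R A_1$ as the vertical direction of $p_1$ (not $A_2, A_3$), and the passage from the preimage of $Z_{\mathrm{smooth}}$ to all of $M$. A self-contained alternative would repeat the contraction argument of Lemma \ref{lem:v-hook-Regamma} directly on $M$ --- contracting $\mathrm{Re}(\Gamma_1)$ with a unit vector $v \in \widetilde{\mathsf{H}}$ to land on a standard complex volume form in a $4$-plane spanned by $\{\mathsf{J}_2 v, \mathsf{J}_3 v, A_2, A_3\}$ --- but this only duplicates work already carried out over $Z$.
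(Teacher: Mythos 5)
Your proposal is correct and follows essentially the same route as the paper: the paper's proof likewise observes that $p_1$ is a Riemannian submersion with $\mathrm{Re}(\Gamma_1) = p_1^*(\mathrm{Re}(\gamma))$ and that $\mathrm{Re}(\gamma)$ has comass one, then cites Proposition~\ref{prop:submersion-calibration} (pullback by a Riemannian submersion preserves the semi-calibration property), which is proved by exactly the horizontal/vertical splitting and norm estimate you carry out inline, with attainment via horizontal lift of a calibrated plane.
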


\begin{proof} Recall that $p_1 \colon M \to Z$ is a Riemannian submersion, that $\mathrm{Re}(\Gamma_1) = p_1^*(\mathrm{Re}(\gamma))$, and that $\mathrm{Re}(\gamma) \in \Omega^3(Z)$ has comass one. The result now follows from Proposition~\ref{prop:submersion-calibration}.
\end{proof}

\begin{rmk}
We pause to make two remarks.  First, Proposition \ref{prop:GammaSemiCal} shows that $\mathrm{Re}(\gamma)$-calibrated $3$-folds $L^3 \subset Z^{4n+2}$ are $\omega_{\mathrm{NK}}$-isotropic.  However, we emphasize that such $3$-folds need not be $\omega_{\mathrm{KE}}$-isotropic in general.  Later (Theorem \ref{thm:ReGammaCircleBundle}), we will characterize the $\mathrm{Re}(\gamma)$-calibrated $3$-folds $L \subset Z$ satisfying $\left.\omega_{\mathrm{KE}}\right|_L = 0$. \\
\indent Second, we clarify that Proposition \ref{prop:GammaSemiCal} asserts $\mathrm{Re}(\gamma)$ is a semi-calibration with respect to the metric $g_{\mathrm{KE}}$.  Therefore, by Proposition~\ref{prop:scaling-calib}, the $3$-form $\mathrm{Re}(t^2\gamma)$ is a semi-calibration with respect to the metric $g(t) = t^2 g_{\mathsf{H}} + g_{\mathsf{V}}$.  In particular, $\mathrm{Re}(2\gamma)$ is a semi-calibration with respect to $g_{\mathrm{NK}} = 2 g_{\mathsf{H}} + g_{\mathsf{V}}$.
\end{rmk}

\subsubsection{A Normal Form for $\mathrm{Re}(\gamma)$-Calibrated $3$-planes}

\indent \indent We now aim to establish a normal form for $\mathrm{Re}(\gamma)$-calibrated $3$-planes in $Z$.  Since the subsequent discussion is a matter of linear algebra, we work in $\R^{4n+2} \simeq \HH^n \oplus \C$.  As we have done previously, we let
$$(e_{10}, e_{11}, e_{12}, e_{13}, \ldots, e_{n0}, e_{n1}, e_{n2}, e_{n3}, f_2, f_3)$$
denote the standard basis of $\R^{4n+2}$, let $(e^{10}, e^{11}, \ldots, f^2, f^3)$ denote its dual basis, let $\beta_1, \beta_2, \beta_3$ be the standard hyperk\"{a}hler triple on $\HH^n$ as in (\ref{eq:StdHKTriple}), and consider the $3$-form $\gamma_0 \in \Lambda^3((\R^{4n+2})^*)$ given by
$$\gamma_0 = (f^2 - if^3) \wedge (\beta_2 + i\beta_3).$$
Now, for $e^{i \theta} \in S^1$, define the $2$-plane
\begin{align} \label{eq:Vtheta}
V_\theta & = \mathrm{span}\left( c_\theta(-f_2 - e_{13}) + s_\theta(- f_3 - e_{12}), s_\theta(-f_2 + e_{13}) + c_\theta (- f_3 + e_{12}) \right)\!.
\end{align}
In particular, we highlight
\begin{align} \label{eq:Vpi4}
V_{\frac{\pi}{4}} & = \mathrm{span}\left( f_2 + f_3 + e_{12} + e_{13}, f_2 + f_3 - e_{12} - e_{13} \right).
\end{align}

\begin{prop} \label{prop:GammaNormalForm} Consider the $\Sp(n)\U(1)$-action on $\HH^n \oplus \C$ given in (\ref{eq:SpnU1Rep}). Let $E \subset \HH^n \oplus \C$ be a $\mathrm{Re}(\gamma_0)$-calibrated $3$-plane. Then there exists $(A, \lambda) \in \Sp(n)\U(1)$ and a unique $\theta \in [0, \frac{\pi}{4}]$ such that $(A, \lambda) \cdot E = \R e_{10} \oplus V_\theta$. Moreover, the following are equivalent:
\begin{enumerate}[(a)]
\item $\dim(E \cap \HH^n) = 2$.
\item $E = (E \cap \HH^n) \oplus (E \cap \C)$.
\item $E$ is $\omega_{\mathrm{KE}}$-isotropic.
\item $\theta = \frac{\pi}{4}$.
\end{enumerate}
\end{prop}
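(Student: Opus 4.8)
The plan is to reduce the statement to a $2$-plane problem via Proposition~\ref{prop:GammaSemiCal}, and then to use the classical description of special Lagrangian $2$-planes in $\C^2$ relative to a complex line. First I would invoke Proposition~\ref{prop:GammaSemiCal}(a)--(b) together with Remark~\ref{rmk:v-hook-Regamma}: any $\mathrm{Re}(\gamma_0)$-calibrated $3$-plane $E$ splits orthogonally as $E = \R v \oplus E'$, where $v$ is a horizontal unit vector, $E'$ is an $\iota_v(\mathrm{Re}(\gamma_0))$-calibrated $2$-plane, and $E' \subset \mathrm{span}(J_2 v, J_3 v) \oplus \C$. Since $\Sp(n)$ acts transitively on the unit sphere of $\HH^n$ and fixes $\gamma_0$ (it fixes $\beta_2, \beta_3$ and acts trivially on $\C$), I may apply an element of $\Sp(n)$ to arrange $v = e_{10}$; then $\mathrm{span}(J_2 e_{10}, J_3 e_{10}) = \mathrm{span}(e_{12}, e_{13})$, so $E' \subset W := \mathrm{span}(e_{12}, e_{13}, f_2, f_3)$. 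A direct contraction gives $\iota_{e_{10}}\gamma_0 = -(f^2 - if^3)\wedge(e^{12}+ie^{13})$, which is a decomposable complex $2$-form on $W$; hence $E'$ is a special Lagrangian $2$-plane of phase $0$ for a Calabi--Yau structure on $W \cong \C^2$, one whose K\"ahler form is a multiple of $e^{12}\wedge e^{13} - f^2\wedge f^3$ and for which $\mathsf{V} := \mathrm{span}(f_2, f_3)$ is a complex line. (One also notes in passing that $\mathrm{Re}(\Upsilon_W)$ restricts to a unit volume form on each $V_\theta$ of~\eqref{eq:Vtheta}, so each $\R e_{10}\oplus V_\theta$ is indeed $\mathrm{Re}(\gamma_0)$-calibrated.)

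The key invariant is the pair of principal angles of the Lagrangian plane $E'$ with respect to the complex line $\mathsf{V}$. Because $E'$ is Lagrangian, $J E' = (E')^\perp$ inside $W$; since $J$ is an isometry fixing $\mathsf{V}$ and since the eigenvalues of $\mathrm{proj}_{\mathsf{V}}\,\mathrm{proj}_{(E')^\perp}\,\mathrm{proj}_{\mathsf{V}} = \mathrm{proj}_{\mathsf{V}} - \mathrm{proj}_{\mathsf{V}}\,\mathrm{proj}_{E'}\,\mathrm{proj}_{\mathsf{V}}$ are $1$ minus those for $E'$, the principal angles $\theta_1 \le \theta_2$ of $E'$ satisfy $\{\theta_1,\theta_2\} = \{\tfrac{\pi}{2}-\theta_1, \tfrac{\pi}{2}-\theta_2\}$, hence $\theta_1 + \theta_2 = \tfrac{\pi}{2}$; writing $\theta_1 = \tfrac{\pi}{4}-\theta$ forces a unique $\theta \in [0,\tfrac{\pi}{4}]$. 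One then checks that the residual symmetry --- the stabilizer in $\Sp(n)\U(1)$ of the flag $(\R e_{10}, E)$, which acts on $W$ by rotating $\mathsf{H}' := \mathrm{span}(e_{12},e_{13})$ and $\mathsf{V}$ --- acts transitively on the phase-$0$ special Lagrangian $2$-planes in $W$ with a given value of $\theta$, and that the explicit plane $V_\theta$ of~\eqref{eq:Vtheta} has principal angles exactly $(\tfrac{\pi}{4}-\theta,\, \tfrac{\pi}{4}+\theta)$ with respect to $\mathsf{V}$ --- this last point by computing the Gram matrix $\left(\begin{smallmatrix} \tfrac12 & \tfrac12\sin 2\theta \\ \tfrac12\sin 2\theta & \tfrac12 \end{smallmatrix}\right)$ of the $\mathsf{V}$-projections of an orthonormal basis of $V_\theta$. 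Together these give $(A,\lambda)\cdot E = \R e_{10}\oplus V_\theta$, and uniqueness of $\theta$ follows because $\theta$ is recovered from the $\Sp(n)\U(1)$-invariant principal-angle data and $\theta \mapsto (\tfrac{\pi}{4}-\theta, \tfrac{\pi}{4}+\theta)$ is injective on $[0,\tfrac{\pi}{4}]$. I expect the bookkeeping in this paragraph --- identifying the residual symmetry group precisely and verifying its transitivity, so that the parameter range is exactly $[0,\tfrac{\pi}{4}]$ rather than something larger --- to be the main obstacle.

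Finally, for the equivalences (a)--(d): each of (a), (b), (c) is $\Sp(n)\U(1)$-invariant (the splitting $\R^{4n+2} = \HH^n \oplus \C$ and the form $\omega_0 = \omega_{\mathrm{KE}}$ are preserved), so it suffices to test them on the normal form $E = \R e_{10} \oplus V_\theta$. Writing $u_1, u_2$ for the two spanning vectors of $V_\theta$ in~\eqref{eq:Vtheta}, their vertical ($f_2,f_3$) components form a $2\times 2$ matrix of determinant $\cos 2\theta$, so $V_\theta$ contains a horizontal vector --- equivalently $\dim(E\cap\HH^n) = 2$ --- iff $\theta = \tfrac{\pi}{4}$, giving (a)$\iff$(d). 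Likewise the horizontal ($e_{12},e_{13}$) components have determinant $\cos 2\theta$, so $V_\theta\cap\C \neq 0$ iff $\theta = \tfrac{\pi}{4}$; and at $\theta = \tfrac{\pi}{4}$ one computes $E\cap\HH^n = \mathrm{span}(e_{10}, e_{12}+e_{13})$ and $E\cap\C = \mathrm{span}(f_2+f_3)$, which together span $E$, while for $\theta \neq \tfrac{\pi}{4}$ one has $E\cap\HH^n = \R e_{10}$ and $E\cap\C = 0$; this gives (b)$\iff$(d). Lastly $\iota_{e_{10}}\omega_0 = e^{11}$ annihilates $V_\theta$, while $\omega_0|_{V_\theta} = (f^2\wedge f^3 + e^{12}\wedge e^{13})|_{V_\theta} = 2\cos 2\theta \,(u^1\wedge u^2)$, so $E$ is $\omega_{\mathrm{KE}}$-isotropic iff $\theta = \tfrac{\pi}{4}$, giving (c)$\iff$(d), which completes the argument.
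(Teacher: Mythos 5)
Your reduction is legitimate and genuinely different from the paper's route: you normalize the horizontal vector $v$ to $e_{10}$ and try to classify the $\iota_{e_{10}}(\mathrm{Re}\,\gamma_0)$-calibrated $2$-planes in $W=\mathrm{span}(e_{12},e_{13},f_2,f_3)$ by their principal angles against $\mathsf{V}$, whereas the paper normalizes only the quaternionic line $L_0$, notes that $\gamma_0$ restricts to a complex volume form on $L_0\oplus\C\cong\C^3$, and quotes the known $\U(2)$-normal form for special Lagrangian $3$-planes from the cited reference. Your use of Proposition~\ref{prop:GammaSemiCal} and Remark~\ref{rmk:v-hook-Regamma}, your check that each $\R e_{10}\oplus V_\theta$ is calibrated, the relation $\theta_1+\theta_2=\tfrac{\pi}{2}$, and your verifications of (a)--(d) on the normal form are all correct (and the last part is essentially the paper's own computation). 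The problem is exactly the step you defer: the transitivity of the residual symmetry on calibrated $2$-planes with fixed $\theta$, and for the group you actually describe that claim is false.

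Here is the concrete obstruction. The subgroup of $\Sp(n)\U(1)$ fixing the vector $e_{10}$ acts on $W$ only through a single circle: $Ae_{10}=e_{10}\lambda$ determines $A$ on the quaternionic line ($e_{10}q\mapsto e_{10}\lambda q$), so the action on $W$ depends on $\lambda$ alone and is a correlated rotation of $\mathsf{H}'$ and $\mathsf{V}$. On the other hand $\psi:=\iota_{e_{10}}\mathrm{Re}(\gamma_0)=-(f^2\wedge e^{12}+f^3\wedge e^{13})$ is itself a K\"ahler form on $W$ (hyperk\"ahler rotation), so the $\psi$-calibrated $2$-planes are precisely the complex lines of the associated complex structure $I_\psi$ (with $I_\psi e_{12}=f_2$, $I_\psi e_{13}=f_3$), i.e.\ a $2$-sphere; the residual circle acts on this sphere by rotation about one axis, and each level set of $\theta\in[0,\tfrac{\pi}{4})$ is a union of \emph{two} distinct orbits. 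For instance at $\theta=0$, both $V_0$ and $V_0':=\mathrm{span}(e_{12}+f_3,\,f_2-e_{13})$ are calibrated, both have principal angles $(\tfrac{\pi}{4},\tfrac{\pi}{4})$ with $\mathsf{V}$, and both are \emph{fixed points} of the circle, so no element fixing $e_{10}$ carries $\R e_{10}\oplus V_0'$ to $\R e_{10}\oplus V_0$. (Relatedly, ``stabilizer of the flag $(\R e_{10},E)$'' cannot be what you mean, since anything stabilizing $E$ cannot move $E'$ at all; and your uniqueness invariant should be phrased via the principal angles of $E$ against $\C$, since when $\dim(E\cap\HH^n)=2$ the vector $v$ is not unique.) The repair is to retain the disconnected part of the stabilizer of the \emph{line} $\R e_{10}$ --- e.g.\ $(\diag(-1,1,\ldots,1),1)$, equivalently the freedom to replace $v$ by $-v$ --- which sends $\psi\mapsto-\psi$, acts antiholomorphically on the sphere of complex lines, and interchanges the two orbits in each level set; with that element included and the transitivity actually verified, your argument closes. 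The paper's choice to normalize only $L_0$, keeping the full $\Sp(1)\U(1)\cong\U(2)$ in play and invoking the special Lagrangian normal form in $\C^3$, is precisely what sidesteps this bookkeeping.
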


\begin{proof}  Let $E \subset \HH^n \oplus \C$ be a $\mathrm{Re}(\gamma_0)$-calibrated $3$-plane. By Proposition~\ref{prop:GammaSemiCal}, there exists a quaternionic line $L \subset \HH^n$ for which $E \subset L \oplus \C$.  Since the subgroup $\Sp(n) \leq \Sp(n) \U(1)$ acts transitively on the quaternionic lines of $\HH^n$, there exists $A_0 \in \Sp(n)$ such that $A_0 \cdot L = L_0$, where $L_0$ is the standard quaternionic line
$$L_0 = \mathrm{span}(e_{10}, e_{11}, e_{12}, e_{13}).$$
Thus $(A_0, 1) \cdot E \subset L_0 \oplus \C$, so we can without loss of generality suppose that $E \subset L_0 \oplus \C$. \\
\indent Now, $L_0 \oplus \C$ is a complex $3$-plane, and the restriction of $\gamma_0$ to $L_0 \oplus \C$ is a complex volume form.  Thus, the problem reduces to finding a normal form for special Lagrangian $3$-planes in a complex $3$-space with respect to the action of $\Sp(1)\U(1) \cong \U(2)$.  Such a normal form was established in~\cite[Proposition 3.2]{Aslan}. (Translating between notations, the $b_1, i b_1, b_2, i b_2, b_3, i b_3$ of~\cite{Aslan} corresponds to our $e_{10}, e_{11}, e_{12}, e_{13}, f_2, - f_3$.) \\
\indent For $\theta \in [0, \frac{\pi}{4}]$, write $W_\theta = \R e_{10} \oplus V_\theta$. We observe that the conditions (a), (b), (c) above are invariant under the action of $\Sp(n) \U(1)$, so it is enough to verify that for $W_{\theta}$ they are equivalent to $\theta = \frac{\pi}{4}$. If $\theta = \frac{\pi}{4}$, we have
$$ W_{\frac{\pi}{4}} = \mathrm{span}(e_{10},\, e_{12} + e_{13}, \, f_2 + f_3) = (W_{\frac{\pi}{4}} \cap \HH^n) \oplus (W_{\frac{\pi}{4}} \cap \C), $$
so both (a) and (b) hold. If $\theta \neq \frac{\pi}{4}$, then one can compute from~\eqref{eq:Vtheta} that $\dim(W_\theta \cap \HH^n) = 1$.  Since a $\mathrm{Re}(\gamma_0)$-calibrated $3$-plane cannot contain any complex lines, we have $\dim(W_\theta \cap \C) < 2$, and hence
$$ \dim( (W_\theta \cap \HH^n) \oplus (W_\theta \cap \C)) = \dim(W_\theta \cap \HH^n) + \dim(W_\theta \cap \C) < 3 = \dim(W_\theta), $$
so both (a) and (b) do not hold. \\
\indent With respect to the above basis, we have $\omega_{\mathrm{KE}} = \beta_1 + f^2 \wedge f^3$. Letting
$$ v_2 = c_\theta(-f_2 - e_{13}) + s_\theta(- f_3 - e_{12}), \quad v_3 = s_\theta(-f_2 + e_{13}) + c_\theta (- f_3 + e_{12}), $$
so $V_\theta = \mathrm{span}(v_2, v_3)$ and $W_{\theta} = \R e_{10} \oplus V_{\theta}$, a computation shows that
$$ \omega_{\mathrm{KE}}(e_{10}, v_2) = \omega_{\mathrm{KE}}(e_{10}, v_3) = 0, \qquad \omega_{\mathrm{KE}}(v_2, v_3) = 2 (c_\theta^2 - s_\theta^2), $$
so $\left.\omega_{\mathrm{KE}}\right|_{W_{\theta}} = 0$ if and only if $\theta = \frac{\pi}{4}$.
\end{proof}

\subsubsection{HV Compatibility}

\begin{defn} A submanifold $\Sigma^k \subset Z^{4n+2}$ is called \emph{HV-compatible} if at each $x \in \Sigma$, we have
$$T_x\Sigma = (T_x\Sigma \cap \mathsf{H}) \oplus (T_x\Sigma \cap \mathsf{V}).$$
\end{defn}

\indent HV compatibility is a rather stringent condition.  Nevertheless, we now observe that certain natural classes of submanifolds of $Z$ automatically satisfy it.

\begin{prop} \label{prop:HV-KE-Iso} Let $\Sigma^k \subset Z^{4n+2}$ be a submanifold, $1 \leq k \leq 2n+1$.
\begin{enumerate}[(a)]
\item If $\Sigma$ is HV-compatible, then $\Sigma$ is $\omega_{\mathrm{KE}}$-isotropic if and only if $\Sigma$ is $\omega_{\mathrm{NK}}$-isotropic.
\item Suppose $\dim(\Sigma) = 2n+1$.  If $\Sigma$ is $\omega_{\mathrm{KE}}$-Lagrangian and $\omega_{\mathrm{NK}}$-Lagrangian, then $\Sigma$ is HV-compatible.  Moreover, $\dim(T_z\Sigma \cap \mathsf{H}) = 2n$ and $\dim(T_z\Sigma \cap \mathsf{V}) = 1$ at each $z \in \Sigma$.
\item Suppose $\dim(\Sigma) = 3$.  If $\Sigma$ is $\mathrm{Re}(\gamma)$-calibrated, then $\Sigma$ is HV-compatible if and only if $\Sigma$ is $\omega_{\mathrm{KE}}$-isotropic.  In this case, $\dim(T_z\Sigma \cap \mathsf{H}) = 2$ and $\dim(T_z\Sigma \cap \mathsf{V}) = 1$ at each $z \in \Sigma$.
\end{enumerate}
\end{prop}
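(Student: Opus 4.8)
The whole argument is pointwise linear algebra organized around the identities \eqref{eq:omegaKE-omegaNK}, which invert to $\omega_{\mathsf{H}} = \tfrac13(\omega_{\mathrm{KE}} + \omega_{\mathrm{NK}})$ and $\omega_{\mathsf{V}} = \tfrac13(2\omega_{\mathrm{KE}} - \omega_{\mathrm{NK}})$. The two facts I would use throughout are that $\omega_{\mathsf{H}}$ is nondegenerate on the $4n$-plane field $\mathsf{H}$ and annihilates $\mathsf{V}$, while $\omega_{\mathsf{V}}$ is nondegenerate on the $2$-plane field $\mathsf{V}$ and annihilates $\mathsf{H}$; so, writing $\pi_{\mathsf{H}}, \pi_{\mathsf{V}}$ for the orthogonal projections, one has $\omega_{\mathsf{H}}(X,Y) = \omega_{\mathsf{H}}(\pi_{\mathsf{H}}X, \pi_{\mathsf{H}}Y)$ and $\omega_{\mathsf{V}}(X,Y) = \omega_{\mathsf{V}}(\pi_{\mathsf{V}}X, \pi_{\mathsf{V}}Y)$ for all $X,Y$. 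For part (a): fix $x\in\Sigma$, set $W = T_x\Sigma = (W\cap\mathsf{H})\oplus(W\cap\mathsf{V})$, and note that $\omega_{\mathsf{H}}|_W$ lies in $\Lambda^2(W\cap\mathsf{H})^*$ while $\omega_{\mathsf{V}}|_W$ lies in $\Lambda^2(W\cap\mathsf{V})^*$, which are complementary summands of $\Lambda^2 W^*$. Hence $\omega_{\mathrm{KE}}|_W = \omega_{\mathsf{H}}|_W + \omega_{\mathsf{V}}|_W$ vanishes iff both $\omega_{\mathsf{H}}|_W$ and $\omega_{\mathsf{V}}|_W$ vanish, which is equivalent to the vanishing of $\omega_{\mathrm{NK}}|_W = 2\omega_{\mathsf{H}}|_W - \omega_{\mathsf{V}}|_W$. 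This step presents no obstacle.

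For part (b), the two Lagrangian hypotheses give $\omega_{\mathsf{H}}|_W = \omega_{\mathsf{V}}|_W = 0$ at each $z\in\Sigma$, i.e.\ $W = T_z\Sigma$ is simultaneously $\omega_{\mathsf{H}}$- and $\omega_{\mathsf{V}}$-isotropic. By the projection identities, $\pi_{\mathsf{H}}(W)$ is an $\omega_{\mathsf{H}}$-isotropic subspace of the $4n$-dimensional symplectic space $(\mathsf{H},\omega_{\mathsf{H}})$, so $\dim\pi_{\mathsf{H}}(W)\le 2n$; likewise $\pi_{\mathsf{V}}(W)$ is $\omega_{\mathsf{V}}$-isotropic in the $2$-dimensional symplectic space $(\mathsf{V},\omega_{\mathsf{V}})$, so $\dim\pi_{\mathsf{V}}(W)\le 1$. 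Since $\ker(\pi_{\mathsf{V}}|_W) = W\cap\mathsf{H}$ and $\ker(\pi_{\mathsf{H}}|_W) = W\cap\mathsf{V}$, rank--nullity gives $\dim(W\cap\mathsf{H})\ge 2n$ and $\dim(W\cap\mathsf{V})\ge 1$. As $(W\cap\mathsf{H})\oplus(W\cap\mathsf{V})$ is a subspace of the $(2n+1)$-dimensional $W$, both inequalities must be equalities and the direct sum must exhaust $W$. Thus $\Sigma$ is HV-compatible with $\dim(T_z\Sigma\cap\mathsf{H}) = 2n$ and $\dim(T_z\Sigma\cap\mathsf{V}) = 1$; this is again routine.

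For part (c), the forward implication is short: if $\Sigma$ is HV-compatible then, being $\mathrm{Re}(\gamma)$-calibrated, it is $\omega_{\mathrm{NK}}$-isotropic by Proposition \ref{prop:GammaSemiCal}(c), hence $\omega_{\mathrm{KE}}$-isotropic by part (a). For the converse, and for the fibre dimensions, I would apply Proposition \ref{prop:GammaNormalForm} pointwise. At each $z\in\Sigma$ pick an $\Sp(n)\U(1)$-coframe as in Definition \ref{def:SpnU1Str}; a short bookkeeping computation identifies $(T_zZ, \gamma|_z, \omega_{\mathrm{KE}}|_z, \mathsf{H}, \mathsf{V})$ with $(\R^{4n+2}, \gamma_0, \beta_1 + f^2\wedge f^3, \HH^n, \C)$, using that $\sum_j(\rho_{j0}+i\rho_{j1})\wedge(\rho_{j2}+i\rho_{j3}) = \beta_2 + i\beta_3$ and that $\omega_{\mathsf{H}}$ corresponds to $\beta_1$ in the notation of \eqref{eq:StdHKTriple}. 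Then $T_z\Sigma$ becomes a $\mathrm{Re}(\gamma_0)$-calibrated $3$-plane, and the equivalence (b)$\iff$(c) of Proposition \ref{prop:GammaNormalForm} states exactly that $T_z\Sigma$ is HV-compatible iff it is $\omega_{\mathrm{KE}}$-isotropic; moreover in that case $\theta = \tfrac{\pi}{4}$, so $T_z\Sigma$ is $\Sp(n)\U(1)$-equivalent to $W_{\pi/4} = \mathrm{span}(e_{10},\, e_{12}+e_{13},\, f_2+f_3)$ (see \eqref{eq:Vpi4}), giving $\dim(T_z\Sigma\cap\mathsf{H}) = 2$ and $\dim(T_z\Sigma\cap\mathsf{V}) = 1$. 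The only genuinely fiddly point in the entire proof is this last matching of the coframe conventions of Definition \ref{def:SpnU1Str} to the standard model, so that Proposition \ref{prop:GammaNormalForm} applies verbatim; everything else is symplectic linear algebra and dimension counting.
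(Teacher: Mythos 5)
Your proposal is correct and follows essentially the same route as the paper: part (a) via the supports of $\omega_{\mathsf{H}}$ and $\omega_{\mathsf{V}}$ on an HV-compatible tangent space, part (b) via the isotropic-dimension bounds $\dim\pi_{\mathsf{H}}(W)\leq 2n$, $\dim\pi_{\mathsf{V}}(W)\leq 1$ plus a dimension count, and part (c) via Proposition \ref{prop:GammaNormalForm} applied pointwise in an $\Sp(n)\U(1)$-coframe. The only (inessential) deviations are that you use rank--nullity where the paper uses $T_z\Sigma\subset\pi_{\mathsf{H}}(T_z\Sigma)\oplus\pi_{\mathsf{V}}(T_z\Sigma)$, and that you derive the forward implication in (c) from Proposition \ref{prop:GammaSemiCal}(c) together with part (a) rather than directly from the normal form.
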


\begin{proof} (a) Suppose $\Sigma$ is HV-compatible. If $\Sigma$ is $\omega_{\mathrm{KE}}$-isotropic, then~\eqref{eq:omegaKE-omegaNK} says that
\begin{equation} \label{eq:HV-isotropic-temp}
\left.\omega_{\mathsf{V}}\right|_{\Sigma} = - \left.\omega_{\mathsf{H}}\right|_{\Sigma}.
\end{equation}
We claim that $\omega_{\mathsf{H}}|_{\Sigma} = \omega_{\mathsf{V}}|_{\Sigma} = 0$, which would imply again by~\eqref{eq:omegaKE-omegaNK} that $\Sigma$ is also $\omega_{\mathrm{NK}}$-isotropic. Let $u_1, u_2 \in T_x \Sigma$, and decompose them orthogonally as $u_j = u_j^{\mathsf{H}} + u_j^{\mathsf{V}}$, where $u_j^{\mathsf{H}} \in \mathsf{H}$ and $u_j^{\mathsf{V}} \in \mathsf{V}$. Since $\Sigma$ is HV-compatible, both $u_j^{\mathsf{H}}$ and $u_j^{\mathsf{V}}$ are in $T_x \Sigma$ for $j = 1, 2$. Using~\eqref{eq:HV-isotropic-temp} and the facts that $\omega_{\mathsf{H}} \in \Lambda^2 (\mathsf{H}^*)$ and $\omega_{\mathsf{V}} \in \Lambda^2 (\mathsf{V}^*)$, we have
\begin{align*}
\omega_{\mathsf{V}}(u_1, u_2) & = \omega_{\mathsf{V}}(u_1^{\mathsf{H}} + u_1^{\mathsf{V}}, u_2^{\mathsf{H}} + u_2^{\mathsf{V}}) = \omega_{\mathsf{V}}(u_1^{\mathsf{V}}, u_2^{\mathsf{V}}) \\
& = -\omega_{\mathsf{H}}(u_1^{\mathsf{V}}, u_2^{\mathsf{V}}) = 0.
\end{align*}
The argument in the other direction is essentially the same, with~\eqref{eq:HV-isotropic-temp} replaced by $\left.\omega_{\mathsf{V}}\right|_{\Sigma} = \left.2 \omega_{\mathsf{H}}\right|_{\Sigma}$.
\\

\indent (b) Let $\Sigma^{2n+1} \subset Z$ be $\omega_{\mathrm{KE}}$-Lagrangian and $\omega_{\mathrm{NK}}$-Lagrangian, so that $\left.\omega_{\mathsf{V}}\right|_\Sigma = 0$ and $\left.\omega_{\mathsf{H}}\right|_\Sigma = 0$.  Fix $z \in \Sigma$, let  $\pi_{\mathsf{H}} \colon T_zZ \to \mathsf{H}$ and $\pi_{\mathsf{V}} \colon T_zZ \to \mathsf{V}$ denote the projection maps, so that
$$T_z\Sigma \subset \pi_{\mathsf{H}}(T_z\Sigma) \oplus \pi_{\mathsf{V}}(T_z\Sigma).$$
Let $(\rho, \mu) \colon T_zZ \to \R^{4n + 2}$ be an $\Sp(n)\U(1)$-coframe at $z$.  Since $\left.\mu^2 \wedge \mu^3\right|_\Sigma = \left.\omega_{\mathsf{V}}\right|_\Sigma = 0$, we have $\left.\mu^2 \wedge \mu^3\right|_{\pi_{\mathsf{V}}(T_z\Sigma)} = 0$, so that $\dim( \pi_{\mathsf{V}}(T_z\Sigma)) \leq 1$.  Moreover, since $\omega_{\mathsf{H}}$ is a non-degenerate $2$-form on the $4n$-plane $\mathsf{H}$, the condition $\omega_{\mathsf{H}}|_{ \pi_{\mathsf{H}}(T_z\Sigma) } = 0$ implies that $\dim(\pi_{\mathsf{H}}(T_z\Sigma)) \leq 2n$.  Therefore, since
$$\dim(\pi_{\mathsf{H}}(T_z\Sigma) \oplus \pi_{\mathsf{V}}(T_z\Sigma)) = \dim(\pi_{\mathsf{H}}(T_z\Sigma)) + \dim(\pi_{\mathsf{V}}(T_z\Sigma)) \leq 2n+1 = \dim(T_z\Sigma),$$
we deduce that $T_z\Sigma = \pi_{\mathsf{H}}(T_z\Sigma) \oplus \pi_{\mathsf{V}}(T_z\Sigma)$, which implies the result. \\

\indent (c) This is immediate from Proposition~\ref{prop:GammaNormalForm}.
\end{proof}

\subsubsection{Other phases}

\indent \indent Thus far, we have studied the real $3$-form $\mathrm{Re}(\gamma) \in \Omega^3(Z)$.  More generally, one can consider the $S^1$-family of real $3$-forms $\mathrm{Re}(e^{-i\theta}\gamma)$ for constant $e^{i\theta} \in S^1$.  We now explore the corresponding submanifold theory, beginning with a familiar situation:

\begin{example} \label{ex:NK6} Suppose that $n = 1$, so that the twistor space $Z$ is $6$-dimensional, and $\gamma \in \Omega^3(Z;\C)$ is an $\Sp(1)\U(1) = \U(2)$-structure.  By the discussion in Examples \ref{ex:InducedSU3} and \ref{ex:NKEqns}, the $3$-form $\gamma$ induces an $\SU(3)$-structure on $Z^6$  and satisfies
\begin{align*}
d\omega_{\mathrm{NK}} & = 3\,\mathrm{Im}(2\gamma), \\
d\,\mathrm{Re}(2\gamma) & = 2\, \omega_{\mathrm{NK}} \wedge \omega_{\mathrm{NK}}.
\end{align*}
Now, let $L^3 \subset Z^6$ be an oriented $3$-dimensional submanifold.  It is well-known that $L$ is $\omega_{\mathrm{NK}}$-Lagrangian if and only if $L$ is $\gamma$-special Lagrangian of phase $1$.  That is:
\begin{align*}
\mathrm{Re}(2\gamma)|_L = \vol_L \ \ \iff \ \ \mathrm{Im}(2\gamma)|_L = 0 \ \text{ and } \ \left.\omega_{\mathrm{NK}}\right|_L = 0 \ \ \ \iff \ \ \ L \text{ is }\omega_{\mathrm{NK}}\text{-Lagrangian}.
\end{align*}
More generally, one might wish to consider $\gamma$-special Lagrangian $3$-folds of other phases $e^{i\theta} \in S^1$. However, it is well-known that if $L^3 \subset Z^6$ satisfies $\mathrm{Re}(e^{-i\theta}\gamma)|_L = \vol_L$, then $e^{-i\theta} = \pm 1$.
\end{example}

\indent Example~\ref{ex:NK6} is the special case $n=1$ of the following more general statement, which is new:

\begin{prop} \label{prop:Re-gamma-phases} Let $L^3 \subset Z^{4n+2}$ be a $3$-dimensional submanifold.
\begin{enumerate}[(a)]
\item If $L$ is $\mathrm{Re}(e^{-i\theta}\gamma)$-calibrated, then $e^{i\theta} = \pm 1$.
\item If $L$ is $\mathrm{Re}(\gamma)$-calibrated, then $\omega_{\mathrm{NK}}|_L = 0$ and $\mathrm{Im}(\gamma)|_L = 0$.  If $n = 1$, then the converse also holds.
\end{enumerate}
\end{prop}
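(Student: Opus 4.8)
The plan is to derive everything from two ingredients, after which both parts are short. The first ingredient is that for every constant $\theta$ the form $e^{-i\theta}\gamma$ is again a compatible $\Sp(n)\U(1)$-structure for the \emph{same} $(\U(2n)\times\U(1))$-structure on $Z$, so Proposition~\ref{prop:GammaSemiCal} applies to it unchanged. Indeed, given an $\Sp(n)\U(1)$-coframe $(\rho,\mu)$ at $y\in Z$ as in Definition~\ref{def:SpnU1Str}, replace $(\mu_2,\mu_3)$ by the rotated orthonormal pair $\mu_2'=c_\theta\mu_2-s_\theta\mu_3$, $\mu_3'=s_\theta\mu_2+c_\theta\mu_3$; then $\mu_2'\wedge\mu_3'=\mu_2\wedge\mu_3=\omega_{\mathsf{V}}|_y$, so $(\rho,\mu')$ is still a $(\U(2n)\times\U(1))$-coframe, and since $\mu_2'-i\mu_3'=e^{-i\theta}(\mu_2-i\mu_3)$ one gets $e^{-i\theta}\gamma|_y=(\mu_2'-i\mu_3')\wedge\sum_j(\rho_{j0}+i\rho_{j1})\wedge(\rho_{j2}+i\rho_{j3})$. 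Hence $\mathrm{Re}(e^{-i\theta}\gamma)$ is a semi-calibration and, by Proposition~\ref{prop:GammaSemiCal}(c), any $\mathrm{Re}(e^{-i\theta}\gamma)$-calibrated $3$-plane (for the same $\omega_{\mathrm{NK}}$) is $\omega_{\mathrm{NK}}$-isotropic. The second ingredient is the structure equation $d\omega_{\mathrm{NK}}=3\,\mathrm{Im}(2\gamma)=6\,\mathrm{Im}(\gamma)$: for any $3$-fold $L\subset Z$ with $\omega_{\mathrm{NK}}|_L=0$, restricting to $L$ and using that $d$ commutes with restriction gives $6\,\mathrm{Im}(\gamma)|_L=d(\omega_{\mathrm{NK}}|_L)=0$, i.e.\ $\mathrm{Im}(\gamma)|_L=0$.

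For part (a): suppose $L^3$ is $\mathrm{Re}(e^{-i\theta}\gamma)$-calibrated, so each tangent plane is $\mathrm{Re}(e^{-i\theta}\gamma)$-calibrated. By the first ingredient $\omega_{\mathrm{NK}}|_L=0$, and by the second $\mathrm{Im}(\gamma)|_L=0$. Therefore, at each point of $L$,
$$\vol_L \;=\; \mathrm{Re}(e^{-i\theta}\gamma)\big|_L \;=\; c_\theta\,\mathrm{Re}(\gamma)\big|_L+s_\theta\,\mathrm{Im}(\gamma)\big|_L \;=\; c_\theta\,\mathrm{Re}(\gamma)\big|_L.$$
Since $\mathrm{Re}(\gamma)$ is a semi-calibration (Proposition~\ref{prop:GammaSemiCal}), $\mathrm{Re}(\gamma)|_L=f\,\vol_L$ for a function $f$ with $|f|\le 1$; hence $c_\theta f=1$, which forces $|c_\theta|\ge 1$. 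Thus $c_\theta=\pm 1$, $s_\theta=0$, and $e^{i\theta}=\pm 1$.

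For part (b): if $L$ is $\mathrm{Re}(\gamma)$-calibrated, then $\omega_{\mathrm{NK}}|_L=0$ by Proposition~\ref{prop:GammaSemiCal}(c), whence $\mathrm{Im}(\gamma)|_L=\frac{1}{6}d(\omega_{\mathrm{NK}}|_L)=0$ as in part (a). For the converse when $n=1$: here $\gamma$ is an $\Sp(1)\U(1)\cong\U(2)$-structure which (Examples~\ref{ex:InducedSU3} and~\ref{ex:NK6}) makes $Z^6$ a nearly-K\"{a}hler $6$-manifold whose $\SU(3)$ complex volume form is $2\gamma$; by the well-known equivalence in that setting --- equivalently, the Harvey--Lawson criterion \cite[Corollary 1.11]{Harvey-Lawson} applied to this $\SU(3)$-structure, with the passage between $g_{\mathrm{KE}}$ and $g_{\mathrm{NK}}$ (i.e.\ $\gamma$ versus $2\gamma$) handled by Proposition~\ref{prop:scaling-calib} --- a $3$-fold with $\omega_{\mathrm{NK}}|_L=0$ and $\mathrm{Im}(\gamma)|_L=0$ is $\mathrm{Re}(\gamma)$-calibrated (for a suitable orientation). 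Since $\mathrm{Im}(\gamma)|_L=0$ is anyway automatic from $\omega_{\mathrm{NK}}|_L=0$, the stated hypotheses suffice.

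The main obstacle is conceptual rather than computational: part (a) is \emph{not} a pointwise statement. In $\R^{4n+2}$ the form $\mathrm{Re}(e^{-i\theta}\gamma_0)$ possesses $\mathrm{Re}(e^{-i\theta}\gamma_0)$-calibrated $3$-planes for \emph{every} $\theta$ --- they occur inside the complex $3$-spaces $L\oplus\mathsf{V}$ as special Lagrangian planes of phase $\theta$ --- so no linear-algebra argument can single out $\theta$. The restriction $e^{i\theta}=\pm1$ appears only once $L$ is required to be a submanifold, and the sole mechanism producing it is the structure equation $d\omega_{\mathrm{NK}}=3\,\mathrm{Im}(2\gamma)$: it forces an $\omega_{\mathrm{NK}}$-Lagrangian $3$-fold to be $\mathrm{Im}(\gamma)$-isotropic, after which the comass-one bound on $\mathrm{Re}(\gamma)$ leaves no room for a nontrivial phase. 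The only remaining care is the metric bookkeeping ($g_{\mathrm{KE}}$ versus $g_{\mathrm{NK}}$, equivalently $\gamma$ versus $2\gamma$) in the $n=1$ converse.
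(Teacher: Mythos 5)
Your proof is correct and follows essentially the same route as the paper's: derive $\omega_{\mathrm{NK}}|_L=0$ from the comass argument of Proposition~\ref{prop:GammaSemiCal}, use $d\omega_{\mathrm{NK}}=6\,\mathrm{Im}(\gamma)$ to kill $\mathrm{Im}(\gamma)|_L$, and then let the comass-one bound on $\mathrm{Re}(\gamma)$ force $c_\theta=\pm1$, with the $n=1$ converse delegated to the standard nearly-K\"ahler fact of Example~\ref{ex:NK6}. The only difference is that you make explicit (via the rotated vertical coframe $\mu_2',\mu_3'$) why $e^{-i\theta}\gamma$ is again a compatible $\Sp(n)\U(1)$-structure so that Proposition~\ref{prop:GammaSemiCal} applies verbatim, a point the paper compresses into ``by the same argument''; this is a useful clarification but not a different proof.
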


\begin{proof} Suppose that $L \subset Z^{4n+2}$ is $\mathrm{Re}(e^{-i\theta}\gamma)$-calibrated.  By the same argument as in Proposition \ref{prop:GammaSemiCal}, we have $\left.\omega_{\mathrm{NK}}\right|_L = 0$.  Since $d\omega_{\mathrm{NK}} = 6\,\mathrm{Im}(\gamma)$, it follows that $\left.\mathrm{Im}(\gamma)\right|_L = 0$.  Therefore,
$$ \pm \vol_L = \left.\mathrm{Re}(e^{-i\theta}\gamma)\right|_L = \left.\cos(\theta)\,\mathrm{Re}(\gamma)\right|_L.$$
Since $\mathrm{Re}(\gamma)$ has comass one, it follows that $\cos(\theta) = \pm 1$. (The converse of (b) when $n=1$ is the well-known result discussed in Example~\ref{ex:NK6}.)
\end{proof}

\subsection{Relations between Submanifolds in $M$ and $Z$} \label{sec:SubmanifoldsMZ}

\indent \indent We now systematically discuss the relationships between the various classes of submanifolds in $Z^{4n+2}$ and those in $M^{4n+3}$.  Broadly speaking, given a submanifold $\Sigma \subset Z$, there are two natural ways to construct a corresponding submanifold of $M$.  The first is to consider the circle bundle $p_1^{-1}(\Sigma) \subset M$, and the second is to consider its $p_1$-horizontal lift $\widehat{\Sigma} \subset M$ (provided it exists).  We will examine both constructions.

\subsubsection{Circle Bundle Constructions}

\indent \indent We begin by considering submanifolds of the form $p_1^{-1}(\Sigma) \subset M$ for some submanifold $\Sigma \subset Z$.  First, we consider those that are $I_1$-CR.  In general, Proposition \ref{prop:AssocCases}(a) shows that every $I_1$-CR $3$-fold of $M$ is $\phi_2$-associative.  For circle bundles, the converse also holds:

\begin{prop} \label{prop:CRCircleBundle} Let $\Sigma^{2k} \subset Z^{4n+2}$ be a submanifold, $2 \leq 2k \leq 4n$. Then $\Sigma$ is $J_+$-complex if and only if $p_1^{-1}(\Sigma)$ is $I_1$-CR.  Moreover, in the case of $2k = 2$, these conditions are also equivalent to: $p_1^{-1}(\Sigma)$ is $\phi_2$-associative.
\end{prop}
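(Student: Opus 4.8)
The plan is to work fiberwise over the circle bundle $p_1 \colon M \to Z$. Since $\alpha_1$ is a connection $1$-form for $p_1$, the kernel of $(p_1)_*$ is $\R A_1$, and $(p_1)_*$ restricts to a linear isomorphism of $\Ker(\alpha_1)_x$ onto $T_{p_1(x)}Z$; because $p_1^*\omega_{\mathrm{KE}} = \Omega_1$ by Proposition~\ref{prop:Descent} and $\Omega_1 = g_M(\mathsf{J}_1\,\cdot\,,\,\cdot\,)$ on $\Ker(\alpha_1)$, this isomorphism intertwines $\mathsf{J}_1|_{\Ker(\alpha_1)}$ with $J_+ = J_{\mathrm{KE}}$ (consistent with Theorem~\ref{thm:BasicStructureTwistor}, which identifies the Kähler--Einstein complex structure and records $p_*(\widetilde{\mathsf{H}}) = \mathsf{H}$, $p_*(\mathrm{span}(A_2,A_3)) = \mathsf{V}$). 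For a submanifold $\Sigma \subset Z$ I write $L := p_1^{-1}(\Sigma)$, so $\dim L = 2k+1$, and at each $x \in L$ there is an orthogonal splitting $T_xL = \R A_1 \oplus H_x$, where $H_x \subset \Ker(\alpha_1)$ is the unique subspace with $(p_1)_*(H_x) = T_{p_1(x)}\Sigma$. In particular $A_1 \in T_xL$ for every $x$, and $(p_1)_*$ carries $H_x$ isomorphically onto $T_{p_1(x)}\Sigma$ while intertwining $\mathsf{J}_1$ and $J_+$.

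For the first equivalence I would invoke the criterion of Proposition~\ref{prop:CREquivalence}(iii): $L$ is $I_1$-CR if and only if every $T_xL$ is $\mathsf{J}_1$-invariant and contains $A_1$. The second condition always holds. For the first, note that $\mathsf{J}_1 A_1 = 0$ and $\mathsf{J}_1(\Ker(\alpha_1)) = \Ker(\alpha_1)$, so $\mathsf{J}_1(T_xL) = \mathsf{J}_1(H_x) \subset \Ker(\alpha_1)$; hence $\mathsf{J}_1(T_xL) \subseteq T_xL$ iff $\mathsf{J}_1(H_x) \subseteq H_x$, and pushing forward by $(p_1)_*$ this holds iff $J_+(T_{p_1(x)}\Sigma) \subseteq T_{p_1(x)}\Sigma$, i.e.\ (on squaring $J_+$) iff $T_{p_1(x)}\Sigma$ is $J_+$-invariant. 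Ranging over $x$ gives exactly: $L$ is $I_1$-CR $\iff$ $\Sigma$ is $J_+$-complex.

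For the case $2k = 2$ the key computation is the identity $\iota_{A_1}\phi_2 = \Omega_1$. This follows quickly: from $\mathsf{J}_1 A_1 = 0$, $\mathsf{J}_2 A_1 = -A_3$, $\mathsf{J}_3 A_1 = A_2$ one gets $\iota_{A_1}\Omega_1 = 0$, $\iota_{A_1}\Omega_2 = -\alpha_3$, $\iota_{A_1}\Omega_3 = \alpha_2$, and substituting into $\phi_2 = \alpha_1 \wedge \Omega_1 - \alpha_2 \wedge \Omega_2 + \alpha_3 \wedge \Omega_3$ together with $\alpha_q(A_1) = \delta_{1q}$ yields $\iota_{A_1}\phi_2 = \Omega_1 - \alpha_2 \wedge \alpha_3 + \alpha_2 \wedge \alpha_3 = \Omega_1$. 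Now suppose $L = p_1^{-1}(\Sigma)$ is $3$-dimensional and fix $x \in L$. Since $A_1 \perp H_x$ and $|A_1| = 1$, I choose a $g_M$-orthonormal basis $(A_1, u, v)$ of $T_xL$ with $u, v \in H_x$ and, after possibly interchanging $u$ and $v$, so that $(A_1, u, v)$ is a positively oriented frame of $T_xL$ for a fixed orientation of $L$. Then
$$\phi_2(A_1, u, v) = (\iota_{A_1}\phi_2)(u, v) = \Omega_1(u, v) = g_M(\mathsf{J}_1 u, v) \le 1,$$
with equality if and only if $v = \mathsf{J}_1 u$. Hence $\phi_2|_L = \vol_L$ (for a suitable orientation) if and only if $H_x = \mathrm{span}(u, \mathsf{J}_1 u)$ at every $x$, i.e.\ if and only if every $H_x$ is $\mathsf{J}_1$-invariant, i.e.\ (again by Proposition~\ref{prop:CREquivalence}(iii)) if and only if $L$ is $I_1$-CR. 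Read in the other direction, this also reproves ``$I_1$-CR $\Rightarrow$ $\phi_2$-associative'' of Proposition~\ref{prop:AssocCases}(a) in the present setting. This closes the chain of equivalences; the hypothesis $2k \le 4n$ is used only to keep $\dim L = 2k+1 \le 4n+1$.

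I expect the only non-routine ingredient to be the converse $\phi_2$-associative $\Rightarrow$ $I_1$-CR for circle bundles, and within it the identity $\iota_{A_1}\phi_2 = \Omega_1$: once that is in hand, the equality case of Cauchy--Schwarz forces each horizontal plane $H_x$ to be a $\mathsf{J}_1$-complex line, and the conclusion is immediate. The one point requiring a little care is matching the orientation of the circle bundle $p_1^{-1}(\Sigma)$ with the calibrating orientation, which is handled by the freedom to interchange $u$ and $v$; the first equivalence and the two forward implications are purely a matter of translation through $(p_1)_*$.
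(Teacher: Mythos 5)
Your proof is correct and follows essentially the same route as the paper: both arguments split $T_x p_1^{-1}(\Sigma) = \R A_1 \oplus H_x$ with $H_x \subset \Ker(\alpha_1)$, use that $(p_1)_*$ identifies $(\Ker(\alpha_1), \Omega_1)$ with $(T Z, \omega_{\mathrm{KE}})$, and for $2k=2$ exploit that the part of $\phi_2$ other than $\alpha_1 \wedge \Omega_1$ is annihilated by $A_1$ (your identity $\iota_{A_1}\phi_2 = \Omega_1$ is the same fact as the paper's observation that $-\alpha_2\wedge\kappa_2 + \alpha_3\wedge\kappa_3$ restricts to zero on the fiber-containing $3$-fold $L$). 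The only cosmetic differences are that you run the general-$k$ equivalence through the tangent-invariance criterion of Proposition~\ref{prop:CREquivalence}(iii) rather than the calibration equalities (ii)/(iv), and in the $2k=2$ case you rederive the calibration statement by the equality case of Cauchy--Schwarz instead of citing Proposition~\ref{prop:CREquivalence}(iv); both substitutions are sound and the orientation bookkeeping is handled at the same level of care as in the paper.
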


\begin{proof} Let $\Sigma \subset Z$ be a submanifold, and set $L = p_1^{-1}(\Sigma) \subset M$.  Fix $x \in L$ and let $z = p_1(x) \in \Sigma$.  Note that:
\begin{align*}
\Sigma \text{ is }J_+\text{-complex} & \iff \left.(\omega_{\mathrm{KE}})^k\right|_\Sigma = k!\,\vol_\Sigma, \\
L \text{ is } I_1\text{-CR} & \iff \left.(\alpha_1 \wedge \Omega_1^k)\right|_L = k!\, \vol_L.
\end{align*}

\indent Since $A_1 \in T_xL$, we can write $T_xL = \R A_1 \oplus \widetilde{U}$ for some subspace $\widetilde{U} \subset \Ker(\alpha_1)$.  Let $\{\widetilde{u}_1, \ldots, \widetilde{u}_{2k-1}\}$ be an orthonormal basis of $\widetilde{U}$ such that $\{A_1, \widetilde{u}_1, \ldots, \widetilde{u}_{2k-1}\}$ is an oriented orthonormal basis of $T_xL$. Setting $u_j = (p_1)_*(\widetilde{u}_j)$, and noting that
$$p_1|_{\text{Ker}(\alpha_1)} \colon \Ker(\alpha_1)|_x \to T_zZ$$
is an isometry, we see that $\{u_1, \ldots, u_{2k-1}\}$ is an orthonormal basis of $T_z\Sigma$. Therefore, recalling that $\Omega_1 = p_1^*(\omega_{\mathrm{KE}})$, we have:
\begin{align*}
L \text{ is }I_1\text{-CR } \iff (\alpha_1 \wedge \Omega_1^k)(A_1, \widetilde{u}_1, \ldots, \widetilde{u}_{2k-1}) = k! & \iff \Omega_1^k(\widetilde{u}_1, \ldots, \widetilde{u}_{2k-1}) = k! \\
& \iff \omega_{\mathrm{KE}}^k(u_1, \ldots, u_{2k-1}) = k! \\ 
& \iff \Sigma \text{ is }J_+\text{-complex.}
\end{align*}

\indent Now suppose $k = 1$. Observe that
\begin{align*}
\phi_2 & = \alpha_1 \wedge \Omega_1 - \alpha_2 \wedge \Omega_2 + \alpha_3 \wedge \Omega_3 \\
& = \alpha_1 \wedge \Omega_1 - \alpha_2 \wedge \kappa_2 + \alpha_3 \wedge \kappa_3.
\end{align*}
Since $\iota_{A_1}(-\alpha_2 \wedge \kappa_2 + \alpha_3 \wedge \kappa_3) = 0$, we have $\left.(-\alpha_2 \wedge \kappa_2 + \alpha_3 \wedge \kappa_3)\right|_L = 0$.  Therefore, we see that $\left.\phi_2\right|_L = \left.(\alpha_1 \wedge \Omega_1)\right|_L$, which gives the result.
\end{proof}

\indent The previous proposition shows that a circle bundle $p_1^{-1}(\Sigma)$ is $I_1$-CR if and only if $\Sigma$ is $J_+$-complex.  In fact, any $I_1$-CR submanifold is \emph{locally} a circle bundle:

\begin{prop} \label{prop:CRImpliesBundle} Let $L^{2k+1} \subset M^{4n+3}$ be a submanifold, $2 \leq 2k \leq 4n$.  Then $L$ is $I_1$-CR if and only if $L$ is locally of the form $p_1^{-1}(\Sigma)$ for some $J_+$-complex submanifold $\Sigma^{2k} \subset Z^{4n+2}$.
\end{prop}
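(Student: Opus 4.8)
The plan is to derive both directions from Proposition~\ref{prop:CRCircleBundle}, with essentially all of the content going into showing that an $I_1$-CR submanifold of $M$ is \emph{locally} the $p_1$-preimage of a submanifold of $Z$. The direction ``$\Leftarrow$'' is immediate: if $L$ agrees on some open set with $p_1^{-1}(\Sigma)$ for a $J_+$-complex submanifold $\Sigma^{2k} \subset Z^{4n+2}$, then Proposition~\ref{prop:CRCircleBundle} says $p_1^{-1}(\Sigma)$ is $I_1$-CR, and since being $I_1$-CR is a pointwise first-order condition, $L$ inherits it on that open set.

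For the direction ``$\Rightarrow$'', suppose $L^{2k+1} \subset M$ is $I_1$-CR. By the equivalence (i)$\iff$(iii) of Proposition~\ref{prop:CREquivalence}, the Reeb vector $A_1|_x$ belongs to $T_xL$ for every $x \in L$. Since $A_1$ spans the vertical line field of the submersion $p_1 \colon M \to Z$ over the smooth locus, the restriction $p_1|_L \colon L \to Z$ has differential of constant rank $2k$ with kernel exactly $\R A_1$ at each point. By the constant-rank theorem, about any $x_0 \in L$ the image $\Sigma := p_1(L)$ is an embedded $2k$-dimensional submanifold of $Z$, so that $p_1^{-1}(\Sigma) \subset M$ is an embedded $(2k+1)$-dimensional submanifold containing $L$ near $x_0$. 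Being embedded submanifolds of equal dimension, $L$ and $p_1^{-1}(\Sigma)$ coincide on a neighbourhood of $x_0$; after shrinking $\Sigma$ so that $p_1^{-1}(\Sigma)$ agrees with $L$ over all of $\Sigma$, the $A_1$-flow invariance of the $I_1$-CR condition (which follows from $\mathscr{L}_{A_1}\alpha_1 = 0$ and $\mathscr{L}_{A_1}\Omega_1 = 0$) shows that $p_1^{-1}(\Sigma)$ is $I_1$-CR everywhere, whence Proposition~\ref{prop:CRCircleBundle} forces $\Sigma$ to be $J_+$-complex. This realizes $L$ locally as $p_1^{-1}(\Sigma)$ with $\Sigma$ a $J_+$-complex $2k$-fold, as desired.

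The one step requiring care is the constant-rank/saturation argument: one must check that $p_1(L)$ is genuinely an embedded submanifold and that $L$ fills out its preimage on a neighbourhood of $x_0$. This is routine once one observes that $A_1 \in T_xL$ everywhere, so that $p_1|_L$ is a submersion onto its image; all remaining assertions are direct appeals to Propositions~\ref{prop:CREquivalence} and~\ref{prop:CRCircleBundle}. Throughout one uses the standing convention that submanifolds of $Z$ avoid its orbifold locus, so that $p_1$ is an honest principal $S^1$-bundle on the relevant region, with the $A_1$-orbits as its fibres.
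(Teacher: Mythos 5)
Your proposal is correct and follows essentially the same route as the paper: tangency of $A_1$ to $L$ gives $p_1|_L$ constant rank $2k$, the local image $\Sigma = p_1(L)$ is an embedded $2k$-fold (the paper builds the flow-box slice chart by hand rather than quoting the constant-rank theorem), $L$ is then locally open in $p_1^{-1}(\Sigma)$ for dimension reasons, and Proposition~\ref{prop:CRCircleBundle} yields that $\Sigma$ is $J_+$-complex. One small caution: you cannot literally shrink $\Sigma$ so that $p_1^{-1}(\Sigma)$ agrees with $L$, since $L$ need not contain whole $S^1$-fibres; what actually closes this step is exactly the $A_1$-flow invariance you invoke in the same sentence (using that $A_1$ is Killing, so the flow preserves $g_M$, $\alpha_1$, $\Omega_1$, hence the $I_1$-CR condition) --- or, as the paper implicitly does, the observation that the equivalence in Proposition~\ref{prop:CRCircleBundle} is a pointwise statement, so no saturation argument is needed at all.
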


\begin{proof} ($\Longleftarrow$) This follows from Proposition \ref{prop:CRCircleBundle}. \\
\indent ($\Longrightarrow$) Let $L \subset M$ be $I_1$-CR, and abbreviate $p := p_1$.  At each $x \in L$, we have $\left.A_1\right|_x \in T_xL$, so (short-time) integral curves of $A_1$ lie in $L$.  That is, at each $x \in L$, there exists an open set $I_x \subset p^{-1}(p(x))$ such that $x \in I_x$ and $I_x \subset L$. \\
\indent We claim that $p(L) \subset Z$ is an embedded $2k$-dimensional submanifold of $Z$.  To see this, fix $z \in p(L)$, and let $x \in L$ have $p(x) = z$.  Letting $\ell$ satisfy $(2k+1) + \ell = 4n+3$, we choose a neighborhood $W \subset M$ of $x$ and a chart $\widetilde{\phi} \colon W \to \R^{4n+3} = \R^{2k} \times \R \times \R^{\ell}$ with coordinate functions denoted $\widetilde{\phi} = (t^1, \ldots, t^{2k}, u, v^1, \ldots, v^\ell)$ such that
\begin{align*}
\widetilde{\phi}(L \cap W) & \subset \R^{2k} \times \R \times 0, \\
\widetilde{\phi}_*(A_1) & = \frac{\partial}{\partial u} \ \ \ \ \text{on } L \cap W.
\end{align*}
\indent Since $p \colon M \to Z$ is a submersion, it is an open map, and therefore $p(W) \subset M$ is an open set.  Letting $\pi \colon \R^{2k} \times \R \times \R^{\ell} \to \R^{2k} \times \R^{\ell}$ denote the natural projection map, we observe that $\pi \circ \widetilde{\phi} \colon W \to \R^{4n+2} = \R^{2k} \times \R^{\ell}$ descends to a chart $\phi \colon p(W) \to \R^{4n+2} = \R^{2k} \times \R^\ell$ such that
$$\phi(p(L) \cap p(W)) \subset \R^{2k} \times 0.$$
This provides slice coordinates at $z \in p(L)$, showing that $p(L) \subset Z$ is an embedded $2k$-fold. \\
\indent It follows that $p^{-1}(p(L)) \subset M$ is an embedded $(2k+1)$-dimensional submanifold of $M$, so that $L \subset p^{-1}(p(L))$ is an open set for dimension reasons.  That $\Sigma := p(L)$ is $J_+$-complex follows from Proposition \ref{prop:CRCircleBundle}.
\end{proof}

Next, for any submanifold $\Sigma \subset Z$, we note that its circle bundle $p_1^{-1}(\Sigma) \subset M$ is never $\alpha_1$-isotropic.  However, in special situations, it can be $\alpha_2$-isotropic.  In this direction, we first observe:

\begin{lem} \label{lem:HorzIsotropic} Let $\Sigma^k \subset Z^{4n+2}$ be a submanifold with $1 \leq k \leq 2n$.  The following are equivalent:
\begin{enumerate}[(i)]
\item $p_1^{-1}(\Sigma)$ is $\alpha_2$-isotropic.
\item $p_1^{-1}(\Sigma)$ is $\alpha_3$-isotropic.
\item $\Sigma$ is horizontal.
\end{enumerate}
\end{lem}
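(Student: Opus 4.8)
The plan is to work pointwise on $M$ along a fiber of $p_1$, exploiting the bundle structure $p_1 \colon M \to Z$ together with the splitting $TM = \R A_1 \oplus \Ker(\alpha_1)$ and the further $3$-Sasakian splitting $TM = \widetilde{\mathsf{H}} \oplus \widetilde{\mathsf{V}}$. Fix $x \in p_1^{-1}(\Sigma)$ and set $z = p_1(x) \in \Sigma$. Since $p_1^{-1}(\Sigma)$ is a circle bundle over $\Sigma$, its tangent space at $x$ is $T_x(p_1^{-1}(\Sigma)) = \R A_1|_x \oplus \widehat{T_z\Sigma}$, where $\widehat{T_z\Sigma} \subset \Ker(\alpha_1)|_x$ is the horizontal lift of $T_z\Sigma$ under the isometry $(p_1)_* \colon \Ker(\alpha_1)|_x \to T_zZ$. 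Recall $\Sigma$ is horizontal (tangent to $\mathsf{H}$) precisely when $T_z\Sigma \subset \mathsf{H}$ for all $z$, equivalently $\widehat{T_z\Sigma} \subset \widetilde{\mathsf{H}} = \Ker(\alpha_1,\alpha_2,\alpha_3)$.

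The key computational step is to unwind what $\alpha_2$-isotropy of $p_1^{-1}(\Sigma)$ means. We have $\left.\alpha_2\right|_{p_1^{-1}(\Sigma)} = 0$ iff $\alpha_2$ annihilates $\R A_1|_x \oplus \widehat{T_z\Sigma}$ for every $x$. Now $\alpha_2(A_1) = g_M(A_2, A_1) = 0$ automatically (the Reeb fields are orthonormal), so the condition reduces to $\alpha_2|_{\widehat{T_z\Sigma}} = 0$, i.e.\ $\widehat{T_z\Sigma} \subset \Ker(\alpha_2)$. But $\widehat{T_z\Sigma}$ already lies in $\Ker(\alpha_1)$ by construction, so $\widehat{T_z\Sigma} \subset \Ker(\alpha_1) \cap \Ker(\alpha_2)$. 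The same argument with $\alpha_2$ replaced by $\alpha_3$ gives that (ii) is equivalent to $\widehat{T_z\Sigma} \subset \Ker(\alpha_1) \cap \Ker(\alpha_3)$. So I must show both of these one-sided containments are equivalent to the full containment $\widehat{T_z\Sigma} \subset \widetilde{\mathsf{H}} = \Ker(\alpha_1,\alpha_2,\alpha_3)$. One direction is trivial ($\widetilde{\mathsf{H}}$ is contained in each pairwise intersection). For the converse, the point is that $\Sigma$ is required to have $1 \le k \le 2n$, and more importantly that $\widehat{T_z\Sigma}$ is contained in $\Ker(\alpha_1)$, which is $\mathsf{J}_1$-invariant; I expect the cleanest route is to use that $\mathsf{J}_1$ maps $\Ker(\alpha_1)$ to itself and sends $A_2 \mapsto A_3$, $A_3 \mapsto -A_2$, so that a subspace of $\Ker(\alpha_1)$ annihilated by $\alpha_2$ need not be $\mathsf{J}_1$-invariant — hence this alone does \emph{not} force vanishing of $\alpha_3$. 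Let me reconsider.

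The correct converse argument is via the relation $d\alpha_2 = 2\Omega_2$ and the decomposition \eqref{eq:Omegas}, $\Omega_2 = \alpha_3 \wedge \alpha_1 + \kappa_2$: if $\alpha_2$ vanishes on a submanifold $N = p_1^{-1}(\Sigma)$, then $0 = d(\alpha_2|_N) = (d\alpha_2)|_N = 2\Omega_2|_N = 2(\alpha_3 \wedge \alpha_1 + \kappa_2)|_N$. Since $\alpha_1$ does \emph{not} vanish on $N$ (the circle fiber direction $A_1$ is tangent and $\alpha_1(A_1) = 1$), contracting $(\alpha_3 \wedge \alpha_1 + \kappa_2)|_N = 0$ with $A_1$ gives $\alpha_3|_N = (\iota_{A_1}\kappa_2)|_N$; but $\kappa_2 \in \Lambda^2(\widetilde{\mathsf{H}}^*)$ so $\iota_{A_1}\kappa_2 = 0$, whence $\alpha_3|_N = 0$. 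This establishes (i)$\Rightarrow$(ii); by the symmetric argument using $d\alpha_3 = 2\Omega_3 = 2(\alpha_1\wedge\alpha_2 + \kappa_3)$ and contracting with $A_1$ we get (ii)$\Rightarrow$(i). Finally, (i) and (ii) together give $\alpha_2|_N = \alpha_3|_N = 0$ in addition to the automatic consequence, which (combined with $\widehat{T_z\Sigma} \subset \Ker(\alpha_1)$) is exactly $\widehat{T_z\Sigma} \subset \widetilde{\mathsf{H}}$, i.e.\ (iii); conversely (iii)$\Rightarrow$(i) and (iii)$\Rightarrow$(ii) are immediate since $\alpha_2, \alpha_3$ vanish on $\widetilde{\mathsf{H}}$ and on $A_1$. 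I expect the main (minor) obstacle to be bookkeeping the fact that $\alpha_1$ is nowhere zero on the circle bundle, which is what powers the contraction-with-$A_1$ trick; everything else is a direct consequence of \eqref{eq:deriv-alpha}, \eqref{eq:Omegas}, and $\kappa_p \in \Lambda^2(\widetilde{\mathsf{H}}^*)$.
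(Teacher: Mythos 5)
Your proposal is correct and follows essentially the same route as the paper: both reduce to the tangent space decomposition $T_x p_1^{-1}(\Sigma) = \R A_1 \oplus \widehat{T_z\Sigma}$ with $\widehat{T_z\Sigma} \subset \Ker(\alpha_1)$, and both obtain (i)$\Rightarrow$(ii) from $d\alpha_2 = 2\Omega_2$ together with the fact that $\iota_{A_1}\Omega_2 = -\alpha_3$ on the circle bundle. The only cosmetic difference is that you extract this identity from the decomposition $\Omega_2 = \alpha_3\wedge\alpha_1 + \kappa_2$ and contraction with $A_1$, whereas the paper invokes its Proposition~\ref{prop:LegendrianEquivalence} and the Reeb relation $\mathsf{J}_2(A_1) = -A_3$; these are the same computation.
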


\begin{proof} Let $\Sigma \subset Z^{4n+2}$ be a submanifold with $\dim(\Sigma) \leq 2n$, and set $L = p_1^{-1}(\Sigma) \subset M$.  Fix $x \in L$ and let $z = p_1(x) \in \Sigma$.   \\
\indent (i)$\iff$(ii). Suppose that $L$ is $\alpha_2$-isotropic at $x$.  By Proposition \ref{prop:LegendrianEquivalence}, we have both $T_xL \subset \Ker(\alpha_2)$ and $\Omega_2|_{T_xL} = 0$.  That is, the subspace $T_xL \subset \Ker(\alpha_2)$ is $\Omega_2$-isotropic.  Therefore, since $A_1 \in T_xL$, it follows that $A_3 = -\mathsf{J}_2(A_1)$ is orthogonal to $T_xL$, and hence $T_xL \subset \text{Ker}(\alpha_3)$, showing that $L$ is $\alpha_3$-isotropic at $x$. \\
\indent (ii)$\iff$(iii). Since $A_1 \in T_xL$, we can write $T_xL = \R A_1 \oplus \widetilde{U}$ for some subspace $\widetilde{U} \subset \Ker(\alpha_1)$.  Since $p_1|_{\Ker(\alpha_1)} \colon \Ker(\alpha_1)|_x \to T_zZ$ is an isometry, it follows that $(p_1)_*(\widetilde{U}) = T_z\Sigma$.  Now, observe that:
\begin{align*}
T_z\Sigma \subset \mathsf{H} & \iff (p_1)_*(\widetilde{U}) \subset (p_1)_*(\widetilde{\mathsf{H}})  \iff \widetilde{U} \subset \Ker(\alpha_1, \alpha_2, \alpha_3) \iff T_xL \subset \Ker(\alpha_2, \alpha_3).
\end{align*}
Thus, if $\Sigma$ is horizontal at $z$, then $T_z\Sigma \subset \mathsf{H}$, so that $T_xL \subset \mathrm{Ker}(\alpha_2, \alpha_3)$, and hence $L$ is both $\alpha_2$- and $\alpha_3$-isotropic at $x$.  Conversely, if $L$ is $\alpha_2$-isotropic at $x$, then by the previous paragraph, $L$ is also $\alpha_3$-isotropic at $x$, so $T_xL \subset \Ker(\alpha_2, \alpha_3)$, and hence $\Sigma$ is horizontal at $z$.
\end{proof}

\begin{cor} \label{cor:CRIsoBundle} Let $\Sigma^{2k} \subset Z^{4n+2}$ be a submanifold, $2 \leq 2k \leq 2n$.  Then $\Sigma$ is $J_+$-complex and horizontal if and only if $p_1^{-1}(\Sigma)$ is $I_1$-CR isotropic  (i.e.: $I_1$-CR, $\alpha_2$-isotropic, and $\alpha_3$-isotropic).
\end{cor}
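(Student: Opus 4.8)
The plan is to deduce this corollary directly by combining Proposition \ref{prop:CRCircleBundle} and Lemma \ref{lem:HorzIsotropic}, using the characterization of $I_1$-CR isotropic submanifolds recorded in Proposition \ref{prop:ConeComplexLag}. First I would check that the dimension hypotheses are compatible: since $2 \le 2k \le 2n$, the range $2 \le 2k \le 4n$ required by Proposition \ref{prop:CRCircleBundle} holds, the range $1 \le \dim(\Sigma) \le 2n$ required by Lemma \ref{lem:HorzIsotropic} holds, and $L := p_1^{-1}(\Sigma)$ is a $(2k+1)$-dimensional submanifold of $M$ with $1 \le k \le n$, so the notion of ``$I_1$-CR isotropic'' genuinely applies to $L$ via Proposition \ref{prop:ConeComplexLag}.

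With the bookkeeping in place, the argument is a short chain of equivalences. By Proposition \ref{prop:ConeComplexLag}, $L$ is $I_1$-CR isotropic precisely when $L$ is simultaneously $I_1$-CR, $\alpha_2$-isotropic, and $\alpha_3$-isotropic. By Proposition \ref{prop:CRCircleBundle}, $L$ is $I_1$-CR if and only if $\Sigma$ is $J_+$-complex. By Lemma \ref{lem:HorzIsotropic}, $L$ being $\alpha_2$-isotropic, $L$ being $\alpha_3$-isotropic, and $\Sigma$ being horizontal are all equivalent. Combining these three facts gives: $L$ is $I_1$-CR isotropic $\iff$ $\Sigma$ is $J_+$-complex and horizontal, which is exactly the assertion.

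I expect essentially no obstacle here, as the statement is a formal consequence of the two preceding results once the definition of $I_1$-CR isotropic is unwound. The only mild point of care is the dimension bookkeeping just discussed --- in particular, making sure $2 \le 2k \le 2n$ is precisely the range in which \emph{both} cited results apply and in which $p_1^{-1}(\Sigma)$ has a dimension permitting the $I_1$-CR isotropic condition. One could also remark, as a sanity check, that $\Sigma$ being $J_+$-complex forces $\dim(\Sigma)$ to be even, consistent with the hypothesis $\dim(\Sigma) = 2k$.
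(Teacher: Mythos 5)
Your proposal is correct and matches the paper's own argument, which likewise deduces the corollary immediately from Proposition \ref{prop:CRCircleBundle} and Lemma \ref{lem:HorzIsotropic} after unwinding the definition of $I_1$-CR isotropic. The added dimension bookkeeping is a harmless (and reasonable) extra check.
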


\begin{proof} This follows immediately from Proposition \ref{prop:CRCircleBundle} and Lemma \ref{lem:HorzIsotropic}.
\end{proof}

\begin{cor} \label{cor:CRIsoChar1} Let $L^{2k+1} \subset M^{4n+3}$ be a submanifold, $3 \leq 2k+1 \leq 2n+1$.  Then $L$ is $I_1$-CR isotropic if and only if $L$ is locally of the form $p_1^{-1}(\Sigma)$ for some horizontal $J_+$-complex submanifold $\Sigma^{2k} \subset Z^{4n+2}$.
\end{cor}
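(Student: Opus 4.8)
The plan is to combine the two previous results — Proposition~\ref{prop:CRImpliesBundle}, which handles the ``$I_1$-CR'' part of the statement, and Corollary~\ref{cor:CRIsoBundle}, which handles the ``isotropic/horizontal'' refinement — with Lemma~\ref{lem:HorzIsotropic} to pin down horizontality. Note first that the hypothesis $3 \leq 2k+1 \leq 2n+1$ is equivalent to $1 \leq k \leq n$, so $2 \leq 2k \leq 2n$, which is exactly the range in which all three cited results apply.

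For the backward direction, suppose $L$ is locally of the form $p_1^{-1}(\Sigma)$ for some horizontal $J_+$-complex submanifold $\Sigma^{2k} \subset Z$. Since $2 \leq 2k \leq 2n$, Corollary~\ref{cor:CRIsoBundle} applies directly and tells us that $p_1^{-1}(\Sigma)$ is $I_1$-CR isotropic; hence so is $L$, as this is a local (indeed pointwise, first-order) condition.

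For the forward direction, assume $L$ is $I_1$-CR isotropic. In particular $L$ is $I_1$-CR, so Proposition~\ref{prop:CRImpliesBundle} gives that $L$ is locally of the form $p_1^{-1}(\Sigma)$ for some $J_+$-complex submanifold $\Sigma^{2k} \subset Z$. It remains only to upgrade $\Sigma$ to be horizontal. But $I_1$-CR isotropic also means $L$ is $\alpha_2$-isotropic (by definition / Proposition~\ref{prop:ConeComplexLag}), and $\dim(\Sigma) = 2k \leq 2n$, so Lemma~\ref{lem:HorzIsotropic} (the equivalence (i)$\iff$(iii), applied to the local identification $L = p_1^{-1}(\Sigma)$) forces $\Sigma$ to be horizontal. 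This completes the argument. There is no serious obstacle here: the content is entirely carried by the earlier propositions, and the only point requiring a moment's care is checking that the dimension ranges line up, which they do.
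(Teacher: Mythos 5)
Your proposal is correct and follows essentially the same route as the paper, whose proof is simply the citation of Proposition~\ref{prop:CRImpliesBundle} together with Corollary~\ref{cor:CRIsoBundle}. Your extra invocation of Lemma~\ref{lem:HorzIsotropic} to extract horizontality is just an unpacking of the ``only if'' half of Corollary~\ref{cor:CRIsoBundle}, so there is no substantive difference.
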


\begin{proof} This follows from Proposition \ref{prop:CRImpliesBundle} and Corollary \ref{cor:CRIsoBundle}.
\end{proof}

\indent When $\Sigma$ is $2n$-dimensional, the situation is particularly special:

\begin{cor} \label{cor:2nEquivalence}
Let $\Sigma^{2n} \subset Z^{4n+2}$ be $2n$-dimensional.  The following are equivalent:
\begin{enumerate}[(i)]
\item $\Sigma$ is $J_+$-complex and horizontal.
\item $\Sigma$ is horizontal.
\item $p_1^{-1}(\Sigma)$ is $\alpha_2$-Legendrian.
\item $p_1^{-1}(\Sigma)$ is $\alpha_3$-Legendrian.
\item $p_1^{-1}(\Sigma)$ is $I_1$-CR Legendrian (i.e.: $I_1$-CR, $\alpha_2$-Legendrian, and $\alpha_3$-Legendrian).
\item $p_1^{-1}(\Sigma)$ is $\Psi_2$-special Legendrian of phase $i^{n+1}$ and $\Psi_3$-special Legendrian of phase $1$.
\end{enumerate}
\end{cor}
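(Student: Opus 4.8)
The plan is to reduce everything to the dictionary already established, applied to the $(2n+1)$-dimensional submanifold $L := p_1^{-1}(\Sigma) \subset M^{4n+3}$. Since $\dim(\Sigma) = 2n$ is the maximal value allowed here, $\dim(L) = 2n+1$, so for $L$ the properties ``$\alpha_2$-isotropic'' and ``$\alpha_2$-Legendrian'' coincide, and likewise for $\alpha_3$; I will use this identification without further comment. I would prove the six conditions equivalent by a cycle of implications.

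The implication (i)$\Rightarrow$(ii) is trivial. For the block (ii)$\Leftrightarrow$(iii)$\Leftrightarrow$(iv) I would apply Lemma~\ref{lem:HorzIsotropic} with $k = 2n$, which gives directly that $\Sigma$ is horizontal if and only if $p_1^{-1}(\Sigma)$ is $\alpha_2$-isotropic, if and only if $p_1^{-1}(\Sigma)$ is $\alpha_3$-isotropic; by the dimension remark these last two are precisely (iii) and (iv), so in fact (ii), (iii), (iv) are already mutually equivalent.

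To bring in (v), note that by the previous step conditions (iii) and (iv) hold simultaneously, i.e.\ $L$ is both $\alpha_2$- and $\alpha_3$-Legendrian; this is exactly condition (v) of Corollary~\ref{cor:CRLegEquiv} for $L$, which is there equivalent to its condition (iv), namely that $L$ is $I_1$-CR Legendrian --- our (v). For (v)$\Leftrightarrow$(vi), Corollary~\ref{cor:CRLegEquiv} equates ``$L$ is $I_1$-CR Legendrian'' with ``$L$ is $I_1$-CR, $\Psi_2$-special Legendrian of phase $i^{n+1}$, and $\Psi_3$-special Legendrian of phase $1$'', so the forward direction is immediate on forgetting the $I_1$-CR clause. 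The reverse direction is the one spot that needs a small extra remark: by the $\Psi_p$-analogue of Proposition~\ref{prop:ConeSpecialLag}, a $\Psi_2$-special Legendrian submanifold is automatically $\alpha_2$-Legendrian and a $\Psi_3$-special Legendrian submanifold is automatically $\alpha_3$-Legendrian, so (vi) already forces $L$ to be $\alpha_2$- and $\alpha_3$-Legendrian, and then Corollary~\ref{cor:CRLegEquiv} upgrades this to $L$ being $I_1$-CR Legendrian, i.e.\ (v).

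Finally I would close the cycle with (v)$\Rightarrow$(i): if $L$ is $I_1$-CR Legendrian then it is in particular $I_1$-CR, so Proposition~\ref{prop:CRCircleBundle} (with $2k = 2n$, which lies in the allowed range $2 \leq 2k \leq 4n$) shows that $\Sigma$ is $J_+$-complex; and $L$ is $\alpha_2$-Legendrian, so Lemma~\ref{lem:HorzIsotropic} shows that $\Sigma$ is horizontal. Together these give (i), completing the cycle. I do not expect a genuine obstacle: every step is an application of a cited result, and the only non-bookkeeping input is the observation that the special-Legendrian phase data entails $\alpha_p$-isotropy. The points to be careful about are merely checking that the value $2k = 2n$ lies in the hypothesis range of each cited statement, and that an $\alpha_p$-isotropic submanifold of dimension $2n+1$ is automatically $\alpha_p$-Legendrian.
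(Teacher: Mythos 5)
Your proof is correct and follows essentially the same route as the paper: Lemma~\ref{lem:HorzIsotropic} for (ii)$\iff$(iii)$\iff$(iv), Corollary~\ref{cor:CRLegEquiv} to pass between the double-Legendrian, CR Legendrian, and special Legendrian conditions, Proposition~\ref{prop:ConeSpecialLag} to extract $\alpha_p$-isotropy from (vi), and Proposition~\ref{prop:CRCircleBundle} plus Lemma~\ref{lem:HorzIsotropic} for the link with (i) (the paper packages this last step as Corollary~\ref{cor:CRIsoBundle}, whose proof is exactly that combination). The only difference is bookkeeping in how the implications are arranged into a cycle, and your range checks and the isotropic-versus-Legendrian dimension remark are all valid.
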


\begin{proof} The equivalence (ii)$\iff$(iii)$\iff$(iv) is Lemma \ref{lem:HorzIsotropic}.  The equivalence (i)$\iff$(v) is Corollary \ref{cor:CRIsoBundle}. \\
\indent It is clear that (v)$\implies$(iv).  Conversely, if (iv) holds, then $L := p_1^{-1}(\Sigma)$ is both $\alpha_3$-Legendrian and $\alpha_2$-Legendrian, so that $\mathrm{C}(L) \subset C$ is both $\omega_2$-Lagrangian and $\omega_3$-Lagrangian, and therefore $\mathrm{C}(L)$ is $I_1$-complex Lagrangian.  This proves (v). \\
\indent It remains only to involve condition (vi).  For this, note that (v)$\implies$(vi) follows from Corollary \ref{cor:CRLegEquiv}, and (vi)$\implies$(iii) follows from Proposition \ref{prop:ConeSpecialLag}.
\end{proof}

\noindent The results of this subsection can be summarized in the following table.
\small
$$\begin{tabular}{| c | l || l | c | c |} \hline
$\dim(p_1^{-1}(\Sigma))$ & Circle bundle $p_1^{-1}(\Sigma) \subset M$ & Base $\Sigma \subset Z$ & $\dim(\Sigma)$ & Ref. \\ \hline \hline
$2k+1$ & $I_1$-CR & $J_+$-complex & $2k$ & \ref{prop:CRImpliesBundle} \\ \hline
$3$ & $\phi_2$-associative & $J_+$-complex & $2$ & \ref{prop:CRCircleBundle} \\ \hline
$\leq 2n+1$ & $\alpha_2$-isotropic & Horizontal & $\leq 2n$ & \ref{lem:HorzIsotropic} \\ \hline
$2n+1$ & $\alpha_2$-Legendrian & ($J_+$-complex and) horizontal & $2n$ & \ref{cor:2nEquivalence} \\ \hline
$2n+1$ & $\Psi_2$-special Legendrian & ($J_+$-complex and) horizontal & $2n$ & \ref{cor:2nEquivalence} \\
& of phase $i^{n+1}$ & & & \\ \hline
$2n+1$ & $I_1$-CR Legendrian & ($J_+$-complex and) horizontal & $2n$ & \ref{cor:2nEquivalence} \\ \hline
$2k+1 \leq 2n+1$ & $I_1$-CR isotropic & $J_+$-complex and horizontal & $2k \leq 2n$ & \ref{cor:CRIsoChar1} \\ \hline
\end{tabular}$$
\normalsize

\subsubsection{$p_1$-Horizontal Lifts}

\indent \indent Let $L \subset M^{4n+3}$ be a submanifold, and recall that $L$ is $p_1$-horizontal if and only if it is $\alpha_1$-isotropic.  In this case, $\dim(L) \leq 2n+1$, and its projection $p_1(L) \subset Z$ is $\omega_{\mathrm{KE}}$-isotropic.  Conversely:

\begin{prop} \label{prop:GenHorzLift} Let $\Sigma \subset Z^{4n+2}$ be a submanifold.  Then $\Sigma$ locally lifts to a $p_1$-horizontal submanifold of $M$ if and only if $\Sigma$ is $\omega_{\mathrm{KE}}$-isotropic.  In this case, $\dim(\Sigma) \leq 2n+1$.
\end{prop}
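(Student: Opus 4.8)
The plan is to use that $p_1 \colon M \to Z$ is a principal $S^1$-orbibundle with connection $1$-form $\alpha_1$, whose curvature is
$$d\alpha_1 = 2\Omega_1 = 2\,p_1^*(\omega_{\mathrm{KE}})$$
by \eqref{eq:deriv-alpha} together with Proposition~\ref{prop:Descent}(a). The statement is then an instance of the standard principle that a submanifold of the base of a principal circle bundle admits local horizontal lifts exactly when the curvature form vanishes on it. Since, as stipulated in the conventions, we work only over the smooth locus of $Z$, no orbifold subtleties arise and $p_1^{-1}(\Sigma)$ is an honest submanifold of $M$.

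\emph{Necessity.} Suppose $L \subset M$ is a $p_1$-horizontal submanifold that locally lifts $\Sigma$, so that $p_1|_L \colon L \to \Sigma$ is a (local) diffeomorphism. Being $p_1$-horizontal, $L$ is $\alpha_1$-isotropic, i.e.\ $\alpha_1|_L = 0$; hence $\Omega_1|_L = \tfrac{1}{2}\,d\alpha_1|_L = 0$. Pulling back along the diffeomorphism $p_1|_L$ and using $\Omega_1 = p_1^*(\omega_{\mathrm{KE}})$ gives $\omega_{\mathrm{KE}}|_\Sigma = 0$. Moreover $\dim\Sigma = \dim L \leq 2n+1$ by Proposition~\ref{prop:LegendrianEquivalence}; equivalently, $\omega_{\mathrm{KE}}$ is a symplectic form on the $(4n+2)$-manifold $Z$ (it is closed by the exterior-derivative table of $\S$\ref{sub:GeomTwistor} and nondegenerate), so its isotropic submanifolds have dimension at most $2n+1$.

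\emph{Sufficiency.} Assume $\omega_{\mathrm{KE}}|_\Sigma = 0$, and set $N := p_1^{-1}(\Sigma) \subset M$, a submanifold of dimension $\dim\Sigma + 1$. At each $x \in N$ we have the orthogonal splitting $T_xN = \R A_1|_x \oplus \mathsf{D}_x$, where $\mathsf{D}_x := T_xN \cap \Ker(\alpha_1|_x)$ is precisely the $\alpha_1$-horizontal lift of $T_{p_1(x)}\Sigma$. In particular $\mathsf{D} \subset TN$ is a smooth distribution of rank $\dim\Sigma$ on which $(p_1)_*$ restricts to a fiberwise isomorphism onto $T\Sigma$. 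I would then verify that $\mathsf{D}$ is involutive: for vector fields $X, Y$ tangent to $\mathsf{D}$ one has $\alpha_1(X) = \alpha_1(Y) = 0$, whence
$$\alpha_1([X,Y]) = -\,d\alpha_1(X,Y) = -2\,\omega_{\mathrm{KE}}\big((p_1)_*X,\,(p_1)_*Y\big) = 0,$$
since $(p_1)_*X, (p_1)_*Y \in T\Sigma$ and $\omega_{\mathrm{KE}}|_\Sigma = 0$; and $[X,Y]$ is tangent to $N$ because $X, Y$ are. Thus $[X,Y] \in TN \cap \Ker(\alpha_1) = \mathsf{D}$, so by the Frobenius theorem $\mathsf{D}$ is integrable. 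Any integral leaf $L$ is a submanifold of $M$ tangent to $\Ker(\alpha_1)$, hence $\alpha_1$-isotropic, i.e.\ $p_1$-horizontal; and $p_1|_L \colon L \to \Sigma$ is a local diffeomorphism, so after shrinking $L$ we obtain the desired local horizontal lift. The bound $\dim\Sigma \leq 2n+1$ follows as in the necessity direction.

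I do not expect any genuine obstacle here: the argument is routine once the curvature identity $d\alpha_1 = 2\,p_1^*(\omega_{\mathrm{KE}})$ is recorded. The one point deserving care is checking that $\mathsf{D}$ is a constant-rank smooth distribution on $p_1^{-1}(\Sigma)$, which is immediate from the pointwise splitting $T_xN = \R A_1 \oplus \mathsf{D}_x$ displayed above.
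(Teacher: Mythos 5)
Your proof is correct and follows essentially the same route as the paper's: the forward direction via $\alpha_1|_L = 0 \Rightarrow \Omega_1|_L = 0 \Rightarrow \omega_{\mathrm{KE}}|_\Sigma = 0$, and the converse by integrating the horizontal-lift distribution of $T\Sigma$ using the curvature identity $d\alpha_1 = 2\Omega_1 = 2\,p_1^*(\omega_{\mathrm{KE}})$ and the Frobenius theorem. You merely spell out the involutivity computation that the paper subsumes into the phrase ``$2\Omega_1$ is the curvature form, so Frobenius applies,'' which is a fine (and slightly more explicit) presentation of the same argument.
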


\begin{proof} Suppose first that $\Sigma$ locally lifts to a $p_1$-horizontal submanifold $\widehat{\Sigma} \subset M$.  Since $\widehat{\Sigma}$ is $p_1$-horizontal, we have that $\left.\alpha_1\right|_{\widehat{\Sigma}} = 0$.  Therefore, Proposition \ref{prop:LegendrianEquivalence} implies that $\left.(p_1^*\omega_{\mathrm{KE}})\right|_{\widehat{\Sigma}} = \left.\Omega_1\right|_{\widehat{\Sigma}} = 0$, and hence $\left.\omega_{\mathrm{KE}}\right|_\Sigma = 0$. \\
\indent Conversely, suppose that $\Sigma$ is $\omega_{\mathrm{KE}}$-isotropic.  Since $p_1 \colon M \to Z$ is a Riemannian submersion, the restriction of the derivative $(p_1)_* \colon TM \to TZ$ to the $p_1$-horizontal subbundle $\Ker(\alpha_1) \subset TM$ is an isometric isomorphism.  Consider the distribution on $M$ defined by $D := (p_1)_*|_{\Ker(\alpha_1)}^{-1}(T\Sigma) \subset TM$.  Since $\Sigma$ is $\omega_{\mathrm{KE}}$-isotropic, we have $\omega_{\mathrm{KE}}|_{T\Sigma} = 0$, and therefore $2\Omega_1|_D = 2(p_1^*\omega_{\mathrm{KE}})|_{D} = 0$.  Since, by~\eqref{eq:deriv-alpha}, $2\Omega_1$ is the curvature $2$-form of the connection $\alpha_1$ on the bundle $p_1 \colon M \to Z$, an application of the Frobenius Theorem implies that $D$ is locally integrable.  By construction, the integral submanifolds of $D$ are (local) $p_1$-horizontal lifts of $\Sigma$.
\end{proof}

\subsubsection{$p_1$-Horizontality and CR Isotropic Submanifolds}

\indent \indent Note that if $L \subset M$ is $p_1$-horizontal, then $L$ cannot be $I_1$-CR.  Nevertheless, it is possible for $L$ to be $I_2$-CR or $I_3$-CR.  Moreover, it is also possible for $L$ to be both $p_1$- and $p_2$-horizontal simultaneously.  The following proposition elaborates on this.

\begin{prop} Let $L^{2k+1} \subset M$ be a $(2k+1)$-dimensional submanifold, $3 \leq 2k+1 \leq 2n+1$.  Then:
\begin{enumerate}[(a)]
\item $L$ is $I_2$-CR and $p_1$-horizontal if and only if $L$ is $I_2$-CR isotropic.
\item Suppose $\dim(L) = 2n+1$.  Then $L$ is $I_2$-CR and $p_1$-horizontal$\iff$$L$ is $I_2$-CR Legendrian$\iff$$L$ is $p_3$-horizontal and $p_1$-horizontal.
\end{enumerate}
\end{prop}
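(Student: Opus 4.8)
The plan is to reduce both parts to results already in hand, using two facts: the dictionary between a submanifold $L \subset M$ and its cone $\mathrm{C}(L) \subset C$ (Propositions~\ref{prop:CREquivalence}, \ref{prop:LegendrianEquivalence}, \ref{prop:ConeComplexLag} and Corollary~\ref{cor:CRLegEquiv}), and the cyclic symmetry of the $3$-Sasakian / hyperk\"ahler structure under $(1,2,3) \mapsto (2,3,1)$, which lets us apply any of those statements with the indices permuted. Throughout, recall that $p_j$-horizontal means $\alpha_j$-isotropic, and that an $\alpha_j$-isotropic submanifold of dimension $2n+1$ is $\alpha_j$-Legendrian.

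For part (a): by (the cyclic permutation of) Proposition~\ref{prop:CREquivalence}, ``$L$ is $I_2$-CR'' is equivalent to ``$\mathrm{C}(L)$ is $I_2$-complex'', and by Proposition~\ref{prop:LegendrianEquivalence}, ``$L$ is $p_1$-horizontal'' $=$ ``$L$ is $\alpha_1$-isotropic'' is equivalent to ``$\mathrm{C}(L)$ is $\omega_1$-isotropic''. So the left-hand side of (a) holds if and only if $\mathrm{C}(L)$ is $I_2$-complex and $\omega_1$-isotropic. I would then note that this forces $\mathrm{C}(L)$ to be $\omega_3$-isotropic as well: for $X \in T\mathrm{C}(L)$ we have $I_2 X \in T\mathrm{C}(L)$, and since $\omega_1$-isotropy means $I_1(T\mathrm{C}(L)) \subseteq N\mathrm{C}(L)$ and $I_1 I_2 = I_3$, we get $I_3 X = I_1(I_2 X) \in N\mathrm{C}(L)$. (This is the argument of Proposition~\ref{prop:CplxIsoEquiv}, relabelled.) Hence $\mathrm{C}(L)$ is $I_2$-complex, $\omega_3$-isotropic and $\omega_1$-isotropic; as $\sigma_2 = \omega_3 + i\omega_1$, this says $\mathrm{C}(L)$ is $I_2$-complex isotropic, which by the cyclically permuted Proposition~\ref{prop:ConeComplexLag} is precisely ``$L$ is $I_2$-CR isotropic''. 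The reverse implication is immediate, since $I_2$-CR isotropic includes $\alpha_1$-isotropy by definition. A direct argument on $M$ is also available: $I_2$-CR gives $A_2 \in T_xL$ and $\mathsf{J}_2(T_xL) = T_xL$, and combining $A_1 \perp T_xL$ with $\mathsf{J}_2 A_1 = -A_3$ and the Hermitian identity for $\mathsf{J}_2$ on $\Ker(\alpha_2)$ yields $A_3 \perp T_xL$.

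For part (b), assume $\dim(L) = 2n+1$. The equivalence of ``$I_2$-CR and $p_1$-horizontal'' with ``$I_2$-CR Legendrian'' is then immediate from part (a), since an $I_2$-CR isotropic submanifold of top dimension $2n+1$ is by definition $I_2$-CR Legendrian. For the equivalence with ``$p_3$-horizontal and $p_1$-horizontal'': in dimension $2n+1$, being $p_j$-horizontal is the same as being $\alpha_j$-Legendrian, so I would simply invoke the cyclically permuted Corollary~\ref{cor:CRLegEquiv} (its equivalence (iv)$\iff$(v)), which states that $L$ is $I_2$-CR Legendrian if and only if $L$ is $\alpha_3$-Legendrian and $\alpha_1$-Legendrian, i.e. $p_3$-horizontal and $p_1$-horizontal. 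Since (1)$\iff$(2) and (2)$\iff$(3) are now both established, the three conditions are all equivalent.

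I do not anticipate a genuine obstacle: the only step with real content is the ``free extra isotropy'' in part (a)---that $I_2$-complex plus $\omega_1$-isotropic forces $\omega_3$-isotropic---which is a one-line consequence of the quaternionic relations already recorded (up to relabelling) in Proposition~\ref{prop:CplxIsoEquiv}. The main thing to be careful about is bookkeeping: tracking which cyclic permutation of each earlier result is being invoked, and keeping in mind that ``horizontal of dimension $2n+1$'' coincides with ``Legendrian''.
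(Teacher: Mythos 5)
Your proposal is correct and follows essentially the same route as the paper, which simply cites Proposition~\ref{prop:ConeComplexLag} (iii)$\iff$(iv) for (a) and Corollary~\ref{cor:CRLegEquiv} (iv)$\iff$(v) for (b), applied with the indices $1,2,3$ replaced by $2,1,-3$. Your only deviation is bookkeeping: you relabel cyclically and then re-derive the ``free'' $\omega_3$-isotropy by the one-line quaternionic argument of Proposition~\ref{prop:CplxIsoEquiv}, which is exactly the content the paper's index substitution packages for you.
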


\begin{proof} (a) This follows from Proposition \ref{prop:ConeComplexLag} (iii)$\iff$(iv) with indices 1,2,3 replaced by 2,1,-3.

\indent (b) This follows from Corollary \ref{cor:CRLegEquiv} (iv)$\iff$(v), again with 1,2,3 replaced by 2,1,-3.
\end{proof}

\indent Now, given a CR isotropic submanifold $L \subset M$, we consider the geometric properties of its projection $p_1(L) \subset Z$.  To state the result, we introduce the following notation.  For a vertical unit vector $V \in \mathsf{V}_z \subset T_zZ$, we let $\beta_V := \iota_V(\mathrm{Re}\,\gamma)$ denote the induced non-degenerate $2$-form on $\mathsf{H}_z$, and let $J_V \in \End(\mathsf{H}_z)$ denote the corresponding complex structure on $\mathsf{H}_z$.

\begin{prop} \label{prop:CRIsoProjection} Let $L^{2k+1} \subset M$ be a $(2k+1)$-dimensional submanifold, $3 \leq 2k+1 \leq 2n+1$.
\begin{enumerate}[(a)]
\item If $L$ is $\alpha_1$-isotropic and $(-s_\theta \alpha_2 + c_\theta \alpha_3)$-isotropic for some $e^{i\theta} \in S^1$, then  $p_1(L) \subset Z$ is $\omega_{\mathrm{KE}}$-isotropic and $\omega_{\mathrm{NK}}$-isotropic.
\item If $L$ is $(c_\theta I_2 + s_\theta I_3)$-CR isotropic for some $e^{i\theta} \in S^1$, then $\Sigma := p_1(L) \subset Z$ is $\omega_{\mathrm{KE}}$-isotropic, $\omega_{\mathrm{NK}}$-isotropic, and HV-compatible.  Moreover, $\dim(T_z\Sigma \cap \mathsf{V}) = 1$ for all $z \in \Sigma$, and the $2k$-plane $T_z\Sigma \cap \mathsf{H}$ is $J_V$-invariant for any vertical unit vector $V \in T_z\Sigma \cap \mathsf{V}$.
\end{enumerate}
\end{prop}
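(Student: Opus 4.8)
The plan is to establish (a) directly from the descent identities, and then bootstrap (b) from (a) together with a pointwise analysis upstairs on $M$.

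\emph{Part (a).} Since $L$ is $\alpha_1$-isotropic, the Reeb field $A_1$ is transverse to $L$, so $p_1|_L$ is an immersion and $\Sigma := p_1(L)$ is locally a submanifold with $T_{p_1(x)}\Sigma = (p_1)_*(T_xL)$. By Proposition \ref{prop:LegendrianEquivalence}, $\alpha_1|_L = 0$ forces $\Omega_1|_L = 0$, and since $\Omega_1 = p_1^*(\omega_{\mathrm{KE}})$ (Proposition \ref{prop:Descent}(a)) this gives $\omega_{\mathrm{KE}}|_\Sigma = 0$. For $\omega_{\mathrm{NK}}$ I would use $\widetilde\Omega_1 = 2\kappa_1 - \alpha_2\wedge\alpha_3 = p_1^*(\omega_{\mathrm{NK}})$ and $\Omega_1 = \kappa_1 + \alpha_2\wedge\alpha_3$: from $\Omega_1|_L = 0$ one gets $\kappa_1|_L = -(\alpha_2\wedge\alpha_3)|_L$, hence $\widetilde\Omega_1|_L = -3(\alpha_2\wedge\alpha_3)|_L$. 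Finally the algebraic identity $\alpha_2\wedge\alpha_3 = -(-s_\theta\alpha_2 + c_\theta\alpha_3)\wedge(c_\theta\alpha_2 + s_\theta\alpha_3)$, combined with the hypothesis $(-s_\theta\alpha_2 + c_\theta\alpha_3)|_L = 0$, gives $(\alpha_2\wedge\alpha_3)|_L = 0$, so $\widetilde\Omega_1|_L = 0$ and $\Sigma$ is $\omega_{\mathrm{NK}}$-isotropic.

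\emph{Part (b).} Put $v = (0,c_\theta,s_\theta)$ and $v' = (0,-s_\theta,c_\theta)$, so $c_\theta I_2 + s_\theta I_3 = I_v$ and $-s_\theta\alpha_2 + c_\theta\alpha_3 = \alpha_{v'}$. By the $\SO(3)$-symmetry of the $3$-Sasakian structure applied to Proposition \ref{prop:ConeComplexLag}, being $I_v$-CR isotropic is the same as being $I_v$-CR, $\alpha_1$-isotropic, and $\alpha_{v'}$-isotropic; thus part (a) already yields that $\Sigma$ is $\omega_{\mathrm{KE}}$- and $\omega_{\mathrm{NK}}$-isotropic. For the rest I would fix $x \in L$, $z = p_1(x)$, and exploit that $T_xL \subset \Ker(\alpha_1)$ with $(p_1)_*\colon \Ker(\alpha_1)_x \to T_zZ$ an isometric isomorphism carrying $\widetilde{\mathsf{H}}$ onto $\mathsf{H}$ and $\mathrm{span}(A_2,A_3)$ onto $\mathsf{V}$ (Theorem \ref{thm:BasicStructureTwistor}). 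Since $T_xL \subset \Ker(\alpha_1) \cap \Ker(\alpha_{v'})$ while $A_v \in T_xL$ (from $I_v$-CR), and the common kernel of $\alpha_1,\alpha_{v'}$ inside $\mathrm{span}(A_1,A_2,A_3)$ is exactly $\R A_v$, we get $T_xL \cap \mathrm{span}(A_2,A_3) = \R A_v$, hence $\dim(T_z\Sigma \cap \mathsf{V}) = 1$; and if $w \in T_xL$ with $w \perp A_v$, then $\alpha_v(w) = \alpha_1(w) = \alpha_{v'}(w) = 0$, so $w \in \widetilde{\mathsf{H}}$. This gives the orthogonal splitting $T_xL = (T_xL\cap\widetilde{\mathsf{H}}) \oplus \R A_v$, i.e. HV-compatibility of $\Sigma$ at $z$ with $\dim(T_z\Sigma\cap\mathsf{H}) = 2k$.

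The remaining point, and the one I expect to require the most care, is identifying $J_V$. I would take $V := (p_1)_*(A_v)$, which is a vertical unit vector (the Reeb fields are orthonormal and $(p_1)_*$ is isometric), noting that $\pm V$ are the only vertical unit vectors in $T_z\Sigma\cap\mathsf{V}$. For horizontal $Y_1,Y_2$ with $\widetilde{\mathsf{H}}$-lifts $\widetilde Y_i$, using $p_1^*(\mathrm{Re}\,\gamma) = \mathrm{Re}(\Gamma_1) = \alpha_2\wedge\kappa_2 + \alpha_3\wedge\kappa_3$, the vanishing $\iota_{A_j}\kappa_p = 0$, and $\kappa_p|_{\widetilde{\mathsf{H}}} = g_M(\mathsf{J}_p\cdot,\cdot)|_{\widetilde{\mathsf{H}}}$, one computes $\beta_V(Y_1,Y_2) = (\iota_{A_v}\mathrm{Re}(\Gamma_1))(\widetilde Y_1,\widetilde Y_2) = (c_\theta\kappa_2 + s_\theta\kappa_3)(\widetilde Y_1,\widetilde Y_2) = g_M(\mathsf{J}_v\widetilde Y_1,\widetilde Y_2)$, where $\mathsf{J}_v = c_\theta\mathsf{J}_2 + s_\theta\mathsf{J}_3$ restricts to a complex structure on $\widetilde{\mathsf{H}}$. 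Hence under the isometry $(p_1)_*\colon\widetilde{\mathsf{H}}_x\to\mathsf{H}_z$ the complex structure $J_V$ corresponds to $\mathsf{J}_v|_{\widetilde{\mathsf{H}}}$. Because $T_xL$ is $\mathsf{J}_v$-invariant ($L$ is $I_v$-CR) and $\widetilde{\mathsf{H}}$ is $\mathsf{J}_v$-invariant, the intersection $T_xL\cap\widetilde{\mathsf{H}}$ is $\mathsf{J}_v$-invariant, so $T_z\Sigma\cap\mathsf{H}$ is $J_V$-invariant; and $J_{-V} = -J_V$ has the same invariant subspaces, which finishes the proof. The genuine difficulty is purely bookkeeping — carrying the isometric identification $\Ker(\alpha_1)_x \cong T_zZ$ and its $\mathsf{H}/\mathsf{V}$ refinement through the contraction computation so that everything lands consistently; no ideas beyond the structure already recorded in Propositions \ref{prop:Descent}, \ref{prop:ExtDerivGammaXi}, and Theorem \ref{thm:BasicStructureTwistor} seem to be needed.
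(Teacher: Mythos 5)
Your proposal is correct and follows essentially the same route as the paper: part (a) via $\alpha_2\wedge\alpha_3|_L=0$ and $\Omega_1|_L=0$ (hence $\widetilde{\Omega}_1|_L=0$), and part (b) via the splitting $T_xL=(T_xL\cap\widetilde{\mathsf{H}})\oplus\R A_v$ together with the contraction $\iota_{A_v}\mathrm{Re}(\Gamma_1)=c_\theta\kappa_2+s_\theta\kappa_3$, which identifies $J_V$ with $\mathsf{J}_v$ under $(p_1)_*$ exactly as in the paper's equation \eqref{eq:p-commutation}. Your extra remarks (deriving the splitting explicitly from the isotropy conditions and noting $J_{-V}=-J_V$) only make explicit points the paper leaves implicit.
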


\begin{proof} (a) Suppose $L \subset M$ is $\alpha_1$-isotropic and $(-s_\theta \alpha_2 + c_\theta \alpha_3)$-isotropic for some constant $e^{i\theta} \in S^1$.  On $L$, we have $\alpha_1 = 0$ and $-s_\theta \alpha_2 + c_\theta \alpha_3 = 0$.  This second equation implies
\begin{align*}
c_\theta \,\alpha_2 \wedge \alpha_3 & = 0 & s_\theta \,\alpha_2 \wedge \alpha_3 & = 0,
\end{align*}
and hence $\alpha_2 \wedge \alpha_3 = 0$ on $L$.  Therefore, $\alpha_1 = 0$ implies $0 = d\alpha_1 = 2 \Omega_1 = 2(\alpha_2 \wedge \alpha_3 + \kappa_1) = 2\kappa_1$, so that $\kappa_1 = 0$ on $L$.  We deduce that $\Omega_1|_L = 0$ and $\widetilde{\Omega}_1|_L = 0$.  Therefore, on the projection $p_1(L) \subset Z$, we have both $\left.\omega_{\mathrm{KE}}\right|_{p_1(L)} = 0$ and $\left.\omega_{\mathrm{NK}}\right|_{p_1(L)} = 0$. \\

\indent (b) Suppose $L \subset M$ is $(c_\theta I_2 + s_\theta I_3)$-CR isotropic, so that $L$ is $\alpha_1$-isotropic and $(-s_\theta \alpha_2 + c_\theta \alpha_3)$-isotropic, and $(c_\theta I_2 + s_\theta I_3)$-CR.  By part (a), the projection $\Sigma := p_1(L)$ is both $\omega_{\mathrm{KE}}$-isotropic and $\omega_{\mathrm{NK}}$-isotropic. \\
\indent  Fix $x \in L$, let $z = p(x) \in \Sigma$, set $\widetilde{V} = c_\theta A_2 + s_\theta A_3 \in T_xM$, and let $\mathsf{J}_V = c_\theta \mathsf{J}_2 + s_\theta \mathsf{J}_3$.  By assumption, we can write $T_xL = H_L \oplus \R \widetilde{V}$ for some $\mathsf{J}_V$-invariant $2k$-plane $H_L \subset \widetilde{\mathsf{H}}$.  It follows that
$T_z\Sigma = H_\Sigma \oplus \R V,$ where $H_\Sigma := p_*(H_L) \subset \mathsf{H}$ is a horizontal $2k$-plane, and $V = p_*(\widetilde{V}) \in \mathsf{V}$ is a vertical unit vector.  In particular, this shows that $\Sigma$ is HV compatible, and that $\dim(T_z\Sigma \cap \mathsf{V}) = 1$. \\
\indent Now, since $\mathrm{Re}(\Gamma_1) = p^*(\mathrm{Re}(\gamma))$, we have that $\iota_{\widetilde{V}}(\mathrm{Re}\,\Gamma_1) = p^*(\iota_{V}(\mathrm{Re}\,\gamma)) = p^*(\beta_V)$ on $L$.  In particular, if $Y \in H_L$ is a horizontal vector tangent to $L$, then 
$$g_{\mathrm{KE}}(p_* \mathsf{J}_VY, p_* \cdot) = g_M(\mathsf{J}_VY, \cdot) = \mathrm{Re}(\Gamma_1)(\widetilde{V}, Y, \cdot) = \beta_V(p_*Y, p_* \cdot) = g_{\mathrm{KE}}(J_Vp_*Y, p_*\cdot),$$
which shows that
\begin{equation} \label{eq:p-commutation}
p_*\mathsf{J}_Y = J_Vp_* \text{ on } H_L.
\end{equation}
\indent Finally, if $X \in T_z\Sigma \cap \mathsf{H} = H_\Sigma$, then $X = p_*(\widetilde{X})$ for some $\widetilde{X} \in H_L$.  Since $H_L$ is $\mathsf{J}_V$-invariant, it follows that $\mathsf{J}_V\widetilde{X} \in H_L$.   Therefore, $J_VX = J_Vp_*(\widetilde{X}) = p_*(\mathsf{J}_V\widetilde{X}) \in p_*(H_L) = H_\Sigma$, which shows that $H_\Sigma$ is $J_V$-invariant.
\end{proof}

\indent Conversely, we now ask which submanifolds $\Sigma \subset Z$ admit local $p_1$-horizontal lifts to CR isotropic submanifolds of $M$.  As we now show, the necessary conditions given in Proposition \ref{prop:CRIsoProjection}(b) are in fact sufficient:

\begin{prop} \label{prop:LiftingDoubleIsotropic} Let $\Sigma^k \subset Z^{4n+2}$ be a submanifold, $3 \leq k \leq 2n+1$, that is $\omega_{\mathrm{KE}}$-isotropic, $\omega_{\mathrm{NK}}$-isotropic, and HV-compatible.
\begin{enumerate}[(a)]
\item If $\Sigma$ is nowhere tangent to $\mathsf{H}$, then every local $p_1$-horizontal lift of $\Sigma$ is $\alpha_1$-isotropic and $(-s_\theta \alpha_2 + c_\theta \alpha_3)$-isotropic for some constant $e^{i\theta} \in S^1$.
\item If $\dim(T_z\Sigma \cap \mathsf{V}) = 1$ for all $z \in \Sigma$, and if $T_z\Sigma \cap \mathsf{H}$ is $J_V$-invariant for any vertical unit vector $V \in T_z\Sigma \cap \mathsf{V}$, then every local $p_1$-horizontal lift of $\Sigma$ is $(c_\theta I_2 + s_\theta I_3)$-CR isotropic for some constant $e^{i\theta} \in S^1$.
\end{enumerate}
\end{prop}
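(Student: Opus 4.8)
The plan is to prove part (b) in full and indicate the (simpler) modification for part (a). Write $p := p_1$, let $\Sigma$ satisfy the hypotheses of (b), and use Proposition~\ref{prop:GenHorzLift} (applicable since $\Sigma$ is $\omega_{\mathrm{KE}}$-isotropic) to produce a connected local $p$-horizontal lift $\widehat{\Sigma}\subset M$; by construction $\alpha_1|_{\widehat{\Sigma}}=0$, so $\widehat{\Sigma}$ is $\alpha_1$-isotropic and hence (Proposition~\ref{prop:LegendrianEquivalence}) $\Omega_1|_{\widehat{\Sigma}}=0$. Since $\widetilde{\Omega}_1=p^*(\omega_{\mathrm{NK}})$ (Proposition~\ref{prop:Descent}) and $\Sigma$ is $\omega_{\mathrm{NK}}$-isotropic, also $\widetilde{\Omega}_1|_{\widehat{\Sigma}}=0$; solving the pair $\Omega_1=\alpha_2\wedge\alpha_3+\kappa_1$, $\widetilde{\Omega}_1=2\kappa_1-\alpha_2\wedge\alpha_3$ gives $\kappa_1|_{\widehat{\Sigma}}=0$ and $(\alpha_2\wedge\alpha_3)|_{\widehat{\Sigma}}=0$. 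The latter forces $\alpha_2|_{\widehat{\Sigma}}$ and $\alpha_3|_{\widehat{\Sigma}}$ to be pointwise proportional, and since $\dim(T_z\Sigma\cap\mathsf{V})=1$ they are not both zero, so I can write $\alpha_2|_{\widehat{\Sigma}}=c_\theta\,\eta$, $\alpha_3|_{\widehat{\Sigma}}=s_\theta\,\eta$ for a (locally defined) smooth function $\theta\colon\widehat{\Sigma}\to\R$ and a unit $1$-form $\eta$. Using HV-compatibility together with $p_*(\widetilde{\mathsf{H}})=\mathsf{H}$, $p_*(\mathrm{span}(A_2,A_3))=\mathsf{V}$, one splits $T_x\widehat{\Sigma}=H_L\oplus\R\widetilde{V}$ with $H_L=T_x\widehat{\Sigma}\cap\widetilde{\mathsf{H}}$, $\widetilde{V}=c_\theta A_2+s_\theta A_3$, and $\eta=\widetilde{V}^\flat$; note $\kappa_1|_{\widehat{\Sigma}}=0$ says precisely that $H_L$ is $\mathsf{J}_1$-isotropic.

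Next I would establish the CR condition pointwise. Set $\mathsf{J}_{\widetilde{V}}:=c_\theta\mathsf{J}_2+s_\theta\mathsf{J}_3$; using $\mathsf{J}_p(A_q)=\epsilon_{pqr}A_r$ one checks $\mathsf{J}_{\widetilde{V}}(\widetilde{V})=0$, while on $\widetilde{\mathsf{H}}$ the map $\mathsf{J}_{\widetilde{V}}$ is a complex structure with $g_M(\mathsf{J}_{\widetilde{V}}\,\cdot\,,\,\cdot\,)|_{\widetilde{\mathsf{H}}}=c_\theta\kappa_2+s_\theta\kappa_3=\iota_{\widetilde{V}}\mathrm{Re}(\Gamma_1)$ (expand $\mathrm{Re}(\Gamma_1)=\alpha_2\wedge\kappa_2+\alpha_3\wedge\kappa_3$). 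Since $\mathrm{Re}(\Gamma_1)=p^*\mathrm{Re}(\gamma)$ and $p_*\widetilde{V}$ is a unit vertical vector $V$, comparing $\mathrm{Re}(\Gamma_1)(\widetilde{V},\,\cdot\,,\,\cdot\,)$ on $\widetilde{\mathsf{H}}$ with $(\iota_V\mathrm{Re}\,\gamma)(p_*\,\cdot\,,p_*\,\cdot\,)=g_{\mathrm{KE}}(J_Vp_*\,\cdot\,,p_*\,\cdot\,)$ and using that $p_*|_{\widetilde{\mathsf{H}}}$ is an isometry onto $\mathsf{H}$ yields $p_*\circ\mathsf{J}_{\widetilde{V}}=J_V\circ p_*$ on $\widetilde{\mathsf{H}}$ (the identity \eqref{eq:p-commutation}). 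Hence the hypothesis that $T_z\Sigma\cap\mathsf{H}$ is $J_V$-invariant translates to $H_L$ being $\mathsf{J}_{\widetilde{V}}$-invariant; combined with $\mathsf{J}_{\widetilde{V}}(\widetilde{V})=0\in T_x\widehat{\Sigma}$, this says $T_x\widehat{\Sigma}$ is $\mathsf{J}_{\widetilde{V}}$-invariant and contains the Reeb vector $A_{\widetilde{V}}=\widetilde{V}$, i.e.\ $\widehat{\Sigma}$ is $(c_{\theta(x)}I_2+s_{\theta(x)}I_3)$-CR at $x$.

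The heart of the argument — and the step I expect to be delicate — is upgrading these pointwise facts to a \emph{constant} phase, i.e.\ showing $d\theta=0$. From $d\alpha_p=2\Omega_p$, using $\alpha_1|_{\widehat{\Sigma}}=0$ so that $\Omega_2|_{\widehat{\Sigma}}=\kappa_2|_{\widehat{\Sigma}}$, $\Omega_3|_{\widehat{\Sigma}}=\kappa_3|_{\widehat{\Sigma}}$, I expand $d(c_\theta\eta)=2\kappa_2|_{\widehat{\Sigma}}$ and $d(s_\theta\eta)=2\kappa_3|_{\widehat{\Sigma}}$ and take the two orthogonal combinations, obtaining
\begin{align*}
d\theta\wedge\eta & = 2(-s_\theta\kappa_2+c_\theta\kappa_3)|_{\widehat{\Sigma}}, & d\eta & = 2(c_\theta\kappa_2+s_\theta\kappa_3)|_{\widehat{\Sigma}}.
\end{align*}
Both forms on the right are supported on $H_L$ (contracting with $\widetilde{V}$ kills them, as $\kappa_2,\kappa_3\in\Lambda^2\widetilde{\mathsf{H}}^*$), and on $H_L$ the first equals $g_M(\mathsf{J}_{\widetilde{W}}\,\cdot\,,\,\cdot\,)|_{H_L}$ with $\mathsf{J}_{\widetilde{W}}:=-s_\theta\mathsf{J}_2+c_\theta\mathsf{J}_3$. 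A short computation gives $\mathsf{J}_{\widetilde{V}}\mathsf{J}_{\widetilde{W}}=\mathsf{J}_1$ on $\widetilde{\mathsf{H}}$, so $\mathsf{J}_{\widetilde{W}}=-\mathsf{J}_{\widetilde{V}}\mathsf{J}_1$; since $H_L$ is $\mathsf{J}_{\widetilde{V}}$-invariant and $\mathsf{J}_1$-isotropic and $\mathsf{J}_{\widetilde{V}}$ is orthogonal, $\mathsf{J}_{\widetilde{W}}(H_L)=-\mathsf{J}_{\widetilde{V}}(\mathsf{J}_1H_L)\perp\mathsf{J}_{\widetilde{V}}(H_L)=H_L$, so this form vanishes and $d\theta\wedge\eta=0$. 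Thus $d\theta=\widetilde{V}(\theta)\,\eta$, and differentiating once more $0=d(d\theta\wedge\eta)=-d\theta\wedge d\eta=-2\,\widetilde{V}(\theta)\,\eta\wedge(c_\theta\kappa_2+s_\theta\kappa_3)|_{\widehat{\Sigma}}$. But $(c_\theta\kappa_2+s_\theta\kappa_3)|_{H_L}=g_M(\mathsf{J}_{\widetilde{V}}\,\cdot\,,\,\cdot\,)|_{H_L}$ is a symplectic form on the $\mathsf{J}_{\widetilde{V}}$-complex space $H_L$ of real dimension $k-1\geq2$, hence nonzero, so $\eta\wedge(c_\theta\kappa_2+s_\theta\kappa_3)|_{\widehat{\Sigma}}\neq0$ and therefore $\widetilde{V}(\theta)=0$. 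So $d\theta=0$ and $\theta$ is constant on the connected lift $\widehat{\Sigma}$. Assembling everything, $\widehat{\Sigma}$ is simultaneously $\alpha_1$-isotropic, $(-s_\theta\alpha_2+c_\theta\alpha_3)$-isotropic, and $(c_\theta I_2+s_\theta I_3)$-CR for one fixed $e^{i\theta}$, which (cf.\ Proposition~\ref{prop:ConeComplexLag}, with $1,2,3$ relabeled to $\widetilde V$-adapted indices) is exactly the statement that $\widehat{\Sigma}$ is $(c_\theta I_2+s_\theta I_3)$-CR isotropic.

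For part (a) the same template applies, omitting the CR step: one no longer has $J_V$-invariance of $T_z\Sigma\cap\mathsf{H}$, but $\kappa_1|_{\widehat{\Sigma}}=0$ and \eqref{eq:deriv-kappa} give $0=d\kappa_1|_{\widehat{\Sigma}}=2\,\eta\wedge(-s_\theta\kappa_2+c_\theta\kappa_3)|_{\widehat{\Sigma}}$, hence again $(-s_\theta\kappa_2+c_\theta\kappa_3)|_{\widehat{\Sigma}}=0$ and $d\theta\wedge\eta=0$; the second-differentiation argument then forces $\theta$ constant, and $\widehat{\Sigma}$ is $\alpha_1$-isotropic and $(-s_\theta\alpha_2+c_\theta\alpha_3)$-isotropic for a fixed $e^{i\theta}$, as claimed. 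The point I expect to require the most care is precisely the constancy of the phase: verifying that the transverse form $(c_\theta\kappa_2+s_\theta\kappa_3)|_{\widehat{\Sigma}}$ is non-degenerate enough to eliminate $\widetilde{V}(\theta)$ — this is where the bound $k\geq3$ and the compatible complex structure on the horizontal part of $T\widehat{\Sigma}$ are used, and where one must rule out (or separately treat) the degenerate configuration in which $T_z\Sigma\cap\mathsf{H}$ is totally real for the quaternionic structure on $\mathsf{H}$.
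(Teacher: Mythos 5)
Your setup coincides with the paper's: the lift exists by Proposition~\ref{prop:GenHorzLift}, HV-compatibility transfers to the splitting $TL=(TL\cap\widetilde{\mathsf{H}})\oplus(TL\cap\widetilde{\mathsf{V}})$, the vanishing of $\omega_{\mathsf{V}}$ forces $\alpha_2|_L,\alpha_3|_L$ to be proportional so that a phase function $\theta$ is defined, and your CR step in (b) is exactly the paper's: the identity $p_*\circ\mathsf{J}_{\widetilde{V}}=J_V\circ p_*$ on $\widetilde{\mathsf{H}}$ (the paper's \eqref{eq:p-commutation}) transports $J_V$-invariance of $T_z\Sigma\cap\mathsf{H}$ to $\mathsf{J}_{\widetilde{V}}$-invariance of $H_L$. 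Where you genuinely differ is the constancy of $\theta$. The paper proves constancy already in part (a), by differentiating the vanishing combination of $\alpha_2,\alpha_3$ once, separating the mixed and purely horizontal pieces via the splitting, and concluding from $d\theta\wedge\alpha_2=d\theta\wedge\alpha_3=0$ that $d\theta=0$; part (b) then just quotes (a). You instead prove constancy inside (b): the first differentiation gives $d\theta=\widetilde{V}(\theta)\,\eta$ and $(-s_\theta\kappa_2+c_\theta\kappa_3)|_{H_L}=0$, and a second differentiation kills $\widetilde{V}(\theta)$ because $(c_\theta\kappa_2+s_\theta\kappa_3)|_{H_L}$ is the K\"ahler form of $\mathsf{J}_{\widetilde{V}}$ on the $\mathsf{J}_{\widetilde{V}}$-invariant space $H_L$ of dimension $k-1\geq 2$, hence nonzero. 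That argument is correct and complete, and it is in fact more careful than the paper at exactly this point: since $\alpha_2|_L$ and $\alpha_3|_L$ are proportional, the two wedge conditions only control the $H_L$-derivative of $\theta$ and say nothing about $\widetilde{V}(\theta)$, which you rightly isolate as the crux.

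The genuine gap is in your part (a). Without the $J_V$-invariance hypothesis, the nondegeneracy you need fails in general: the hypotheses of (a) only give $\kappa_1|_{H_L}=0$ and $(-s_\theta\kappa_2+c_\theta\kappa_3)|_{H_L}=0$, and nothing excludes $H_L$ being totally real for the quaternionic structure on $\widetilde{\mathsf{H}}$ (possible pointwise once $n\geq 2$), in which case $\eta\wedge(c_\theta\kappa_2+s_\theta\kappa_3)|_L\equiv 0$ and your second differentiation yields no information about $\widetilde{V}(\theta)$; indeed, differentiating the remaining identities ($d\kappa_1|_L$, $d$ of the vanishing $\kappa$-combination) gives nothing new in that regime. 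You acknowledge this but do not resolve it, so the constancy claim of (a) is unproved in your proposal, and (a) is needed independently later (it is invoked in Lemma~\ref{lem:LocallyCircleBundle} and in Corollary~\ref{cor:DoubleLagrangian-CRLift}(b)). Note also that your logical order is reversed relative to the paper: there, (b) rests on (a); in your write-up (b) is self-contained, so (b) survives, but (a) is left open in the degenerate configuration. To close it you must either handle the totally real case by some additional mechanism (exterior algebra of the restricted forms alone does not seem to suffice, so one expects to need the connection or an argument that such configurations cannot occur on an open subset of $L$), or justify the paper's own passage from $d\theta\wedge\alpha_2=d\theta\wedge\alpha_3=0$ to $d\theta=0$, which as written is terse at precisely the same point.
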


\begin{proof} (a) Let $\Sigma \subset Z$ be as in the statement.  Since $\Sigma$ is $\omega_{\mathrm{KE}}$-isotropic, Proposition \ref{prop:GenHorzLift} implies that $\Sigma$ locally admits a $p_1$-horizontal lift to a $k$-dimensional submanifold $L \subset M$, which is automatically $\alpha_1$-isotropic.  Moreover, since $\Sigma$ is HV-compatible, and since $(p_1)_*|_{\mathrm{Ker}(\alpha_1)} \colon \Ker(\alpha_1) \to TZ$ is an isomorphism that respects the horizontal-vertical splitting, it follows that $TL$ splits as
\begin{equation} \label{eq:TL-Splitting}
TL = (TL \cap \widetilde{\mathsf{H}}) \oplus (TL \cap \widetilde{\mathsf{V}}).
\end{equation}
\indent Now, note that the system $\left.\omega_{\mathrm{KE}}\right|_\Sigma = \left.\omega_{\mathrm{NK}}\right|_\Sigma = 0$ is equivalent to $\left.\omega_{\mathsf{V}}\right|_\Sigma = \left.\omega_{\mathsf{H}}\right|_\Sigma = 0$.  Since $p_1^*(\omega_{\mathsf{V}}) = \alpha_2 \wedge \alpha_3$, it follows that $\{\alpha_2|_L, \alpha_3|_L\}$ is a linearly dependent set of $1$-forms on $L$.  Moreover, since $\Sigma$ is nowhere tangent to $\mathsf{H}$, it follows that $L$ is nowhere tangent to $\widetilde{\mathsf{H}} = \Ker(\alpha_1, \alpha_2, \alpha_3)$, and thus there is no point of $L$ at which $\alpha_2|_L, \alpha_3|_L$ simultaneously vanish.  Therefore, there is an $S^1$-valued function $e^{i\theta} \colon L \to S^1$ such that the $1$-form
$$\tau_\theta := - s_\theta \alpha_2 + c_\theta \alpha_3$$
vanishes on $L$.  It remains to show that $e^{i\theta}$ is constant on $L$. For this, we compute on $L$ that
$$0 = d\tau_\theta = d \theta \wedge (- s_\theta \alpha_2 +  c_\theta \alpha_3) + 2( c_\theta \kappa_2 + s_\theta \kappa_3),$$
where we have used that $\alpha_1|_L = 0$ to compute $d\alpha_2 = 2\kappa_2$ and $d\alpha_3 = 2\kappa_3$.  Now, the first term is in $(T^*L \otimes \widetilde{\mathsf{V}}^*)|_L$ while the second is in $\Lambda^2(\widetilde{\mathsf{H}}^*)|_L$, so by equation (\ref{eq:TL-Splitting}), they vanish independently.  In particular, $d\theta \wedge (-s_\theta \alpha_2 +  c_\theta \alpha_3) = 0$.  Together with the equation $c_\theta \alpha_2 + s_\theta \alpha_3 = 0$ on $L$, this implies that $d \theta \wedge \alpha_2 = 0 $ and $d \theta \wedge \alpha_3 = 0$, which yields $d\theta = 0$, so (since $L$ is assumed connected) $\theta$ is constant. \\

\indent (b) Let $\Sigma \subset Z$ be as in the statement.  By part (a), every local $p_1$-horizontal lift $L \subset M$ of the submanifold $\Sigma \subset Z$ is $\alpha_1$-isotropic and $(-s_\theta \alpha_2 + c_\theta \alpha_2)$-isotropic for some $e^{i\theta} \in S^1$.  Thus, it remains only to show that $L$ is $(c_\theta I_2 + s_\theta I_3)$-CR. \\
\indent Fix $x \in L$, and let $z = p_1(x) \in \Sigma$.  By assumption, we may split $T_z\Sigma = H_\Sigma \oplus \R V$, where $V \in \mathsf{V}$ is a unit vector, and $H_\Sigma \subset \mathsf{H}$ is $J_V$-invariant.  Therefore, since $(p_1)_*$ yields an isomorphism $\Ker(\alpha_1)|_x \to T_zZ$ that respects the horizontal-vertical splittings, we may decompose $TL = H_L \oplus \R \widetilde{V}$, where $H_L \subset \widetilde{\mathsf{H}}$ satisfies $p_*(H_L) = H_\Sigma$ and $\widetilde{V} \in \widetilde{\mathsf{V}}$ satisfies $p_*(\widetilde{V}) = V$. \\
\indent Now, since $L$ is both $\alpha_1$-isotropic and  $(-s_\theta \alpha_2 + c_\theta \alpha_2)$-isotropic, it follows that $\widetilde{V} = c_\theta A_2 + s_\theta A_3$.  Let $\mathsf{J}_V = c_\theta \mathsf{J}_2 + s_\theta \mathsf{J}_3$.  If $X \in H_L$, then $p_*X \in H_\Sigma$, so by~\eqref{eq:p-commutation} we have $p_*(\mathsf{J}_VX) = J_V(p_*X) \in H_\Sigma = p_*(H_L)$, and therefore $\mathsf{J}_VX \in H_L$.  Thus $H_L$ is $\mathsf{J}_V$-invariant, and so $L$ is $(c_\theta I_2 + s_\theta I_3)$-CR.
\end{proof}

\indent In the highest and lowest dimensions, the relationship between CR isotropic submanifolds of $M$ and their projections in $Z$ becomes simpler.  Indeed, in the top dimension:

\begin{cor} \label{cor:DoubleLagrangian-CRLift} ${}$
\begin{enumerate}[(a)]
\item If $L^{2n+1} \subset M^{4n+3}$ is $(c_\theta I_2 + s_\theta I_3)$-CR Legendrian for some $e^{i\theta} \in S^1$, then $p_1(L) \subset Z$ is $\omega_{\mathrm{KE}}$-Lagrangian and $\omega_{\mathrm{NK}}$-Lagrangian.
\item Conversely, if $\Sigma^{2n+1} \subset Z^{4n+2}$ is $\omega_{\mathrm{KE}}$-Lagrangian and $\omega_{\mathrm{NK}}$-Lagrangian, then every local $p_1$-horizontal lift of $\Sigma$ is $(c_\theta I_2 + s_\theta I_3)$-CR Legendrian for some $e^{i\theta} \in S^1$.
\end{enumerate}
\end{cor}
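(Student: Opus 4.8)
The plan is to reduce both directions to their non-top-dimensional analogues. Part (a) will follow from Proposition~\ref{prop:CRIsoProjection}(b), and part (b) from Proposition~\ref{prop:LiftingDoubleIsotropic}(b), once two things are checked: first, that in dimension $2n+1$ an $\omega_{\mathrm{KE}}$-isotropic (resp. $\omega_{\mathrm{NK}}$-isotropic) submanifold of $Z^{4n+2}$ is automatically Lagrangian --- immediate, since $\omega_{\mathrm{KE}}$ and $\omega_{\mathrm{NK}}$ are non-degenerate $2$-forms on the $(4n+2)$-manifold $Z$, so isotropic subspaces have dimension at most $2n+1$ with equality only in the Lagrangian case; and second, that the geometric hypotheses of Proposition~\ref{prop:LiftingDoubleIsotropic}(b) hold for a doubly-Lagrangian $\Sigma^{2n+1}$. (One also uses that a $(c_\theta I_2 + s_\theta I_3)$-CR isotropic submanifold of dimension $2n+1$ is, by definition, $(c_\theta I_2 + s_\theta I_3)$-CR Legendrian.)

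For part (a): a $(c_\theta I_2 + s_\theta I_3)$-CR Legendrian $L^{2n+1}$ is in particular $(c_\theta I_2 + s_\theta I_3)$-CR isotropic and hence $\alpha_1$-isotropic, so $A_1 \notin TL$ and $p_1|_L$ is an immersion; thus $\Sigma := p_1(L)$ has dimension $2n+1$. Proposition~\ref{prop:CRIsoProjection}(b) gives that $\Sigma$ is $\omega_{\mathrm{KE}}$-isotropic and $\omega_{\mathrm{NK}}$-isotropic, and the dimension count upgrades both to Lagrangian.

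For part (b): let $\Sigma^{2n+1}$ be $\omega_{\mathrm{KE}}$-Lagrangian and $\omega_{\mathrm{NK}}$-Lagrangian. Proposition~\ref{prop:HV-KE-Iso}(b) already furnishes HV-compatibility together with $\dim(T_z\Sigma \cap \mathsf{H}) = 2n$ and $\dim(T_z\Sigma \cap \mathsf{V}) = 1$. What remains, in order to apply Proposition~\ref{prop:LiftingDoubleIsotropic}(b), is to show that the $2n$-plane $H_\Sigma := T_z\Sigma \cap \mathsf{H}$ is $J_V$-invariant for the unit vector $V$ spanning $T_z\Sigma \cap \mathsf{V}$. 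The idea: by \eqref{eq:omegaKE-omegaNK} the two Lagrangian conditions are equivalent to $\omega_{\mathsf{H}}|_\Sigma = \omega_{\mathsf{V}}|_\Sigma = 0$, so $H_\Sigma$ is Lagrangian in $(\mathsf{H}_z, \omega_{\mathsf{H}})$; and since $d\omega_{\mathrm{NK}} = 6\,\mathrm{Im}(\gamma)$, the condition $\omega_{\mathrm{NK}}|_\Sigma = 0$ forces $\mathrm{Im}(\gamma)|_\Sigma = 0$. Writing $\gamma|_z = (\mu_2 - i\mu_3) \wedge (\beta_2 + i\beta_3)$ in an $\Sp(n)\U(1)$-coframe as in the proof of Lemma~\ref{lem:v-hook-Regamma}, one computes $\iota_V(\mathrm{Re}\,\gamma)|_{\mathsf{H}_z} = \beta_V$ and $\iota_V(\mathrm{Im}\,\gamma)|_{\mathsf{H}_z} = \beta_V^\perp$, where $J_V, J_V^\perp$ are complex structures on $\mathsf{H}_z$ with $\{J_V, J_V^\perp, J_+\}$ a hyperk\"{a}hler triple; using HV-compatibility and the decomposable form of $\gamma$, the condition $\mathrm{Im}(\gamma)|_\Sigma = 0$ is then equivalent to $\beta_V^\perp|_{H_\Sigma} = 0$. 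Thus $H_\Sigma$ is Lagrangian for two members of that triple on $\mathsf{H}_z$, and the linear algebra behind the implication (iii)$\Rightarrow$(i) of Proposition~\ref{prop:CplxLagEquiv} --- being $\omega$-Lagrangian for two of three hyperk\"{a}hler $2$-forms forces complexity for the third, using crucially that $\dim H_\Sigma = 2n$ is half of $\dim \mathsf{H}_z$ --- yields $J_V$-invariance. Proposition~\ref{prop:LiftingDoubleIsotropic}(b) then delivers that every local $p_1$-horizontal lift $L$ of $\Sigma$ is $(c_\theta I_2 + s_\theta I_3)$-CR isotropic for a constant $e^{i\theta}$, hence $(c_\theta I_2 + s_\theta I_3)$-CR Legendrian since $\dim L = 2n+1$.

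I expect the $J_V$-invariance step in part (b) to be the crux: one must correctly extract the second Lagrangian condition from $\mathrm{Im}(\gamma)|_\Sigma = 0$ and recognize the resulting pair of conditions as ``Lagrangian for two K\"{a}hler forms of a hyperk\"{a}hler triple'', so that the complex-Lagrangian dichotomy of Proposition~\ref{prop:CplxLagEquiv} applies --- and it is exactly the top dimension that makes $H_\Sigma$ half-dimensional in $\mathsf{H}_z$, which is what that dichotomy requires. The horizontal-lift construction and the constancy of the phase $\theta$ are already packaged inside Propositions~\ref{prop:GenHorzLift} and~\ref{prop:LiftingDoubleIsotropic}, so nothing further is needed there.
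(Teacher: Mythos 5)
Your proposal is correct. Part (a) coincides with the paper's proof: Proposition~\ref{prop:CRIsoProjection} gives that $p_1(L)$ is $\omega_{\mathrm{KE}}$- and $\omega_{\mathrm{NK}}$-isotropic, and the dimension count (the projection is an immersion since $L$ is $\alpha_1$-isotropic) upgrades this to Lagrangian. Part (b) reaches the same conclusion by a genuinely different route. The paper applies Proposition~\ref{prop:LiftingDoubleIsotropic}(a) to conclude that every local $p_1$-horizontal lift $L$ is $\alpha_1$-Legendrian and $(-s_\theta\alpha_2+c_\theta\alpha_3)$-Legendrian for a constant phase, and then invokes Corollary~\ref{cor:CRLegEquiv} (v)$\implies$(iv) \emph{upstairs on $M$}, so the ``Lagrangian for two forms implies complex for the third'' dichotomy is applied to the lift (equivalently to the cone). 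You instead verify the $J_V$-invariance hypothesis of Proposition~\ref{prop:LiftingDoubleIsotropic}(b) directly \emph{on $Z$}: from $d\omega_{\mathrm{NK}} = 6\,\mathrm{Im}(\gamma)$ you extract $\mathrm{Im}(\gamma)|_\Sigma = 0$, identify $\iota_V(\mathrm{Re}\,\gamma)$ and $\iota_V(\mathrm{Im}\,\gamma)$ as two members of a hyperk\"ahler triple on $\mathsf{H}_z$ completing $\omega_{\mathsf{H}}$, use HV-compatibility to reduce the two vanishing conditions to $\omega_{\mathsf{H}}|_{H_\Sigma} = 0$ and $\iota_V(\mathrm{Im}\,\gamma)|_{H_\Sigma} = 0$, and then run the same half-dimensional linear algebra as Proposition~\ref{prop:CplxLagEquiv} (iii)$\implies$(i) inside $\mathsf{H}_z$. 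All of these steps check out (in particular the reduction of $\mathrm{Im}(\gamma)|_\Sigma = 0$ to $\beta_V^\perp|_{H_\Sigma}=0$ is correct because $\mu_3$ annihilates $T_z\Sigma = H_\Sigma \oplus \R V$), and the hypotheses of Proposition~\ref{prop:LiftingDoubleIsotropic}(b) implied by Proposition~\ref{prop:HV-KE-Iso}(b) plus your invariance claim are exactly those stated, so the conclusion follows, with Legendrian/CR Legendrian recovered from the dimension $2n+1$. What each approach buys: the paper's argument is shorter because it recycles the already-proved equivalence on $M$ (Corollary~\ref{cor:CRLegEquiv}); yours stays entirely on the twistor space and in passing re-derives the pointwise hyperk\"ahler structure on $\mathsf{H}_z$ that the paper only exploits later (e.g., in Theorems~\ref{thm:ProjCRIsoCircleBundle} and~\ref{thm:ReGammaCircleBundle}), which makes the role of the top dimension --- $H_\Sigma$ being half-dimensional in $\mathsf{H}_z$ --- more transparent.
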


\begin{proof} (a) This follows from Proposition \ref{prop:CRIsoProjection}. \\
\indent (b) Suppose $\Sigma \subset Z$ is $\omega_{\mathrm{KE}}$-Lagrangian and $\omega_{\mathrm{NK}}$-Lagrangian.  By Proposition \ref{prop:HV-KE-Iso}(b), it follows that $\Sigma$ is HV compatible, and that $\dim(T_z\Sigma \cap \mathsf{V}) = 1$ at each $z \in \Sigma$.  Therefore, by Proposition \ref{prop:LiftingDoubleIsotropic}(a), every local $p_1$-horizontal lift $L \subset M$ is $\alpha_1$-Legendrian and $(-s_\theta \alpha_2 + c_\theta \alpha_3)$-Legendrian for some constant $e^{i\theta} \in S^1$.  By Corollary \ref{cor:CRLegEquiv} (v)$\implies$(iv), it follows that $L$ is $(c_\theta I_2 + s_\theta I_3)$-CR Legendrian.
\end{proof}

\begin{cor} \label{cor:Regamma-CRLift} ${}$
\begin{enumerate}[(a)]
\item If $L^{3} \subset M^{4n+3}$ is $(c_\theta I_2 + s_\theta I_3)$-CR isotropic for some $e^{i\theta} \in S^1$, then $p_1(L) \subset Z$ is (up to a change of orientation) $\mathrm{Re}(\gamma)$-calibrated and $\omega_{\mathrm{KE}}$-isotropic.
\item Conversely, if $\Sigma^{3} \subset Z^{4n+2}$ is $\mathrm{Re}(\gamma)$-calibrated and $\omega_{\mathrm{KE}}$-isotropic, then every local $p_1$-horizontal lift of $\Sigma$ is $(c_\theta I_2 + s_\theta I_3)$-CR isotropic for some $e^{i \theta} \in S^1$.
\end{enumerate}
\end{cor}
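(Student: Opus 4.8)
The plan is to handle parts (a) and (b) separately, in each case reducing to results already in hand --- chiefly Propositions \ref{prop:CRIsoProjection}, \ref{prop:GammaSemiCal}, \ref{prop:HV-KE-Iso}, \ref{prop:GammaNormalForm}, and \ref{prop:LiftingDoubleIsotropic} --- supplemented by a single pointwise linear-algebra observation.

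\textbf{Part (a).} Suppose $L^3 \subset M^{4n+3}$ is $(c_\theta I_2 + s_\theta I_3)$-CR isotropic, meaning $L$ is $\alpha_1$-isotropic, $(-s_\theta \alpha_2 + c_\theta \alpha_3)$-isotropic, and $(c_\theta I_2 + s_\theta I_3)$-CR. Since $L$ is $\alpha_1$-isotropic, $p_1|_L$ is an immersion, so $\Sigma := p_1(L) \subset Z$ is locally an embedded $3$-fold. Proposition \ref{prop:CRIsoProjection}(b) then gives that $\Sigma$ is $\omega_{\mathrm{KE}}$-isotropic and HV-compatible, with $\dim(T_z\Sigma \cap \mathsf{V}) = 1$ and with $T_z\Sigma \cap \mathsf{H}$ a $J_V$-invariant $2$-plane, where $V$ is the unit vertical vector in $T_z\Sigma \cap \mathsf{V}$ and $J_V$ is the complex structure on $\mathsf{H}_z$ induced by $\iota_V(\mathrm{Re}\,\gamma)$. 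Thus $T_z\Sigma = H \oplus \R V$ with $H := T_z\Sigma \cap \mathsf{H}$ a $J_V$-complex line; picking a unit $u \in H$, so that $\{u, J_V u\}$ is an orthonormal basis of $H$ and $\{V, u, J_V u\}$ is an orthonormal basis of $T_z\Sigma$, we compute
$$\mathrm{Re}(\gamma)(V, u, J_V u) = \bigl(\iota_V\mathrm{Re}(\gamma)\bigr)(u, J_V u) = \beta_V(u, J_V u) = g_{\mathrm{KE}}(J_V u, J_V u) = 1.$$
Since $\mathrm{Re}(\gamma)$ has comass one (Proposition \ref{prop:GammaSemiCal}), this says $T_z\Sigma$ is $\mathrm{Re}(\gamma)$-calibrated with orientation $V \wedge u \wedge J_V u$; as $\Sigma$ is connected, this orientation is globally consistent, so $\Sigma$ is (up to a change of orientation) $\mathrm{Re}(\gamma)$-calibrated and $\omega_{\mathrm{KE}}$-isotropic, which is (a).

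\textbf{Part (b).} Suppose $\Sigma^3 \subset Z^{4n+2}$ is $\mathrm{Re}(\gamma)$-calibrated and $\omega_{\mathrm{KE}}$-isotropic. The strategy is to verify the hypotheses of Proposition \ref{prop:LiftingDoubleIsotropic}(b), whose conclusion is precisely the assertion of (b). By Proposition \ref{prop:GammaSemiCal}(c), $\Sigma$ is $\omega_{\mathrm{NK}}$-isotropic. By Proposition \ref{prop:HV-KE-Iso}(c) --- applicable since $\dim \Sigma = 3$ and $\Sigma$ is $\mathrm{Re}(\gamma)$-calibrated --- the $\omega_{\mathrm{KE}}$-isotropy of $\Sigma$ forces $\Sigma$ to be HV-compatible, with $\dim(T_z\Sigma \cap \mathsf{H}) = 2$ and $\dim(T_z\Sigma \cap \mathsf{V}) = 1$ at each $z$. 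It remains only to check that $T_z\Sigma \cap \mathsf{H}$ is $J_V$-invariant for the unit vertical $V \in T_z\Sigma \cap \mathsf{V}$, and this is a pointwise statement: by Proposition \ref{prop:GammaNormalForm}, the $\mathrm{Re}(\gamma_0)$-calibrated, $\omega_{\mathrm{KE}}$-isotropic $3$-plane $T_z\Sigma$ lies in the $\Sp(n)\U(1)$-orbit of $W_{\pi/4} = \R e_{10} \oplus V_{\pi/4}$, and a direct check --- with $V = \tfrac{1}{\sqrt2}(f_2 + f_3)$ and $J_V = \tfrac{1}{\sqrt2}(J_2 + J_3)$ the complex structure with $\beta_V = \iota_V(\mathrm{Re}\,\gamma_0)$ --- shows that $W_{\pi/4} \cap \HH^n = \mathrm{span}(e_{10}, e_{12} + e_{13})$ is $J_V$-invariant. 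Since the condition ``$E \cap \mathsf{H}$ is $J_V$-invariant'' is preserved by $\Sp(n)\U(1)$ (which fixes $\gamma_0$ and the $\mathsf{H}$--$\mathsf{V}$ splitting, hence intertwines the various $J_V$), it passes to $T_z\Sigma$. Now Proposition \ref{prop:LiftingDoubleIsotropic}(b) applies and yields (b).

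\textbf{Expected obstacle.} No single step is deep; the care required is in bookkeeping. One must correctly unpack ``$(c_\theta I_2 + s_\theta I_3)$-CR isotropic'' as above so that Propositions \ref{prop:CRIsoProjection} and \ref{prop:LiftingDoubleIsotropic} apply verbatim, keep straight that the $J_V$ of those propositions is the one induced by $\iota_V(\mathrm{Re}\,\gamma)$ (so that the $3$-covector evaluation in (a) closes), and confirm the two small linear-algebra inputs --- the $3$-covector evaluation in (a) and the $\Sp(n)\U(1)$-normal-form computation in (b) --- each of which is elementary.
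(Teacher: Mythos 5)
Your proof is correct and follows essentially the same route as the paper's: part (a) via Proposition \ref{prop:CRIsoProjection}(b) together with a pointwise evaluation of $\mathrm{Re}(\gamma)$ using the $J_V$-invariance of $T_z\Sigma \cap \mathsf{H}$, and part (b) by verifying the hypotheses of Proposition \ref{prop:LiftingDoubleIsotropic}(b) through Propositions \ref{prop:GammaSemiCal}(c) and \ref{prop:HV-KE-Iso}(c). The only variation is the last step of (b), where the paper deduces $J_V$-invariance of $H_\Sigma$ directly from $\left.\mathrm{Re}(\gamma)\right|_\Sigma = \vol_{V_\Sigma} \wedge \vol_{H_\Sigma}$ forcing $\left.\beta_V\right|_{H_\Sigma} = \pm\vol_{H_\Sigma}$, whereas you route through the normal form of Proposition \ref{prop:GammaNormalForm} and an $\Sp(n)\U(1)$-equivariance argument --- slightly longer, but equally valid.
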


\begin{proof} (a) Let $L^{3} \subset M^{4n+3}$ be a $(c_\theta I_2 + s_\theta I_3)$-CR isotropic $3$-fold.  By Proposition \ref{prop:CRIsoProjection}(b), $\Sigma := p_1(L) \subset Z$ is $\omega_{\mathrm{KE}}$-isotropic, so it remains only to show that $\Sigma$ is $\mathrm{Re}(\gamma)$-calibrated. \\
\indent Fix $z \in \Sigma$.  Again by Proposition \ref{prop:CRIsoProjection}(b), we may decompose $T_z\Sigma = H_\Sigma \oplus \R V$ for some $2$-plane $H_\Sigma \subset \mathsf{H}$ and vertical unit vector $T \in \mathsf{V}_z$.  Let $N \in \mathsf{V}_z$ be the vertical unit vector such that $\{T, N\}$ is an oriented orthonormal basis of $\mathsf{V}_z$, and let $\beta_T, \beta_N \in \Lambda^2(\mathsf{H}_z^*)$ be the induced non-degenerate $2$-forms from $\gamma$.  Since $H_\Sigma$ is $J_V$-invariant, it follows that $\left.\beta_V\right|_{H_\Sigma} = \pm \vol_{H_\Sigma}$.  Therefore,
\begin{align*}
\left.\mathrm{Re}(\gamma)\right|_{T_z\Sigma} & = \left.( T^\flat \wedge \beta_T + N^\flat \wedge \beta_N )\right|_{T_z\Sigma} = \pm\vol_{V_\Sigma} \wedge \vol_{H_\Sigma} + 0 = \pm\vol_\Sigma.
\end{align*}

\indent (b) Suppose $\Sigma^{3} \subset Z^{4n+2}$ is $\mathrm{Re}(\gamma)$-calibrated and $\omega_{\mathrm{KE}}$-isotropic.  By Proposition \ref{prop:HV-KE-Iso}(c), it follows that $\Sigma$ is HV compatible, so we may write $T_z\Sigma = H_\Sigma \oplus V_\Sigma$, where $H_\Sigma \subset \mathsf{H}$ and $V_\Sigma \subset \mathsf{V}$.  The same proposition shows that $\dim(V_\Sigma) = 1$.  Now, let $V \in V_\Sigma$ be a unit vector, let $\beta_V = \iota_V(\mathrm{Re}(\gamma))$ denote the induced non-degenerate $2$-form on $\mathsf{H}_z$, and let $J_V$ be the corresponding complex structure on $\mathsf{H}_z$.  Since $\left.\mathrm{Re}(\gamma)\right|_\Sigma = \vol_\Sigma = \vol_{V_\Sigma} \wedge \vol_{H_\Sigma}$, it follows that $\beta_V|_{H_\Sigma} = \pm\vol_{H_\Sigma}$, which proves that $H_\Sigma$ is $J_V$-invariant.  Therefore, Proposition \ref{prop:LiftingDoubleIsotropic}(b) gives the result.
\end{proof}

\subsubsection{$p_1$-Horizontality of Special Isotropic Submanifolds}

\indent \indent By Proposition \ref{prop:AssocCases}(b), every $-\theta_{I,3}$-special isotropic $3$-fold is $\phi_2$-associative.  Moreover, since $\iota_{A_1}(-\theta_{I,3}) = 0$ by Definition~\ref{defn:special-isotropic}, Proposition \ref{prop:CalibrationSplitting} implies that every $-\theta_{I,3}$-special isotropic $3$-fold is $p_1$-horizontal.  We now observe that these necessary conditions are sufficient:

\begin{prop} \label{prop:HorzAssoc} Let $L^{2k+1} \subset M^{4n+3}$ be a $(2k+1)$-dimensional submanifold, $3 \leq 2k+1 \leq 2n+1$.
\begin{enumerate}[(a)]
\item If $L$ is $\theta_{I, 2k+1}$-special isotropic, then $L$ is $p_1$-horizontal.
\item If $L$ is $\Psi_1$-special Legendrian, then $L$ is $p_1$-horizontal.
\item Suppose $\dim(L) = 3$.  Then $L$ is $-\theta_{I,3}$-special isotropic if and only if $L$ is $\phi_2$-associative and $p_1$-horizontal.
\end{enumerate}
\end{prop}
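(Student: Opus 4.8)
The plan is to deduce all three parts from a single structural fact: the special isotropic form $\theta_{I,2k+1}$ is annihilated by the Reeb field $A_1$, so that its calibrated planes must be $p_1$-horizontal. For part (a), the one step with content is to check that $\iota_{A_1}\theta_{I,2k+1}=0$. By Definition~\ref{defn:special-isotropic}, $\theta_{I,2k+1}=(r\partial_r\,\lrcorner\,\Theta_{I,2k+2})|_M$ with $\Theta_{I,2k+2}=\tfrac{1}{(k+1)!}\mathrm{Re}(\sigma_1^{k+1})$, and on the cone $A_1=I_1(r\partial_r)$. Since $\sigma_1=\omega_2+i\omega_3$ is of $I_1$-type $(2,0)$ one has $\iota_{A_1}\sigma_1=i\,\iota_{r\partial_r}\sigma_1$, hence $\iota_{A_1}\sigma_1^{k+1}=i\,\iota_{r\partial_r}\sigma_1^{k+1}$, and contracting $\Theta_{I,2k+2}$ with both $A_1$ and $r\partial_r$ produces $\iota_{r\partial_r}\iota_{r\partial_r}(\cdots)=0$. (Equivalently, inserting~\eqref{eq:Omegas} into $\theta_{I,2k+1}=\tfrac1{k!}\mathrm{Re}[(\alpha_2+i\alpha_3)\wedge(\Omega_2+i\Omega_3)^k]$ and using that the square of a $1$-form vanishes gives the closed form $\theta_{I,2k+1}=\tfrac1{k!}\mathrm{Re}[(\alpha_2+i\alpha_3)\wedge(\kappa_2+i\kappa_3)^k]$, from which $\iota_{A_1}\theta_{I,2k+1}=0$ is immediate since $\alpha_2,\alpha_3$ and $\kappa_2,\kappa_3\in\Lambda^2(\widetilde{\mathsf{H}}^*)$ all annihilate $A_1$.) Since $\theta_{I,2k+1}$ is a semi-calibration (Definition~\ref{defn:special-isotropic}) with $\iota_{A_1}\theta_{I,2k+1}=0$, Proposition~\ref{prop:CalibrationSplitting}---the same tool applied just before the statement of this proposition in the case $2k+1=3$---forces every $\theta_{I,2k+1}$-calibrated $(2k+1)$-plane to lie in $\Ker(\alpha_1)$. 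Applying this at each point of a $\theta_{I,2k+1}$-special isotropic $L$ yields $\alpha_1|_L=0$, i.e.\ $L$ is $p_1$-horizontal; the same argument applies verbatim to $-\theta_{I,2k+1}$, as sign reversal preserves both the comass and the underlying family of calibrated planes.

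Part (b) will then be the case $2k+1=2n+1$ of part (a), because $\theta_{I,2n+1}=\mathrm{Re}(\Psi_1)$ and being $\Psi_1$-special Legendrian means being $\mathrm{Re}(\Psi_1)$-calibrated; alternatively, it is immediate from Proposition~\ref{prop:ConeSpecialLag}, whose condition (iv) already contains $\alpha_1|_L=0$. For part (c): if $L^3$ is $-\theta_{I,3}$-special isotropic, then it is $\phi_2$-associative by Proposition~\ref{prop:AssocCases}(b) and $p_1$-horizontal by part (a). Conversely, suppose $L^3$ is $\phi_2$-associative and $p_1$-horizontal. From $\phi_2=\alpha_1\wedge\Omega_1-\alpha_2\wedge\Omega_2+\alpha_3\wedge\Omega_3$ and $\alpha_1|_L=0$ we get $(\alpha_1\wedge\Omega_1)|_L=0$, so $\phi_2|_L=(-\alpha_2\wedge\Omega_2+\alpha_3\wedge\Omega_3)|_L=-\theta_{I,3}|_L$; since $\phi_2|_L=\vol_L$, this gives $-\theta_{I,3}|_L=\vol_L$, i.e.\ $L$ is $-\theta_{I,3}$-special isotropic.

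The only step carrying real content is the vanishing $\iota_{A_1}\theta_{I,2k+1}=0$ for general $k$: for $k=1$ it is read off directly from the explicit formula for $\theta_{I,3}$, and in general it follows from the $I_1$-$(2,0)$-type of $\sigma_1$ as above (or from the closed-form expression for $\theta_{I,2k+1}$). Everything else is formal once Propositions~\ref{prop:CalibrationSplitting}, \ref{prop:AssocCases}, and~\ref{prop:ConeSpecialLag} are in hand.
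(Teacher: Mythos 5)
Your proposal is correct and follows essentially the same route as the paper: part (a) is exactly the observation that $\iota_{A_1}\theta_{I,2k+1}=0$ combined with Proposition~\ref{prop:CalibrationSplitting}, part (b) is the case $2k+1=2n+1$ via $\theta_{I,2n+1}=\mathrm{Re}(\Psi_1)$, and part (c) compares $\phi_2$ with $-\theta_{I,3}$ using $\alpha_1|_L=0$, just as in the paper. The only difference is that you supply the verification of $\iota_{A_1}\theta_{I,2k+1}=0$ (via the $I_1$-type $(2,0)$ property of $\sigma_1$, or the closed-form expression in $\alpha_2,\alpha_3,\kappa_2,\kappa_3$), which the paper asserts without proof; this is a correct and welcome elaboration, not a deviation.
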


\begin{proof} (a) Since $\iota_{A_1}(\theta_{I,2k+1}) = 0$, Proposition \ref{prop:CalibrationSplitting} gives the result. \\
\indent (b) This is simply part (a) in the case of $\dim(L) = 2n+1$. \\
\indent (c) Suppose $\dim(L) = 3$.  Then:
\begin{align*}
L \text{ is }\phi_2\text{-associative and }p_1\text{-horizontal} & \iff \left.\left( \alpha_1 \wedge \Omega_1 - \alpha_2 \wedge \Omega_2 + \alpha_3 \wedge \Omega_3  \right)\right|_L = \vol_L \,  \text{ and } \, \alpha_1|_L = 0 \\
L \text{ is }-\theta_{I,3}\text{-special isotropic} & \iff \left.\left(  -\alpha_2 \wedge \Omega_2 + \alpha_3 \wedge \Omega_3 \right)\right|_L = \vol_L
\end{align*}
The result is now immediate.
\end{proof}

\begin{example} For $n = 1$, Proposition \ref{prop:HorzAssoc}(c) is the well-known fact that a $3$-fold $L^3 \subset M^7$ is $\phi_2$-associative and $p_1$-horizontal if and only if it is $\Psi_1$-special Legendrian of phase $-1$.
\end{example}

\subsubsection{$\mathrm{Re}(\Gamma_1)$-Calibrated $3$-folds of $M$} \label{subsub:ReGammaClassify}

\indent \indent We now observe that $\mathrm{Re}(\Gamma_1)$-calibrated $3$-folds $L^3 \subset M^{4n+3}$ are always $p_1$-horizontal, and describe their projections $p_1(L) \subset Z$.  Namely:

\begin{prop} \label{prop:GammaLifts} If $L^3 \subset M^{4n+3}$ is $\mathrm{Re}(\Gamma_1)$-calibrated, then $L$ is $p_1$-horizontal (equivalently, $\alpha_1$-isotropic).  Moreover:
\begin{enumerate}[(a)]
\item If $L^3 \subset M^{4n+3}$ is $\mathrm{Re}(\Gamma_1)$-calibrated, then $L$ is locally a $p_1$-horizontal lift of a $3$-fold in $Z$ that is both $\mathrm{Re}(\gamma)$-calibrated and $\omega_{\mathrm{KE}}$-isotropic.
\item Conversely, if $\Sigma^3 \subset Z^{4n+2}$ is both $\mathrm{Re}(\gamma)$-calibrated and $\omega_{\mathrm{KE}}$-isotropic, then $\Sigma$ locally lifts to a $\mathrm{Re}(\Gamma_1)$-calibrated $3$-fold in $M$.
\end{enumerate}
\end{prop}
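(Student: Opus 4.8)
The plan is to reduce the whole statement to two identities from Proposition~\ref{prop:Descent}, namely $\mathrm{Re}(\Gamma_1) = p_1^*(\mathrm{Re}(\gamma))$ and $\Omega_1 = p_1^*(\omega_{\mathrm{KE}})$, together with the semi-calibration properties of $\mathrm{Re}(\Gamma_1)$ and $\mathrm{Re}(\gamma)$ from Corollary~\ref{cor:ReGammaComass} and Proposition~\ref{prop:GammaSemiCal}, and the horizontal-lift lemma (Proposition~\ref{prop:GenHorzLift}).

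First I would establish $p_1$-horizontality. From the explicit formula~\eqref{eq:ReImGammas}, $\mathrm{Re}(\Gamma_1) = \alpha_2 \wedge \kappa_2 + \alpha_3 \wedge \kappa_3$; since $\alpha_p(A_q) = \delta_{pq}$ and $\kappa_2, \kappa_3 \in \Lambda^2(\widetilde{\mathsf{H}}^*)$, this gives $\iota_{A_1}(\mathrm{Re}(\Gamma_1)) = 0$. As $\mathrm{Re}(\Gamma_1)$ is a semi-calibration (Corollary~\ref{cor:ReGammaComass}), Proposition~\ref{prop:CalibrationSplitting} forces every $\mathrm{Re}(\Gamma_1)$-calibrated $3$-plane to be orthogonal to $A_1$. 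Hence $T_xL \subset \Ker(\alpha_1)$ for all $x \in L$, i.e.\ $L$ is $\alpha_1$-isotropic, equivalently $p_1$-horizontal. By Proposition~\ref{prop:LegendrianEquivalence}, $\alpha_1|_L = 0$ additionally yields $\Omega_1|_L = 0$, a fact I will use in part~(a).

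For part~(a), horizontality of $L$ means $(p_1)_*$ is injective on each $T_xL \subset \Ker(\alpha_1)$, so $p_1|_L$ is an immersion, hence locally an embedding onto a $3$-dimensional submanifold $\Sigma := p_1(L) \subset Z$, of which $L$ is (locally) a $p_1$-horizontal lift. Since $p_1$ is a Riemannian submersion, $p_1|_L \colon L \to \Sigma$ is a local isometry; orient $\Sigma$ so that this map is orientation-preserving, whence $(p_1|_L)^*(\vol_\Sigma) = \vol_L$. Then $(p_1|_L)^*(\mathrm{Re}(\gamma)|_\Sigma) = \mathrm{Re}(\Gamma_1)|_L = \vol_L$ gives $\mathrm{Re}(\gamma)|_\Sigma = \vol_\Sigma$, while $(p_1|_L)^*(\omega_{\mathrm{KE}}|_\Sigma) = \Omega_1|_L = 0$ gives $\omega_{\mathrm{KE}}|_\Sigma = 0$; thus $\Sigma$ is both $\mathrm{Re}(\gamma)$-calibrated and $\omega_{\mathrm{KE}}$-isotropic. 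For part~(b), given such a $\Sigma$, Proposition~\ref{prop:GenHorzLift} produces a local $p_1$-horizontal lift $\widehat{\Sigma} \subset M$; again $p_1|_{\widehat{\Sigma}}$ is a local isometry, so after choosing the orientation on $\widehat{\Sigma}$ for which $(p_1|_{\widehat{\Sigma}})^*(\vol_\Sigma) = \vol_{\widehat{\Sigma}}$, pulling back $\mathrm{Re}(\gamma)|_\Sigma = \vol_\Sigma$ through $\mathrm{Re}(\Gamma_1) = p_1^*(\mathrm{Re}(\gamma))$ shows $\mathrm{Re}(\Gamma_1)|_{\widehat{\Sigma}} = \vol_{\widehat{\Sigma}}$, as required.

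I expect no substantive obstacle: the mathematical content is already packaged in the semi-calibration statements (Corollary~\ref{cor:ReGammaComass}, Proposition~\ref{prop:GammaSemiCal}) and the descent/lift lemmas (Propositions~\ref{prop:Descent}, \ref{prop:CalibrationSplitting}, \ref{prop:GenHorzLift}). The only care needed is routine bookkeeping --- verifying $\iota_{A_1}(\mathrm{Re}(\Gamma_1)) = 0$, tracking orientations under the local isometry $p_1|_L$ (using that connected submanifolds carry only two orientations), and keeping every assertion local so that $p_1|_L$ is an embedding and Proposition~\ref{prop:GenHorzLift} applies.
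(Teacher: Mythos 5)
Your proposal is correct and follows essentially the same route as the paper: horizontality via $\iota_{A_1}(\mathrm{Re}(\Gamma_1))=0$ together with Corollary~\ref{cor:ReGammaComass} and Proposition~\ref{prop:CalibrationSplitting}, and then parts (a) and (b) via Proposition~\ref{prop:GenHorzLift} and the identities $\Gamma_1 = p_1^*(\gamma)$, $\Omega_1 = p_1^*(\omega_{\mathrm{KE}})$. The extra bookkeeping you supply (local isometry of $p_1|_L$, orientation choice, $\Omega_1|_L=0$ from $\alpha_1|_L=0$) is exactly what the paper leaves implicit in its one-line deduction of (a) and (b).
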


\begin{proof} Let $L \subset M$ be a $\mathrm{Re}(\Gamma_1)$-calibrated $3$-fold.  Since $\mathrm{Re}(\Gamma_1) = \alpha_2 \wedge \kappa_2 + \alpha_3 \wedge \kappa_3$, we have $\iota_{A_1}(\mathrm{Re}(\Gamma_1)) = 0$.  In view of the splitting $TM = \R A_1 \oplus \mathrm{Ker}(\alpha_1)$, Proposition \ref{prop:CalibrationSplitting} implies that $TL \subset \Ker(\alpha_1)$, so that $L$ is $p_1$-horizontal (equivalently, $\alpha_1|_L = 0$). \\
\indent Parts (a) and (b) now follow from Proposition \ref{prop:GenHorzLift} and the fact that $\Gamma_1 = p_1^*(\gamma)$.
\end{proof}

\indent We are now in a position to prove Theorem \ref{thm:ReGammaChar1}, which classifies the $\mathrm{Re}(\Gamma_1)$-calibrated $3$-folds in terms of more familiar geometries.

\begin{thm} \label{thm:FirstCharDim3} Let $L^3 \subset M^{4n+3}$ be a $3$-dimensional submanifold.  The following are equivalent:
\begin{enumerate}[(i)]
\item $\mathrm{C}(L)$ is a $(c_\theta I_2 + s_\theta I_3)$-complex isotropic $4$-fold for some constant $e^{i \theta} \in S^1$.
\item $L$ is a $(c_\theta I_2 + s_\theta I_3)$-CR isotropic $3$-fold for some constant $e^{i \theta} \in S^1$.
\item $L$ is locally of the form $p_v^{-1}(S)$ for some horizontal $J_+$-complex curve $S \subset Z$ and some $v = (0, c_\theta, s_\theta)$.
\item $L$ is locally a $p_1$-horizontal lift of a $3$-fold $\Sigma^3 \subset Z$ that is $\mathrm{Re}(\gamma)$-calibrated and $\omega_{\mathrm{KE}}$-isotropic.
\item $L$ is $\mathrm{Re}(\Gamma_1)$-calibrated.
\end{enumerate}
\end{thm}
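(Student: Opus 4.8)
The plan is to prove the five conditions equivalent by using (ii) as a hub: I will establish (i)$\iff$(ii), (ii)$\iff$(iii), (ii)$\iff$(iv), and (iv)$\iff$(v), from which every remaining implication follows. Each of these four equivalences is assembled from results in $\S$\ref{sec:SubmanifoldsMZ}, once one observes that the entire theory built around the distinguished projection $p_1 = p_{(1,0,0)}$ transcribes verbatim with $p_1$ replaced by $p_v$ for any $v \in S^2$; none of the relevant proofs uses a special feature of the index $1$.

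For (i)$\iff$(ii): put $v = (0, c_\theta, s_\theta)$, so that $I_v = c_\theta I_2 + s_\theta I_3$. Rotating the Reeb triple to an orthonormal triple adapted to $v$, Proposition \ref{prop:ConeComplexLag} says precisely that $\mathrm{C}(L)$ is $I_v$-complex isotropic if and only if $L$ is $I_v$-CR isotropic. For (ii)$\iff$(iii): with the same $v$, observe that $L$ being $I_v$-CR isotropic forces $A_v = c_\theta A_2 + s_\theta A_3 \in TL$, so the circle-bundle description must use $p_v$ rather than $p_1$ (indeed $L$ is $\alpha_1$-isotropic, hence certainly not a $p_1$-circle bundle). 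Applying Corollary \ref{cor:CRIsoChar1} to $p_v$ with $2k+1 = 3$ then gives that $L$ is $I_v$-CR isotropic if and only if $L$ is locally of the form $p_v^{-1}(S)$ for a horizontal $J_+$-complex submanifold $S \subset Z$ of real dimension $2$, i.e.\ a $J_+$-complex curve; the reverse implication is Corollary \ref{cor:CRIsoBundle} with $p_1$ replaced by $p_v$.

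For (ii)$\iff$(iv): a $(c_\theta I_2 + s_\theta I_3)$-CR isotropic $3$-fold is in particular $\alpha_1$-isotropic, hence $p_1$-horizontal, and is therefore a $p_1$-horizontal lift of its own projection $\Sigma := p_1(L) \subset Z$; with this observation, the equivalence is exactly Corollary \ref{cor:Regamma-CRLift} --- part (a) for the forward direction ($\Sigma$ being $\mathrm{Re}(\gamma)$-calibrated and $\omega_{\mathrm{KE}}$-isotropic), part (b) for the converse. Finally, (iv)$\iff$(v) is Proposition \ref{prop:GammaLifts}: part (a) gives (v)$\implies$(iv), using $\mathrm{Re}(\Gamma_1) = p_1^*(\mathrm{Re}(\gamma))$ and that $\iota_{A_1}\mathrm{Re}(\Gamma_1) = 0$ forces $\mathrm{Re}(\Gamma_1)$-calibrated $3$-folds to be $p_1$-horizontal; part (b) gives (iv)$\implies$(v). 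Throughout, the calibration conditions (ii), (iv), (v) are understood up to a change of orientation on $L$ or $\Sigma$.

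The step I expect to require the most care is the rotational-symmetry bookkeeping needed to pass from $p_1$ to $p_v$ in (i)$\iff$(ii) and (ii)$\iff$(iii). One option is to verify directly that the Boyer--Galicki identification $Z_v \cong Z$ carries the horizontal distribution, the complex structures $J_{\pm}$, and the $3$-form $\gamma$ to their counterparts --- the fact that $\widetilde{\mathsf{H}} = \Ker(\alpha_1,\alpha_2,\alpha_3)$ is independent of $v$ already settles the horizontal distribution --- while the cleaner option is to check that the proofs of Proposition \ref{prop:ConeComplexLag}, Corollary \ref{cor:CRIsoBundle}, and Corollary \ref{cor:CRIsoChar1} survive the substitution of $v$ for the subscript $1$ throughout. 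A secondary point worth stating explicitly is that, since $L$ is connected, the phase $e^{i\theta}$ produced in Corollary \ref{cor:Regamma-CRLift}(b) is genuinely constant, which is part of the content of Proposition \ref{prop:LiftingDoubleIsotropic}.
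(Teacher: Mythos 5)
Your proposal is correct and follows essentially the same route as the paper: it uses (ii) as the hub and cites exactly the same results, namely Proposition \ref{prop:ConeComplexLag} for (i)$\iff$(ii), Corollary \ref{cor:CRIsoChar1} for (ii)$\iff$(iii), Corollary \ref{cor:Regamma-CRLift} for (ii)$\iff$(iv), and Proposition \ref{prop:GammaLifts} for (iv)$\iff$(v). Your explicit remarks about transporting the $p_1$-statements to $p_v$ via the rotational symmetry of the $3$-Sasakian structure, and about the constancy of the phase $e^{i\theta}$, only make precise what the paper leaves implicit.
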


\begin{proof} (i)$\iff$(ii). This follows from Proposition \ref{prop:ConeComplexLag}. \\
\indent  (ii)$\iff$(iii).  This is Corollary \ref{cor:CRIsoChar1}. \\
\indent (ii)$\iff$(iv). This is Corollary \ref{cor:Regamma-CRLift}. \\
\indent (iv)$\iff$(v).  This is Proposition \ref{prop:GammaLifts}.
\end{proof}

\section{Submanifolds of Quaternionic K\"{a}hler Manifolds}

\indent \indent Thus far, we have studied twistor spaces $Z$ as $S^1$-quotients of $3$-Sasakian manifolds $M$.  In $\S$\ref{sec:QKManifolds}, we adopt a different perspective, viewing $Z$ as the total space of a canonical $S^2$-bundle $\tau \colon Z \to Q$ over a quaternionic-K\"{a}hler manifold $Q^{4n}$.  This leads to an alternative construction of the $\Sp(n)\U(1)$-structure on $Z$, including the $3$-form $\gamma \in \Omega^3(Z;\C)$. \\
\indent In $\S$\ref{subsec:TotallyComplex}, we turn our attention to \emph{totally-complex} submanifolds of $Q^{4n}$, a class that is intimately related to the (semi-)calibrated geometries of previous sections.  To explain these relations, we will recall that a totally-complex submanifold $U^{2k} \subset Q^{4n}$ admits two distinct lifts to $Z$, namely its \emph{$\tau$-horizontal lift} $\widetilde{U}^{2k} \subset Z$, and its \emph{geodesic circle bundle lift} $\mathcal{L}(U)^{2k+1} \subset Z$. \\
\indent Given such a circle bundle lift $\mathcal{L}(U) \subset Z$, we will prove (Corollary \ref{cor:LiftOfCircleBundle}) that its local $p_1$-horizontal lifts to $M$ are $(c_\theta I_2 + s_\theta I_3)$-CR isotropic.  The main result of this section (Theorem \ref{thm:ProjCRIsoCircleBundle}) is that the converse also holds: If $L \subset M$ is a compact $(c_\theta I_2 + s_\theta I_3)$-CR isotropic submanifold, then $L$ is a $p_1$-horizontal lift of some circle bundle $\mathcal{L}(U)$.  As an application, we prove (Theorem \ref{thm:Double-Lagrangian-Char}) that every compact $(2n+1)$-fold $\Sigma \subset Z$ that is Lagrangian with respect both $\omega_{\mathrm{KE}}$ and $\omega_{\mathrm{NK}}$ is of the form $\mathcal{L}(U)$, thereby generalizing a result of Storm \cite{Storm} to higher dimensions. \\
\indent We remind the reader that as mentioned in the introduction, we only consider submanifolds of $Q$ that do not meet any orbifold points.

\subsection{Quaternionic K\"{a}hler Manifolds} \label{sec:QKManifolds}

\indent \indent Let $Q^{4n}$ be a smooth $4n$-manifold, $n \geq 1$.

\begin{defn} \label{defn:QH}
An \emph{almost quaternionic-Hermitian structure} (or \emph{$\Sp(n)\Sp(1)$-structure}) on $Q$ is a pair $(g_Q, E)$ consisting of an orientation and a Riemannian metric $g_Q$, and a rank $3$ subbundle $E \subset \mathrm{End}(TQ)$ such that:
\begin{enumerate}
\item At each $q \in Q$, there exists a local frame $(j_1, j_2, j_3)$ of $E$, called an \emph{admissible frame}, satisfying the quaternionic relations $j_1j_2 = j_3$ and $j_1^2 = j_2^2 = j_3^2 = -\mathrm{Id}$.
\item Every $j \in E$ acts by isometries: $g_Q(jX, jY) = g_Q(X,Y)$, for all $X,Y \in TQ$.
\end{enumerate}
Equivalently, an almost quaternionic-Hermitian structure may be defined as $4$-form $\Pi \in \Omega^4(Q)$ such that at each $q \in Q$, there exists a coframe $L \colon T_qQ \to \R^{4n}$ for which $\Pi|_q = \frac{1}{6}L^*(\beta_1^2 + \beta_2^2 + \beta_3^2)$, where $\{\beta_1, \beta_2, \beta_3\}$ is the standard hyperk\"{a}hler triple on $\R^{4n} = \HH^n$. (See~\cite{Salamon} or~\cite{Boyer-Galicki} for details.)
\end{defn}

\begin{defn} \label{defn:QK}
Let $n \geq 2$. An almost quaternionic-Hermitian structure $(g_Q, E)$ is \emph{quaternionic-K\"{a}hler (QK)} if $E \subset \mathrm{End}(TQ)$ is a parallel subbundle (with respect to the connection $\nabla$ induced by $g_Q$).  That is, if $\sigma$ is a local section of $E$, then $\nabla \sigma$ is also a local section of $E$.  An equivalent condition is that the $4$-form $\Pi \in \Omega^4(Q)$ is $g_Q$-parallel. \\
\indent For $n = 1$, we say $(Q^4, g_Q)$ is \emph{quaternionic-K\"{a}hler} if the metric $g_Q$ is Einstein and anti-self-dual.
\end{defn}

\begin{rmk} It is well-known that if $(g_Q, E)$ is a QK structure, then $\mathrm{Hol}(g_Q) \leq \Sp(n)\Sp(1)$.  Conversely, for $n \geq 2$, if $g$ is a Riemannian metric on $Q$ with $\mathrm{Hol}(g) \leq \Sp(n)\Sp(1)$, then there exists a $g$-parallel rank $3$ subbundle $E \subset \End(TQ)$ such that $(g,E)$ is a QK structure.
\end{rmk}

\subsubsection{The Twistor Space} \label{subsec:TwistorSpace}

\indent \indent From now on, $(Q^{4n}, g_Q, E)$ denotes a quaternionic-K\"{a}hler $4n$-manifold with positive scalar curvature.  The \emph{twistor space of $Q$} is the $(4n+2)$-manifold
$$Z := \{ j \in E \colon j^2 = -\mathrm{Id}\}.$$
The obvious projection map $\tau \colon Z \to Q$ is then an $S^2$-bundle, and we let $\mathsf{V} \subset TZ$ denote the (rank $2$) vertical bundle.  The Levi-Civita connection of $g_Q$ induces a connection on the vector bundle $E \subset \End(TZ)$, and hence a connection on the $S^2$-subbundle $Z \subset E$, thereby yielding a $4n$-plane field $\mathsf{H} \subset TZ$ such that
$$TZ = \mathsf{H} \oplus \mathsf{V}.$$
We now recall the K\"{a}hler-Einstein structure $(g_{\mathrm{KE}}, \omega_{\mathrm{KE}}, J_{\mathrm{KE}})$ on $Z$.  First, define a Riemannian metric $g_{\mathrm{KE}}$ by requiring that $g_{\mathrm{KE}}(\mathsf{H}, \mathsf{V}) = 0$ and
\begin{enumerate}
\item For $X,Y \in \mathsf{H}$, we have $g_{\mathrm{KE}}(X,Y) = g_Q(\tau_*X, \tau_*Y)$.
\item On $\mathsf{V}$, the metric $g_{\mathrm{KE}}$ is induced by the fiber metric $\langle \cdot, \cdot \rangle$ on $E \subset \End(TZ)$ under the identifications $\mathsf{V}_z \simeq T_z(Z_{\tau(z)}) \subset T_z(E_{\tau(z)}) \simeq E_{\tau(z)}$.
\end{enumerate}
Next, define an almost-complex structure $J_{\mathrm{KE}}$ on $Z$ by requiring that both $\mathsf{H}$ and $\mathsf{V}$ are $J_{\mathrm{KE}}$-invariant, and
\begin{enumerate}
\item On $\mathsf{H}_z$, we set $J_{\mathrm{KE}} = (\left.\tau_*\right|_{\mathsf{H}_z})^{-1} \circ z \circ \tau_*$.
\item On $\mathsf{V}_z$, identifying vertical vectors $X \in \mathsf{V}_z \simeq T_z(Z_{\tau(z)})$ with endomorphisms $j_X \in z^\perp = \{j \in E_{\tau(z)} \colon \langle j,z \rangle = 0\}$, we set $J_{\mathrm{KE}}X = z \circ j_X$.
\end{enumerate}
We let $\omega_{\mathrm{KE}}(X,Y) = g_{\mathrm{KE}}(J_{\mathrm{KE}}X, Y)$.  It is well-known~\cite{Salamon} that the triple $(g_{\mathrm{KE}}, \omega_{\mathrm{KE}}, J_{\mathrm{KE}})$ is a K\"{a}hler-Einstein structure.

\begin{rmk} The $(\U(2n) \times \U(1))$-structure $(g_{\mathrm{KE}}, \omega_{\mathrm{KE}}, J_{\mathrm{KE}}, \mathsf{H})$ just defined on $Z$ coincides with the one described in $\S$\ref{sub:GeomTwistor}.  In brief, if $Q^{4n}$ is a quaternionic-K\"{a}hler manifold of positive scalar curvature, then its \emph{Konishi bundle} $M^{4n+3} = F_{\SO(3)}(E)$, which is the $\SO(3)$-frame bundle of the rank $3$ vector bundle $E \to Q$, admits a $3$-Sasakian structure, from which one can recover the $(\U(2n) \times \U(1))$-structure on $Z$.  For details, see ~\cite[$\S$12.2, $\S$13.3.2]{Boyer-Galicki}.
\end{rmk}

\indent Recall from Theorem~\ref{thm:ExtraStructureTwistor} that there exists a canonical $\Sp(n)\U(1)$-structure $\gamma \in \Omega^3(Z; \C)$ on the twistor space $(Z, g_{\mathrm{KE}}, J_{\mathrm{KE}}, \mathsf{H})$. We end this section by giving a different proof of the existence of this $\Sp(n)\U(1)$-structure, working directly from the projection $\tau \colon Z \to Q$, without reference to $M$. At a point $z \in Z$, choose an admissible frame $(z, j_2, j_3)$ at $\tau(z) \in Q$.  Via the isomorphism
$$\mathsf{V}_z \simeq z^\perp = \{j \in E_{\tau(z)} \colon \langle j,z \rangle = 0\},$$
the points $j_2, j_3 \in E_{\tau(z)}$ define vertical vectors at $z$, and hence (via the metric) $1$-forms $\mu_2, \mu_3 \in \Lambda^1(\mathsf{V}^*|_z)$ at $z$.  On the other hand,
\begin{align} \label{eq:J2J3}
J_2 & := (\left.\tau_*\right|_{\mathsf{H}_z})^{-1} \circ j_3 \circ \tau_*  & J_3 & := -(\left.\tau_*\right|_{\mathsf{H}_z})^{-1} \circ j_2 \circ \tau_* 
\end{align}
are $g_{\mathrm{KE}}$-orthogonal complex structures on $\mathsf{H}_z$, and hence yield $2$-forms $\beta_2 := g_{\mathrm{KE}}(J_2 \cdot, \cdot)$ and $\beta_3 := g_{\mathrm{KE}}(J_3 \cdot, \cdot)$ on $\mathsf{H}_z$.  We can now define a $\C$-valued $3$-form $\gamma$ at $z \in Z$ by
\begin{align} \label{eq:gamma-twistor}
\gamma := (\mu_2 - i\mu_3) \wedge (\beta_2 + i\beta_3).
\end{align}
This $3$-form is independent of the choice $(j_2, j_3)$.  That is, one can check that if $(z, \widetilde{j}_2, \widetilde{j}_3) = (z, c_\theta j_2 + s_\theta j_3, -s_\theta j_2 + c_\theta j_3)$ is another admissible frame at $\tau(z)$, then the corresponding $1$-forms $\widetilde{\mu}_2, \widetilde{\mu}_3$ on $\mathsf{V}_z$ and $2$-forms $\widetilde{\beta}_2, \widetilde{\beta}_3$ on $\mathsf{H}_z$ satisfy
\begin{align*}
(\widetilde{\mu}_2 - i\widetilde{\mu}_3) \wedge (\widetilde{\beta}_2 + i \widetilde{\beta}_3) = (\mu_2 - i\mu_3) \wedge (\beta_2 + i\beta_3).
\end{align*}

\begin{rmk} In fact, there is a natural $1$-parameter family of $3$-forms on $Z$ given by $e^{i\theta}\gamma \in \Omega^3(Z;\C)$ for constants $e^{i\theta} \in S^1$.  In particular, the $3$-form defined by (\ref{eq:gamma-twistor}) agrees with that of $\S$\ref{sub:GeomTwistor} (viz., Theorem \ref{thm:ExtraStructureTwistor}) up to a constant $\lambda \in S^1$.  The $90^\circ$ rotation in formula (\ref{eq:J2J3}) relating $(J_2, J_3)$ to $(j_2, j_3)$ was chosen to arrange for $\lambda = 1$. (This follows from Theorem~\ref{thm:LowDimension} and Proposition~\ref{prop:Re-gamma-phases}.)
\end{rmk}

\subsubsection{The Diamond Diagram}

\indent \indent Altogether, the various spaces we have considered can be summarized by the \emph{diamond diagram}:

$$\begin{tikzcd}
   & M^{4n+3} \arrow[r, hook] \arrow[rd, "{p_v}"'] \arrow[dd, "h"'] & C^{4n+4} \arrow[d] &  \\
 &      &     Z^{4n+2} \arrow[ld, "\tau"]         &        \\
                              & Q^{4n}        &          & 
\end{tikzcd}$$

\begin{example} ${}$
\begin{itemize}
\item The \emph{flat model} is $(C,M,Z,Q) = (\HH^{n+1}, \Sph^{4n+3}, \CP^{2n+1}, \HP^n)$, in which each $p_v \colon \Sph^{4n+3} \to \CP^{2n+1}$ for $v \in S^2$ is a complex Hopf fibration, $h \colon \Sph^{4n+3} \to \HP^n$ is the quaternionic Hopf fibration, and $\tau \colon \CP^{2n+1} \to \HP^n$ is the classical twistor fibration.
\item Perhaps the second simplest family of examples is
$$(M, Z, Q) = \left( \Sph(T^*\CP^{n+1}),\, \mathbb{P}(T^*\CP^{n+1}),\, \Gr_2(\C^{n+2}) \right)\!,$$
where $\mathbb{P}(T^*\CP^{n+1})$ and $\Sph(T^*\CP^{n+1})$ refer to the projectivized cotangent bundle and unit sphere subbundle of the cotangent bundle of $\CP^{n+1}$, respectively \cite{Tsukada-2016}.  In the case of $n = 1$, these spaces are $(M^7, Z^6, Q^4) = (N_{1,1},\, \frac{\SU(3)}{T^2},\, \CP^2)$, where $N_{1,1} = \frac{\SU(3)}{\U(1)}$ is an exceptional Aloff-Wallach space.
\item An exceptional example is $\textstyle (M^{11}, Z^{10}, Q^8) = \left( \frac{\G_2}{\Sp(1)_+}, \, \frac{\G_2}{\U(2)_+}, \, \frac{\G_2}{\SO(4)} \right)$.  Here, $M^{11}$ and $Z^{10}$ should not be confused with $\frac{\G_2}{\Sp(1)_-} \cong V_2(\R^7)$ and $\frac{\G_2}{\U(2)_-} \cong \Gr_2(\R^7)$.  See \cite[Example 13.6.8]{Boyer-Galicki}.
\end{itemize}
\end{example}

\subsection{Totally-complex Submanifolds} \label{subsec:TotallyComplex}

\indent \indent We now turn to the various submanifolds of a quaternionic-K\"{a}hler manifold $(Q^{4n}, g_Q, E)$, continuing to assume that $g_Q$ has positive scalar curvature.

\begin{defn} A submanifold $U^{2k} \subset Q^{4n}$ is \emph{almost-complex} if there exists a section $i \in \Gamma(Z|_U)$ such that $i(T_uU) = T_uU$ for all $u \in U$.
\end{defn}

We will be particularly interested in the following subclass of almost-complex submanifolds.

\begin{defn} \label{defn:totally-complex}
A submanifold $U^{2k} \subset Q^{4n}$, for $1 \leq k \leq 2n$, is called \emph{totally-complex} if there exists a section $i \in \Gamma(Z|_U)$ such that at each $u \in U$:
\begin{enumerate}
\item $i(T_uU) = T_uU$.
\item For all $j \in Z_u$ with $\langle j,i \rangle = 0$, we have $j(T_uU) \subset (T_uU)^\perp$.
\end{enumerate}
A totally-complex submanifold $U \subset Q^{4n}$ is called \emph{maximal} if $\dim(U) = 2n$.
\end{defn}

Totally-complex submanifolds were introduced by Funabashi \cite{Funabashi}, who proved that they are minimal (zero mean curvature) provided $n \geq 2$.

\begin{example} \label{ex:TC} ${}$
\begin{itemize}
\item In $Q = \HP^n$, the maximal totally-complex submanifolds with parallel second fundamental form were classified by Tsukada \cite{Tsukada-1985}.  The list consists of the two infinite families
\begin{align*}
\CP^n & \to \HP^n & \CP^1 \times \frac{\SO(n+1)}{\SO(2) \times \SO(n-1)} & \to \HP^n \ \ \ \ (n \geq 2)
\end{align*}
and four sporadic exceptions (in $\HP^6, \HP^9, \HP^{15}$, and $\HP^{27}$).  Bedulli--Gori--Podest\`a \cite{BGP} proved that a maximal totally-complex submanifold of $\HP^n$ is homogeneous if and only if it appears on Tsukada's list.
\item If $Q = \Gr_2(\C^{n+2})$, the maximal totally-complex submanifolds that are homogeneous have been recently classified by Tsukada \cite{Tsukada-2016}.
\item If $Q$ is a quaternionic symmetric space, the maximal totally-complex submanifolds that are totally geodesic have been classified by Takeuchi \cite{Takeuchi}.
\end{itemize}
\end{example}

\begin{rmk} \label{rmk:AM-results} Totally-complex submanifolds are also studied by Alekseevsky--Marchiafava \cite{AM-2001}, \cite{AM-2005}.  In particular, they prove the following results for almost-complex submanifolds $U^{2k} \subset Q^{4k}$:
\begin{itemize}
\item If $k \geq 2$ (so that $n \geq 2$), then
$$\nabla_Xi = 0, \ \forall X \in TU \ \ \iff \ \ U \text{ is totally-complex} \ \ \iff \ \ \left(U, \left.g_Q\right|_{U}, \left.i\right|_{U}\right) \text{ is K\"{a}hler.}$$
For this reason, totally-complex submanifolds $U$ of real dimension $\geq 4$ are sometimes called ``K\"{a}hler submanifolds" in the literature.
\item If $k = 1$ and $n \geq 2$, then the equivalence
$$\nabla_Xi = 0, \ \forall X \in TU \ \ \iff \ \ U \text{ is totally-complex}$$
continues to hold.  By contrast, the condition that $\left(U, \left.g_Q\right|_{U}, \left.i\right|_{U}\right)$ be K\"{a}hler is automatic.
\item If $k = 1$ and $n = 1$, then every oriented surface $U^2 \subset Q^4$ is totally-complex, and $\left(U, \left.g_Q\right|_{U}, \left.i\right|_{U}\right)$ is K\"{a}hler.  By contrast, $\nabla_X i = 0$ for all $X \in TU$ is equivalent to $U$ being \emph{superminimal} (or \emph{infinitesimally holomorphic}), a condition on the second fundamental form (see, e.g. \cite{Bryant}, \cite{Friedrich-1984}, \cite{Friedrich-1997}).
\end{itemize}
\end{rmk}

\subsubsection{The Horizontal Lift}

\indent \indent Given a totally-complex submanifold $U^{2k} \subset Q^{4n}$, there are two natural ways to lift $U$ to a submanifold of the twistor space $Z$.  The first of these is the \emph{horizontal lift} $\widetilde{U} \subset Z$, defined as the union of
\begin{equation*}
\widetilde{U}_p : = \left\{ z \in Z_p \colon z(T_pU) = T_pU \right\}\!
\end{equation*}
for $p \in U$.  The following results were proved in \cite[Theorem 4.1]{Takeuchi}, and later generalized in \cite[Theorem 4.2, Proposition 4.7]{AM-2005}.

\begin{lem}[\cite{AM-2005}] \label{lem:ComplexSection} Let $U \subset Q$ be a submanifold, let $i \in \Gamma(Z|_U)$ be a section over $U$, and let $N = i(U) \subset Z$ be its image.  Then $N \subset Z$ is $J_{\mathrm{KE}}$-complex and horizontal if and only if $(U,i)$ is almost-complex and $\nabla_V i = 0$ for all $V \in TU$.
\end{lem}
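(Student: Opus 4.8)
The plan is to reduce the statement to pointwise linear algebra along the section $i$. Since $\tau \circ i = \mathrm{id}_U$, the map $i$ embeds $U$ onto $N$, and for $p \in U$ and $z := i(p)$ we have $T_z N = i_*(T_p U)$. Fixing $V \in T_p U$, I would decompose $i_* V = (i_*V)^{\mathsf{H}} + (i_*V)^{\mathsf{V}}$ according to $T_z Z = \mathsf{H}_z \oplus \mathsf{V}_z$. Differentiating $\tau \circ i = \mathrm{id}_U$ and using that $\tau_*$ annihilates $\mathsf{V}$ gives $\tau_*\big( (i_*V)^{\mathsf{H}} \big) = V$, so $\tau_*|_{\mathsf{H}_z}$ carries $(i_*V)^{\mathsf{H}}$ back to $V$; in particular $i_*$ is injective and $T_z N$ projects isomorphically onto $T_p U$. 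Thus $N$ is horizontal at $z$ precisely when $(i_*V)^{\mathsf{V}} = 0$ for every $V \in T_p U$.

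First I would pin down the vertical component. The bundle $Z \subset E$ is the unit sphere subbundle of $(E, \langle \cdot,\cdot\rangle)$, and the distribution $\mathsf{H}$ on $Z$ is cut out inside $E$ by the Levi-Civita connection $\nabla$ on $E$, as recalled in $\S$\ref{subsec:TwistorSpace}. By the standard description of a covariant derivative via the connector, the vertical part $(i_*V)^{\mathsf{V}}$ is exactly $\nabla_V i \in E_p$, regarded as a vertical vector at $z$ under the identification $\mathsf{V}_z \simeq T_z(Z_p) \subset T_z(E_p) \simeq E_p$. Since $\langle i,i\rangle \equiv 1$, differentiating yields $\langle \nabla_V i, i\rangle = 0$, so $\nabla_V i$ automatically lies in $z^\perp$ and is a genuine element of $\mathsf{V}_z$. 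Hence
\[
N \text{ is horizontal} \iff (i_*V)^{\mathsf{V}} = 0 \ \ \forall V \in TU \iff \nabla_V i = 0 \ \ \forall V \in TU.
\]

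Next I would handle the complex condition under the standing assumption that $N$ is horizontal — which is available in both directions, being hypothesized on the left-hand side and following from $\nabla i|_{TU} = 0$ on the right-hand side. When $N$ is horizontal we have $i_*(T_p U) \subset \mathsf{H}_z$, and the definition of $J_{\mathrm{KE}}$ on $\mathsf{H}_z$ gives $\tau_*|_{\mathsf{H}_z} \circ J_{\mathrm{KE}}|_{\mathsf{H}_z} = z \circ \tau_*|_{\mathsf{H}_z}$; i.e., $\tau_*|_{\mathsf{H}_z}$ is a $\C$-linear isomorphism from $(\mathsf{H}_z, J_{\mathrm{KE}})$ onto $(T_p Q, z)$. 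Since it maps $T_z N = i_*(T_p U)$ isomorphically onto $T_p U$, the plane $T_z N$ is $J_{\mathrm{KE}}$-invariant if and only if $T_p U$ is $z = i(p)$-invariant, i.e., if and only if $(U,i)$ is almost-complex at $p$. Assembling the two directions: if $N$ is $J_{\mathrm{KE}}$-complex and horizontal, then horizontality gives $\nabla i|_{TU} = 0$ and the intertwining gives that $(U,i)$ is almost-complex; conversely, if $(U,i)$ is almost-complex with $\nabla i|_{TU} = 0$, the parallelism gives horizontality and then the intertwining gives that $N$ is $J_{\mathrm{KE}}$-complex.

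I expect the only genuinely delicate point to be the identification in the second paragraph, namely that the $\mathsf{V}$-component of $i_* V$ equals $\nabla_V i$. This requires being careful about how the connection on the vector bundle $E$ (rather than an abstract principal connection) induces the splitting $TZ = \mathsf{H} \oplus \mathsf{V}$, and about the identification $\mathsf{V}_z \simeq z^\perp \subset E_{\tau(z)}$; everything else is a short unwinding of the definitions of $\tau$ and $J_{\mathrm{KE}}$. (Alternatively, as the excerpt does, one may simply invoke \cite{Takeuchi} and \cite{AM-2005}, but the argument above is self-contained.)
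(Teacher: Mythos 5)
Your proposal is correct and follows essentially the same route as the paper's proof: horizontality of $i(U)$ is identified with $\nabla_V i = 0$ (a fact the paper simply asserts, which you justify via the connector), and the complex condition is transferred through the intertwining $\tau_*|_{\mathsf{H}_z} \circ J_{\mathrm{KE}} = z \circ \tau_*|_{\mathsf{H}_z}$, exactly as in the paper's computation $\tau_*(J_{\mathrm{KE}}\widetilde{X}) = z(\tau_*\widetilde{X})$. The extra care you take with the vertical component and the identification $\mathsf{V}_z \simeq z^\perp$ is sound and only makes the argument more self-contained.
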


\begin{proof} ($\Longrightarrow)$ Suppose $N$ is $J_{\mathrm{KE}}$-complex and horizontal.  Fix $u \in U$, and let $z = i(u) \in N$.  Let $X \in T_uU$, and write $X = \tau_*(\widetilde{X})$ for some $\widetilde{X} \in T_zN$.  Since $T_zN \subset T_zZ$ is complex, we have $J_{\mathrm{KE}}\widetilde{X} \in T_zN$.  Since $\widetilde{X}$ is horizontal, we may calculate $i(u)(X) = z(\tau_*\widetilde{X}) = \tau_*(J_{\mathrm{KE}}\widetilde{X}) \in \tau_*(T_zN) = T_uU$. This shows that $(U,i)$ is almost-complex.  Moreover, since $N = i(U)$ is horizontal, it follows that $\nabla_V i = 0$ for all $V \in TU$. \\
\indent $(\Longleftarrow)$ Suppose $(U,i)$ is almost-complex and $\nabla_V i = 0$ for all $V \in TU$.  Since $i$ is a parallel section, its image $N$ is horizontal.  Now, fix $z \in N$, write $z = i(u)$ for $u \in U$, and let $Y \in T_zN$.  Since $(U,i)$ is almost-complex, we have $i(u)(\tau_*Y) \in T_uU$.  Therefore, since $Y$ is horizontal, we have $\tau_*(J_{\mathrm{KE}}Y) = i(u)(\tau_*Y) \in T_uU = \tau_*(T_zN)$. Since $\tau_* \colon \mathsf{H}_z \to T_uQ$ is an isomorphism, it follows that $J_{\mathrm{KE}}Y \in T_zN$, which proves that $N$ is $J_{\mathrm{KE}}$-complex.
\end{proof}

\begin{thm}[\cite{AM-2005, Takeuchi}] \label{prop:HorzLiftTotCplx} Let $\Sigma^{2k} \subset Z^{4n+2}$ be a submanifold, where $1 \leq k \leq n$.  Then $\Sigma$ is $J_{\mathrm{KE}}$-complex and horizontal if and only if $\Sigma$ is locally of the form $\widetilde{U}$ for some totally-complex $U^{2k} \subset Q^{4n}$ (respectively, a superminimal surface $U^2 \subset Q^4$ if $n = 1$).
\end{thm}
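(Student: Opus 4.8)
The plan is to deduce the theorem from Lemma~\ref{lem:ComplexSection}, which already settles the case in which the relevant submanifold of $Z$ is the image of a section over a submanifold of $Q$. The two remaining ingredients are: (i) the equivalence, recorded in Remark~\ref{rmk:AM-results} and due to Funabashi~\cite{Funabashi} and Alekseevsky--Marchiafava~\cite{AM-2005}, that an almost-complex submanifold $(U,i)$ with $\nabla_V i = 0$ for all $V \in TU$ is precisely a totally-complex submanifold (respectively, a superminimal surface when $\dim U = 2$ and $n = 1$); and (ii) the observation that a horizontal submanifold of $Z$ is, locally, automatically the image of a section over $Q$.

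For the reverse implication, I would start from $\Sigma$ locally of the form $\widetilde{U}$ for a totally-complex $U^{2k} \subset Q^{4n}$ (resp.\ a superminimal $U^2 \subset Q^4$ if $n = 1$), with defining section $i \in \Gamma(Z|_U)$. A one-line computation using condition~(2) in the definition of totally-complex shows that $\widetilde{U}_p = \{\pm i(p)\}$ for each $p \in U$, so that $\widetilde{U}$ is the disjoint union of the two antipodal sheets $i(U)$ and $(-i)(U)$, each of which is the image of a section. Remark~\ref{rmk:AM-results} then gives $\nabla_V i = 0$ along $U$ (for superminimal $U^2 \subset Q^4$ this is the definition), and $U$ is almost-complex with respect to $i$; Lemma~\ref{lem:ComplexSection} shows that each sheet, hence $\Sigma$, is $J_{\mathrm{KE}}$-complex and horizontal.

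For the forward implication, suppose $\Sigma^{2k} \subset Z^{4n+2}$ is $J_{\mathrm{KE}}$-complex and horizontal. Horizontality means $T_z\Sigma \subset \mathsf{H}_z$ at every $z \in \Sigma$; since $\tau_*|_{\mathsf{H}_z} \colon \mathsf{H}_z \to T_{\tau(z)}Q$ is an isomorphism, $\tau|_\Sigma$ is an immersion, so after shrinking $\Sigma$ to a neighborhood of any of its points, $U := \tau(\Sigma)$ is an embedded $2k$-dimensional submanifold of $Q$ and $\Sigma = i(U)$ for the section $i := (\tau|_\Sigma)^{-1} \in \Gamma(Z|_U)$. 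Applying Lemma~\ref{lem:ComplexSection} with $N = \Sigma = i(U)$ yields that $(U,i)$ is almost-complex and $\nabla_V i = 0$ for all $V \in TU$, and the characterization in Remark~\ref{rmk:AM-results} identifies $U$ as totally-complex when $k \geq 2$, or when $k = 1$ and $n \geq 2$, and as a superminimal surface when $k = 1$ and $n = 1$. Since $\Sigma = i(U)$ is then one of the two sheets of $\widetilde{U}$, this completes the argument.

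I do not expect a serious obstacle: the theorem is essentially bookkeeping built on Lemma~\ref{lem:ComplexSection}. The one point requiring care is the equivalence~(i) in arbitrary codimension $2k \leq 2n$, since Remark~\ref{rmk:AM-results} is phrased for the maximal case $U^{2k} \subset Q^{4k}$; for this I would invoke the general forms of the Alekseevsky--Marchiafava and Funabashi results~\cite{AM-2005, Funabashi}, or else supply the short direct computation showing that parallelism of $i$ along $U$ forces $j(T_uU) \perp T_uU$ for every $j \in Z_u$ orthogonal to $i$ --- completing $i$ to a local admissible frame $(i,j,k)$, using that $\nabla_V i = 0$ along $U$ annihilates the connection $1$-forms that rotate $i$ into $j$ and $k$, and examining the local $2$-forms $g_Q(j\,\cdot\,,\cdot)$ and $g_Q(k\,\cdot\,,\cdot)$ together with the second fundamental form of $U$. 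I would also note in passing the identification $\widetilde{U} = i(U) \sqcup (-i)(U)$ as the union of two antipodal sheets, which is what lets ``$\Sigma$ is locally of the form $\widetilde{U}$'' be read interchangeably with ``$\Sigma$ is locally the image of a parallel section.''
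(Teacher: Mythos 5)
Your proposal is correct and follows essentially the same route as the paper: both directions reduce to Lemma~\ref{lem:ComplexSection} combined with Remark~\ref{rmk:AM-results}, with the forward direction obtained by locally realizing the horizontal submanifold $\Sigma$ as the image of a section over $U = \tau(\Sigma)$ (your immersion argument is the paper's appeal to the implicit function theorem), and the reverse direction by applying the lemma to the two antipodal sheets $\widetilde{U} = i(U) \cup (-i(U))$. Your extra caution about the codimension hypotheses in Remark~\ref{rmk:AM-results} is sensible but not a departure, since the remark's case analysis in $k$ and $n$ is exactly what the paper invokes.
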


\begin{proof} $(\Longleftarrow)$ Suppose that $\Sigma$ is locally of the form $\widetilde{U}$ for some totally-complex $U \subset Q$ (respectively, superminimal surface if $n = 1$).  By definition, $U$ is almost-complex, so there exists a section $i \in \Gamma(Z|_U)$ such that $i(TU) = TU$, and hence $\widetilde{U} = i(U) \cup -i(U)$.  Moreover, by Remark \ref{rmk:AM-results}, we have $\nabla_Vi = 0$ for all $V \in TU$.  Therefore, by Lemma \ref{lem:ComplexSection}, the submanifolds $i(U)$ and $-i(U)$ are $J_{\mathrm{KE}}$-complex and horizontal, and hence $\Sigma$ is, too. \\
\indent $(\Longrightarrow)$ Suppose that $\Sigma$ is $J_{\mathrm{KE}}$-complex and horizontal.  Since $\Sigma$ is horizontal, the Implicit Function Theorem implies that $\Sigma$ is locally of the form $i(U)$ for some horizontal section $i \in \Gamma(Z|_U)$ over some submanifold $U \subset Q$.  By Lemma \ref{lem:ComplexSection}, $(U,i)$ is almost-complex and $\nabla_Vi = 0$.  Thus, by Remark \ref{rmk:AM-results}, $U$ is totally-complex (and, in addition, superminimal if $n = 1$).
\end{proof}

\subsubsection{The Circle Bundle Lift}

\indent \indent Let $U^{2k} \subset Q^{4n}$ be totally-complex.  The second natural lift of $U$ is the \emph{circle bundle lift} $\mathcal{L}(U) \subset Z$, defined as the union of
\begin{equation*}
\left.\mathcal{L}(U)\right|_p : = \left\{ j \in Z_p \colon j(T_pU) \subset (T_pU)^\perp \right\}\!
\end{equation*}
for $p \in U$.  Each fiber $\mathcal{L}(U)|_p$ is a great circle in the $2$-sphere $Z_p$. \\
\indent The circle bundle lift was introduced by Ejiri--Tsukada \cite{ET-2005}, who proved that if $U^{2k} \subset Q^{4n}$ is totally-complex and $k \geq 2$, then $\mathcal{L}(U) \subset Z$ is a minimal submanifold that is both $\omega_{\mathrm{KE}}$-isotropic and HV-compatible.  In particular, if $\dim(U) = 2n \geq 4$, then $\mathcal{L}(U) \subset Z$ is a minimal $\omega_{\mathrm{KE}}$-Lagrangian.  In the case of $k = n = 1$, circle bundle lifts of superminimal surfaces $U^2 \subset Q^4$ were studied by Storm~\cite{Storm}. \\
\indent We now explore these submanifolds further.  Recall that if $V \in \mathsf{V}_z$ is a vertical unit vector, we let $\beta_V := \iota_V(\mathrm{Re}(\gamma)) \in \Lambda^2(\mathsf{H}_z^*)$ denote the induced non-degenerate $2$-form on $\mathsf{H}_z$, and let $J_V$ be the corresponding complex structure on $\mathsf{H}_z$.

\begin{thm} \label{prop:NecessaryCircleBundle} Let $U^{2k} \subset Q^{4n}$ be a submanifold with $1 \leq k \leq n$.  If $U$ is totally-complex and $n \geq 2$, or if $U$ is superminimal and $n = 1$, then $\mathcal{L} := \mathcal{L}(U)$ satisfies the following:
\begin{enumerate}[(a)]
\item  $\mathcal{L} \subset Z$ is $\omega_{\mathrm{KE}}$-isotropic, $\omega_{\mathrm{NK}}$-isotropic, HV-compatible, and satisfies $\dim(T_z\mathcal{L} \cap \mathsf{V}) = 1$ at every $z \in \mathcal{L}$.
\item For any unit vector $V \in T_z\mathcal{L} \cap \mathsf{V}$, the $2k$-plane $T_z\mathcal{L} \cap \mathsf{H}$ is $J_V$-invariant.\end{enumerate}
\end{thm}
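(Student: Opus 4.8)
The plan is to work directly with the twistor-space description of $\gamma$ from \S\ref{subsec:TwistorSpace}, using that the hypotheses always provide a section $i \in \Gamma(Z|_U)$ with $i(T_pU) = T_pU$ \emph{and} $\nabla_X i = 0$ for all $X \in TU$: by Remark~\ref{rmk:AM-results}, this parallelism is exactly the meaning of ``totally-complex'' when $n \geq 2$, and of ``superminimal'' when $n = 1$ (and an oriented surface in $Q^4$ is automatically totally-complex). First I would reinterpret the circle bundle lift. Since $i_p$ preserves $T_pU$ while every $j \in Z_p$ with $\langle j, i_p \rangle = 0$ sends $T_pU$ into $(T_pU)^\perp$ (property (2) of totally-complex) and every $j$ with $\langle j, i_p\rangle \neq 0$ does not (because writing $j = c_\theta i_p + s_\theta j'$ with $j' \perp i_p$ gives $jX \equiv c_\theta i_pX \pmod{(T_pU)^\perp}$, and $i_pX \in T_pU \setminus \{0\}$), one obtains
$$\left.\mathcal{L}(U)\right|_p = \{ j \in Z_p \colon \langle j, i_p \rangle = 0\},$$
a great circle in the fibre.

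Next I would identify the tangent spaces. Given $X \in T_pU$, parallel-transport $z$ along a curve in $U$ with velocity $X$; since $i|_U$ is parallel, inner products are preserved along the transport, so the transported curve stays inside $\mathcal{L}(U)$, and its velocity at $z$ is precisely the $\mathsf{H}$-lift $\widetilde{X} \in \mathsf{H}_z$ of $X$. Combining with the velocity $V \in \mathsf{V}_z$ of the fibre circle, this shows $T_z\mathcal{L}(U) = \widetilde{T_pU} \oplus \R V$ with $\widetilde{T_pU} \subset \mathsf{H}_z$ and $\R V \subset \mathsf{V}_z$. This immediately yields HV-compatibility and $\dim(T_z\mathcal{L} \cap \mathsf{V}) = 1$. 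For $\omega_{\mathrm{KE}}$-isotropy, write $\omega_{\mathrm{KE}} = \omega_{\mathsf{H}} + \omega_{\mathsf{V}}$: both kill the mixed and purely-vertical pairs for type reasons, and under the isometry $\tau_*\colon \mathsf{H}_z \to T_pQ$ the form $\omega_{\mathsf{H}}$ pulls back from $g_Q(z\,\cdot\,,\,\cdot)$, which vanishes on $T_pU$ exactly because $z(T_pU) \subset (T_pU)^\perp$. Then $\omega_{\mathrm{NK}}$-isotropy follows from Proposition~\ref{prop:HV-KE-Iso}(a). This proves (a).

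For (b), I would trace through $\gamma|_z = (\mu_2 - i\mu_3) \wedge (\beta_2 + i\beta_3)$ from \eqref{eq:gamma-twistor}, choosing the admissible frame $(z, i_p, z\cdot i_p)$ at $\tau(z) = p$ — legitimate since $\langle z, i_p\rangle = 0$ forces $i_p \in z^\perp$ — so that $j_2 = i_p$ and $j_3 = z \cdot i_p$. The vertical tangent $V$ to the great circle $\mathcal{L}(U)|_p$ is the vector of $z^\perp = \mathrm{span}(i_p, z\cdot i_p)$ orthogonal to $i_p$, i.e.\ the vertical vector corresponding to $j_3$ (up to sign), so $\mu_2(V) = 0$ and $\mu_3(V) = \pm 1$. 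Hence $\iota_V \mathrm{Re}(\gamma) = \pm\beta_3$, and therefore $J_V = \pm J_3 = \mp (\tau_*|_{\mathsf{H}_z})^{-1} \circ i_p \circ \tau_*$ by \eqref{eq:J2J3}. Since $\tau_*$ carries $\widetilde{T_pU} = T_z\mathcal{L} \cap \mathsf{H}$ isomorphically onto $T_pU$, the $2k$-plane $\widetilde{T_pU}$ is $J_V$-invariant if and only if $T_pU$ is $i_p$-invariant, which is the almost-complex condition on $(U, i)$.

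The genuinely substantive input is the parallelism $\nabla i|_U = 0$, which simultaneously makes the $\mathsf{H}$-lift of $TU$ tangent to $\mathcal{L}(U)$ and underlies the description of $\mathcal{L}(U)$ as an $i^\perp$-circle bundle; everything else is linear algebra on a single fibre of $TZ$. The main obstacle I anticipate is bookkeeping: correctly pinning down which vertical vector is the fibre-circle tangent and matching it, through the $90^\circ$ rotation in \eqref{eq:J2J3} and the sign conventions in the definitions of $J_{\mathrm{KE}}$ and of $\mu_2, \mu_3$, to the right one of $\pm J_2, \pm J_3$. This must be done with care, but it is not deep.
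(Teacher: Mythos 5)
Your proof is correct and takes essentially the same route as the paper's: both arguments hinge on the parallel section $i$, on identifying $\mathcal{L}(U)$ with the unit-circle bundle of $i^\perp \subset E|_U$ (your parallel transport of $z$ along curves in $U$ is just an unwound version of the paper's observation that $i^\perp$ is a parallel subbundle, which yields HV-compatibility and $\dim(T_z\mathcal{L}\cap\mathsf{V})=1$), on the vanishing of $\omega_{\mathsf{H}}$ via $z(T_pU)\subset (T_pU)^\perp$, and, for (b), on an admissible frame at $\tau(z)$ together with \eqref{eq:J2J3}. The only cosmetic difference is in the frame bookkeeping: the paper starts from the circle point $j$ corresponding to $V$ and sets $i = z\circ j$ (which is $\pm i_p$), whereas you take $(z, i_p, z\circ i_p)$ directly — the same computation up to relabeling and sign.
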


\begin{proof} Suppose $U$ is totally-complex if $n \geq 2$, or superminimal if $n = 1$, and set $\mathcal{L} := \mathcal{L}(U)$.  In either case, there exists a section $i \in \Gamma(Z|_U)$ such that $i(TU) = TU$ and $\nabla_Xi = 0$ for all $X \in TU$. \\
\indent (a) Following \cite[proof of Lemma 2.1]{ET-2005}, we orthogonally decompose $E|_U = \R i \oplus E'$.  If $\sigma \in \Gamma(E')$ is a local section, then $\langle \sigma, i \rangle = 0$, so that $\langle \nabla_X \sigma, i \rangle = 0$, and thus $\nabla_X \sigma \in \Gamma(E')$.  Thus, $E' \subset E|_U$ is a parallel subbundle.  Since $\mathcal{L} \subset E'$ is the unit sphere subbundle, it follows that $\mathcal{L} \subset E'$ is a parallel fiber subbundle.  This implies that $T\mathcal{L} = H_\Sigma \oplus V_\Sigma$ for subbundles $H_\Sigma \subset \mathsf{H}$ and $V_\Sigma \subset \mathsf{V}$, meaning that $\mathcal{L}$ is HV compatible. \\
\indent We now show that $\mathcal{L}$ is $\omega_{\mathrm{KE}}$-isotropic and $\omega_{\mathrm{NK}}$-isotropic.  Fix $z \in \mathcal{L}$, and recall that
\begin{align*}
\omega_{\mathrm{KE}} & = \omega_{\mathsf{H}} + \omega_{\mathsf{V}}, & \omega_{\mathrm{NK}} & = 2\omega_{\mathsf{H}} - \omega_{\mathsf{V}}.
\end{align*}
Since $\mathcal{L}$ is HV compatible and $\dim(T_z\mathcal{L} \cap \mathsf{V}) = 1$, it follows that $\left.\omega_{\mathsf{V}}\right|_{\mathcal{L}} = 0$.  Moreover, if $X, Y \in T_z\mathcal{L} \cap \mathsf{H}$, then
$$\omega_{\mathsf{H}}(X, Y) = g_{\mathrm{KE}}(J_{\mathrm{KE}}X, Y) = g_Q( \tau_* J_{\mathrm{KE}}X, \tau_*Y) = g_Q(z(\tau_*X), \tau_*Y) = 0,$$
where in the last step we used that $z(T_{\tau(z)}U) \subset (T_{\tau(z)}U)^\perp$.  This shows that $\left.\omega_{\mathsf{H}}\right|_\mathcal{L} = 0$, and therefore $\left.\omega_{\mathrm{KE}}\right|_{\mathcal{L}} = 0$ and $\left.\omega_{\mathrm{NK}}\right|_{\mathcal{L}} = 0$. \\

\indent (b) Fix $z \in \mathcal{L}$, let $u = \tau(z)$, and let $V \in T_z\mathcal{L} \cap \mathsf{V}$ be a vertical unit vector.  Let $j \in \mathcal{L}|_{u} \cap z^\perp$ denote the point on the great circle $\mathcal{L}|_u$ that corresponds to $V$ under the natural isomorphism $\mathsf{V}_z \simeq z^\perp$.  Set $i = z \circ j$, so that $(z,j,i)$ is an admissible frame of $E_u$.  By (\ref{eq:J2J3}), we have
$$J_V = (\left.\tau_*\right|_{\mathsf{H}_z})^{-1} \circ i \circ \tau_*.$$
Since $U$ is totally-complex, the $2k$-plane $T_uU \subset T_uQ$ is $i$-invariant.  Therefore, if $X \in T_z\mathcal{L} \cap \mathsf{H}$, then $i(\tau_*X) \in T_uU$, so that $J_VX = (\left.\tau_*\right|_{\mathsf{H}_z})^{-1}( i(\tau_*X)) \in T_z\mathcal{L} \cap \mathsf{H}$, proving that $T_z\mathcal{L} \cap \mathsf{H}$ is $J_V$-invariant.
\end{proof}

\subsubsection{Circle Bundle Lifts and CR Isotropic Submanifolds}

\indent \indent We now prove that circle bundle lifts $\mathcal{L}(U) \subset Z$ are intimately related to CR isotropic submanifolds $L \subset M$.  Indeed, the geometric properties of $\mathcal{L}(U)$ established in Theorem \ref{prop:NecessaryCircleBundle} are precisely those needed for its $p_1$-horizontal lift to be CR isotropic.  That is:

\begin{cor} \label{cor:LiftOfCircleBundle} Let $U^{2k} \subset Q^{4n}$ be a submanifold with $1 \leq k \leq n$.  If $U$ is totally-complex and $n \geq 2$, or if $U$ is superminimal and $n = 1$, then $\mathcal{L}(U) \subset Z$ admits local $p_1$-horizontal lifts to $M$, and every such lift is $(c_\theta I_2 + s_\theta I_3)$-CR isotropic for some $e^{i\theta} \in S^1$.
\end{cor}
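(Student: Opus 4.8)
The plan is to deduce the corollary by feeding the geometric properties of $\mathcal{L}(U)$ established in Theorem \ref{prop:NecessaryCircleBundle} into Proposition \ref{prop:LiftingDoubleIsotropic}(b); almost no new computation is required. First I would record the dimension count: since each fiber $\mathcal{L}(U)|_p$ is a great circle in $Z_p$ and $\dim U = 2k$, we have $\dim \mathcal{L}(U) = 2k+1$, and the hypothesis $1 \le k \le n$ gives $3 \le 2k+1 \le 2n+1$, so $\mathcal{L}(U)$ falls within the dimension range required by Propositions \ref{prop:GenHorzLift} and \ref{prop:LiftingDoubleIsotropic}. (The blanket convention that submanifolds avoid orbifold points lets us work on the smooth locus of $Z$ and $Q$.)

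Next I would invoke Theorem \ref{prop:NecessaryCircleBundle}, whose hypotheses are exactly satisfied here ($U$ totally-complex with $n \ge 2$, or $U$ superminimal with $n = 1$). That theorem tells us the submanifold $\mathcal{L} := \mathcal{L}(U) \subset Z$ is $\omega_{\mathrm{KE}}$-isotropic, $\omega_{\mathrm{NK}}$-isotropic, HV-compatible, with $\dim(T_z\mathcal{L} \cap \mathsf{V}) = 1$ at every $z \in \mathcal{L}$, and with $T_z\mathcal{L} \cap \mathsf{H}$ being $J_V$-invariant for every vertical unit vector $V \in T_z\mathcal{L} \cap \mathsf{V}$. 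In particular, since $\mathcal{L}$ is $\omega_{\mathrm{KE}}$-isotropic, Proposition \ref{prop:GenHorzLift} guarantees that $\mathcal{L}$ admits local $p_1$-horizontal lifts to $M$, establishing the first assertion of the corollary.

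Finally, the list of properties just recorded is precisely the hypothesis of Proposition \ref{prop:LiftingDoubleIsotropic}(b): $\mathcal{L}$ is $\omega_{\mathrm{KE}}$-isotropic, $\omega_{\mathrm{NK}}$-isotropic, HV-compatible, has $1$-dimensional vertical part everywhere, and has $J_V$-invariant horizontal part. Applying that proposition verbatim shows that every local $p_1$-horizontal lift $L \subset M$ of $\mathcal{L}(U)$ is $(c_\theta I_2 + s_\theta I_3)$-CR isotropic for some constant $e^{i\theta} \in S^1$, which is the claim. I do not expect any real obstacle in this assembly step; the substantive work lives upstream, in the Ejiri--Tsukada-type parallel-subbundle argument behind Theorem \ref{prop:NecessaryCircleBundle} and in the Frobenius/connection argument (together with the local constancy of $\theta$) behind Proposition \ref{prop:LiftingDoubleIsotropic}. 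The only point worth a sentence of care is that the constant $\theta$ attached to each connected local lift may vary between different lifts and different connected pieces of $\mathcal{L}(U)$, which is why the statement is phrased with "for some $e^{i\theta} \in S^1$" rather than claiming a single global phase.
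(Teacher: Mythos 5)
Your proposal is correct and follows essentially the same route as the paper, which proves the corollary by combining Theorem \ref{prop:NecessaryCircleBundle} with Proposition \ref{prop:LiftingDoubleIsotropic}(b); your additional remarks on the dimension count, the use of Proposition \ref{prop:GenHorzLift} for existence of the lifts, and the lift-by-lift constancy of the phase $e^{i\theta}$ are accurate elaborations of what the paper leaves implicit.
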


\begin{proof} This follows from Theorem \ref{prop:NecessaryCircleBundle} and Proposition \ref{prop:LiftingDoubleIsotropic}(b).
\end{proof}

\indent We now aim to establish a converse in the case where $L$ is compact.  For this, we need a technical lemma.

\begin{lem} \label{lem:LocallyCircleBundle} Let $\Sigma^k \subset Z^{4n+2}$ be a compact submanifold.  If $\Sigma$ is $\omega_{\mathrm{KE}}$-isotropic, HV-compatible, and $\dim(T_z\Sigma \cap \mathsf{V}) = 1$ for all $z \in \Sigma$, then $U := \tau(\Sigma) \subset Q^{4n}$ is a $(k-1)$-dimensional submanifold, and $\left.\tau\right|_\Sigma \colon \Sigma \to U$ is an $S^1$-bundle whose fibers are geodesics in $Z$ with respect to the K\"ahler-Einstein metric.
\end{lem}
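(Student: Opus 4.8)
The plan is to study the restriction $\tau|_\Sigma$ through its differential, extract a vertical line field whose integral curves are the fibres, and identify those curves using the $\omega_{\mathrm{KE}}$-isotropy. Since $\mathsf{V} = \ker \tau_*$, HV-compatibility together with $\dim(T_z\Sigma \cap \mathsf{V}) = 1$ give $\ker(\tau_*|_{T_z\Sigma}) = T_z\Sigma \cap \mathsf{V}$, a line at each $z$, so $\tau|_\Sigma$ has constant rank $k-1$. By the rank theorem it is locally a linear projection $\R^k \to \R^{k-1}$; hence $\tau(\Sigma)$ is locally an embedded $(k-1)$-submanifold of $Q$, and each nonempty fibre $\Sigma_q := \Sigma \cap Z_q$ is a properly embedded $1$-submanifold of $\Sigma$ tangent everywhere to the vertical line $\ell := T\Sigma \cap \mathsf{V}$. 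As $\Sigma$ and the sphere $Z_q$ are compact, $\Sigma_q$ is a finite disjoint union of embedded circles sitting inside the round $2$-sphere $Z_q$.

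Next I would show that each such circle is a great circle. Fix $q \in U := \tau(\Sigma)$; for $z \in \Sigma_q$ put $W_z := \tau_*(T_z\Sigma) = \tau_*(T_z\Sigma \cap \mathsf{H}) \subset T_qQ$, a $(k-1)$-plane. By Proposition~\ref{prop:HV-KE-Iso}(a), HV-compatibility and $\omega_{\mathrm{KE}}$-isotropy force $\omega_{\mathsf{H}}|_\Sigma = 0$, and since $\omega_{\mathsf{H}}(\widetilde X, \widetilde Y) = g_Q(z(\tau_*\widetilde X), \tau_*\widetilde Y)$ for horizontal lifts, this says precisely $z(W_z) \subset W_z^\perp$. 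Expanding $j = \sum_p v_p j_p$ in an admissible frame of $E_q$, the relation $j(W) \subset W^\perp$ becomes the linear system $\sum_p v_p\, g_Q(j_p X, Y) = 0$ (for all $X, Y \in W$) in $(v_1, v_2, v_3)$, so the isotropy locus $\mathcal{C}_q(W) := \{ j \in Z_q : j(W) \subset W^\perp \}$ has the form $Z_q \cap L_q$ for a linear subspace $L_q \subseteq E_q$; since $\Sigma_q$ is nonempty and $1$-dimensional, $\dim L_q \ge 2$.

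The crux --- and the step I expect to be the main obstacle --- is to show that $\dim L_q = 2$, equivalently that $W = W_z$ is not totally $\HH$-real ($E_q \cdot W \not\perp W$), and that $W_z$ is independent of $z \in \Sigma_q$; once this is known, $\mathcal{C}_q(W)$ is a great circle, so $\Sigma_q$, a nonempty union of circles inside the connected curve $\mathcal{C}_q(W)$, must equal it --- a single great circle, i.e. a closed geodesic of $Z_q$, in particular connected. When $W$ is complex (i.e. $j_0$-invariant for some $j_0 \in E_q$ --- the situation arising in the applications, where $T_z\Sigma \cap \mathsf{H}$ is $J_V$-invariant as in Proposition~\ref{prop:CRIsoProjection}(b)), a direct computation in a frame adapted to $j_0$ gives $\mathcal{C}_q(W) = \{ j \in Z_q : \langle j, j_0 \rangle = 0 \}$. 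In general I would exclude the totally-$\HH$-real alternative via the integrability of $T\Sigma$: the horizontal lifts of $W_z$ are tangent to $\Sigma$, so the vertical part of $[\widetilde X, \widetilde Y]$ --- governed by the curvature of the twistor connection on $Z \to Q$, which is a nonzero multiple of the $\mathfrak{sp}(1)$-part of the Riemann curvature because $\mathrm{scal}(Q) > 0$ --- must lie in $\ell$ for all $X, Y \in W_z$, and this non-degenerate constraint rules out $W_z$ being totally $\HH$-real and pins it down along the fibre.

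Finally I would globalize. Since every $\Sigma_q$ is connected, the local $(k-1)$-submanifold charts for $\tau(\Sigma)$ supplied by the rank theorem agree along each fibre and therefore patch to an embedded $(k-1)$-submanifold $U = \tau(\Sigma) \subset Q$, which is compact. Then $\tau|_\Sigma \colon \Sigma \to U$ is a surjective submersion out of a compact manifold, hence proper, so by Ehresmann's theorem it is a locally trivial fibre bundle; its fibres are the great circles $\Sigma_q \subset Z_q$, so it is an $S^1$-bundle with geodesic fibres.
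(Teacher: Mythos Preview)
Your plan coincides with the paper's through the constant-rank argument and the identification of each fibre as a finite union of embedded circles; the divergence is at the key step of showing each such circle is a great circle in $Z_q$. The paper does not argue intrinsically via the isotropy locus $\mathcal{C}_q(W)$; instead it passes up to $M$. Using $\omega_{\mathrm{KE}}$-isotropy to take a local $p_1$-horizontal lift $L \subset M$, and noting that $\Sigma$ is also $\omega_{\mathrm{NK}}$-isotropic (Proposition~\ref{prop:HV-KE-Iso}(a)) and nowhere tangent to $\mathsf{H}$, the paper invokes Proposition~\ref{prop:LiftingDoubleIsotropic}(a): an exterior-differential computation on $M$ forces $(-s_\theta\alpha_2 + c_\theta\alpha_3)|_L = 0$ with $\theta$ \emph{constant}. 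Hence the vertical direction along $L$ is spanned by the Reeb field $c_\theta A_2 + s_\theta A_3$, whose integral curves are geodesics in $M$; pushing down, each fibre component of $\Sigma$ is a great circle in the round $2$-sphere $Z_q$, and since any two great circles intersect, there is only one. The constancy of $\theta$ --- a differential consequence of the $3$-Sasakian relations --- is exactly the ingredient your sketch is missing.

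Your proposed intrinsic resolution has a genuine gap at the point you yourself flag. The curvature constraint you invoke is vacuous precisely when $W_z$ is totally $\HH$-real: if $g_Q(jX,Y) = 0$ for every $j \in E_q$ and all $X,Y \in W_z$, then the $\mathfrak{sp}(1)$-part of the curvature annihilates $\Lambda^2 W_z$, the vertical component of every bracket $[\widetilde X, \widetilde Y]$ is zero, and the requirement that it lie in $\ell$ is automatically satisfied. So integrability neither excludes this case nor pins down $W_z$ along the fibre. The hypotheses of the lemma --- only $\omega_{\mathrm{KE}}$-isotropy, HV-compatibility, and a one-dimensional vertical part --- do not by themselves rule out totally $\HH$-real $W_z$; the $J_V$-invariance that would force $\dim L_q = 2$ is an \emph{extra} hypothesis, available in the applications via Proposition~\ref{prop:CRIsoProjection}(b) but not assumed here. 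The clean fix is to lift to $M$ and cite Proposition~\ref{prop:LiftingDoubleIsotropic}(a), which yields the geodesic conclusion without any case analysis on $W_z$. (Your final Ehresmann step is fine and is a mild variant of the paper's argument, which instead uses the free $S^1$-action coming from the Reeb flow to build the smooth quotient directly.)
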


\begin{proof} Since $\Sigma$ is HV-compatible and $\dim(T_z\Sigma \cap \mathsf{V}) = 1$ for all $z \in \Sigma$, it follows that $\dim(T_z\Sigma \cap \mathsf{H}) = k-1$.  Therefore, the map $\left.\tau\right|_\Sigma \colon \Sigma \to Q$ has constant rank $k -1$.  By the Constant Rank Theorem, each fiber $\left.\tau\right|_\Sigma^{-1}(\tau(z)) \subset \Sigma$ is an embedded $1$-manifold, and therefore (since $\Sigma$ is compact) is an at most countable union of disjoint circles. \\
\indent We claim that each $S^1$-fiber is a geodesic.  For this, note that since $\Sigma$ is $\omega_{\mathrm{KE}}$-isotropic, it admits local $p_1$-horizontal lifts to $M$.  Let $L \subset M$ be such a lift.  Since $\Sigma$ is HV-compatible and $p_1 \colon M \to Z$ respects the horizontal-vertical splitting, we may write $TL = H_L \oplus \R \widetilde{V}$, where $H_L \subset \widetilde{\mathsf{H}}$ and $\widetilde{V} \in \widetilde{\mathsf{V}}$.  Moreover, Proposition \ref{prop:LiftingDoubleIsotropic}(a) implies that $L$ is $\alpha_1$-isotropic and $(-s_\theta \alpha_2 + c_\theta \alpha_3)$-isotropic for some constant $e^{i\theta} \in S^1$.  Therefore, $\widetilde{V} = c_\theta A_2 + s_\theta A_3$ is a Reeb vector field, so its integral curves are geodesics in $M$.  Consequently, the integral curves of $(p_1)_*(\widetilde{V}) \in \mathsf{V} \subset TZ$ are geodesics in $Z$ (and hence geodesics in $L$), and these are precisely the $S^1$-fibers $\left.\tau\right|_\Sigma^{-1}(\tau(z)) \subset \Sigma$. \\
\indent Consequently, since $\Sigma$ is compact, each $S^1$-fiber $\left.\tau\right|_\Sigma^{-1}(\tau(z)) \subset \Sigma$ is an at most countable union of disjoint \emph{great} circles in the twistor $2$-sphere.  Since any two great circles in a round $2$-sphere intersect, it follows that each $S^1$-fiber consists of a single great circle. \\
\indent It remains to show that $U := \tau(\Sigma)$ is a $(k-1)$-dimensional submanifold of $Q$.  For this, note that since $\Sigma$ is a union of great circles, each of which is the $p_1$-image of a Reeb circle in $M$, it admits a free $S^1$-action.  (The action is free because we are working on the regular part of $M$.) Therefore, the quotient $\Sigma/S^1$ admits the structure of smooth $(k-1)$-manifold, and the projection $\pi \colon \Sigma \to \Sigma/S^1$ is a smooth quotient map. \\
\indent Now, let $\widehat{\tau} \colon \Sigma \to U$ denote the map $\tau|_\Sigma$ with restricted codomain, equip $U \subset Q$ with the subspace topology, and let $\iota \colon U \hookrightarrow Q$ be the inclusion map.  If $V \subset U$ is open, then $V = U \cap W$ for some open set $W \subset Q$, and hence $\widehat{\tau}^{-1}(V) = \Sigma \cap \tau^{-1}(W)$ is open subset of $\Sigma$, which proves that $\widehat{\tau}$ is continuous.  Since $\widehat{\tau}$ is a continuous surjection from a compact domain, it follows that $\widehat{\tau}$ is a quotient map.  Since $\pi$ and $\widehat{\tau}$ are quotient maps that are constant on each other's fibers, there exists a unique homeomorphism $F \colon \Sigma/S^1 \to U$ such that $\widehat{\tau} = F \circ \pi$.  Choosing a smooth local section $\sigma \colon Y \to \Sigma$ of $\pi$, where $Y \subset \Sigma/S^1$ is an open set, we observe that $\tau|_\Sigma \circ \sigma \colon Y \to Q$ is a smooth map of rank $k-1$, which implies that $\iota \circ F \colon \Sigma/S^1 \to Q$ is also a smooth map of rank $k-1$, and therefore a smooth embedding whose image is $U$.
\end{proof}

The converse to Corollary \ref{cor:LiftOfCircleBundle} is now given by: 

\begin{thm} \label{thm:ProjCRIsoCircleBundle} Let $L^{2k+1} \subset M^{4n+3}$ be a compact submanifold, $1 \leq k \leq n$.  If $L$ is $(c_\theta I_2 + s_\theta I_3)$-CR isotropic for some $e^{i\theta} \in S^1$, and if $p_1(L) \subset Z$ is embedded, then $p_1(L) = \mathcal{L}(U)$ for some totally-complex submanifold $U^{2k} \subset Q^{4n}$ (respectively, a superminimal surface $U^2 \subset Q^4$ if $n = 1$).
\end{thm}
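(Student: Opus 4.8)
The plan is to set $\Sigma := p_1(L) \subset Z$ and show that $\Sigma$ is exactly the circle bundle lift of $U := \tau(\Sigma)$, together with the verification that this $U$ is totally-complex (superminimal if $n=1$). Since $L$ is $(c_\theta I_2 + s_\theta I_3)$-CR isotropic it is in particular $\alpha_1$-isotropic, hence $p_1$-horizontal, so $(p_1)_*$ is injective on $TL$ and $\Sigma$ is a compact $(2k+1)$-dimensional submanifold of $Z$ (embedded by hypothesis). By Proposition~\ref{prop:CRIsoProjection}(b), $\Sigma$ is $\omega_{\mathrm{KE}}$-isotropic, $\omega_{\mathrm{NK}}$-isotropic, HV-compatible, has $\dim(T_z\Sigma \cap \mathsf{V}) = 1$ at each $z$, and $H_\Sigma := T_z\Sigma \cap \mathsf{H}$ is $J_V$-invariant for any vertical unit vector $V \in T_z\Sigma \cap \mathsf{V}$. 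Feeding this into Lemma~\ref{lem:LocallyCircleBundle}, we learn that $U := \tau(\Sigma) \subset Q^{4n}$ is an embedded $2k$-dimensional submanifold, that $\tau|_\Sigma \colon \Sigma \to U$ is an $S^1$-bundle, and that each fiber $\Sigma|_p$ is a great circle in the twistor $2$-sphere $Z_p$. Write $i_p \in E_p$ (defined up to sign) for the unit vector whose orthogonal great circle in $Z_p$ is $\Sigma|_p$; under the orientability conventions in force these assemble to a section $i \in \Gamma(Z|_U)$ (otherwise one argues locally over $U$, which suffices).

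Next I would verify the two defining conditions of a totally-complex submanifold for $(U,i)$. First, fix $z \in \Sigma|_p$. Since $\Sigma$ is HV-compatible with one-dimensional vertical part, $\omega_{\mathsf{V}}$ vanishes on $\Sigma$, so $\omega_{\mathrm{KE}}$-isotropy forces $\omega_{\mathsf{H}}|_{H_\Sigma} = 0$; recalling $\omega_{\mathsf{H}} = g_{\mathrm{KE}}(J_{\mathrm{KE}}\cdot,\cdot)$ on $\mathsf{H}$ with $J_{\mathrm{KE}}|_{\mathsf{H}_z} = (\tau_*|_{\mathsf{H}_z})^{-1}\circ z \circ \tau_*$, and that $\tau_*$ maps $H_\Sigma$ isomorphically onto $T_pU$, this becomes $g_Q(z(T_pU), T_pU) = 0$, i.e. $z(T_pU) \subseteq (T_pU)^\perp$ for every $z$ on the great circle $\Sigma|_p$ --- precisely the second totally-complex condition. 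Second, I claim $i_p(T_pU) = T_pU$. For this I would combine the $J_V$-invariance of $H_\Sigma$ with the twistor-theoretic description of $\gamma$ in $\S$\ref{subsec:TwistorSpace}: the vertical direction $V$ tangent to the fiber $\Sigma|_p$ at $z$ corresponds, under $\mathsf{V}_z \cong z^\perp \subset E_p$, to $\pm z\,i_p$ (the tangent at $z$ to a great circle with pole $i_p$), and taking $(z, z i_p, \cdot)$ as an admissible frame one reads off from \eqref{eq:J2J3}--\eqref{eq:gamma-twistor} that the complex structure $J_V$ on $\mathsf{H}_z$ associated to $V$ by the $\Sp(n)\U(1)$-structure satisfies $\tau_*\circ J_V = (\pm i_p)\circ\tau_*$; hence $J_V(H_\Sigma) = H_\Sigma$ is equivalent to $i_p(T_pU) = T_pU$. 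This shows $(U,i)$ is totally-complex.

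It remains to identify $\Sigma$ with $\mathcal{L}(U)$. By the first condition just proved, $\Sigma|_p \subseteq \mathcal{L}(U)|_p$ for each $p \in U$. Conversely, let $j \in \mathcal{L}(U)|_p$, and decompose $j$ into its component $a\,i_p$ along $i_p$ and its orthogonal component, which after normalization is $b\,z_1$ with $z_1 \in \Sigma|_p$ and $a^2+b^2 = 1$; then $j(T_pU) \subseteq a\,i_p(T_pU) + b\,z_1(T_pU) \subseteq a\,T_pU \oplus b\,(T_pU)^\perp$ using $i_p(T_pU) = T_pU$ and $z_1(T_pU) \perp T_pU$. Since $j(T_pU) \subseteq (T_pU)^\perp$ and $T_pU \neq 0$, the $T_pU$-part must vanish, forcing $a = 0$, so $j \in \Sigma|_p$. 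Thus $\Sigma|_p = \mathcal{L}(U)|_p$ for all $p$, and as both sides fiber over $U = \tau(\Sigma)$ we conclude $p_1(L) = \Sigma = \mathcal{L}(U)$. Finally, for $n = 1$ (so $k=1$ and $U^2\subset Q^4$ is automatically totally-complex) one must additionally check $U$ is superminimal: HV-compatibility of $\Sigma = \mathcal{L}(U) = S(i^\perp)$ means parallel transport in $E|_U$ preserves $\Sigma$, hence the line subbundle $\R i$, so $\nabla_X i \in \R i \cap i^\perp = 0$ for $X \in TU$, and Remark~\ref{rmk:AM-results} gives superminimality. (For $n \geq 2$ this is automatic and is not needed.)

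The step I expect to be the main obstacle is establishing $i_p(T_pU) = T_pU$, since it is exactly where the $\Sp(n)\U(1)$-datum $\gamma$ on $Z$ --- through the auxiliary complex structures $J_V = \iota_V(\mathrm{Re}\,\gamma)^\sharp$ --- must be matched with the quaternionic structure on $Q$ and the twistor-line geometry; the bookkeeping relating the fiber tangent direction $V$, the admissible frame at $\tau(z)$, and the quaternion product $z\,i_p$ in $\mathrm{Im}\,\HH$ must be carried out with consistent sign and orientation conventions. A secondary, more cosmetic issue is upgrading the pointwise pole $\pm i_p$ to a genuine global section $i \in \Gamma(Z|_U)$, which I would handle using the orientability hypotheses (or by working locally over $U$, which does not affect the conclusion).
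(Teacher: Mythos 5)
Your proposal is correct and follows essentially the same route as the paper: Proposition~\ref{prop:CRIsoProjection}(b) plus Lemma~\ref{lem:LocallyCircleBundle} to produce $U = \tau(\Sigma)$ with great-circle fibers, then the pointwise identification $J_V = (\tau_*|_{\mathsf{H}_z})^{-1}\circ i \circ \tau_*$ for the admissible frame $(z,j,i)$ together with $\omega_{\mathsf{H}}$-isotropy of $H_\Sigma$ to get $i(T_uU)=T_uU$ and $z(T_uU)\subset (T_uU)^\perp$, and finally the fiberwise identification $\Sigma|_u = \mathcal{L}(U)|_u$ (your explicit decomposition $j = a\,i_p + b\,z_1$ just spells out the reverse inclusion that the paper states tersely). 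The one genuine divergence is the $n=1$ endgame: you deduce $\nabla_X i = 0$ from parallelism of the line bundle $\R i$ (via HV-compatibility) and invoke Remark~\ref{rmk:AM-results} to get superminimality, whereas the paper observes that $\mathcal{L}(U)$ is $\omega_{\mathrm{KE}}$- and $\omega_{\mathrm{NK}}$-Lagrangian and cites Storm's theorem \cite{Storm}; both are valid, and your variant is somewhat more self-contained.
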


\begin{proof} Suppose that $L \subset M$ is a compact $(c_\theta I_2 + s_\theta I_3)$-CR isotropic $(2k+1)$-fold for some constant $e^{i\theta} \in S^1$, and that $\Sigma := p_1(L) \subset Z$ is embedded.  By Proposition \ref{prop:CRIsoProjection}(b), $\Sigma$ is $\omega_{\mathrm{KE}}$-isotropic, $\omega_{\mathrm{NK}}$-isotropic, HV-compatible, and $\dim(T_z\Sigma \cap \mathsf{V}) = 1$ for all $z \in \Sigma$.  Therefore, Lemma \ref{lem:LocallyCircleBundle} implies that $U := \tau(\Sigma) \subset Q$ is a $2k$-dimensional submanifold, and $\left.\tau\right|_\Sigma \colon \Sigma \to U$ is an $S^1$-bundle with geodesic fibers. \\
\indent Fix $z \in \Sigma$ and let $u = \tau(z)$.  Since $\Sigma$ is HV compatible and $\dim(T_z\Sigma \cap \mathsf{V}) = 1$, we can orthogonally split
$$T_z\Sigma = H_\Sigma \oplus \R V,$$
where $V \in \mathsf{V}_z$ is a vertical unit vector, and $H_\Sigma \subset \mathsf{H}_z$ is $2k$-dimensional.  On $\mathsf{H}_z$, let $\beta_V := \iota_{V}(\mathrm{Re}\,\gamma)$ denote the induced non-degenerate $2$-form, and let $J_V$ denote the corresponding complex structure.  By Proposition \ref{prop:CRIsoProjection}(b), the $2k$-plane $H_\Sigma \subset \mathsf{H}_z$ is $J_V$-invariant. \\ 
\indent Now, the $S^1$-fiber $\left.\tau\right|_\Sigma^{-1}(u) \subset \Sigma$ is a great circle through $z$ in the $2$-sphere $Z_u = \tau^{-1} (u)$.  Let $j \in \left.\tau\right|_\Sigma^{-1}(u) \cap z^\perp$ be the point on this circle that corresponds to $V$ under the natural isomorphism $\mathsf{V}_z \simeq z^\perp$.  Setting $i = z \circ j$, we see that $(z,j,i)$ is an admissible frame of $E_u$, which is the fiber over $u$ of the bundle $E$ from Definition~\ref{defn:QH}. See Figure~\ref{figure}. We also have $\left.\tau\right|_\Sigma^{-1}(u) = \{k \in Z_u \colon \langle k,i\rangle = 0 \}$, and
$$J_V = (\left.\tau_*\right|_{\mathsf{H}_z})^{-1} \circ i \circ \tau_*.$$
In particular, the $J_V$-invariance of the $2k$-plane $H_\Sigma \subset \mathsf{H}_z$ implies that $T_uU \subset T_uQ$ is $i$-invariant. \\
\indent Now, let $X_1, X_2 \in T_{u}U$, and let $\widetilde{X}_j = (\left.\tau_*\right|_{H_\Sigma})^{-1}(X_j) \in H_\Sigma$.  Since $\Sigma$ is $\omega_{\mathrm{KE}}$-isotropic, and $\omega_{\mathrm{KE}} = f^2 \wedge f^3 + \beta_1$, it follows that the $2k$-plane $H_\Sigma$ is $\beta_1$-isotropic.  Therefore,
\begin{align*}
g_Q(zX_1, X_2) = g_Q( \tau_*(J_1 \widetilde{X}_1), \tau_* \widetilde{X}_2) = g_{\mathrm{KE}}(J_1\widetilde{X}_1, \widetilde{X}_2) = \beta_1(\widetilde{X}_1, \widetilde{X}_2) = 0,
\end{align*}
which shows that $z(T_uU) \subset (T_uU)^\perp$.  Finally, if $X \in T_uU$, then $iX \in T_uU$, so $jX = -z(iX) \in (T_uU)^\perp$, demonstrating that $j(T_uU) \subset (T_uU)^\perp$.  This proves that $U$ is totally-complex, and that
$$\left.\tau\right|_\Sigma^{-1}(u) = \{k \in Z_u \colon \langle k,i\rangle = 0 \} = \{ k \in Z_u \colon k(T_uU) \subset (T_uU)^\perp\} = \left.\mathcal{L}(U)\right|_{u}.$$
\indent Finally, suppose that $n = 1$, so that $k = 1$.  Then $\Sigma^3 = \mathcal{L}(U)$ is $\omega_{\mathrm{KE}}$-Lagrangian and $\omega_{\mathrm{NK}}$-Lagrangian.  By a result of Storm \cite{Storm}, the surface $U \subset Q^4$ is superminimal.
\end{proof}

\begin{rmk} \label{rmk:embedded}
If $U$ is an embedded submanifold of $Q$, then its geodesic circle bundle is embedded in $Z$. Therefore, in order to characterize those submanifolds $\Sigma$ of $Z$ which are geodesic circle bundles in $Z$ we need to assume a priori that $\Sigma$ is embedded.
\end{rmk}

\subsubsection{Applications}

\indent \indent In previous sections, we considered $\mathrm{Re}(\gamma)$-calibrated $3$-folds $\Sigma^3 \subset Z$ that are $\omega_{\mathrm{KE}}$-isotropic, describing their $p_1$-horizontal lifts $L^3 \subset M^{4n+3}$ (Theorem \ref{thm:FirstCharDim3}).  Now, we are in a position to classify such $3$-folds in $Z$ as circle bundle lifts of totally-complex surfaces in $Q$.

\begin{thm} \label{thm:ReGammaCircleBundle} ${}$
\begin{enumerate}[(a)] 
\item If $U^2 \subset Q^{4n}$ is totally-complex and $n \geq 2$, or if $U$ is superminimal and $n = 1$, then $\mathcal{L}(U) \subset Z$ is $\mathrm{Re}(\gamma)$-calibrated and $\omega_{\mathrm{KE}}$-isotropic.
\item Conversely, if $\Sigma^3 \subset Z^{4n+2}$ is a compact $3$-dimensional submanifold that is $\mathrm{Re}(\gamma)$-calibrated and $\omega_{\mathrm{KE}}$-isotropic, then $\Sigma = \mathcal{L}(U)$ for some totally-complex surface $U^2 \subset Q^{4n}$.  Moreover, if $n = 1$, then $U$ is superminimal.
\end{enumerate}
\end{thm}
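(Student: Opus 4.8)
The plan is to treat the two implications separately, in both cases reducing to machinery already established: for (a) the geometric properties of circle bundle lifts from Theorem~\ref{prop:NecessaryCircleBundle}, and for (b) the global circle-bundle structure of Lemma~\ref{lem:LocallyCircleBundle} together with the pointwise linear algebra of Theorem~\ref{thm:ProjCRIsoCircleBundle}.

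For part (a), I would begin by invoking Theorem~\ref{prop:NecessaryCircleBundle}: under the stated hypotheses, $\mathcal{L} := \mathcal{L}(U)$ is $\omega_{\mathrm{KE}}$-isotropic, HV-compatible, has $\dim(T_z\mathcal{L}\cap\mathsf{V}) = 1$ at every point, and its horizontal $2$-plane $T_z\mathcal{L}\cap\mathsf{H}$ is $J_V$-invariant for any unit $V \in T_z\mathcal{L}\cap\mathsf{V}$. The $\omega_{\mathrm{KE}}$-isotropy is then part of the conclusion, so only the $\mathrm{Re}(\gamma)$-calibration remains, and this is the pointwise computation from the proof of Corollary~\ref{cor:Regamma-CRLift}(a): fix $z \in \mathcal{L}$, write $T_z\mathcal{L} = H_\Sigma \oplus \R V$, choose $N \in \mathsf{V}_z$ so that $\{V,N\}$ is an oriented orthonormal basis of $\mathsf{V}_z$, and use $\mathrm{Re}(\gamma) = V^\flat \wedge \beta_V + N^\flat \wedge \beta_N$. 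Since $N \perp T_z\mathcal{L}$ the second term restricts to zero, while $J_V$-invariance of $H_\Sigma$ forces $\beta_V|_{H_\Sigma} = \pm\vol_{H_\Sigma}$; hence $\mathrm{Re}(\gamma)|_{T_z\mathcal{L}} = \pm\vol_{\mathcal{L}}$, and one orients $\mathcal{L}$ to make the sign $+$. (Equivalently, (a) follows by combining Corollary~\ref{cor:LiftOfCircleBundle}, which produces local $p_1$-horizontal lifts of $\mathcal{L}(U)$ that are $(c_\theta I_2 + s_\theta I_3)$-CR isotropic, with Corollary~\ref{cor:Regamma-CRLift}(a) applied to such a lift.)

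For part (b), I would first observe that a $\mathrm{Re}(\gamma)$-calibrated $\Sigma^3$ is automatically $\omega_{\mathrm{NK}}$-isotropic by Proposition~\ref{prop:GammaSemiCal}(c); combined with the $\omega_{\mathrm{KE}}$-isotropy hypothesis, Proposition~\ref{prop:HV-KE-Iso}(c) gives that $\Sigma$ is HV-compatible with $\dim(T_z\Sigma\cap\mathsf{H}) = 2$ and $\dim(T_z\Sigma\cap\mathsf{V}) = 1$ everywhere. These are exactly the hypotheses of Lemma~\ref{lem:LocallyCircleBundle} with $k = 3$, so $U := \tau(\Sigma) \subset Q^{4n}$ is a surface and $\tau|_\Sigma \colon \Sigma \to U$ is an $S^1$-bundle whose fibers are great circles in the twistor $2$-spheres. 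It then remains to identify $\Sigma$ with $\mathcal{L}(U)$ and to see that $U$ is totally-complex; this is the pointwise argument from the proof of Theorem~\ref{thm:ProjCRIsoCircleBundle}, carried out directly from $Z$-data rather than from a CR isotropic lift in $M$. Concretely: at $z \in \Sigma$, $u = \tau(z)$, split $T_z\Sigma = H_\Sigma \oplus \R V$; the calibration condition $\mathrm{Re}(\gamma)|_{T_z\Sigma} = \vol$ forces $\beta_V|_{H_\Sigma} = \pm\vol_{H_\Sigma}$, i.e. $H_\Sigma$ is $J_V$-invariant. Choosing $j \in \tau|_\Sigma^{-1}(u)\cap z^\perp$ corresponding to $V$ and setting $i = z\circ j$ gives an admissible frame $(z,j,i)$ of $E_u$ with $J_V = (\tau_*|_{\mathsf{H}_z})^{-1}\circ i \circ \tau_*$; $J_V$-invariance of $H_\Sigma$ yields $i$-invariance of $T_uU$, and $\omega_{\mathrm{KE}}$-isotropy of $\Sigma$ (with $\omega_{\mathrm{KE}}|_{\mathsf{H}} = \beta_1$) yields $z(T_uU) \subset (T_uU)^\perp$, hence $j(T_uU) \subset (T_uU)^\perp$. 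Thus $U$ is totally-complex and $\tau|_\Sigma^{-1}(u) = \mathcal{L}(U)|_u$, so $\Sigma = \mathcal{L}(U)$.

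Finally, for the last sentence, when $n = 1$ the submanifold $\Sigma^3 = \mathcal{L}(U) \subset Z^6$ is $\omega_{\mathrm{KE}}$-Lagrangian by dimension and, being $\omega_{\mathrm{NK}}$-isotropic of dimension $3 = 2n+1$, also $\omega_{\mathrm{NK}}$-Lagrangian; Storm's theorem~\cite{Storm} then forces the surface $U \subset Q^4$ to be superminimal. I expect the only real obstacle here to be organizational: making sure the pointwise identification in (b) is run with the sign and phase conventions consistent with the normalization of $\gamma$, and confirming that the global conclusion of Lemma~\ref{lem:LocallyCircleBundle} genuinely applies (which it does, using compactness of $\Sigma$ and the fact that any two great circles in a round $2$-sphere meet). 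There is no new analytic difficulty; the theorem is essentially the assembly of Lemma~\ref{lem:LocallyCircleBundle}, Theorem~\ref{thm:ProjCRIsoCircleBundle}, Theorem~\ref{prop:NecessaryCircleBundle}, and the observation that a $\mathrm{Re}(\gamma)$-calibrated, $\omega_{\mathrm{KE}}$-isotropic $3$-fold is doubly Lagrangian when $n=1$.
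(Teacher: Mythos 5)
Your proposal is correct and takes essentially the same route as the paper's proof: part (a) rests on Theorem~\ref{prop:NecessaryCircleBundle} plus the calibration mechanism of Corollary~\ref{cor:LiftOfCircleBundle} and Corollary~\ref{cor:Regamma-CRLift} (the paper channels this through Theorem~\ref{thm:FirstCharDim3}(ii)$\Rightarrow$(iv), while you also spell out the equivalent pointwise check directly on $Z$), and part (b) proceeds, exactly as in the paper, via Proposition~\ref{prop:HV-KE-Iso}(c), Lemma~\ref{lem:LocallyCircleBundle}, the pointwise admissible-frame identification from Theorem~\ref{thm:ProjCRIsoCircleBundle}, and the double-Lagrangian reduction to Storm's theorem when $n=1$. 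No gaps.
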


\begin{proof} (a) Let $U^2 \subset Q^{4n}$ be totally-complex if $n \geq 2$, or superminimal if $n = 1$.  By Theorem \ref{prop:NecessaryCircleBundle}(a), the $3$-fold $\mathcal{L}(U) \subset Z$ is $\omega_{\mathrm{KE}}$-isotropic.  Fix $z \in L$, and let $L \subset M$ denote a $p_1$-horizontal lift of a neighborhood of $z$.  By Corollary \ref{cor:LiftOfCircleBundle}, $L$ is $(c_\theta I_2 + s_\theta I_3)$-CR isotropic.  Therefore, by Theorem \ref{thm:FirstCharDim3}((ii)$\implies$(iv)), $p_1(L) \subset \mathcal{L}(U)$ is $\mathrm{Re}(\gamma)$-calibrated.

\begin{figure}
\centering
$$\begin{tikzpicture}

  % Define radius
  \def\r{3}

  % Points
  \draw (0, 0) node[circle, fill, inner sep=1] (orig) {};
  \draw (\r, 0) node[circle, fill, inner sep=1] (jpt)  {};
  \draw (-\r/5, -\r/3) node[circle, fill, inner sep=1] (zpt) {};
  \draw (0, \r) node[circle, fill, inner sep=1] (ipt) {};

  % Sphere + Equator
  \draw (orig) circle (\r);
  \draw[dashed] (orig) ellipse (\r{} and \r/3);

  % Vectors + Labels
  \draw[] (orig) ++(-\r/5, -\r/3) node[below] (x1) {$z$};
  \draw[] (orig) ++(\r, 0) node[right] (x2) {$j$};
  \draw[] (orig) ++(0, \r) node[above] (x3) {$i$};
  \draw[->] (zpt) -- ++(\r, 0) node[right] (x4) {$f_2$};
  \draw[->] (zpt) -- ++(0, \r) node[right] (x5) {$f_3$};

\end{tikzpicture}$$
\caption{The admissible frame $(z, j, i)$ of $E_u$.} \label{figure}
\end{figure}
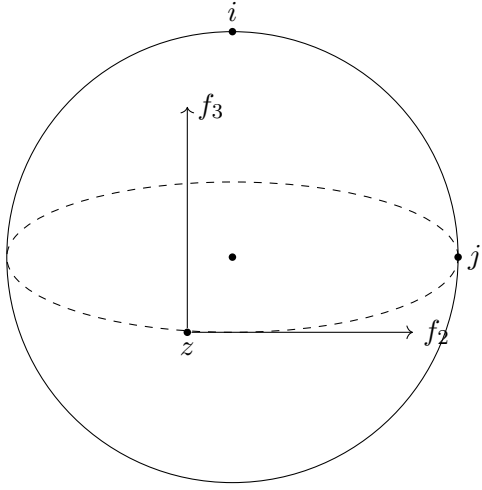

\indent (b) Suppose $\Sigma^3 \subset Z$ is a compact $3$-dimensional submanifold that is $\mathrm{Re}(\gamma)$-calibrated and $\omega_{\mathrm{KE}}$-isotropic.  By Proposition \ref{prop:HV-KE-Iso}(c), $\Sigma$ is HV-compatible and $\dim(T_z\Sigma \cap \mathsf{V}) = 1$ for all $z \in \Sigma$.  Therefore, Lemma \ref{lem:LocallyCircleBundle} implies that $U^2 = \tau(\Sigma) \subset Q$ is a $2$-dimensional surface, and that $\left.\tau\right|_\Sigma \colon \Sigma \to U$ is an $S^1$-bundle with geodesic fibers. \\
\indent Fix $z \in \Sigma$, and let $u = \tau(z)$.  We may write $T_z\Sigma = H_\Sigma \oplus V_\Sigma$ for some $2$-plane $H_\Sigma \subset \mathsf{H}$ and line $V_\Sigma \subset \mathsf{V}$.  Let $(e_{10}, \ldots, e_{n3}, f_2, f_3)$ be an $\Sp(n)\U(1)$-frame at $z$, with dual coframe $(\rho_{10}, \ldots, \rho_{n3}, \mu_2, \mu_3)$, such that
\begin{align} \label{eq:vertical-frame2}
V_\Sigma & = \mathrm{span}(f_2), & \vol_{V_\Sigma} & = \mu_2.
\end{align}
Let $(\beta_1, \beta_2, \beta_3) = (\omega_{\mathsf{H}},\, \iota_{f_2}(\mathrm{Re}\,\gamma), \, \iota_{f_3}(\mathrm{Re}\,\gamma))$ denote the induced hyperk\"{a}hler triple on $\mathsf{H}_z$, and let $(J_1, J_2, J_3)$ be the corresponding complex structures on $\mathsf{H}_z$. \\
\indent Now, the $S^1$-fiber $\left.\tau\right|_\Sigma^{-1}(u) \subset \Sigma$ is a great circle through $z$ in the twistor $2$-sphere $Z_u$.  Let $j \in \left.\tau\right|_\Sigma^{-1}(u) \cap z^\perp$ be the point on this circle that corresponds to $V$ under the natural isomorphism $\mathsf{V}_z \simeq z^\perp$.  Setting $i = z \circ j$, we see that $(z,j,i)$ is an admissible frame of $E_u$ (see Figure~\ref{figure}), that $\left.\tau\right|_\Sigma^{-1}(u) = \{k \in Z_u \colon \langle k,i\rangle = 0 \}$, and moreover,
\begin{align*}
J_2 & = (\left.\tau_*\right|_{\mathsf{H}_z})^{-1} \circ i \circ \tau_* & J_3 & = -(\left.\tau_*\right|_{\mathsf{H}_z})^{-1} \circ j \circ \tau_*.
\end{align*}
Using (\ref{eq:vertical-frame2}), we compute
\begin{align*}
\left.\mu_2\right|_{V_\Sigma} \wedge \vol_{H_\Sigma} = \vol_{T_z\Sigma} = \left.\mathrm{Re}(\gamma)\right|_{T_z\Sigma} & = \left.\left(\mu_2 \wedge \beta_2 + \mu_3 \wedge \beta_3\right)\right|_{T_z\Sigma} \\
& = \left.\mu_2\right|_{V_\Sigma} \wedge \left.\beta_2\right|_{H_\Sigma} + \left.\mu_3\right|_{V_\Sigma} \wedge \left.\beta_3\right|_{H_\Sigma} \\
& = \left.\mu_2\right|_{V_\Sigma} \wedge \left.\beta_2\right|_{H_\Sigma}.
\end{align*}
Contracting with $f_2$ gives $\beta_2|_{H_\Sigma} = \vol_{H_\Sigma}$, which implies that the real $2$-plane $H_\Sigma \subset \mathsf{H}_z$ is $J_2$-invariant.  Consequently, $T_{u}U \subset T_{u}Q$ is $i$-invariant. \\
\indent Repeating the argument at the end of the proof of Theorem \ref{thm:ProjCRIsoCircleBundle}, we observe that $z(T_uU) \subset (T_uU)^\perp$ and $j(T_uU) \subset (T_uU)^\perp$.  This proves that $U$ is totally-complex, and that
$$\left.\tau\right|_\Sigma^{-1}(u) = \{k \in Z_u \colon \langle k,i\rangle = 0 \} = \{ k \in Z_u \colon k(T_uU) \subset (T_uU)^\perp\} = \left.\mathcal{L}(U)\right|_{u}.$$
\indent Finally, suppose that $n = 1$.  Since $\Sigma^3 = \mathcal{L}(U) \subset Z^6$ is $\mathrm{Re}(\gamma)$-calibrated, it follows from Proposition~\ref{prop:Re-gamma-phases} that $\Sigma$ is $\omega_{\mathrm{NK}}$-Lagrangian. Thus, $\mathcal{L}(U)$ is both $\omega_{\mathrm{KE}}$-Lagrangian and $\omega_{\mathrm{NK}}$-Lagrangian, so the superminimality of $U^2 \subset Q^4$ follows from Storm's theorem \cite{Storm}.
\end{proof}

\indent We can now classify the compact submanifolds of $Z$ that are Lagrangian with respect to both $\omega_{\mathrm{KE}}$ and $\omega_{\mathrm{NK}}$.

\begin{thm} \label{thm:Double-Lagrangian-Char} ${}$
\begin{enumerate}[(a)]
\item If $U^{2n} \subset Q^{4n}$ is totally-complex and $n \geq 2$, or if $U$ is superminimal and $n = 1$, then $\mathcal{L}(U) \subset Z$ is $\omega_{\mathrm{KE}}$-Lagrangian and $\omega_{\mathrm{NK}}$-Lagrangian.
\item Conversely, if $\Sigma^{2n+1} \subset Z^{4n+2}$ is a compact $(2n+1)$-dimensional submanifold that is both $\omega_{\mathrm{KE}}$-Lagrangian and $\omega_{\mathrm{NK}}$-Lagrangian, then $\Sigma = \mathcal{L}(U)$ for some (maximal) totally-complex $2n$-fold $U^{2n} \subset Q^{4n}$.
\end{enumerate}
\end{thm}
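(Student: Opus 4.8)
The plan is to deduce part (a) directly from Theorem~\ref{prop:NecessaryCircleBundle}, and to prove part (b) by exhibiting $\Sigma$ as an $S^1$-bundle over $U := \tau(\Sigma)$ and then checking fiberwise that $U$ is totally-complex with $\Sigma = \mathcal{L}(U)$. For (a): if $U^{2n} \subset Q^{4n}$ is totally-complex (for $n \geq 2$) or superminimal (for $n = 1$), then $\mathcal{L}(U)$ is a $(2n+1)$-dimensional submanifold of $Z^{4n+2}$ which, by Theorem~\ref{prop:NecessaryCircleBundle}(a), is both $\omega_{\mathrm{KE}}$-isotropic and $\omega_{\mathrm{NK}}$-isotropic. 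Since $\omega_{\mathrm{KE}}$ and $\omega_{\mathrm{NK}}$ are non-degenerate $2$-forms on $Z^{4n+2}$, an isotropic submanifold of the maximal possible dimension $2n+1$ is Lagrangian, so (a) follows.

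For (b), let $\Sigma^{2n+1} \subset Z$ be compact, $\omega_{\mathrm{KE}}$-Lagrangian and $\omega_{\mathrm{NK}}$-Lagrangian. First I would apply Proposition~\ref{prop:HV-KE-Iso}(b) to conclude that $\Sigma$ is HV-compatible with $\dim(T_z\Sigma \cap \mathsf{H}) = 2n$ and $\dim(T_z\Sigma \cap \mathsf{V}) = 1$ at each $z$. Since $\Sigma$ is then compact, $\omega_{\mathrm{KE}}$-isotropic, HV-compatible and has one-dimensional vertical part, Lemma~\ref{lem:LocallyCircleBundle} applies and shows that $U := \tau(\Sigma) \subset Q$ is a $2n$-dimensional submanifold and that $\left.\tau\right|_\Sigma \colon \Sigma \to U$ is an $S^1$-bundle whose fibers are great circles in the twistor $2$-spheres.

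The one extra pointwise ingredient needed is that, for each $z \in \Sigma$ and each vertical unit vector $V \in T_z\Sigma \cap \mathsf{V}$, the $2n$-plane $T_z\Sigma \cap \mathsf{H}$ is $J_V$-invariant. I would obtain this by passing to $M$: since $\Sigma$ is $\omega_{\mathrm{KE}}$-isotropic it admits local $p_1$-horizontal lifts $L \subset M$, and Corollary~\ref{cor:DoubleLagrangian-CRLift}(b) shows each such $L$ is $(c_\theta I_2 + s_\theta I_3)$-CR Legendrian for some constant $e^{i\theta} \in S^1$, whence Proposition~\ref{prop:CRIsoProjection}(b) yields exactly the asserted $J_V$-invariance. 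I expect this step to be the main obstacle, as it is the one place where the full double-Lagrangian hypothesis (rather than mere isotropy) enters, and it relies on the $3$-Sasakian geometry of $M$ rather than on anything intrinsic to $Z$.

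With the $S^1$-bundle structure and the $J_V$-invariance in hand, the remainder of (b) runs exactly as in the proof of Theorem~\ref{thm:ProjCRIsoCircleBundle} (equivalently, Theorem~\ref{thm:ReGammaCircleBundle}(b)). Fix $z \in \Sigma$, put $u = \tau(z)$, and let $j$ be the point of the great-circle fiber $\left.\tau\right|_\Sigma^{-1}(u)$ orthogonal to $z$ corresponding to $V$ under $\mathsf{V}_z \simeq z^\perp$; then $i := z \circ j$ makes $(z, j, i)$ an admissible frame of $E_u$, and formula~\eqref{eq:J2J3} gives $J_V = (\left.\tau_*\right|_{\mathsf{H}_z})^{-1} \circ i \circ \tau_*$. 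Hence the $J_V$-invariance of $T_z\Sigma \cap \mathsf{H}$ translates into $i$-invariance of $T_uU$, while $\omega_{\mathrm{KE}}$-isotropy of $\Sigma$ (so $\beta_1$-isotropy of $T_z\Sigma \cap \mathsf{H}$) forces $z(T_uU) \subset (T_uU)^\perp$; combining these with $j = -z \circ i$ yields $j(T_uU) \subset (T_uU)^\perp$ as well. Therefore $U$ is totally-complex and $\left.\tau\right|_\Sigma^{-1}(u) = \{ k \in Z_u \colon \langle k, i\rangle = 0\} = \{k \in Z_u \colon k(T_uU) \subset (T_uU)^\perp\} = \left.\mathcal{L}(U)\right|_u$, so $\Sigma = \mathcal{L}(U)$. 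Finally, when $n = 1$ the resulting $\Sigma = \mathcal{L}(U) \subset Z^6$ is both $\omega_{\mathrm{KE}}$- and $\omega_{\mathrm{NK}}$-Lagrangian, so Storm's theorem~\cite{Storm} shows $U^2 \subset Q^4$ is superminimal, consistent with part (a).
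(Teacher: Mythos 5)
Your proposal is correct and follows essentially the same route as the paper: part (a) via Theorem~\ref{prop:NecessaryCircleBundle}(a) plus the maximal-dimension observation, and part (b) via Proposition~\ref{prop:HV-KE-Iso}(b), Lemma~\ref{lem:LocallyCircleBundle}, Corollary~\ref{cor:DoubleLagrangian-CRLift}(b) together with Proposition~\ref{prop:CRIsoProjection}(b) for the $J_V$-invariance, and then the fiberwise admissible-frame argument of Theorem~\ref{thm:ProjCRIsoCircleBundle}. The paper compresses the last steps into ``the proof now follows exactly as in Theorem~\ref{thm:ProjCRIsoCircleBundle},'' but the content is the same as what you wrote out.
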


\begin{proof} (a) This follows from Theorem \ref{prop:NecessaryCircleBundle}(a). \\
\indent (b) Suppose $\Sigma^{2n+1} \subset Z$ is a compact submanifold that is both $\omega_{\mathrm{KE}}$-Lagrangian and $\omega_{\mathrm{NK}}$-Lagrangian.  By Proposition \ref{prop:HV-KE-Iso}(b), $\Sigma$ is HV compatible, $\dim(T_z\Sigma \cap \mathsf{H}) = 2n$, and $\dim(T_z\Sigma \cap \mathsf{V}) = 1$ for all $z \in \Sigma$.  By Lemma \ref{lem:LocallyCircleBundle}, $U := \tau(\Sigma) \subset Q$ is a $2n$-dimensional submanifold, and $\left.\tau\right|_\Sigma \colon \Sigma \to U$ is an $S^1$-bundle with geodesic fibers. \\
\indent It remains to prove that $U$ is totally-complex, and that $\tau|_\Sigma^{-1}(u) = \left.\mathcal{L}(U)\right|_u$.  For this, note that Corollary \ref{cor:DoubleLagrangian-CRLift}(b) implies that every local $p_1$-horizontal lift of $\Sigma$ is $(c_\theta I_2 + s_\theta I_3)$-CR Legendrian for some $e^{i\theta} \in S^1$.  The proof now follows exactly as in Theorem \ref{thm:ProjCRIsoCircleBundle}.
\end{proof}

\section{Characterizations of Complex Lagrangian Cones} \label{sec:CharCplxLagCones}

\indent \indent In a hyperk\"{a}hler cone $C^{4n+4}$, recall that a $(2k+2)$-dimensional cone $\Cone(L)$ is $(c_\theta I_2 + s_\theta I_3)$-complex isotropic provided that it satisfies the following three conditions:
\begin{align*}
\left.\omega_1\right|_{\Cone(L)} & = 0, & \left.\left(-s_\theta \omega_2 + c_\theta \omega_3\right)\right|_{\Cone(L)} & = 0, & (c_\theta I_2 + s_\theta I_3)\text{-complex.}
\end{align*}
As discussed in $\S$\ref{subsub:CRIso}, this is equivalent to requiring that the $(2k+1)$-dimensional link $L$ be $(c_\theta I_2 + s_\theta I_3)$-CR isotropic, meaning
\begin{align*}
\left.\alpha_1\right|_L & = 0, & \left.\left(-s_\theta \alpha_2 + c_\theta \alpha_3\right)\right|_L & = 0, & (c_\theta I_2 + s_\theta I_3)\text{-CR}.
\end{align*}
In this short section, we characterize complex isotropic cones $\Cone(L)^{2k+2} \subset C^{4n+4}$, $1 \leq k \leq n$, in terms of related geometries in $M^{4n+3}$, $Z^{4n+2}$, and $Q^{4n}$. \\

\indent To begin, we generalize a result of Ejiri--Tsukada \cite{ET-2012} --- originally established for complex Lagrangian cones (i.e., $k = n$) in the flat model $C^{4n+4} = \HH^{n+1}$ --- to complex isotropic cones of any dimension $2k+2$ in arbitrary hyperk\"{a}hler cones $C^{4n+4}$.

\begin{thm} \label{thm:MainArbDim} Let $L^{2k+1} \subset M^{4n+3}$, where $3 \leq 2k+1 \leq 2n+1$.  The following conditions are equivalent:
\begin{enumerate}[(1)]
\item $\mathrm{C}(L)$ is $(c_\theta I_2 + s_\theta I_3)$-complex isotropic for some constant $e^{i \theta} \in S^1$.
\item $L$ is $(c_\theta I_2 + s_\theta I_3)$-CR isotropic for some constant $e^{i \theta} \in S^1$.
\item $L$ is locally of the form $p_v^{-1}(V)$ for some horizontal $J_{\mathrm{KE}}$-complex submanifold $V^{2k} \subset Z$ and some $v = (0, c_\theta, s_\theta)$.
\item  $L$ is locally of the form $p_v^{-1}(\widetilde{U})$ for some totally-complex submanifold $U^{2k} \subset Q$ (resp., superminimal surface if $n = 1$) and some $v = (0, c_\theta, s_\theta)$.
\end{enumerate}
If, in addition, $L$ is compact and $p_1(L) \subset Z$ is embedded, then the above conditions are equivalent to:
\begin{enumerate}[($\star$)]
\item $L$ is a $p_1$-horizontal lift of $\mathcal{L}(U) \subset Z$ for some totally-complex submanifold $U^{2k} \subset Q^{4n}$ (resp., superminimal surface $U^2 \subset Q^4$ if $n = 1$).
\end{enumerate}
\end{thm}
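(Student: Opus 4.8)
The plan is to establish the linear chain $(1)\Leftrightarrow(2)\Leftrightarrow(3)\Leftrightarrow(4)$, and then, under the additional hypotheses, $(2)\Leftrightarrow(\star)$, by assembling results already proved together with the $\SO(3)$-symmetry acting on the index of the $3$-Sasakian structure, which transfers statements from the distinguished index $1$ and projection $p_1 = p_{(1,0,0)}$ to an arbitrary equatorial direction $v = (0,c_\theta,s_\theta)$.

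First I would observe that $(1)\Leftrightarrow(2)$ is Proposition~\ref{prop:ConeComplexLag} with $I_1$ replaced by $c_\theta I_2 + s_\theta I_3$: that proposition relates $\omega_p|_{\mathrm{C}(L)}$ to $\alpha_p|_L$ and $\Omega_p|_L$ and is symmetric under rotations of the hyperk\"ahler triple, and the existential quantifier on $e^{i\theta}$ matches on both sides. For $(2)\Leftrightarrow(3)$, I would fix $e^{i\theta}$, choose $R \in \SO(3)$ carrying $v = (0,c_\theta,s_\theta)$ to $e_1$, and relabel the data $(g_M,(\alpha_p),(\mathsf{J}_p))$ by $R$; this produces an isomorphic $3$-Sasakian structure on the \emph{same} $M$, in which the new index-$1$ cone complex structure is the old $c_\theta I_2 + s_\theta I_3$, the new index-$1$ twistor projection is the old $p_v$, and the new quotient $Z_v$ is canonically identified with $Z$ by Boyer--Galicki. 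Corollary~\ref{cor:CRIsoChar1} applied to the relabeled structure, and translated back, then reads: $L$ is $(c_\theta I_2 + s_\theta I_3)$-CR isotropic iff $L$ is locally $p_v^{-1}(V)$ for some horizontal $J_{\mathrm{KE}}$-complex $V^{2k} \subset Z$. Finally $(3)\Leftrightarrow(4)$ is Theorem~\ref{prop:HorzLiftTotCplx}: such a $V$ is horizontal and $J_{\mathrm{KE}}$-complex iff it is locally of the form $\widetilde{U}$ for a totally-complex $U^{2k} \subset Q$ (resp.\ a superminimal surface if $n=1$), and composing the two ``locally'' conditions yields both implications.

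For the remaining equivalence I would assume $L$ compact with $p_1(L) \subset Z$ embedded. To prove $(2)\Rightarrow(\star)$: a $(c_\theta I_2 + s_\theta I_3)$-CR isotropic $L$ is in particular $\alpha_1$-isotropic (by the description of this class recalled at the start of $\S$\ref{sec:CharCplxLagCones}), so $T_xL \subset \Ker(\alpha_1)$ and $p_1|_L$ is an immersion; hence $L$ is a $p_1$-horizontal lift of $p_1(L)$, and Theorem~\ref{thm:ProjCRIsoCircleBundle} identifies $p_1(L) = \mathcal{L}(U)$ for a totally-complex $U^{2k} \subset Q$ (resp.\ superminimal surface if $n=1$), which is $(\star)$. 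For $(\star)\Rightarrow(2)$: if $L$ is a $p_1$-horizontal lift of $\mathcal{L}(U)$ with $U$ totally-complex (resp.\ superminimal), then Corollary~\ref{cor:LiftOfCircleBundle} shows every such lift is $(c_\theta I_2 + s_\theta I_3)$-CR isotropic for some $e^{i\theta}$; this is a local statement, but $\theta$ is then locally constant and $L$ is connected, so it holds globally, giving $(2)$.

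The only step that is not a direct citation --- and the one I would write out carefully --- is the $\SO(3)$-relabeling used for $(2)\Leftrightarrow(3)$: one must verify that relabeling the triple of almost-contact structures by $R \in \SO(3)$ yields another $3$-Sasakian structure on the same manifold $M$, that its associated twistor projection coincides with $p_v$, and that the Boyer--Galicki identification $Z_v \approx Z$ intertwines the two $(\U(2n)\times\U(1))$-structures --- so that ``horizontal and $J_{\mathrm{KE}}$-complex'' is a condition intrinsic to $Z$, independent of the equatorial direction used to realize it. Granting this, the rest of the argument is immediate from the earlier results.
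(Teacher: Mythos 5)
Your proposal is correct and follows essentially the same route as the paper: the chain $(1)\Leftrightarrow(2)\Leftrightarrow(3)\Leftrightarrow(4)$ is obtained by citing Proposition~\ref{prop:ConeComplexLag}, Corollary~\ref{cor:CRIsoChar1}, and Theorem~\ref{prop:HorzLiftTotCplx}, while $(2)\Rightarrow(\star)$ and $(\star)\Rightarrow(2)$ are Theorem~\ref{thm:ProjCRIsoCircleBundle} and Corollary~\ref{cor:LiftOfCircleBundle}, exactly as in the paper. The only difference is that you spell out the $\SO(3)$-relabeling needed to transfer the index-$1$ statements to the direction $v=(0,c_\theta,s_\theta)$, a step the paper's proof leaves implicit; this is a harmless (and sensible) elaboration rather than a different argument.
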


\begin{proof} The equivalence (1)$\iff$(2) is Proposition \ref{prop:ConeComplexLag}.  The equivalence (2)$\iff$(3) is Corollary \ref{cor:CRIsoChar1}.  The equivalence (3)$\iff$(4) follows from Theorem \ref{prop:HorzLiftTotCplx}. \\
\indent ($\star$) $\implies (2)$. This is Corollary \ref{cor:LiftOfCircleBundle}. \\
\indent (2)$\implies$($\star$). This is Theorem \ref{thm:ProjCRIsoCircleBundle}.
\end{proof}

\indent Therefore, given a $(c_\theta I_2 + s_\theta I_3)$-complex isotropic cone $\mathrm{C}(L) \subset C$, its link $L \subset M$ can be viewed in two ways.  On the one hand, $L$ is a $p_{(1,0,0)}$-horizontal lift of a circle bundle over a totally-complex submanifold $U \subset Q$.  On the other hand, $L$ is also a $p_{(0,c_\theta, s_\theta)}$-circle bundle over a $\tau$-horizontal lift of a totally-complex submanifold $U \subset Q$.  Thus, loosely speaking, the operations of ``horizontal lift" and ``circle bundle lift" form a commutative diagram of sorts:
$$\begin{tikzcd}
M^{4n+3} \arrow[d, "{p_{(1,0,0)}}"', bend right] \arrow[d, "{p_{(0, c_\theta, s_\theta)}}", bend left] &                                  & {\widehat{\mathcal{L}(U)} = L = p_{(0,c_\theta, s_\theta)}^{-1}(\widetilde{U})} &                               \\
Z^{4n+2} \arrow[d, "\tau"]                                                                             & \mathcal{L}(U)^{2k+1} \arrow[ru] &                                                                                 & \widetilde{U}^{2k} \arrow[lu] \\
Q^{4n}                                                                                                 &                                  & U^{2k} \arrow[lu] \arrow[ru]                                                    &                              
\end{tikzcd}$$

\indent For complex Lagrangian cones in $C^{4n+4}$, we are able to say more.

\begin{thm} \label{thm:TopDimension} Let $L^{2n+1} \subset M^{4n+3}$ be a $(2n+1)$-dimensional submanifold.  The following five conditions are equivalent:
\begin{enumerate}[(1)]
\item $\mathrm{C}(L)$ is $(c_\theta I_2 + s_\theta I_3)$-complex Lagrangian for some constant $e^{i \theta} \in S^1$.
\item $L$ is $(c_\theta I_2 + s_\theta I_3)$-CR Legendrian for some constant $e^{i \theta} \in S^1$.
\item $L$ is locally of the form $p_v^{-1}(V)$ for some horizontal $J_{\mathrm{KE}}$-complex submanifold $V^{2n} \subset Z$ and some $v = (0, c_\theta, s_\theta)$.
\item  $L$ is locally of the form $p_v^{-1}(\widetilde{U})$ for some totally-complex submanifold $U^{2n} \subset Q$ (resp., superminimal surface if $n = 1$) and some $v = (0, c_\theta, s_\theta)$.\
\item $L$ is locally a $p_1$-horizontal lift of a $(2n+1)$-fold $\Sigma^{2n+1} \subset Z$ that is $\omega_{\mathrm{KE}}$-Lagrangian and $\omega_{\mathrm{NK}}$-Lagrangian.
\end{enumerate}
If, in addition, $L$ is compact and $p_1(L) \subset Z$ is embedded, then the above conditions are equivalent to:
\begin{enumerate}[($\star$)]
\item $L$ is a $p_1$-horizontal lift of $\mathcal{L}(U) \subset Z$ for some totally-complex submanifold $U^{2n} \subset Q^{4n}$ (resp., superminimal surface $U^2 \subset Q^4$ if $n = 1$).
\end{enumerate}
\end{thm}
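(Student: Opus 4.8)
The plan is to obtain Theorem~\ref{thm:TopDimension} as the top-dimensional case $k=n$ of Theorem~\ref{thm:MainArbDim}, supplemented by one genuinely new equivalence --- the one with condition (5) --- that is special to dimension $2n+1$. Concretely, the equivalences (1)$\iff$(2)$\iff$(3)$\iff$(4) are exactly Theorem~\ref{thm:MainArbDim} specialized to $2k+1 = 2n+1$, so I would just cite them. Similarly, under the extra hypotheses that $L$ is compact and $p_1(L)$ is embedded, the equivalence ($\star$)$\iff$(2) is the $k=n$ instance of the circle-bundle correspondence: ($\star$)$\implies$(2) is Corollary~\ref{cor:LiftOfCircleBundle}, once one notes that a $(c_\theta I_2 + s_\theta I_3)$-CR isotropic submanifold of dimension $2n+1$ is by definition $(c_\theta I_2 + s_\theta I_3)$-CR Legendrian; and (2)$\implies$($\star$) is Theorem~\ref{thm:ProjCRIsoCircleBundle} with $k=n$, using that a CR Legendrian $L$ is in particular $\alpha_1$-isotropic, hence (being connected) a $p_1$-horizontal lift of $p_1(L)$, so that the conclusion $p_1(L) = \mathcal{L}(U)$ upgrades to ``$L$ is a $p_1$-horizontal lift of $\mathcal{L}(U)$''.

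The new content is the equivalence of (5) with the rest, which I would establish as (2)$\iff$(5). For (2)$\implies$(5): if $L^{2n+1}$ is $(c_\theta I_2 + s_\theta I_3)$-CR Legendrian, then $\left.\alpha_1\right|_L = 0$, so $L$ is $p_1$-horizontal; since $p_1 \colon M \to Z$ is a Riemannian submersion and $\dim L = 2n+1$, the restriction $p_1|_L$ is a local embedding, so $L$ is locally the $p_1$-horizontal lift of $\Sigma := p_1(L) \subset Z$, and $\Sigma$ is both $\omega_{\mathrm{KE}}$-Lagrangian and $\omega_{\mathrm{NK}}$-Lagrangian by Corollary~\ref{cor:DoubleLagrangian-CRLift}(a). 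For (5)$\implies$(2): this is precisely Corollary~\ref{cor:DoubleLagrangian-CRLift}(b), whose proof in turn draws on Proposition~\ref{prop:HV-KE-Iso}(b) (a doubly-Lagrangian $(2n+1)$-fold is automatically HV-compatible with one-dimensional vertical part) feeding into Proposition~\ref{prop:LiftingDoubleIsotropic}. Chaining (1)$\iff$(2), (2)$\iff$(3), (3)$\iff$(4), (2)$\iff$(5), and, in the compact case, (2)$\iff$($\star$) then closes the argument.

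I expect the proof to be essentially bookkeeping, requiring no new computation beyond the exterior-derivative identities already recorded in \S\ref{sub:GeomTwistor} (in particular $d\omega_{\mathrm{NK}} = 6\,\mathrm{Im}(\gamma)$, which is what forces $\mathrm{Im}(\gamma)|_L = 0$ once $\omega_{\mathrm{NK}}|_L = 0$). The one place that demands care is keeping the ``local'' quantifiers straight across the chain (2)$\implies$(5)$\implies$(2): one must verify that ``CR Legendrian'' genuinely forces $p_1$-horizontality, so that replacing $L$ by $p_1(L)$ is legitimate with no extra hypotheses, and one must correctly distinguish the merely local statement (5) from the global statement ($\star$), which is asserted only when $L$ is compact and $p_1(L) \subset Z$ is embedded (where Lemma~\ref{lem:LocallyCircleBundle} is used inside Theorem~\ref{thm:ProjCRIsoCircleBundle}). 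The overall structure parallels that of Theorem~\ref{thm:FirstCharDim3}, the analogous statement in the lowest dimension $2k+1 = 3$.
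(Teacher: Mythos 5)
Your proposal is correct and follows essentially the same route as the paper: cite Theorem~\ref{thm:MainArbDim} for (1)$\iff$(2)$\iff$(3)$\iff$(4) and (in the compact, embedded case) for ($\star$), and handle the genuinely new condition (5) via Corollary~\ref{cor:DoubleLagrangian-CRLift}, i.e.\ (2)$\iff$(5). The only cosmetic difference is that you unwind the ($\star$) equivalence through Corollary~\ref{cor:LiftOfCircleBundle} and Theorem~\ref{thm:ProjCRIsoCircleBundle} rather than citing Theorem~\ref{thm:MainArbDim} directly, and the paper additionally notes the alternative route (5)$\iff$($\star$) via Theorem~\ref{thm:Double-Lagrangian-Char}.
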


\begin{proof} The equivalence (1)$\iff$(2)$\iff$(3)$\iff$(4)$\iff$($\star$) was proven in Theorem \ref{thm:MainArbDim}.  It remains only to involve condition (5).  For this, note that (5)$\iff$(2) is the content of Corollary \ref{cor:DoubleLagrangian-CRLift}.  Alternatively, (5)$\iff$($\star$) is Theorem \ref{thm:Double-Lagrangian-Char}.
\end{proof}

Finally, for $4$-dimensional complex isotropic cones in $C^{4n+4}$, even more characterizations are available:

\begin{thm} \label{thm:LowDimension} Let $L^{3} \subset M^{4n+3}$ be a $3$-dimensional submanifold.  The following six conditions are equivalent:
\begin{enumerate}[(1)]
\item $\mathrm{C}(L)$ is $(c_\theta I_2 + s_\theta I_3)$-complex isotropic for some constant $e^{i \theta} \in S^1$.
\item $L$ is $(c_\theta I_2 + s_\theta I_3)$-CR isotropic for some constant $e^{i \theta} \in S^1$.
\item $L$ is locally of the form $p_v^{-1}(V)$ for some horizontal $J_{\mathrm{KE}}$-complex submanifold $V^{2} \subset Z$ and some $v = (0, c_\theta, s_\theta)$.
\item  $L$ is locally of the form $p_v^{-1}(\widetilde{U})$ for some totally-complex submanifold $U^{2} \subset Q$ (resp., superminimal surface if $n = 1$) and some $v = (0, c_\theta, s_\theta)$.
\item $L$ is locally a $p_1$-horizontal lift of a $\mathrm{Re}(\gamma)$-calibrated $3$-fold that is $\omega_{\mathrm{KE}}$-isotropic.
\item $L$ is $\mathrm{Re}(\Gamma_1)$-calibrated.
\end{enumerate}
If, in addition, $L$ is compact and $p_1(L) \subset Z$ is embedded, then the above conditions are equivalent to:
\begin{enumerate}[($\star$)]
\item $L$ is a $p_1$-horizontal lift of $\mathcal{L}(U) \subset Z$ for some totally-complex submanifold $U^{2} \subset Q^{4n}$ (resp., superminimal surface $U^2 \subset Q^4$ if $n = 1$).
\end{enumerate}
\end{thm}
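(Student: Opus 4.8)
The plan is to deduce Theorem \ref{thm:LowDimension} entirely from results already proven, since the case $k = 1$ of a $(2k+1)$-dimensional CR isotropic link has by now been analysed from every relevant direction. Each of the seven conditions (1)--(6) and ($\star$) reproduces, essentially verbatim, one of the conditions appearing in Theorem \ref{thm:MainArbDim} or Theorem \ref{thm:FirstCharDim3}, so the argument consists of quoting the appropriate equivalences and chaining them together.

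First I would dispatch (1)--(4) and ($\star$) by invoking Theorem \ref{thm:MainArbDim} with $k = 1$: the hypothesis $3 \le 2k+1 \le 2n+1$ holds for every $n \ge 1$, and conditions (1), (2), (3), (4), ($\star$) of the present theorem are precisely conditions (1), (2), (3), (4), ($\star$) there (recalling $J_+ = J_{\mathrm{KE}}$ from $\S$\ref{sub:GeomTwistor}, and reading the submanifolds $V$ and $U$ as $2$-dimensional). Unwinding that theorem: (1)$\iff$(2) is Proposition \ref{prop:ConeComplexLag}; (2)$\iff$(3) is Corollary \ref{cor:CRIsoChar1}; (3)$\iff$(4) is Theorem \ref{prop:HorzLiftTotCplx}; and under the additional hypothesis that $L$ is compact with $p_1(L)$ embedded, ($\star$)$\implies$(2) is Corollary \ref{cor:LiftOfCircleBundle} while (2)$\implies$($\star$) is Theorem \ref{thm:ProjCRIsoCircleBundle}.

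Next I would fold in the two conditions special to the $3$-dimensional setting. Conditions (5) and (6) of the present theorem are exactly conditions (iv) and (v) of Theorem \ref{thm:FirstCharDim3}, so that (2)$\iff$(5) is Corollary \ref{cor:Regamma-CRLift} and (5)$\iff$(6) is Proposition \ref{prop:GammaLifts}. Chaining this with the equivalences of the previous paragraph closes the cycle through all seven conditions.

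Because the entire content is bookkeeping, there is no substantive obstacle; the only point requiring attention is to confirm that the phase data $v = (0, c_\theta, s_\theta)$, the dimension ranges, and the ``compact, $p_1(L)$ embedded'' caveat are transcribed exactly as in Theorems \ref{thm:MainArbDim} and \ref{thm:FirstCharDim3}, so that no hidden hypothesis is dropped and the superminimal-surface exception for $n = 1$ is carried along consistently in conditions (4) and ($\star$).
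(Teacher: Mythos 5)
Your proposal is correct and follows essentially the same route as the paper, which likewise proves the theorem by quoting Theorem \ref{thm:MainArbDim} (with $k=1$) for the equivalences (1)--(4) and ($\star$), and Theorem \ref{thm:FirstCharDim3} (via Corollary \ref{cor:Regamma-CRLift} and Proposition \ref{prop:GammaLifts}) to bring in conditions (5) and (6). Your unpacking of the individual citations matches the paper's own, so nothing further is needed.
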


\begin{proof} Theorem \ref{thm:FirstCharDim3} gives (1)$\iff$(2)$\iff$(3)$\iff$(5)$\iff$(6).  Now, as Theorem \ref{thm:MainArbDim} proves (1)$\iff$(2)$\iff$(3)$\iff$(4)$\iff$($\star$), we deduce the result.  Alternatively, Theorem \ref{prop:HorzLiftTotCplx} gives (3)$\iff$(4), and Theorem \ref{thm:ReGammaCircleBundle} gives (5)$\iff$($\star$).
\end{proof}

\begin{appendix}

\section{Appendix}

\subsection{Linear Algebra of Calibrations} \label{appendix:calib}

\indent \indent Let $(V, g)$ be an $n$-dimensional oriented real inner product space. Recall that a $k$-form $\gamma$ on $V$ is said to have \emph{comass one} if $\gamma(P) \leq 1$ for any oriented orthonormal $k$-plane $P$ in $V$, with equality on at least one such $P$. Equivalently, by writing $P = e_1 \wedge \cdots \wedge e_k$, this means that
$$ \gamma(e_1, \ldots, e_k) \leq 1 $$
whenever $e_1, \ldots, e_k$ are orthonormal in $V$, with equality on at least one such set. Throughout this paper, a $k$-form with comass one will be called a \emph{semi-calibration}. Let $\gamma \in \Lambda^k (V^*)$ be a semi-calibration. An oriented $k$-plane $P$ is called $\gamma$-calibrated if $\gamma(P) = 1$. 

It is easy to see that $\gamma \in \Lambda^k (V^*)$ is a semi-calibration if and only if $\ast \gamma \in \Lambda^{n-k} (V^*)$ is a semi-calibration, where $\ast$ is the Hodge star operator induced by the inner product and orientation on $V$. We collect here some results on semi-calibrations that we will need.

\begin{prop} \label{prop:rich-calibs}
Let $\gamma \in \Lambda^k (V^*)$, be a semi-calibration, and let $L \subset V$ be an oriented $1$-dimensional subspace with oriented orthonormal basis $\{ e_1 \}$. Write $V = L \oplus L^{\perp}$, and
$$ \gamma = e_1^\flat \wedge \alpha + \beta, $$
where $\alpha = \iota_{e_1} \gamma \in \Lambda^{k-1} (L^{\perp})^*$ and $\beta = \gamma - e_1^\flat \wedge \alpha \in \Lambda^k (L^{\perp})^*$.
\begin{enumerate}[(a)]
\item If every oriented line in $V$ lies in a $\gamma$-calibrated $k$-plane, then $\alpha$ is a semi-calibration.
\item Suppose (a) holds. Then an oriented $(k-1)$-plane $W$ in $L^{\perp}$ is $\alpha$-calibrated if and only if the oriented $k$-plane $P = L \oplus W$ is $\gamma$-calibrated.
\item If every oriented line in $V$ lies in a $(\ast\gamma)$-calibrated $(n-k)$-plane, then $\beta$ is a semi-calibration. 
\end{enumerate}
\end{prop}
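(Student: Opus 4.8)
The plan is to handle (a), (b), (c) in order, reducing (b) and (c) to the identity established for (a). For \emph{part (a)}: since $\alpha = \iota_{e_1}\gamma$ satisfies $\iota_{e_1}\alpha = \iota_{e_1}\iota_{e_1}\gamma = 0$, we have $\alpha \in \Lambda^{k-1}((L^\perp)^*)$, and consequently $\iota_{e_1}\beta = \iota_{e_1}\gamma - \iota_{e_1}(e_1^\flat\wedge\alpha) = \alpha - \alpha = 0$, so $\beta \in \Lambda^{k}((L^\perp)^*)$. The computational heart of the whole proposition is the identity
$$\gamma(e_1, f_1, \ldots, f_{k-1}) = (e_1^\flat\wedge\alpha)(e_1, f_1, \ldots, f_{k-1}) = \alpha(f_1, \ldots, f_{k-1}),$$
valid whenever $f_1, \ldots, f_{k-1} \in L^\perp$ (the $\beta$-term dies because $\iota_{e_1}\beta = 0$, and in the wedge only the $e_1^\flat(e_1)$ term survives). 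Since the semi-calibration condition is checked on orthonormal frames and $\alpha$ kills $e_1$, a one-line projection argument reduces the comass bound for $\alpha$ to orthonormal $(k-1)$-planes $W \subset L^\perp$, where $\alpha(W) = \gamma(\R e_1 \oplus W) \le 1$ because $\gamma$ has comass one. For the matching lower bound I apply the hypothesis to the oriented line $L$: there is a $\gamma$-calibrated $k$-plane $P \ni e_1$, and choosing a positively oriented orthonormal basis $(e_1, f_1, \ldots, f_{k-1})$ of $P$ with the $f_i \in L^\perp$ gives $\alpha(f_1, \ldots, f_{k-1}) = \gamma(P) = 1$; hence $\alpha$ is a semi-calibration. \emph{Part (b)} is then immediate from the displayed identity: for an oriented orthonormal $(k-1)$-plane $W \subset L^\perp$, giving $P := L \oplus W$ the orientation $e_1 \wedge f_1 \wedge \cdots \wedge f_{k-1}$ yields $\gamma(P) = \alpha(W)$, so $P$ is $\gamma$-calibrated iff $W$ is $\alpha$-calibrated (and by (a) both notions are meaningful).

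For \emph{part (c)} I would deduce the result from (a) applied to the Hodge dual. Since $\gamma$ is a semi-calibration, so is $\ast\gamma \in \Lambda^{n-k}(V^*)$, and by hypothesis every oriented line of $V$ lies in a $(\ast\gamma)$-calibrated $(n-k)$-plane; so part (a), run with $\ast\gamma$ in place of $\gamma$, shows $\iota_{e_1}(\ast\gamma)$ is a semi-calibration in $\Lambda^{n-k-1}((L^\perp)^*)$. Now use the standard identities $\iota_{e_1}(\ast_V\gamma) = \pm\ast_V(e_1^\flat\wedge\gamma)$, the fact that $e_1^\flat\wedge\gamma = e_1^\flat\wedge\beta$ (since $e_1^\flat\wedge e_1^\flat = 0$), and $\ast_V(e_1^\flat\wedge\eta) = \pm\ast_{L^\perp}\eta$ for $\eta \in \Lambda^\bullet((L^\perp)^*)$, where $\ast_{L^\perp}$ is the Hodge star of $L^\perp$ with its induced orientation; together these give $\iota_{e_1}(\ast_V\gamma) = \pm\ast_{L^\perp}\beta$. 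Thus $\ast_{L^\perp}\beta$ is a semi-calibration on $L^\perp$; since the Hodge star on $L^\perp$ preserves the semi-calibration property, so is $\beta$, and $\beta$ extended by zero is then a semi-calibration on $V$ by the same projection argument used in (a). (A more hands-on alternative: apply the hypothesis to $L$ to get a $(\ast\gamma)$-calibrated $(n-k)$-plane $R \ni e_1$; its orthogonal complement $R^\perp \subset L^\perp$ is $\gamma$-calibrated, so $\beta(R^\perp) = \gamma(R^\perp) = 1$, since $\gamma$ and $\beta$ agree on $k$-planes inside $L^\perp$, and the comass bound for $\beta$ is again the projection argument.)

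None of the steps is deep; the only point demanding care is the orientation and sign bookkeeping in (c) — coordinating the orientation of $L^\perp$ inside $V$ and tracking the signs in $\ast_V$ versus $\ast_{L^\perp}$ and in $\ast^2 = \pm\,\mathrm{Id}$ — but since an overall sign does not affect comass, these signs are ultimately harmless. The other minor item is confirming $\alpha \in \Lambda^{k-1}((L^\perp)^*)$ and $\beta \in \Lambda^{k}((L^\perp)^*)$, which is exactly what legitimises the projection arguments.
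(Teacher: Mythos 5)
Your proof is correct and follows essentially the same route as the paper's: the identity $\alpha(W) = \gamma(L \oplus W)$ for $(k-1)$-planes $W \subset L^{\perp}$ gives (a) and (b), and (c) is obtained by applying (a) to $\ast\gamma$ together with the identity $\iota_{e_1}(\ast_V \gamma) = \pm \ast_{L^{\perp}} \beta$ (the paper writes this as $\ast\gamma = \ast_{L^{\perp}}\alpha + (-1)^k e_1^\flat \wedge \ast_{L^{\perp}}\beta$) and the fact that the Hodge star preserves comass one. Your parenthetical hands-on alternative for (c), using the $\gamma$-calibrated plane $R^{\perp} \subset L^{\perp}$ and the observation that $\beta = \gamma$ on $k$-planes in $L^{\perp}$, is also valid and slightly more elementary, but it does not constitute a genuinely different approach.
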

\begin{proof}
Let $W$ be an oriented $(k-1)$-plane in $L^{\perp}$, where $W = e_2 \wedge \cdots \wedge e_k$ for some oriented orthonormal bases $e_2, \ldots, e_k$ of $W$. Then
\begin{equation} \label{eq:comass-temp}
\alpha(W) = \alpha(e_2, \ldots, e_k) = \gamma(e_1, e_2, \ldots, e_k) = \gamma(L \oplus W).
\end{equation}
Since $\gamma(L \oplus W) \leq 1$, the comass of $\alpha$ is at most $1$. By hypothesis, there exists a $\gamma$-calibrated $k$-plane $P$ containing $L$. Let $W$ be the unique oriented $(k-1)$-plane in $L^{\perp}$ that $P = L \oplus W$. Then $\alpha(W) = \gamma(L \oplus W) = \gamma(P) = 1$, so $\alpha$ is a semi-calibration. This proves (a), and then (b) is immediate from~\eqref{eq:comass-temp}. For (c), observe that
$$ \ast \gamma = \ast (e_1^\flat \wedge \alpha + \beta) = \ast_{L^{\perp}} \alpha + (-1)^k e_1^\flat \wedge \ast_{L^{\perp}} \beta. $$
If every oriented line $L$ lies in a $(\ast \gamma)$-calibrated $(n-k)$-plane, then (a) holds for $\ast \gamma$, so $ \iota_{e_1} (\ast \gamma) = (-1)^k \ast_{L^{\perp}} \beta$ is a semi-calibration on $L^{\perp}$, but then so is $\beta$.
\end{proof}

\begin{prop} \label{prop:CalibrationSplitting} Let $\gamma$ be a semi-calibration on $V$, and suppose we have an orthogonal splitting $V = L \oplus L^{\perp}$ for some oriented line $L$, with oriented orthonormal basis $\{ e_1 \}$. If $\iota_{e_1} \gamma = 0$, then any $\gamma$-calibrated $k$-plane lies in $L^{\perp}$.
\end{prop}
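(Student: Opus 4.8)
The plan is to argue by contradiction. Suppose $P$ is a $\gamma$-calibrated $k$-plane with $P \not\subset L^\perp$. The first step is a purely linear-algebraic observation: the orthogonal projection $\pi \colon V \to L$ restricts to a nonzero map on $P$, so $\pi(P) = L$, and hence the subspace $W := P \cap L^\perp = \ker(\pi|_P)$ has dimension exactly $k-1$. I would then pick an oriented orthonormal basis $\{f_2, \dots, f_k\}$ of $W$ and complete it to an oriented orthonormal basis $\{f_1, f_2, \dots, f_k\}$ of $P$. Writing $f_1 = a\,e_1 + b\,u$ with $u \in L^\perp$ a unit vector orthogonal to $f_2, \dots, f_k$ and $a^2 + b^2 = 1$, the assumption $P \not\subset L^\perp$ forces $a \neq 0$, and therefore $|b| < 1$.

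Next I would expand $\gamma(P) = \gamma(f_1, f_2, \dots, f_k)$ by multilinearity in the first slot: the $e_1$-term equals $a\,(\iota_{e_1}\gamma)(f_2, \dots, f_k) = 0$ by hypothesis, leaving $\gamma(P) = b\,\gamma(u, f_2, \dots, f_k)$. Since $\{u, f_2, \dots, f_k\}$ is an orthonormal $k$-frame and $\gamma$ has comass one (so $|\gamma| \leq 1$ on orthonormal frames, flipping orientation if necessary), we get $|\gamma(u, f_2, \dots, f_k)| \leq 1$, whence $\gamma(P) \leq |b| < 1$. This contradicts $\gamma(P) = 1$, so $P \subset L^\perp$.

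The one point worth stating carefully is the strict inequality $|b| < 1$: this is precisely where the assumption $P \not\subset L^\perp$ is used, and without it the same computation only yields $\gamma(P) \leq 1$ rather than the desired conclusion. Everything else is routine multilinear algebra, and no appeal to Proposition \ref{prop:rich-calibs} is needed, since we are not assuming that every oriented line of $V$ lies in a $\gamma$-calibrated $k$-plane.
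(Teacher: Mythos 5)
Your proof is correct and is essentially the paper's argument in contrapositive form: the paper also uses $\dim(P\cap L^{\perp})\geq k-1$, writes the remaining unit vector as $\cos(\theta)\,e_1+\sin(\theta)\,w_1$ with $w_1\in L^{\perp}$, kills the $e_1$-term via $\iota_{e_1}\gamma=0$, and bounds the rest by the comass to force $\sin(\theta)=1$, which is exactly your $a\neq 0\Rightarrow |b|<1\Rightarrow \gamma(P)<1$ step run directly rather than by contradiction.
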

\begin{proof}
It is trivial that $\dim (P \cap L^{\perp}) \geq k-1$. Therefore we can find an oriented orthonormal basis $v_1, w_2, \ldots, w_k$ of $P$ such that $v_1 = \cos (\theta) e_1 + \sin (\theta) w_1$ and $w_1, \ldots, w_k \in L^{\perp}$ orthonormal. Then since $\iota_{e_1} \gamma = 0$, we have
$$ 1 = \gamma(v_1, w_2, \ldots, w_k) = \sin (\theta) \, \gamma(w_1, w_2, \ldots, w_k) \leq \sin (\theta). $$
Thus $\sin (\theta) = 1$, and $v_1 = w_1 \in P$.
\end{proof}

\begin{prop} \label{prop:scaling-calib}
Let $(W, g)$ be a finite-dimensional real inner product space, and suppose we have an orthogonal splitting $W = H \oplus V$, so that the inner product is given by $g = g_H + g_V$. Define a new inner product $\tilde g$ on $V$ by $\tilde g = t^2 g_H + g_V$. Let $\gamma$ be a semi-calibration on $V$ such that $\gamma \in \Lambda^m (H^*) \otimes \Lambda^{k-m} (V^*)$. Then $t^m \gamma$ is a semi-calibration on $(W, \tilde g)$.
\end{prop}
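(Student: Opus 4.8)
The plan is to trivialize the statement by exhibiting a linear isometry between $(W,g)$ and $(W,\tilde g)$ which rescales $\gamma$ by exactly the factor $t^{-m}$, so that the prefactor $t^m$ in the statement cancels it. (We may assume $t>0$, since $\tilde g$ depends only on $t^2$.) Concretely, I would introduce the linear map $\phi\colon W\to W$ acting by $\tfrac1t\,\mathrm{Id}$ on $H$ and by $\mathrm{Id}$ on $V$. Writing $x=x_H+x_V$ with $x_H\in H$ and $x_V\in V$, one computes $\tilde g(\phi x,\phi x)=t^2\cdot\tfrac{1}{t^2}g_H(x_H,x_H)+g_V(x_V,x_V)=g(x,x)$, so $\phi$ is an isometry from $(W,g)$ onto $(W,\tilde g)$; in particular it carries $g$-orthonormal $k$-frames bijectively onto $\tilde g$-orthonormal $k$-frames.

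Next I would record the elementary fact that an isometry identifies comasses: if $(\phi e_1,\dots,\phi e_k)$ is an arbitrary $\tilde g$-orthonormal $k$-frame, with $(e_1,\dots,e_k)$ the corresponding $g$-orthonormal frame, then for any $\eta\in\Lambda^k(W^*)$ we have $\eta(\phi e_1,\dots,\phi e_k)=(\phi^*\eta)(e_1,\dots,e_k)$. Hence $\eta$ has comass at most one with respect to $\tilde g$ if and only if $\phi^*\eta$ has comass at most one with respect to $g$, and the value $1$ is attained on one side if and only if it is attained on the other.

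It then remains to identify $\phi^*\gamma$. For $\xi\in H^*$ and $v\in H$ one has $(\phi^*\xi)(v)=\xi(\phi v)=\tfrac1t\xi(v)$, so $\phi^*$ multiplies each horizontal cotangent leg by $t^{-1}$ while fixing each vertical leg; since $\gamma\in\Lambda^m(H^*)\otimes\Lambda^{k-m}(V^*)$ has precisely $m$ horizontal legs, $\phi^*\gamma=t^{-m}\gamma$, and therefore $\phi^*(t^m\gamma)=\gamma$. Combining with the previous paragraph and the hypothesis that $\gamma$ is a semi-calibration on $(W,g)$, we conclude that $t^m\gamma$ has comass one with respect to $\tilde g$, as claimed.

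There is no genuine difficulty here; the only points demanding attention are bookkeeping. One must check that $\phi^*$ rescales the horizontal legs by $t^{-1}$ rather than $t$, so that $t^m$ (and not $t^{-m}$) is the correct normalization in the statement, and one must carry the ``equality is attained'' half of the comass-one condition through $\phi$, which is immediate because $\phi$ is a bijection on orthonormal frames. It is worth noting that this $\phi$ is precisely the rescaling that underlies the one-parameter families $g(t)=t^2g_{\mathsf H}+g_{\mathsf V}$, $\omega_\pm(t)$, and $\mathrm{Re}(t^2\gamma)$ appearing in the study of twistor spaces in \S\ref{sub:GeomTwistor}.
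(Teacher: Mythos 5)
Your proof is correct, and it is essentially the paper's argument repackaged: the paper passes from a $\tilde g$-orthonormal frame $\tilde e_j = h_j + v_j$ to the $g$-orthonormal frame $e_j = t h_j + v_j$, which is precisely your isometry $\phi^{-1}$, and the factor $t^m$ arises in both cases from $\gamma$ having exactly $m$ horizontal legs. The only cosmetic difference is that you phrase the frame substitution as a pullback identity $\phi^*(t^m\gamma)=\gamma$, which handles the comass bound and the attainment of equality in one stroke.
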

\begin{proof}
Let $\tilde e_1, \ldots, \tilde e_k$ be orthonormal for $\tilde g$. We can decompose $\tilde e_j = h_j + v_j$ where $h_j \in H$ and $v_j \in V$, so
$$ \delta_{ij} = \tilde g (e_i, e_j) = t^2 g(h_i, h_j) + g(v_i, v_j). $$
Thus if we define $e_j = t h_j + v_j$, then $e_1, \ldots, e_k$ are orthonormal for $g$. Using the fact that $\gamma \in \Lambda^m (H^*) \otimes \Lambda^{k-m} (V^*)$, we have
$$ (t^m \gamma)(\tilde e_1, \ldots, \tilde e_k) = t^m \gamma(h_1 + v_1, \ldots, h_k + v_k) $$
is a sum of terms, each of which has exactly $m$ of the $h_j$'s and $k-m$ of the $v_j$'s in the argument of $t^m \gamma$. By multilinearity, we can bring one factor of $t$ in to each of the $h_j$ arguments, to get
$$ \gamma( t h_1 + v_1, \ldots, t h_k + v_k ) = \gamma (e_1, \ldots, e_k) \leq 1. $$
Thus $t^m \gamma$ has comass at most one with respect to $\tilde g$. But now it is clear that if $P = e_1 \wedge \cdots \wedge e_k$ is $\gamma$-calibrated with respect to $g$, then $\tilde P = \tilde e_1 \wedge \cdots \wedge \tilde e_k$ is $t^m \gamma$-calibrated with respect to $\tilde g$, where $\tilde e_j = t^{-1} h_j + v_j$ if $e_j = h_j + v_j$.
\end{proof}

\begin{prop} \label{prop:submersion-calibration}
Let $(V, g)$ and $(W, h)$ be finite-dimensional real inner product spaces, and let $p \colon V \to W$ be a Riemannian submersion. That is, $p$ is a linear surjection that maps $(\Ker\, p)^{\perp} \subset V$ \emph{isometrically} onto $W$. If $\alpha \in \Lambda^k (W^*)$ is a semi-calibration on $(W, h)$, then $p^* \alpha$ is a semi-calibration on $(V, g)$.
\end{prop}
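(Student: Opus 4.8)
The plan is to reduce the statement about $k$-planes in $V$ to the corresponding statement about $k$-planes in $W$ using the fact that $p$ restricts to an isometry on the horizontal subspace $(\Ker\,p)^\perp$. First I would fix an oriented orthonormal $k$-frame $e_1,\dots,e_k$ in $V$ and estimate $(p^*\alpha)(e_1,\dots,e_k) = \alpha(p_*e_1,\dots,p_*e_k)$. The key point is that $p_*$ kills the vertical part and acts isometrically on the horizontal part, so writing $e_j = e_j^{\mathsf H} + e_j^{\mathsf V}$ with $e_j^{\mathsf H}\in(\Ker\,p)^\perp$ and $e_j^{\mathsf V}\in\Ker\,p$, we get $p_*e_j = p_*e_j^{\mathsf H}$ and $|p_*e_j^{\mathsf H}| = |e_j^{\mathsf H}| \leq 1$.

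The main technical step is to argue that the vectors $p_*e_1,\dots,p_*e_k$ span a $k$-plane $P'$ in $W$ on which $\alpha$ evaluates (on the \emph{unit} $k$-vector) to at most $1$, and that the ``shrinkage factor'' coming from passing to $e_j^{\mathsf H}$ only decreases the value. Concretely: either $\{e_j^{\mathsf H}\}$ is linearly dependent, in which case $(p^*\alpha)(e_1,\dots,e_k)=0\le 1$; or it is linearly independent, in which case $e_1^{\mathsf H}\wedge\dots\wedge e_k^{\mathsf H}$ is a simple $k$-vector of norm $\le 1$ (each factor has norm $\le 1$ and they need not be orthonormal, but the norm of the wedge is bounded by the product of the norms, hence $\le 1$ — alternatively one can Gram–Schmidt the $e_j^{\mathsf H}$ and observe the change-of-basis matrix from the orthonormalization has the orthonormal frame in its columns times an upper-triangular matrix whose diagonal entries and determinant are $\le 1$). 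Since $p_*$ is an isometry on the horizontal subspace, $p_*e_1^{\mathsf H}\wedge\dots\wedge p_*e_k^{\mathsf H}$ has the same norm $\le 1$ in $\Lambda^k W$, so it can be written as $c\cdot(f_1\wedge\dots\wedge f_k)$ with $|c|\le 1$ and $f_1,\dots,f_k$ an orthonormal $k$-frame in $W$. By multilinearity $(p^*\alpha)(e_1,\dots,e_k) = \alpha(p_*e_1^{\mathsf H},\dots,p_*e_k^{\mathsf H}) = c\,\alpha(f_1,\dots,f_k) \le |c|\le 1$ using that $\alpha$ has comass one. Hence $p^*\alpha$ has comass at most one.

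To see that the comass equals one, take an $\alpha$-calibrated oriented orthonormal $k$-frame $f_1,\dots,f_k$ in $W$ (which exists since $\alpha$ is a semi-calibration) and let $e_1,\dots,e_k$ be their horizontal lifts under the isometry $(\Ker\,p)^\perp\xrightarrow{\ \sim\ }W$; these are orthonormal in $V$ and $(p^*\alpha)(e_1,\dots,e_k) = \alpha(f_1,\dots,f_k) = 1$. This completes the proof.

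I expect the only real subtlety to be the norm bound $\|e_1^{\mathsf H}\wedge\dots\wedge e_k^{\mathsf H}\| \le \prod_j \|e_j^{\mathsf H}\| \le 1$ and the associated extraction of an orthonormal frame with a coefficient of modulus $\le 1$; everything else is bookkeeping about the horizontal/vertical splitting. One clean way to phrase it: let $G$ be the Gram matrix of $e_1^{\mathsf H},\dots,e_k^{\mathsf H}$; then $\|e_1^{\mathsf H}\wedge\dots\wedge e_k^{\mathsf H}\|^2 = \det G$, and $\det G \le \prod_j G_{jj} \le 1$ by Hadamard's inequality together with $G_{jj} = |e_j^{\mathsf H}|^2 \le 1$. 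This makes the argument fully rigorous without Gram–Schmidt. I would write the proof in this form.

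\begin{proof}
Let $e_1, \ldots, e_k$ be orthonormal in $V$. Write $e_j = e_j^{\mathsf H} + e_j^{\mathsf V}$, where $e_j^{\mathsf H} \in (\Ker\,p)^\perp$ and $e_j^{\mathsf V} \in \Ker\,p$. Since $p$ vanishes on $\Ker\,p$, we have $p(e_j) = p(e_j^{\mathsf H})$, and since $p$ maps $(\Ker\,p)^\perp$ isometrically onto $W$, we have $|p(e_j^{\mathsf H})| = |e_j^{\mathsf H}| \leq |e_j| = 1$. By multilinearity,
$$ (p^*\alpha)(e_1, \ldots, e_k) = \alpha(p(e_1), \ldots, p(e_k)) = \alpha(p(e_1^{\mathsf H}), \ldots, p(e_k^{\mathsf H})). $$
If $e_1^{\mathsf H}, \ldots, e_k^{\mathsf H}$ are linearly dependent, then so are $p(e_1^{\mathsf H}), \ldots, p(e_k^{\mathsf H})$, and the above is $0 \leq 1$. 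Otherwise, let $G$ be the Gram matrix of $e_1^{\mathsf H}, \ldots, e_k^{\mathsf H}$ with respect to $g$. Then $G_{jj} = |e_j^{\mathsf H}|^2 \leq 1$, so by Hadamard's inequality $\det G \leq \prod_j G_{jj} \leq 1$, and hence
$$ \| e_1^{\mathsf H} \wedge \cdots \wedge e_k^{\mathsf H} \| = (\det G)^{1/2} \leq 1. $$
Since $p$ is an isometry on $(\Ker\,p)^\perp$, the $k$-vector $p(e_1^{\mathsf H}) \wedge \cdots \wedge p(e_k^{\mathsf H}) \in \Lambda^k(W)$ also has norm at most $1$, so we may write it as $c\, (f_1 \wedge \cdots \wedge f_k)$ with $|c| \leq 1$ and $f_1, \ldots, f_k$ an oriented orthonormal $k$-frame in $W$. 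Therefore
$$ (p^*\alpha)(e_1, \ldots, e_k) = c\, \alpha(f_1, \ldots, f_k) \leq |c| \leq 1, $$
using that $\alpha$ has comass one. This shows $p^*\alpha$ has comass at most one.

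For equality, let $f_1, \ldots, f_k$ be an $\alpha$-calibrated oriented orthonormal $k$-frame in $W$, which exists because $\alpha$ is a semi-calibration. Let $e_1, \ldots, e_k \in (\Ker\,p)^\perp \subset V$ be their preimages under the isometry $\left.p\right|_{(\Ker\,p)^\perp} \colon (\Ker\,p)^\perp \to W$. Then $e_1, \ldots, e_k$ is an oriented orthonormal $k$-frame in $V$ and
$$ (p^*\alpha)(e_1, \ldots, e_k) = \alpha(f_1, \ldots, f_k) = 1. $$
Hence $p^*\alpha$ has comass one, i.e., it is a semi-calibration on $(V, g)$.
\end{proof}
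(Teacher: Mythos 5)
Your proof is correct and follows essentially the same route as the paper: decompose an orthonormal frame into horizontal and vertical parts, use that $p$ is an isometry on $(\Ker\,p)^{\perp}$ together with Hadamard's inequality to bound the comass by one, and attain the bound by horizontally lifting an $\alpha$-calibrated orthonormal frame. Your Gram-matrix phrasing of Hadamard's inequality and the explicit factor $c\,(f_1\wedge\cdots\wedge f_k)$ are just a more spelled-out version of the paper's one-line estimate $\alpha(p(w_1),\ldots,p(w_k)) \leq |p(w_1)\wedge\cdots\wedge p(w_k)| \leq |p(w_1)|\cdots|p(w_k)| \leq 1$.
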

\begin{proof}
Let $v_1, \ldots, v_k$ be orthonormal vectors in $V$. We can orthogonally decompose $v_j = u_j + w_j$ where $u_j \in \Ker\, p$ and $w_j \in (\Ker\, p)^{\perp}$. Using that $\alpha$ is a semi-calibration, $p \colon ((\Ker\, p)^{\perp}, g) \to (W, h)$ is an isometry, and Hadamard's inequality, we have
\begin{align*}
(p^* \alpha) (v_1, \ldots, v_k) & = (p^* \alpha)(u_1 + w_1, \ldots, u_k + w_k) = \alpha(p(w_1), \ldots, p(w_k)) \\
& \leq | p(w_1) \wedge \cdots \wedge p(w_k) | \leq | p(w_1) | \cdots | p(w_k) | = |w_1| \cdots |w_k| \leq 1.
\end{align*}
Thus the comass of $p^* \alpha$ is at most one. Let $L \subset W$ be an oriented $k$-plane calibrated by $\alpha$, with oriented orthonormal basis $e_1, \ldots, e_k$. For $1 \leq j \leq k$, let $w_j$ be the unique vector in $(\Ker\, p)^{\perp}$ such that $p(w_j) = e_j$. Then it is clear that $w_1 \wedge \cdots \wedge w_k \subset V$ is calibrated by $p^* \alpha$.
\end{proof}

\begin{prop} \label{prop:IsotropyLemma} Let $(V, g, \omega, I)$ be a Hermitian vector space of real dimension $2n$, where $I$ is the complex structure and $\omega = g(I \cdot, \cdot)$ is the associated real $(1,1)$-form. Let $\gamma \in \Lambda^k(V^*)$ be of type $(k,0) + (0,k)$, where $k \leq n$. If $P \subset V$ is an oriented $k$-plane on which $\gamma$ attains its maximum, then $P$ is $\omega$-isotropic. That is, $\omega|_P = 0$.
\end{prop}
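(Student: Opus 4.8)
The plan is to argue variationally, using only the first-order optimality of the maximizing plane together with the $(k,0)+(0,k)$ type condition. Since $\gamma$ is a semi-calibration its maximum over oriented orthonormal $k$-planes is $1$, so $\gamma(P)=1$. Fix an oriented orthonormal basis $v_1,\dots,v_k$ of $P$, and for normal vectors $n_1,\dots,n_k\in P^{\perp}$ consider the $k$-planes spanned by $v_i+tn_i$. Their Gram matrix differs from the identity only at order $t^2$, so after renormalizing they form a curve of oriented orthonormal $k$-planes through $P$ whose first derivative in the Grassmannian is $\sum_i v_1\wedge\cdots\wedge n_i\wedge\cdots\wedge v_k$. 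Because $\gamma(\,\cdot\,)\le 1=\gamma(P)$ near $t=0$ for both signs of $t$, the first-order condition forces $\sum_i\gamma(v_1,\dots,n_i,\dots,v_k)=0$ for every tuple $(n_1,\dots,n_k)$ with $n_i\in P^{\perp}$. Taking all but one $n_i$ to vanish, this says that for each $i$ the $1$-form $\eta_i:=\iota_{v_1\wedge\cdots\wedge\widehat{v_i}\wedge\cdots\wedge v_k}\gamma$ annihilates $P^{\perp}$. On the other hand $\eta_i(v_j)=\pm\gamma(P)\,\delta_{ij}$ (the terms with $j\neq i$ vanish because $v_j$ is repeated), so $\eta_i=\pm v_i^{\flat}$; in particular $\eta_i\neq 0$.

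Next I would bring in the type condition. Write $\gamma=\psi+\bar\psi$ with $\psi\in\Lambda^{k,0}(V^*)$. Interior product of a $(k,0)$-form with a real vector only sees the $(1,0)$-part of that vector, so $\psi_i:=\iota_{v_1\wedge\cdots\wedge\widehat{v_i}\wedge\cdots\wedge v_k}\psi$ is a $(1,0)$-form and $\eta_i=2\,\mathrm{Re}(\psi_i)$, whence $\mathrm{Re}(\psi_i)=\pm\tfrac12 v_i^{\flat}$. Since a $(1,0)$-form $\zeta$ satisfies $\zeta=\mathrm{Re}(\zeta)-i\,\mathrm{Re}(\zeta)\circ I$, this determines $\psi_i$ explicitly in terms of $v_i^{\flat}$ and $v_i^{\flat}\circ I=-(Iv_i)^{\flat}$.

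Finally I would evaluate $\psi_i$ on $v_j$ with $j\neq i$ in two ways. On one hand $\psi_i(v_j)=\pm\psi(v_1,\dots,\widehat{v_i},\dots,v_k,v_j)=0$ because $v_j$ occurs twice. On the other hand the explicit formula gives $\psi_i(v_j)=\pm\tfrac12\bigl(v_i^{\flat}(v_j)-i\,v_i^{\flat}(Iv_j)\bigr)=\pm\tfrac{i}{2}\,g(Iv_i,v_j)=\pm\tfrac{i}{2}\,\omega(v_i,v_j)$. Hence $\omega(v_i,v_j)=0$ for all $i\neq j$, and trivially for $i=j$, so $\omega|_P=0$.

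The computations are elementary; the only points needing care are the justification that $\mathrm{span}(v_i+tn_i)$ may be replaced by genuinely orthonormal $k$-planes without affecting the first derivative of $\gamma$, and the bookkeeping of the signs $(-1)^{i-1}$ in the interior products — harmless since the relevant forms $\eta_i$ are never zero. I would also remark that the hypothesis $k\le n$ is precisely what makes $\Lambda^{k,0}(V^*)$ nonzero and the statement non-vacuous, and that $\gamma(P)=1\neq 0$ (from $\gamma$ being a semi-calibration) is exactly what is used to conclude $\eta_i\neq 0$.
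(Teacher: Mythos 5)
Your argument is correct, but it is a genuinely different route from the paper's. The paper quotes Harvey's normal form for a real $k$-plane relative to a complex structure (an orthonormal basis $(e_1, Ie_1,\dots,e_n,Ie_n)$ and angles $\theta_1,\dots,\theta_m$), uses that a form of type $(k,0)+(0,k)$ satisfies $\iota_{e_i}\iota_{Ie_i}\gamma=0$ to get $\gamma(P)=\cos\theta_1\cdots\cos\theta_m\,\gamma(e_1,\dots,e_k)$, and then lets maximality force all angles to vanish. You instead use only the first-variation (criticality) condition on the Grassmannian: the vanishing of $\sum_i\gamma(v_1,\dots,n_i,\dots,v_k)$ for $n_i\in P^\perp$ identifies the contractions $\eta_i$ with $\pm\gamma(P)\,v_i^\flat$, and the type condition, via the $(1,0)$-forms $\psi_i$ and the identity $\zeta=\mathrm{Re}(\zeta)-i\,\mathrm{Re}(\zeta)\circ I$, converts $\psi_i(v_j)=0$ into $\omega(v_i,v_j)=0$. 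Your computations check out (the Gram matrix is indeed $\mathrm{Id}+O(t^2)$, so the first derivative in the Grassmannian is as you claim, and the sign ambiguities are harmless). What your approach buys is self-containedness (no appeal to the normal-form lemma) and a formally stronger conclusion: you only use that $P$ is a \emph{critical} point with $\gamma(P)\neq 0$, not that it is a global maximum; what the paper's approach buys is brevity given the cited lemma. One small fix: the proposition does not assume $\gamma$ is a semi-calibration, so you should not assert $\gamma(P)=1$; but your proof only needs $\gamma(P)=c>0$, which holds whenever $\gamma\neq 0$ (the only case in which the statement has content, and the same nondegeneracy is implicitly used in the paper's proof, where maximality forces $\cos\theta_j=1$ only because $\gamma(e_1,\dots,e_k)\neq 0$). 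Phrase it that way and the argument stands as a valid alternative proof, covering in particular the paper's application to $\Theta_{2k}$, where the comass is not yet known to be one when the proposition is invoked.
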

\begin{proof} Let $P \subset V$ be an oriented $k$-plane, and write $k = 2m+1$ if $k$ is odd, and $k = 2m$ if $k$ is even. By~\cite[Lemma 7.18]{Harvey}, which actually works for any $k$, there exists an orthonormal basis $(e_1, Ie_1, \ldots, e_n, Ie_n)$ of $V$ and constants $\theta_1, \ldots, \theta_m \in [0,2\pi)$ such that
\begin{align*}
P & = e_1 \wedge (\sin(\theta_1) Ie_1 + \cos(\theta_1) e_2) \wedge \cdots \wedge ( \sin(\theta_m) I e_{2m-1} + \cos(\theta_m) e_{2m} ) \wedge e_{2m+1}, & & (k = 2m+1), \\
P & = e_1 \wedge (\sin(\theta_1) Ie_1 + \cos(\theta_1) e_2) \wedge \cdots \wedge ( \sin(\theta_m) I e_{2m-1} + \cos(\theta_m) e_{2m} ), & & (k = 2m).
\end{align*}
Since $\gamma$ is of type $(k,0) + (0,k)$, we have $\iota_{e_i}(\iota_{Ie_i}\gamma) = 0$. Therefore, we have
$$\gamma(P) = \cos(\theta_1) \cdots \cos(\theta_m)\,\gamma(e_1, \ldots, e_k).$$
Since $\gamma$ attains its maximum at $P$, it follows that $\theta_1 = \theta_2 = \cdots = \theta_m = 0$. Therefore, $P = e_1 \wedge \cdots \wedge e_k$. In particular, if $v \in P$ then $I v \in P^{\perp}$. Hence $P$ is $\omega$-isotropic.
\end{proof}

\begin{thm} \label{thm:special-isotropic-comass-one}
Let $(V, g, \omega_1, \omega_2, \omega_3, I_1, I_2, I_3)$ be a quaternionic-Hermitian vector space of real dimension $4n$, where $\omega_p = g(I_p \cdot, \cdot)$ is the associated real $2$-form of $I_p$-type $(1,1)$. Let $\sigma = \omega_2 + i \omega_3$. It is easy to check that $\sigma$ is of $I_1$-type $(2,0)$. Let $\Theta_{2k} = \mathrm{Re} (\frac{1}{k!} \sigma^k) \in \Lambda^{2k} (V^*)$. Then $\Theta_{2k}$ has comass one.
\end{thm}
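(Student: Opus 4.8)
The plan is to reduce the comass statement to a single algebraic inequality for the Pfaffian of $\sigma$ restricted to a $2k$-plane, and then to establish that inequality by putting the $2k$-plane into a normal form adapted to the quaternionic structure.

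First I would record two reductions. Since $\frac{1}{k!}\sigma^k$ is a complex-valued $2k$-form, for any oriented orthonormal $2k$-plane $P = e_1 \wedge \cdots \wedge e_{2k}$ we have $\Theta_{2k}(P) = \mathrm{Re}\!\left(\frac{1}{k!}\sigma^k(P)\right) \le \left|\frac{1}{k!}\sigma^k(P)\right|$, and moreover $\frac{1}{k!}\sigma^k(P) = \mathrm{Pf}\!\left(\sigma|_P\right)$, the Pfaffian of the complex antisymmetric matrix $\left(\sigma(e_a,e_b)\right)_{a,b=1}^{2k}$. Thus it suffices to prove: (i) $\left|\mathrm{Pf}(\sigma|_P)\right| \le 1$ for every real $2k$-plane $P \subset \R^{4n}$; and (ii) equality $\Theta_{2k}(P_0) = 1$ holds for at least one $P_0$. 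Step (ii) is easy: choosing orthonormal $e_1,\dots,e_k$ lying in $k$ mutually orthogonal quaternionic lines and setting $P_0 = \bigoplus_{j=1}^{k} \mathrm{span}(e_j, I_2 e_j)$, one checks that $P_0$ is $I_2$-complex and that $\omega_3|_{P_0} = 0$ (using $\langle I_3 e_j, I_2 e_j\rangle = -\langle e_j, I_3 I_2 e_j\rangle = \langle e_j, I_1 e_j\rangle = 0$ together with orthogonality of the blocks); hence $\frac{1}{k!}\sigma^k|_{P_0} = \frac{1}{k!}\omega_2^k|_{P_0}$, which is $\mathrm{vol}_{P_0}$ by Wirtinger's inequality for the Kähler form $\omega_2$ of $I_2$.

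The crux is step (i). The key point is that $\Sp(n)$ acts on $\R^{4n} \simeq \HH^n$ by isometries preserving $I_1, I_2, I_3$, hence preserving $\sigma$ and the function $P \mapsto |\mathrm{Pf}(\sigma|_P)|$; so it is enough to verify the bound for $P$ in a normal form under this action. Invoking the classification of real subspaces up to $\Sp(n)$ (a quaternionic analogue of the unitary normal form \cite[Lemma 7.18]{Harvey} used in Proposition~\ref{prop:IsotropyLemma}, carried out in detail by Bryant--Harvey \cite[\S 2--3]{Bryant-Harvey}), one may assume that $P$ splits orthogonally as $P = P_1 \oplus \cdots \oplus P_\ell$, with each $P_i$ contained in a distinct quaternionic block $\HH^{m_i} \subset \HH^n$, on which $\sigma|_{P_i}$ takes a standard ``singular-value'' form. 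Since the blocks are mutually $\sigma$-orthogonal, $\mathrm{Pf}(\sigma|_P) = \prod_i \mathrm{Pf}(\sigma|_{P_i})$, and each factor is bounded by $1$: the ``complex'' pieces are controlled by Wirtinger, while for a planar piece one has directly $|\sigma(X,Y)| = \big(\omega_2(X,Y)^2 + \omega_3(X,Y)^2\big)^{1/2} \le 1$ for orthonormal $X, Y$, because $I_2 X$ and $I_3 X$ are orthonormal. Multiplying the factors gives $|\mathrm{Pf}(\sigma|_P)| \le 1$.

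I expect the normal form of step (i) to be the main obstacle: the naive Cauchy--Schwarz/Hadamard estimates on $\mathrm{Pf}(\sigma|_P)$ only yield $|\mathrm{Pf}(\sigma|_P)| \le 2^{k/2}$, so the sharp constant genuinely relies on the $\Sp(n)$-orbit structure on the real Grassmannian. An alternative route worth noting: the top-dimensional case $k = n$ is precisely the assertion that $\mathrm{Re}\!\left(\frac{1}{n!}\sigma^n\right) = \mathrm{Re}(\Upsilon)$ has comass one --- the special Lagrangian calibration of Harvey--Lawson --- and one might hope to deduce $k < n$ by restricting $\sigma$ to an $I_1$-complex subspace of complex dimension $2k$ containing $P$ on which $\sigma$ remains nondegenerate; but a general $2k$-plane need not lie in such a subspace, so this shortcut requires additional care and ultimately reintroduces the same normal-form considerations.
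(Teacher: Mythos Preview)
Your reductions are sound: the bound $\Theta_{2k}(P)\le|\mathrm{Pf}(\sigma|_P)|$ is correct, the $\U(1)$-symmetry rotating $(I_2,I_3)$ shows the two suprema coincide, and your equality example $P_0=\bigoplus_j\mathrm{span}(e_j,I_2e_j)$ is fine. The gap is in step~(i). The decomposition you invoke---$P=P_1\oplus\cdots\oplus P_\ell$ with each $P_i$ in a separate quaternionic block and $\sigma|_{P_i}$ in ``singular-value'' form with only complex or planar pieces---is not a theorem, and \cite{Bryant-Harvey} only treats $k=2$ (see Remark~\ref{rmk:SI-BH}). Concretely, putting $\sigma|_P=\omega_2|_P+i\,\omega_3|_P$ into block-diagonal $2\times 2$ form by an orthonormal change of basis of $P$ would require simultaneously block-diagonalizing the two real skew matrices $(\omega_2(e_a,e_b))$ and $(\omega_3(e_a,e_b))$ by an element of $O(2k)$; this fails unless they commute, which is not automatic. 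So the ``normal form'' step is precisely where the work lies, and it is not supplied.

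The paper avoids any global normal form by an induction on $k$ that peels off one quaternionic line at a time. The key extra ingredient is Proposition~\ref{prop:IsotropyLemma}: since $\Theta_{2k}$ has $I_1$-type $(2k,0)+(0,2k)$, any maximizing $P$ is $I_1$-isotropic. Choosing a unit $e_1\in P$ and writing $P=e_1\wedge Q$, one has $Q\subset(\mathrm{span}(e_1,I_1e_1))^\perp=W\oplus\widetilde V$ with $W=\mathrm{span}(I_2e_1,I_3e_1)$ and $\widetilde V\cong\HH^{n-1}$. Using $\iota_{e_1}\sigma=I_2e_1^\flat+iI_3e_1^\flat$, the contraction $\iota_{e_1}\Theta_{2k}$ involves only $\mathrm{Re}(\frac{1}{(k-1)!}\sigma^{k-1})$ and $\mathrm{Im}(\frac{1}{(k-1)!}\sigma^{k-1})$ on $\widetilde V$; the induction hypothesis bounds these by $1$, and a short Cauchy--Schwarz argument finishes. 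This sidesteps the $\Sp(n)$-orbit analysis entirely.
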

\begin{proof}
We prove this by induction on $k$, for any $n$. The case $k=1$ is clear, because then $\Theta_{2} = \omega_2$. Note also that if $\Theta_{2k} = \mathrm{Re} (\frac{1}{k!} \sigma^k)$ has comass one, then so does $\mathrm{Re} (e^{- i \theta} \frac{1}{k!} \sigma^k)$ for any $e^{i \theta} \in S^1$, since this just corresponds to rotating the complex structures $I_2, I_3$ by $\theta$, and thus again corresponds to a quaternionic-Hermitian structure. Thus we can assume that $k \geq 2$ and that both $\mathrm{Re} (\frac{1}{(k-1)!} \sigma^{k-1})$ and $\mathrm{Im} (\frac{1}{(k-1)!} \sigma^{k-1})$ have comass one for any quaternionic dimension $n$.

Let $P$ be an oriented $2k$-plane on which $\Theta_{2k}$ attains its maximum. Since $\Theta_{2k}$ is of $I_1$-type $(2k,0) + (0,2k)$, we can apply Proposition~\ref{prop:IsotropyLemma} to deduce that $P$ is $I_1$-isotropic. In particular, $P$ does not contain any $I_1$-complex lines. Let $e_1$ be a unit vector in $P$. Complete $e_1$ to a quaternionic orthonormal basis
$$ \{ e_1, I_1 e_1, I_2 e_1, I_3 e_1, \ldots, e_n, I_1 e_n, I_2 e_n, I_3 e_n \}, $$
so that
$$ \omega_1 = \sum_{j=1}^n (e_j \wedge I_1 e_j + I_2 e_j \wedge I_3 e_j), $$
and similarly for $\omega_2, \omega_3$ by cyclically permuting $1, 2, 3$ above. In particular, we have
\begin{equation} \label{eq:SI-temp}
\iota_{e_1} \sigma = I_2 e_1 + i I_3 e_1.
\end{equation}

Write $P = e_1 \wedge Q$ for an oriented $(2k-1)$-plane, so
\begin{align} \label{eq:SI-temp2}
\Theta_{2k}(P) & = \Theta_{2k}(e_1 \wedge Q) = (\iota_{e_1} \Theta_{2k})(Q).
\end{align}
Moreover, we have
$$ Q \subset (\mathrm{span}(e_1, I_1 e_1))^{\perp} = W \oplus \widetilde V, $$
where
$$ W = \mathrm{span} (I_2 e_1, I_3 e_1) \quad \text{is an $I_1$-complex line}, $$
and
$$ \widetilde V = \mathrm{span} (e_2, I_1 e_2, I_2 e_2, I_3 e_2, \ldots, e_n, I_1 e_n, I_2 e_n, I_3 e_n \} $$
is a quaternionic-Hermitian subspace of real dimension $4(n-1)$. In particular, our induction hypothesis tells us that both $\mathrm{Re} (\frac{1}{(k-1)!} \sigma^{k-1})$ and $\mathrm{Im} (\frac{1}{(k-1)!} \sigma^{k-1})$ have comass one on $\widetilde V$.

We observe from $Q + \widetilde V \subset W \oplus \widetilde V$ that
\begin{align*}
\dim(Q \cap \widetilde V) & = \dim Q + \dim \widetilde V - \dim (Q + \widetilde V) \\
& \geq (2k-1) + (4n-4) - (4n-2) = 2k-3,
\end{align*}
so we can write $Q = u_2 \wedge u_3 \wedge v_4 \wedge \cdots \wedge v_{2k}$ for an oriented orthonormal basis $\{ u_2, u_3, v_4, \ldots, v_{2k} \}$ of $Q$, where $v_4, \ldots, v_{2k} \in \widetilde V$. We also have
$$ u_2 = \cos(\phi) w_2 + \sin(\phi) v_2, \qquad u_3 = \cos(\psi) w_3 + \sin(\psi) v_3, $$
for some unit vectors $w_2, w_3 \in W$ and $v_2, v_3 \in \widetilde V$. Abbreviating $R = v_4 \wedge \cdots \wedge v_{2k}$, $\cos(\phi) = c_{\phi}$ and similarly, we have
\begin{align*}
Q & = u_2 \wedge u_3 \wedge R = (c_{\phi} w_2 + s_{\phi} v_2) \wedge (c_{\psi} w_3 + s_{\psi} v_3) \wedge R \\
& = c_{\phi} c_{\psi} w_2 \wedge w_3 \wedge R + c_{\phi} s_{\psi} w_2 \wedge v_3 \wedge R + s_{\phi} c_{\psi} v_2 \wedge w_3 \wedge R + s_{\phi} s_{\psi} v_2 \wedge v_3 \wedge R.
\end{align*}
From~\eqref{eq:SI-temp2} and the above, we get
\begin{equation} \label{eq:SI-temp3}
\Theta_{2k}(P) = (\iota_{e_1} \alpha) (c_{\phi} c_{\psi} w_2 \wedge w_3 \wedge R + c_{\phi} s_{\psi} w_2 \wedge v_3 \wedge R + s_{\phi} c_{\psi} v_2 \wedge w_3 \wedge R + s_{\phi} s_{\psi} v_2 \wedge v_3 \wedge R).
\end{equation}
Since $\iota_{e_1} \Theta_{2k}$ is of $I_1$-type $(2k-1,0) + (0, 2k-1)$, the first term in~\eqref{eq:SI-temp3} must vanish because it contains the $I_1$-complex line $w_2 \wedge w_3$. Moreover, from~\eqref{eq:SI-temp}, we have
\begin{align*}
\iota_{e_1} \Theta_{2k} & = \iota_{e_1} \mathrm{Re} \left( \frac{1}{k!} \sigma^k \right) = \mathrm{Re} \left( (\iota_{e_1} \sigma) \wedge \frac{1}{(k-1)!} \sigma^{k-1} \right) \\
& = I_2 e_1 \wedge \mathrm{Re} \left( \frac{1}{(k-1)!} \sigma^{k-1} \right) - I_3 e_1 \wedge \mathrm{Im} \left( \frac{1}{(k-1)!} \sigma^{k-1} \right).
\end{align*}
Using the orthogonality of $W$ and $\widetilde V$ and the above, the fourth term in~\eqref{eq:SI-temp3} must also vanish, and we are left with
\begin{align*}
\Theta_{2k}(P) & = c_{\phi} s_{\psi} \, g(I_2 e_1, w_2) \, \mathrm{Re} \left( \frac{1}{(k-1)!} \sigma^{k-1} \right) (v_3 \wedge R) \\
& \qquad {} + s_{\phi} c_{\psi} \, g(I_3 e_1, w_3) \, \mathrm{Im} \left( \frac{1}{(k-1)!} \sigma^{k-1} \right) (v_2 \wedge R).
\end{align*}
Applying the induction hypothesis and Cauchy-Schwarz, we deduce that
$$ \Theta_{2k}(P) \leq c_{\phi} s_{\psi} + s_{\phi} c_{\psi} = \sin(\phi + \psi) \leq 1, $$
so $\Theta_{2k}$ has comass at most one. But letting $v_3 \wedge \cdots \wedge v_{2k}$ be a calibrated $(2k-2)$-plane for $\mathrm{Re} (\frac{1}{(k-1)!} \sigma^{k-1})$ and choosing
\begin{align*}
u_2 & = I_2 e_1 \in W, & & \text{so that $\cos(\phi) = 1$, $\sin(\phi) = 0$, and $g(I_2 e_1, w_2) = 1$}, \\
u_3 & = v_3 \in \widetilde V, & & \text{so that $\cos(\psi) = 0$, $\sin(\psi) = 1$},
\end{align*}
gives $\Theta_{2k}(P) = 1$. Thus the comass of $\Theta_{2k}$ is exactly one.
\end{proof}

\begin{rmk} \label{rmk:SI-BH}
The case $k=2$ of Theorem~\ref{thm:special-isotropic-comass-one} is proved in Bryant--Harvey~\cite[Theorem 2.38]{Bryant-Harvey}, where they also prove that a $\Theta_{4}$-calibrated $4$-plane is contained in a quaternionic $2$-plane in $V$. It is likely that this fact remains true for general $k$. That is, a $\Theta_{2k}$-calibrated $2k$-plane in $V$ is contained in a quaternionic $k$-plane. However, we do not have need for this fact.
\end{rmk}

\subsection{Riemannian Cones and Homogeneous Forms} \label{appendix:cones}

Let $(M, g_M)$ be a Riemannian manifold. Let $C = \mathrm{C}(M) = (0, \infty) \times M$, and let $r$ denote the standard coordinate on $(0, \infty)$. The \emph{cone metric} $g_C$ on $C$ induced by $g_M$ is defined to be
\begin{equation} \label{eq:cone-metric}
g_C = dr^2 + r^2 g_M.
\end{equation}
The codimension one submanifold $\{ 1 \} \times M \cong M$ is called the \emph{link} of the cone. We have a projection map $\pi \colon C \to M$ given by $\pi(r, x) = x$. Given differential forms on the link $M$, we can regard them as forms on the cone $C$ by pulling back by $\pi \colon C \to M$. We omit the explicit pullback notation.

\begin{defn} \label{defn:dil}
Consider the vector field
\begin{equation} \label{eq:dilation-vector-field}
\dil = r \ddr
\end{equation}
on the cone $C$. The flow $F_s$ of $\dil$ is given by $(r,p) \mapsto (e^s r, p)$. For this reason, $\dil$ is called the \emph{dilation vector field} on the cone.
\end{defn}
It follows that $\mathscr{L}_{\dil} g_C = 2 g_C$. We say that $g_C$ is \emph{homogeneous of degree $2$} under dilations.

\begin{defn} \label{defn:homogeneous-form}
Let $\gamma \in \Omega^k (C)$. We say that $\gamma$ is \emph{conical} if $\gamma$ is \emph{homogeneous of degree $k$}, or equivalently if $\mathscr{L}_{\dil} \gamma = k \gamma$.
\end{defn}

\begin{prop} \label{prop:homogeneous-form}
Let $\gamma \in \Omega^k (C)$ be a \emph{closed} form which is homogeneous of degree $k$. Then in fact
$$ \gamma = dr \wedge (r^{k-1} \alpha_0) + \frac{r^k}{k} \hat d \alpha_0 = d \Big( \frac{r^k}{k} \alpha_0 \Big), $$
where $\alpha_0 = (\iota_{\dil} \gamma)|_M \in \Omega^{k-1} (M)$.
\end{prop}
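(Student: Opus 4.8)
The plan is to exploit the Cartan formula together with the homogeneity hypothesis to reduce everything to a decomposition of $\gamma$ into its $dr$-part and its ``spatial" part. First I would write, at each point of $C$,
\[
\gamma = dr \wedge \beta + \eta,
\]
where $\beta := \iota_{\del/\del r}\gamma$ and $\eta := \gamma - dr \wedge \beta$ contain no $dr$ factor; both $\beta$ and $\eta$ are then $r$-dependent families of forms on $M$ (pulled back via $\pi$). Since $\dil = r\,\del/\del r$, we have $\iota_{\dil}\gamma = r\beta$, so $\alpha_0 := (\iota_{\dil}\gamma)|_M = \beta|_{r=1}$. The homogeneity condition $\mathscr{L}_{\dil}\gamma = k\gamma$, applied separately to the two pieces (using $\mathscr{L}_{\dil}(dr) = dr$ and that $\mathscr{L}_{\dil}$ preserves the $dr$-grading), forces $\beta$ to be homogeneous of degree $k-1$ and $\eta$ homogeneous of degree $k$ in $r$; concretely $\beta = r^{k-1}\beta_0$ and $\eta = r^k\eta_0$ for forms $\beta_0,\eta_0 \in \Omega^\bullet(M)$, with $\beta_0 = \alpha_0$.

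Next I would bring in closedness. Computing $d\gamma$ in the splitting: $d\gamma = d\big(dr \wedge r^{k-1}\alpha_0\big) + d\big(r^k \eta_0\big)$. Writing $\hat d$ for the exterior derivative on $M$, one gets $d(dr \wedge r^{k-1}\alpha_0) = -\,dr \wedge r^{k-1}\hat d\alpha_0$ and $d(r^k\eta_0) = k r^{k-1}\,dr \wedge \eta_0 + r^k \hat d\eta_0$. Setting $d\gamma = 0$ and separating the terms with a $dr$ factor from those without yields two equations: $\hat d\eta_0 = 0$ and $k\,\eta_0 = \hat d\alpha_0$, i.e. $\eta_0 = \tfrac1k \hat d\alpha_0$. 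Substituting back,
\[
\gamma = dr \wedge (r^{k-1}\alpha_0) + \frac{r^k}{k}\hat d\alpha_0,
\]
which is the first claimed formula, and a direct check (or reading the computation backwards) shows this equals $d\big(\tfrac{r^k}{k}\alpha_0\big)$.

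The only mildly delicate point — and the place I would be most careful — is the bookkeeping of $\mathscr{L}_{\dil}$ acting on the $dr\wedge(\text{form on }M)$ summand versus the pure $\pi^*\Omega^\bullet(M)$ summand, and the claim that homogeneity can be imposed on each summand independently. This is justified because $\mathscr{L}_{\dil}$ commutes with $\iota_{\del/\del r}$ up to a correction coming from $[\dil,\del/\del r] = -\,\del/\del r$, so the decomposition $\Omega^k(C) = dr\wedge\pi^*\Omega^{k-1}(M)[r] \oplus \pi^*\Omega^k(M)[r]$ is $\mathscr{L}_{\dil}$-invariant; alternatively one can simply expand in the coordinate $r$ and match powers, which is cleaner. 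Everything else is routine manipulation of the Cartan calculus on a product manifold, with no analytic subtlety since all statements are pointwise/algebraic in $r$.
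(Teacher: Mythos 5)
Your proposal is correct and takes essentially the same route as the paper: both decompose $\gamma = dr \wedge \alpha + \eta$ with $\iota_{\partial/\partial r}\alpha = \iota_{\partial/\partial r}\eta = 0$, use homogeneity to force $\alpha = r^{k-1}\alpha_0$ and $\eta = r^k\eta_0$, and use closedness to get $\eta_0 = \tfrac{1}{k}\hat d\alpha_0$. The only cosmetic difference is ordering: the paper channels the homogeneity through Cartan's formula, writing $k\gamma = d(\iota_{\dil}\gamma) = d(r\alpha)$ because $d\gamma = 0$ and then matching terms, whereas you first apply $\mathscr{L}_{\dil}$ to the two graded summands (correctly justified via $[\dil, \partial/\partial r] = -\partial/\partial r$, or by matching powers of $r$) and invoke $d\gamma = 0$ afterwards; the resulting equations are identical.
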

\begin{proof}
Write $\gamma = dr \wedge \alpha + \beta$ for some $(k-1)$-form $\alpha$ and $k$-form $\beta$ on $C$ such that $\iota_{\ddr} \alpha = \iota_{\ddr} \beta = 0$. That is, $\alpha$ and $\beta$ have no $dr$ factor, so they can be considered as forms on $M$ depending on a parameter $r$, pulled back to $C$ by $\pi$.

From $\gamma = dr \wedge \alpha + \beta$, and denoting by $\hat d$ the exterior derivative on $M$, we have
$$ 0 = d \gamma = - dr \wedge \hat d \alpha + dr \wedge \beta' + \hat d \beta, $$
and thus
\begin{equation} \label{eq:induced-link-temp0}
\beta' = \hat d \alpha \quad \text{and} \quad \hat d \beta = 0.
\end{equation}
But from $\mathscr{L}_{\dil} \gamma = k \gamma$, since $d \gamma = 0$, we have $k \gamma = d ( \iota_{\dil} \gamma)$. Hence, since $\iota_{\dil} \gamma = r \alpha$, we obtain
$$ k( dr \wedge \alpha + \beta) = k \gamma = d (r \alpha) = dr \wedge \alpha + r dr \wedge \alpha' + r \hat d \alpha. $$
Comparing the two sides above gives
\begin{equation} \label{eq:induced-link-temp1}
k \alpha = \alpha + r \alpha' \quad \text{and} \quad k \beta = r \hat d \alpha.
\end{equation}
The first equation in~\eqref{eq:induced-link-temp1} gives $r \alpha' = (k-1) \alpha$, so $\alpha = r^{k-1} \alpha_0$ where $\alpha_0$ is independent of $r$. Then the second equation gives $k \beta = r \hat d (r^{k-1} \alpha_0) = r^k \hat d \alpha_0$, so $\beta = \frac{r^k}{k} \hat d \alpha_0$. Note that the two equations in~\eqref{eq:induced-link-temp0} are now automatically satisfied. Since $\iota_{\dil} \gamma = r \alpha = r^k \alpha_0$, we therefore conclude that
$$ \gamma = dr \wedge (r^{k-1} \alpha_0) + \frac{r^k}{k} \hat d \alpha_0 = d \Big( \frac{r^k}{k} \alpha_0 \Big), $$
where $\alpha_0 = (r^k \alpha_0)|_M = (\iota_{\dil} \gamma)|_M$.
\end{proof}

\end{appendix}

\bibliographystyle{plain}
\bibliography{CalGeo-HKcones.bib}

\Addresses

\end{document}